\newtheorem{thm}{Theorem}[section]
\newtheorem{cor}[thm]{Corollary}
\newtheorem{lem}[thm]{Lemma}
\newtheorem{prop}[thm]{Proposition}
\theoremstyle{definition}
\theoremstyle{remark}
\numberwithin{equation}{section}
\newcommand{\mbf}{\mathbf}
\newcommand{\ra}{\rightarrow}
\newcommand{\pa}{\partial}
\newcommand{\ov}{\overline}
\newcommand{\sm}{\setminus}
\newcommand{\ep}{\epsilon}
\newcommand{\no}{\noindent}
\newcommand{\Om}{\Omega}
\newcommand{\cal}{\mathcal}
\newcommand{\ti}{\tilde}
\newcommand{\la}{\lambda}
\newcommand{\ka}{\kappa}
\begin{document}
\title{A characterization of domains in $\mbf C^2$ with noncompact 
automorphism group}
\keywords{Non-compact automorphism group, finite type, 
orbit accumulation point}
\thanks{The author was supported by DST (India) Grant No.: 
SR/S4/MS-283/05 and in part by a grant from UGC under DSA-SAP, Phase IV} 
\subjclass{Primary: 32M12 ; Secondary : 32M99} 
\author{Kaushal Verma}
\address{Department of Mathematics,
Indian Institute of Science, Bangalore 560 012, India}
\email{kverma@math.iisc.ernet.in}

\begin{abstract}
Let $D$ be a bounded domain in $\mbf C^2$ with a non-compact 
group of holomorphic automorphisms. Model domains for $D$ are obtained 
under the hypotheses that at least one orbit accumulates at a boundary 
point near which the boundary is smooth, real analytic and of finite type.
\end{abstract}

\maketitle

\section{Introduction}

\no Let $D$ be a bounded, or more generally a Kobayashi hyperbolic domain 
in $\mbf C^n$. It is known that the group of holomorphic automorphisms of 
$D$, henceforth to be denoted by ${\rm Aut}(D)$, is a real analytic Lie 
group in the compact open topology of dimension at most $n^2 + 2n$; the 
maximal value occuring only when $D$ is biholomorphically equivalent to 
the unit ball $\mbf B^n \subset \mbf C^n$. This paper addresses the 
question of determining those domains $D$ for which ${\rm Aut}(D)$ is 
non-compact. By a theorem of H. Cartan, non-compactness of ${\rm Aut}(D)$ 
is equivalent to the existence of $p \in D$ and a sequence $\{\phi_j\} \in 
{\rm Aut}(D)$ such that $\{\phi_j(p)\}$ clusters only on $\pa D$. In other 
words, there is at least one point in $D$ whose orbit under the natural 
action of ${\rm Aut}(D)$ on $D$ accumulates at the boundary of $D$. Call 
$p_{\infty} \in \pa D$ an orbit accumulation point if it is a limit point 
for $\{\phi_j(p)\}$. In this situation it is known that local data 
regarding $\pa D$ near $p_{\infty}$ provides global information about $D$, 
the protoype example of this being the Wong-Rosay theorem. Indeed, it was 
shown in \cite{Won} that a smoothly bounded strongly pseudoconvex domain 
in $\mbf C^n$ with non-compact automorphism group must be equivalent to 
$\mbf B^n$. The same conclusion was arrived at in \cite{Ros} under the 
weaker hypothesis that the boundary of $D$ is strongly pseudoconvex only 
near $p_{\infty}$. A systematic study of this phenomenon for more general 
pseudoconvex domains was initiated by Greene-Krantz and we refer the 
reader to \cite{BP1}, \cite{BP2}, \cite{BP3}, \cite{BP4}, \cite{Ber1}, 
\cite{Ber2}, \cite{Ef}, \cite{Gau}, \cite{Kim1} and \cite{Kim2} which 
provide a panoramic view of some of the known results in this direction. 
The survey articles \cite{IK} and \cite{KimK} contain an overview of the 
techniques that are used and provide a comprehensive list of relevant 
references as well. 

\medskip

The main result in \cite{Won} was generalised for pseudoconvex domains in 
$\mbf C^2$ (see \cite{BP2}and \cite{BP3} for related results) by 
Bedford-Pinchuk in \cite{BP1}. They showed that a smoothly bounded weakly 
pseudoconvex domain in $\mbf C^2$ with real analytic boundary for which 
${\rm Aut}(D)$ is non-compact must be equivalent to the ellipsoid $E_{2m} 
=\{(z_1, z_2) \in \mbf C^2: \vert z_1 \vert^2 + \vert z_2 \vert^{2m} < 
1\}$ for some integer $m \ge 1$. The corresponding local result (which 
would be the analogue of \cite{Ros}) when the boundary of $D$ is smooth 
weakly pseudoconvex and of finite type only near $p_{\infty}$ was obtained 
in \cite{Ber1}. The model domain for $D$ is then not restricted to be 
$E_{2m}$ alone as above. It turns out that $D$ is equivalent to a domain 
of the form $G = \{(z_1, z_2) \in \mbf C^2 : 2 \Re z_2 + P(z_1, \ov z_1) < 
0\}$ where $P(z_1, \ov z_1)$ is a homogeneous subharmonic polynomial of 
degree $2m$ (here $m \ge 1$ is an integer) without harmonic terms. Apart 
from translations along the imaginary $z_2$-axis, $G$ is invariant 
under the action of a one parameter subgroup of ${\rm Aut}(D)$ given by $s 
\mapsto S_s(z_1, z_2) = (\exp(s/2m) z_1, \exp(s) z_2)$ where $s \in \mbf 
R$. As $s \ra -\infty$, it can be seen that the orbit of any point 
$(z_1, z_2) \in G$ under the action of $(S_s)$ accumulates at the 
origin which lies on 
the boundary of $G$. The situation for domains in $\mbf C^2$ was clarified 
further in \cite{BP4}. It was shown that a smoothly bounded domain in 
$\mbf C^2$ with real analytic boundary must be equivalent to $E_{2m}$. 
Thus pseudoconvexity of the domain that was assumed in \cite{BP1} turned 
out to be a consequence. The purpose of this article is to propose a local 
version for the main result in \cite{BP4}.

\begin{thm}
Let $D$ be a bounded domain in $\mbf C^2$. Suppose that there 
exists a point $p \in D$ and a sequence $\{\phi_j\} \in {\rm Aut}(D)$ such 
that $\phi_j(p)$ converges to $p_{\infty} \in \pa D$. Assume that the 
boundary of $D$ is smooth real analytic and of finite type near 
$p_{\infty}$. Then exactly one of the following alternatives holds:

   \begin{enumerate}

	\item[(i)] If $\dim{\rm Aut}(D) = 2$ then either
	  \begin{itemize}
		\item $D \backsimeq \cal D_1 = \{(z_1, z_2) \in \mbf C^2 : 
	2 \Re z_2 + P_1(\Re z_1) < 0\}$ where $P_1(\Re z_1)$ is a 
	polynomial that depends on $\Re z_1$, or

		\item $D \backsimeq \cal D_2 = \{(z_1, z_2) \in \mbf C^2 : 
	2 \Re z_2 + P_2(\vert z_1 \vert^2) < 0\}$ where $P_2(\vert z_1 
	\vert^2)$ is a polynomial that depends on $\vert z_1 \vert^2$, or

		\item $D \backsimeq \cal D_3 =
	\{(z_1, z_2) \in \mbf C^2 : 2 \Re z_2 + 
	P_{2m}(z_1, \ov z_1) < 0\}$ where $P_{2m}(z_1, \ov z_1)$ is a 
	homogeneous polynomial of degree $2m$ without harmonic terms.
	  \end{itemize}

	\item[(ii)] If $\dim{\rm Aut}(D) = 3$ then $D \backsimeq \cal D_4 
	= \{(z_1, z_2) \in \mbf C^2 : 2 \Re z_2 + (\Re 
	z_1)^{2m} < 0\}$ for some integer $m \ge 2$.

	\item[(iii)] If $\dim{\rm Aut}(D) = 4$ then $D \backsimeq \cal D_5 
	= \{(z_1, z_2) \in \mbf C^2 : 2 \Re z_2 + 
	\vert z_1 \vert^{2m} < 0\}$ for some integer $m \ge 2$. Note that 
	$\Om_3 \backsimeq E_m$.

	\item[(vi)] If $\dim{\rm Aut}(D) = 8$ then $D \backsimeq \cal D_6 
	= \mbf B^2$ the unit ball in $\mbf C^2$.
     
    \end{enumerate}
The dimensions $0, 1, 5, 6, 7$ cannot occur with $D$ as above.
\end{thm}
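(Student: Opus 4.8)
The plan is to replace $D$ by a model domain via the scaling (dilation) method and then to classify that model through its algebra of infinitesimal automorphisms. Since $\pa D$ is smooth, real analytic and of finite type $2m$ near $p_\infty$ and an orbit clusters there, the anisotropic scaling of Pinchuk applied to the $\phi_j$ at $p_\infty$, as carried out in \cite{Ber1} and \cite{BP4}, produces in the limit a biholomorphism of $D$ onto a model
\[
G = \{(z_1,z_2) \in \mbf C^2 : 2\Re z_2 + P(z_1, \ov z_1) < 0\},
\]
whose defining polynomial inherits the symmetries of the one-parameter subgroup realising the accumulation. The decisive output of this step is an isomorphism $\mathrm{Aut}(D) \backsimeq \mathrm{Aut}(G)$ of real Lie groups, so that it suffices to determine $\mathrm{Aut}(G)$ and to read $G$ off the list according to $\dim\mathrm{Aut}(G)$.

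First I would pass to the Lie algebra $\mathfrak g$ of complete holomorphic vector fields $X = f\,\pa_{z_1} + g\,\pa_{z_2}$ whose real part is tangent to $\pa G$, i.e. $\Re\{ f P_{z_1} + g \} = 0$ on $\{2\Re z_2 + P = 0\}$. The field $i\,\pa_{z_2}$ is always tangent, and the limit of the accumulating subgroup survives as a further complete field, so $\dim\mathrm{Aut}(G) \ge 2$, which already excludes the values $0$ and $1$. When $P$ is homogeneous the weighted dilation $X_0 = (2m)^{-1} z_1\,\pa_{z_1} + z_2\,\pa_{z_2}$ is tangent by Euler's relation. The eigenvalues of $\mathrm{ad}(\Re X_0)$ grade $\mathfrak g = \bigoplus_\nu \mathfrak g_\nu$; rigidity of $G$ in the $z_2$-direction forces $f$ and $g$ to be polynomials of bounded weighted degree, so $\mathfrak g$ is finite dimensional and the tangency equation can be solved one graded piece at a time against the Hessian of $P$.

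The core of the argument is that degree-by-degree solution. Matching the lowest-weight terms, I would show that an \emph{rotational} field $i z_1\,\pa_{z_1}$ lies in $\mathfrak g$ exactly when $P$ depends only on $|z_1|^2$, and an \emph{imaginary translation} $i\,\pa_{z_1}$ exactly when $P$ depends only on $\Re z_1$, these two being mutually exclusive with the generic homogeneous case. When $\dim\mathrm{Aut}(G)=2$ the abstract two-dimensional algebra together with its action then yields precisely the three normal forms: two commuting imaginary translations give $G \backsimeq \cal D_1$ with $P = P_1(\Re z_1)$, a rotation together with an imaginary translation gives $G \backsimeq \cal D_2$ with $P = P_2(|z_1|^2)$, and a dilation together with a translation forces homogeneity and gives $G \backsimeq \cal D_3$. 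If a third independent field exists, homogeneity is forced simultaneously with one of the special shapes, pinning $P$ down to $(\Re z_1)^{2m}$ (dimension $3$, the domain $\cal D_4$) or to $|z_1|^{2m}$; in the latter case the extra $\mathrm{PSL}(2,\mbf R)$-symmetry visible in the bounded realisation as the ellipsoid gives $\dim\mathrm{Aut}(G)=4$ and $G \backsimeq \cal D_5 \backsimeq E_m$.

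The hard part will be to show that nothing lives strictly between the four-dimensional ellipsoidal stratum and the eight-dimensional ball, that is, that the dimensions $5,6,7$ are vacant and that $\dim\mathrm{Aut}(G)=8$ forces $G \backsimeq \mbf B^2$. For $m=1$ the point $p_\infty$ is strongly pseudoconvex and the localised Wong--Rosay theorem \cite{Ros}, \cite{Won} already gives $D \backsimeq \mbf B^2$ with its full eight-dimensional group. For $m\ge 2$ I would exploit the grading: the negative part of $\mathfrak g$ is spanned by $i\,\pa_{z_2}$ together with the $z_1$-translation in the special shapes, the zero part carries the dilation and rotation, and the strictly positive part is a prolongation. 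The crux is to prove that this positive prolongation vanishes whenever $2m>2$: substituting a nonzero positively-graded field back into $\Re\{ f P_{z_1} + g \}=0$ yields a holomorphic identity forcing $P$ to be a squared modulus, hence $m=1$. Thus for $m\ge 2$ the algebra is capped at the four dimensions already catalogued, and the only admissible value above $4$ is $8$, realised by the ball. Assembling the strata gives the stated list together with the impossibility of the dimensions $0,1,5,6,7$.
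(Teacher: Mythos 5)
There is a genuine gap, and it sits at the center of your plan: you assume that once $D$ is replaced by the rigid model $G = \{2\Re z_2 + P(z_1,\ov z_1) < 0\}$, the group ${\rm Aut}(G)$ can be computed by solving the tangency equation for polynomial vector fields of bounded weighted degree, one graded piece at a time. Nothing in the hypotheses justifies this. An automorphism of $G$ is a priori just a biholomorphism of an unbounded, possibly non-pseudoconvex domain; it need not extend to $\mbf C^2$, need not preserve the rigid structure, and the corresponding field need not be polynomial. The paper only obtains such normal forms in low dimension, by hand: Propositions 2.6--2.7 for $\dim {\rm Aut}(G) = 1$ and Propositions 3.1--3.2 for the two-dimensional abelian and non-abelian cases, and even there the polynomial character of the coefficients is a nontrivial consequence of the boundary being polynomial, not an assumption. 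For $\dim {\rm Aut}(D) = 3, 4$ the actual proof abandons direct computation entirely and instead invokes Isaev's classifications \cite{I1}, \cite{I2} of all hyperbolic two-dimensional manifolds with three- resp.\ four-dimensional automorphism group; the bulk of the paper (Sections 4--5) is then devoted to eliminating the entries on those lists that are \emph{not} rigid polynomial models --- tube domains over nonalgebraic bases, finite and infinite sheeted covers of quadric-related domains, domains in $\mbf P^2$, Rossi-map domains, and examples with Levi flat orbits --- using Segre varieties, the Diederich--Pinchuk cluster set theorem, algebraicity of proper maps, peak functions, and CR-inequivalence of orbit hypersurfaces. Your graded-algebra scheme never sees these competitors, because they are not presented in the form $\{2\Re z_2 + P < 0\}$; asserting that a positive prolongation vanishes cannot exclude a domain that was never written in prolongable form.

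A second gap is the exclusion of $\dim {\rm Aut}(D) = 1$. You claim the limit of the accumulating subgroup ``survives as a further complete field, so $\dim {\rm Aut}(G) \ge 2$.'' But the $\phi_j$ form a discrete sequence, not a one-parameter group; their limit is the constant map $p_{\infty}$ (Lemma 2.2 of the paper) and produces no vector field. The model supplies only the single field $i\,\pa/\pa z_2$, so scaling alone gives merely $\dim \ge 1$. Ruling out equality occupies most of Section 2: one must extend the translation group $(L_t)$ holomorphically past $p_{\infty}$ (Proposition 2.3, proved via Segre varieties and analytic continuation of the sets $V^{\pm}_t$), show that $\cal X = g_*(i\,\pa/\pa z_2)$ vanishes to finite order at $p_{\infty}$ (Proposition 2.5, which needs the uniform lower bound $\vert 2\Re b_j + P(a_j,\ov a_j)\vert > c$ furnished by the explicit normal form of a one-dimensional ${\rm Aut}(G)$ in Propositions 2.6--2.7 and Lemma 2.8), and then run the Bedford--Pinchuk parabolic analysis to force $D \backsimeq E_{2m}$, contradicting $\dim = 1$. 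Similarly, your cap on dimensions $5, 6, 7$ by a vanishing-prolongation argument is unsupported; the paper's route is Kaup's transitivity theorem for $\dim \ge 5$ together with the existence of strongly pseudoconvex boundary points arbitrarily near $p_{\infty}$ (from real analyticity and finite type) and the Wong--Rosay theorem, which lands directly on $\mbf B^2$. The scaling-plus-classification skeleton you chose is indeed the paper's skeleton, but the three load-bearing steps --- dimension one, the exotic entries at dimensions three and four, and the gap $5$--$7$ --- are exactly the places where your argument substitutes an unproved algebraic rigidity for the required analysis.
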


\noindent While $\cal D_4, \cal D_5, \cal D_6$ are evidently pseudoconvex, 
no such claim is being made about any of the model domains in case $\dim 
{\rm Aut}(D) = 2$. Indeed, pseudoconvexity is not always assured as the 
following example from \cite{FIK} shows. Consider the bounded domain
\[
\Om = \Big\{ (z_1, z_2) \in \mbf C^2 : \vert z_1 \vert^2 + \vert z_2 
\vert^4 + 8 \vert z_1 - 1 \vert^2 \Big( \frac{z_2^2}{z_1 - 1} + \frac{\ov 
z_2^2}{\ov z_1 - 1} - \frac{3}{2} \frac{\vert z_2 \vert^2}{\vert z_1 - 
1\vert} \Big) < 1 \Big\}
\]
and set $a_j = 1 - 1/j$ where $j \ge 1$. The sequence 
\[
\phi_j(z_1, z_2) = \Big( \frac{z_1 - a_j}{1 - a_j z_1}, \frac{(1 - 
a_j^2)^{1/4}z_2}{1 - a_j z_1} \Big) \in {\rm Aut}(\Om)
\]
converges uniformly on compact subsets of $\Om$ to $(-1, 0) \in \pa \Om$. 
Note that the boundary $\pa \Om$ near $(-1, 0)$ is smooth real analytic 
and of finite type. The mapping $f(z_1, z_2) = \Big( (z_1 + 1)/(z_1 - 1), 
\sqrt{2} z_2/ \sqrt{z_1 - 1} \Big)$ biholomorphically transforms $\Om$ 
onto its unbounded realisation given by
\[
\Om' = f(\Om) = \Big\{ (z_1, z_2) \in \mbf C^2 : 2 \Re z_1 + \frac{1}{4} 
\vert z_2 \vert^4 + 2 \Big( z_2^2 + \ov z_2^2 - \frac{3}{2} \vert z_2 
\vert^2 \Big)^2 < 0 \Big\}.
\]
The terms involving $z_2$ are homogeneous of order $4$ and therefore 
$\Om'$ admits automorphisms of the form $S_s(z_1, z_2) = (\exp(s) z_1, 
\exp(s/4) z_2)$ for $s \in \mbf R$. Also, $\Om'$ is evidently invariant 
under translations in the imaginary $z_1$-direction and this shows that 
$\dim {\rm Aut}(\Om') \ge 2$. If $\dim{\rm Aut}(\Om') > 2$, then the above 
theorem shows that $\Om'$ and hence $\Om$ must be pseudoconvex. This 
however does not hold; for instance, the boundary $\pa \Om'$ is not 
pseudoconvex near the point $(-3/4, 1) \in \pa \Om'$ as a calculation of 
the Levi form of $\pa \Om'$ shows.  

\medskip

\no The two principal techniques used in \cite{BP1}, 
\cite{BP4} and \cite{Ber1} are scaling and a careful analysis of a 
holomorphic tangential vector field of parabolic type. Moreover, the 
hypotheses that $\pa D$ is globally smooth real analytic 
(in \cite{BP1} and \cite{BP4}) and that $p_{\infty}$ is a smooth weakly 
pseudoconvex finite type boundary point in \cite{Ber1} are used in an 
important way. These hypotheses are not assumed in the theorem above 
and hence the two techniques have to be supplemented with information 
regarding the type of orbits that are possible when 
$\dim{\rm Aut}(D) = 3, 4$. This leads to a classification of $D$ that 
depends on the dimension of ${\rm Aut}(D)$.

\medskip

As mentioned above it is known that $0 \le \dim{\rm Aut}(D) \le n^2 + 2n 
= 8$ (since $n = 2$) both inclusive. By a result of W. Kaup (see 
\cite{Ka}) ${\rm Aut}(D)$ acts transitively on $D$ if $\dim{\rm Aut}(D) 
\ge 5$. Since the boundary of $D$ is smooth real analytic and of finite 
type near $p_{\infty}$, it can be shown that there are strongly 
pseudoconvex points arbitrarily close to $p_{\infty}$. The Wong-Rosay 
theorem now shows that $D \backsimeq \cal D_6 = \mbf B^2$. Hence the 
remaining possibilities are $0 \le \dim{\rm Aut}(D) \le 4$. An initial 
scaling of $D$, as in \cite{BP4}, with respect to the sequence $p_j := 
\phi_j(p)$ shows that $D$ is equivalent to a domain of the form
\[
G_p = \{(z_1, z_2) \in \mbf C^2 : 2 \Re z_2 + P(z_1, \ov z_1) < 0\}
\]
where $P(z_1, \ov z_1)$ is a polynomial without harmonic terms. Note that 
$G_p$ is invariant under translations $(z_1, z_2) \mapsto (z_1, z_2 + it), 
t \in \mbf R$ and hence $\dim{\rm Aut}(D) \ge 1$. The possibility that it 
equals one is ruled out by arguments similar to those in \cite{Ber1} and 
\cite{BP4}. Thus $\dim{\rm Aut}(D) = 2, 3$ or $4$. When $\dim{\rm Aut}(D) 
= 2$ the arguments used in \cite{O} can be adapted to show that $D$ is 
equivalent to $\cal D_1, \cal D_2$ or $\cal D_3$. Finally when the 
dimension is $3$ or $4$, the 
classification obtained in \cite{I1}, \cite{I2} is used. In case 
$\dim{\rm Aut}(D) = 4$ techniques of analytic continuation of germs of 
holomorphic mappings as in \cite{Sha} are used to show that 
$D \backsimeq \cal D_5$. There are many more possibilities for $D$ in 
case $\dim{\rm Aut}(D) = 3$ as \cite{I1} shows. Several on that list have 
Levi flat orbits foliated by copies of the unit disc in the complex plane. 
However, it is shown that $D$ as in the theorem cannot admit Levi flat 
orbits. A further reduction is obtained by studying the Lie algebra of 
${\rm Aut}(D)$ of some of the examples and showing that $D$ is forced to 
be equivalent to a tube domain. This uses ideas from \cite{O} and 
\cite{KS}. This does not exhaust the list; the remaining possibilities 
are ruled out by using arguments that were developed to study the boundary 
regularity of holomorphic mappings between domains in $\mbf C^n$. The 
conclusion is that $D \backsimeq \cal D_4$.

\medskip

\no The author is indebted to A. V. Isaev for various very helpful  
comments on an earlier incarnation of this article. They have contributed 
in pointing out an error, and have helped in improving and clarifying the 
exposition in several places.

\section{The dimension of ${\rm Aut}(D)$ is at least two}

\no Let $D$ be as in theorem 1.1 and $p \in D$ and $\phi_j \in {\rm 
Aut}(D)$ are such that $\phi_j(p) \ra p_{\infty} \in \pa D$. Let $U$ be an 
open neighbourhood of $p_{\infty}$, fixed henceforth, such that the 
boundary of $D$ is smooth real analytic and of finite type in a 
neighbourhood of $\ov U$ and that $U \cap \pa D$ is defined by 
$\{\rho(z, \ov z) = 0\}$ with $d\rho \not= 0$ on $U \cap \pa D$ for some 
$\rho \in C^{\omega}(U)$. For every $a \in U \cap \pa D$, $T^c_a(\pa D)$ 
the complex tangent space at $a$ is spanned by the non-vanishing vector 
field $X_a = (- \pa \rho/\pa z_2 (a), \pa \rho/\pa z_1 (a))$. The function
\[
\mathcal L_{\rho}(z) = \mathcal L_{\rho}(z, X_z)
\]
where the term on the right is the Levi form associated with the defining 
function $\rho$ evaluated at $z \in U \cap \pa D$ and $X_z$, is real 
analytic on $U \cap \pa D$. This function provides a decomposition of $U 
\cap \pa D$ which will be useful in this context and we recall its salient 
features from \cite{DP}.

\medskip

Let $T$ be the zero locus of $\mathcal L_{\rho}(z)$ in $U \cap \pa D$. 
Then $T$ admits a semi-analytic stratification as $T = T_0 \cup T_1 \cup 
T_2$ where $T_j$ is a locally finite union of smooth real analytic 
submanifolds of $U \cap \pa D$ of dimension $j = 0, 1, 2$ respectively. 
Denote by $(U \cap \pa D)^{\pm}_s$ the set of strongly pseudoconvex (resp. 
strongly pseudoconcave) points on $U \cap \pa D$. Let $(U \cap \pa 
D)^{\pm}$ be the relative interior, taken with respect to the subspace 
topology on $U \cap \pa D$, of the closure of $(U \cap \pa D)^{\pm}_s$ in 
$\ov U \cap \pa D$. Then $(U \cap \pa D)^{\pm}$ is the set of weakly 
pseudoconvex (resp. weakly pseudoconcave) points on $U \cap \pa D$ and the 
border
\[
\cal B = (U \cap \pa D) \sm ((U \cap \pa D)^+ \cup (U \cap \pa D)^-) 
\subset T
\]
separates $(U \cap \pa D)^+$ and $(U \cap \pa D)^-$. The stratification of 
$T$ can be refined in such a way that the two dimensional strata become 
maximally totally real and the order of vanishing of $\cal L_{\rho}(z)$ 
along them is constant. The same notation $T_j$ will be retained to denote 
the various strata after the refinement. It was shown in \cite{DF} that  
if the order of vanishing of $\cal L_{\rho}(z)$ along a two dimensional 
stratum, say $S$ is odd then $S \subset \hat D$, the envelope of 
holomorphy of $D$, while if it is even then $S \subset (U \cap 
\pa D)^+$ or $S \subset (U \cap \pa D)^- \subset \hat D$. This discussion 
holds for a germ of a smooth real analytic, finite type hypersurface in 
$\mbf C^2$ and is independent of any assumptions on ${\rm Aut}(D)$. The 
question now is to identify where $p_{\infty}$ lies in this decomposition 
of $U \cap \pa D$. First observe that $p_{\infty} \in T$ as otherwise it 
is either in $(U \cap \pa D)^+_s$ or $(U \cap \pa D)^-_s$. In the former 
case, the Wong-Rosay theorem shows that $D \backsimeq \mbf B^2$ while the 
latter possibility does not arise; indeed it was observed by Greene-Krantz 
(see \cite{GK}) that no point in $\hat D$ can be a boundary orbit 
accumulation point. If $p_{\infty} \in (U \cap \pa D)^+$ then all 
possible model domains are known by \cite{Ber1} while $p_{\infty} \notin 
(U \cap \pa D)^-$ as all weakly pseudoconcave points are contained in 
$\hat D$. It therefore follows that $p_{\infty} \in \cal B$ which means 
that $\cal L_{\rho}(z)$ must change sign in arbitrarily small 
neighbourhods of $p_{\infty}$. Second, if $p_{\infty}$ belongs to a two 
dimensional stratum of $T$ then the discussion above shows that $p_{\infty}$ 
belongs either to $(U \cap \pa D)^+$ or $\hat D$. As before the former case 
is handled by \cite{Ber1} while the latter does not happen by the 
Greene-Krantz observation. It is thus possible to assume 
without loss of generality that $p_{\infty} \in \cal B \cap (T_0 \cup 
T_1)$. This will be the standing assumption henceforth. In sections $4$ 
and $5$ it will be shown that $\pa D$ is pseudoconvex near $p_{\infty}$.

\begin{lem}
In the situation described above, there exists at least one stratum in 
$T_2$, say $S$ that contains $p_{\infty}$ in its closure and for 
which the order of vanishing of $\cal L_{\rho}(z)$ along it is odd. In 
particular $S \subset \hat D$.
\end{lem}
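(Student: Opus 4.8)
The plan is to exploit the hypothesis $p_\infty \in \cal B$, which forces $\cal L_\rho(z)$ to assume both signs in every neighbourhood of $p_\infty$, and to argue that this sign change cannot be produced by the low-dimensional strata $T_0 \cup T_1$ alone; it must occur across a two-dimensional stratum along which $\cal L_\rho(z)$ vanishes to odd order.

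Since $\cal B$ separates $(U \cap \pa D)^+$ from $(U \cap \pa D)^-$ and $p_\infty \in \cal B$, for every small $\ep > 0$ the relative ball $B_\ep = \{ z \in U \cap \pa D : \vert z - p_\infty \vert < \ep \}$ contains a strongly pseudoconvex point $a^+_\ep$ with $\cal L_\rho(a^+_\ep) > 0$ and a strongly pseudoconcave point $a^-_\ep$ with $\cal L_\rho(a^-_\ep) < 0$; both lie off $T$. By local finiteness of the semi-analytic stratification only finitely many strata meet a fixed $B_{\ep_0}$. As $T_0 \cup T_1$ has real dimension at most one, hence codimension at least two in the three-dimensional manifold $U \cap \pa D$, the set $B_\ep \sm (T_0 \cup T_1)$ is path connected, and I would join $a^+_\ep$ to $a^-_\ep$ by a path $\gamma_\ep$ in $B_\ep \sm (T_0 \cup T_1)$, perturbed rel endpoints (using the semi-analyticity of the strata) to be transverse to each of the finitely many two-dimensional strata.

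The crux is a parity count along $\gamma_\ep$. Because the order of vanishing of $\cal L_\rho(z)$ is constant along each refined stratum, the sign of $\cal L_\rho(z)$ is reversed across a transverse crossing of an odd-order stratum and preserved across an even-order one; off $T$ no sign change occurs, and $\gamma_\ep$ meets $T$ only within $T_2$. Since $\cal L_\rho(z)$ passes from positive to negative along $\gamma_\ep$, the number of sign reversals is odd, so $\gamma_\ep$ crosses at least one odd-order two-dimensional stratum $S_\ep$, which therefore meets $B_\ep$. For $\ep < \ep_0$ each such $S_\ep$ is one of finitely many strata, so by the pigeonhole principle a single odd-order stratum $S$ arises as $S_\ep$ for a sequence $\ep \to 0$; as $S$ then meets balls about $p_\infty$ of arbitrarily small radius, $p_\infty \in \ov S$. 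Finally, the result of \cite{DF} quoted above shows that odd order of vanishing along a two-dimensional stratum forces $S \subset \hat D$, completing the argument.

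The step I expect to be the main obstacle is making the parity count fully rigorous: one must arrange $\gamma_\ep$ to be transverse to all the relevant two-dimensional strata while keeping it inside $B_\ep$ and disjoint from $T_0 \cup T_1$, and one must be sure that crossings of even-order strata genuinely leave the sign unchanged. Both points rest on the refined stratification in which the vanishing order of $\cal L_\rho(z)$ along each two-dimensional stratum is constant; and the passage to a single stratum $S$ with $p_\infty \in \ov S$ depends decisively on the local finiteness of that stratification.
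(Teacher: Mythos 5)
Your proposal is correct, and it runs on the same basic engine as the paper's proof --- sign changes of $\cal L_{\rho}$ along paths that avoid the codimension-two set $T_0 \cup T_1$, the parity of the constant vanishing order governing the sign behaviour across a refined two-dimensional stratum, local finiteness of the stratification, and the Diederich--Fornaess result for the final inclusion $S \subset \hat D$ --- but the organization is genuinely different. The paper argues in two stages: it first shows $T_2$ is nonempty near $p_{\infty}$ (a path from a positive to a negative point avoiding all of $T$ would contradict the intermediate value theorem), and then it takes the finitely many two-dimensional strata $S_1, \ldots, S_k$ containing $p_{\infty}$ in their closure, runs a closed path through a point of each, and does a case analysis: all $S_j \subset (U \cap \pa D)^+$ would make $p_{\infty}$ weakly pseudoconvex (impossible since $p_{\infty} \in \cal B$), all $S_j \subset (U \cap \pa D)^-$ is excluded by the Greene--Krantz observation, and the mixed case yields a stratum near each point of which $\cal L_{\rho}$ changes sign, hence of odd order. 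Your transverse-path parity count compresses both stages into one step: since the sign passes from $+$ to $-$ along $\gamma_{\ep}$ and even-order crossings preserve sign, an odd-order stratum must be crossed inside $B_{\ep}$, and the pigeonhole over $\ep \ra 0$ then places $p_{\infty}$ in the closure of a single such stratum. What your route buys: you need neither the Greene--Krantz input nor the paper's assertion that every component of $B_{\ep} \sm T$ contains $p_{\infty}$ in its closure. What it costs is the transversality bookkeeping you flag, which does go through: the compact path lies at positive distance from the closed set $T_0 \cup T_1$, which contains the frontiers of the two-dimensional strata, so all crossings occur on compact pieces of the finitely many pairwise disjoint strata, where a small perturbation rel endpoints (staying in $B_{\ep} \sm (T_0 \cup T_1)$, as these are open conditions) achieves simultaneous transversality with finitely many crossing points; and at each crossing the local factorization $\cal L_{\rho} = x^m u$ with $u \neq 0$, valid precisely because the refinement makes the vanishing order constant along each stratum, justifies the rule that odd $m$ reverses the sign and even $m$ preserves it.
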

\begin{proof}
If possible let $V \subset U$ be a neighbourhood of $p_{\infty}$ such that 
$V \cap \pa D$ contains no two dimensional stratum of $T$. This implies 
that $V \cap T$ does not separate $V \cap \pa D$. Choose $a, b \in V \cap 
\pa D$ such that $\cal L_{\rho}(a) > 0$ and $\cal L_{\rho}(b) < 0$ and 
join them by a path $\gamma(t)$ parametrised by $[0, 1]$ with the end 
points corresponding to $a, b$ and which lies entirely in $(V \cap \pa D) 
\sm T$. The function $t \mapsto \cal L_{\rho}(\gamma(t))$ changes sign and 
hence $\cal L_{\rho}(\gamma(t_0)) = 0$ for some $0 < t_0 < 1$ which 
means that $\gamma(t_0) \in T$. This is a contradiction. Hence $T_2$ is 
non-empty near $p_{\infty}$.

\medskip

Now fix a ball $B_{\ep} = B(p_{\infty}, \ep)$ for some $\ep > 0$. Then 
$B_{\ep} \sm T$ has finitely many components each of which contains 
$p_{\infty}$ in its closure and the sign of $\cal L_{\rho}(z)$ does not 
change within each component. Let $S_1, S_2, \ldots, S_k$ ($k \ge 1$) be 
all the two dimensional strata each of which contains $p_{\infty}$ in its 
closure. Note that the union of the $S_j$'s is contained in the union of 
the boundaries of the various components of $B_{\ep} \sm T$. Choose $a_j 
\in S_j$ for all $1 \le j \le k$ and let $\sigma(t) : [0, 1] \ra (B_{\ep} 
\cap \pa D) \sm (T_0 \cup T_1)$ be a closed path that contains all the 
$a_j$'s. Let $0 \le t_j \le 1$ be such that $\sigma(t_j) = a_j$ for all $1 
\le j \le k$. The function $t \mapsto \cal L_{\rho}(\sigma(t))$ then has 
zeros at least at all the $t_j$'s. If $S_j \subset (U \cap \pa D)^+$ for 
all $1 \le j \le k$, then $\cal L_{\rho}(\sigma(t))$ is non-negative on 
$[0, 1]$ and hence $p_{\infty}$ is a weakly pseudoconvex point. This is 
not possible. There is therefore at least one value, say $j_0$ for which 
$S_{j_0} \subset (U \cap \pa D)^-$. This implies that $\cal 
L_{\rho}(\sigma(t))$ changes sign on $[0, 1]$ which in turn shows the 
existence of a two dimensional stratum from the collection $S_1, S_2, 
\ldots, S_k$, say $S_{i_o}$ with the property that $\cal L_{\rho}(z)$ 
changes sign near each point on it. Thus the order of vanishing of $\cal 
L_{\rho}(z)$ along $S_{i_0}$ must be odd. The same argument works when 
some of the $S_j$'s (though not all) are contained in $(U \cap \pa D)^-$ 
and the remaining in $(U \cap \pa D)^+$. The case when all the $S_j$'s 
belong to $(U \cap \pa D)^-$ does not arise because then $p_{\infty} \in 
(U \cap \pa D)^-$ which cannot hold by the Greene-Krantz observation 
above.
\end{proof}

\begin{lem}
The sequence $\{\phi_j\} \in {\rm Aut}(D)$ converges uniformly on compact 
subsets of $D$ to the constant map $\phi(z) \equiv p_{\infty}$ for all $z 
\in D$.
\end{lem}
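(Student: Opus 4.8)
The plan is to combine a normal families argument with the finite type hypothesis at $p_\infty$. Since $D$ is bounded, $\{\phi_j\}$ is a uniformly bounded family of holomorphic maps, hence a normal family by Montel's theorem. It therefore suffices to show that every subsequential limit equals the constant map $p_\infty$; the convergence of the full sequence will then follow from the uniqueness of the limiting value. So I fix an arbitrary subsequence and pass to a further subsequence, still denoted $\{\phi_j\}$, converging uniformly on compact subsets of $D$ to a holomorphic map $\psi : D \ra \ov D$.

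First I would show that $\psi(D) \subset \pa D$. By the theorem of H. Cartan on limits of sequences of automorphisms of a bounded domain, $\psi$ is either itself an element of ${\rm Aut}(D)$ or satisfies $\psi(D) \subset \pa D$. Since $\psi(p) = \lim_j \phi_j(p) = p_\infty \in \pa D$ while an automorphism maps $D$ into $D$, the first alternative is impossible, and hence $\psi(D) \subset \pa D$.

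Next I would use the finite type hypothesis to force $\psi$ to be constant. Fix a complex line $\ell$ through $p$ and let $\Delta$ be a small disc on $\ell$ around $p$ contained in $D$; the restriction $\psi|_{\Delta}$ is then a holomorphic disc with values in $\pa D$ passing through $\psi(p) = p_\infty$. If it were nonconstant it would be a nonconstant holomorphic disc lying entirely in $\pa D$ with $p_\infty$ in its image, and such a disc would have infinite order of contact with $\pa D$ at $p_\infty$, contradicting the assumption that $\pa D$ is of finite type there. Hence $\psi|_{\Delta}$ is constant for every complex line $\ell$ through $p$, which forces $\psi$ to be constant, with value $p_\infty$, on a neighbourhood of $p$. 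By the identity theorem for holomorphic maps on the connected domain $D$, it follows that $\psi \equiv p_\infty$ throughout $D$.

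Finally, since every subsequence of $\{\phi_j\}$ admits a further subsequence converging uniformly on compact subsets to the same constant map $p_\infty$, the entire sequence converges to $p_\infty$ uniformly on compact subsets of $D$. The only delicate point is the passage from a boundary-valued limit to a constant limit: this is exactly where the finite type condition at $p_\infty$ enters, ruling out the nonconstant complex discs in $\pa D$ through $p_\infty$ that a nonconstant boundary-valued holomorphic limit would necessarily produce.
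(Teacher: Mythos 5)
Your proposal is correct, and its overall skeleton (Montel plus the Cartan dichotomy to get a boundary-valued limit, then finite type to force constancy) matches the paper's. The difference lies in how constancy is extracted. The paper invokes the rank theorem: it takes the maximal rank $k$ of the limit map $\phi$, attained off an analytic set $A \subset D$, and observes that near a point of maximal rank the image is a germ of a $k$-dimensional complex manifold sitting inside $U \cap \pa D$, which the finite type hypothesis forbids unless $k = 0$; constancy on $B(p,r) \sm A$ then propagates across $A$ and to all of $D$. You instead slice by complex lines through $p$: a nonconstant restriction would give a nonconstant holomorphic disc in $\pa D$ through $p_\infty$, i.e.\ infinite order of contact at $p_\infty$, contradicting finite type; since every point near $p$ lies on such a line, $\psi$ is constant near $p$ and hence on $D$ by the identity theorem. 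Your route is more elementary (no rank theorem, no degeneracy set) and needs finite type only as a germ condition at the single point $p_\infty$, whereas the paper's rank argument uses finite type at nearby boundary points $\phi(q)$ with $q \in B(p,r) \sm A$ -- both are available under the hypotheses, but yours is slightly more economical. One small point you leave implicit: the contradiction lives at the level of the germ of the disc at $p_\infty$, so after shrinking $\Delta$ the image automatically lies in the portion $U \cap \pa D$ where the boundary is real analytic and of finite type; this is harmless but worth a sentence. Finally, your explicit subsequence-of-subsequences step upgrading to convergence of the full sequence is a genuine improvement in care over the paper, whose proof only exhibits a convergent subsequence while the statement asserts convergence of the whole sequence.
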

\begin{proof}
The family $\{\phi_j\}$ is normal and hence admits a 
subsequence that converges in the compact open topology on $D$ to $\phi : 
D \ra \ov D$ with $\phi(p) = p_{\infty}$. It then follows from a 
theorem of H. Cartan (cf. \cite{N}) that $\phi(D) \subset \pa D$. Choose 
$r > 0$ small enough so that $\phi : B(p, r) \ra U$ is well defined. Let 
$k > 0$ be the maximal rank of $\phi$ which is attained on the complement 
of an analytic set $A \subset D$. Two cases arise now; first if $p \in D 
\sm A$, choose a small ball $B(p, \ep)$ which does not intersect $A$. The 
image $\phi(B(p, \ep))$ is then a germ of a complex manifold of 
dimension $k$ that is contained in $U \cap \pa D$. This cannot happen 
unless $k = 0$. Second, if $p \in A$ choose $q \in B(p, r) \sm A$. The 
rank of $\phi$ is constant near $q$ and hence the image of a small enough 
neighbourhood of $q$ under $\phi$ is a germ of a complex manifold of 
dimension $k$ that is contained in $U \cap \pa D$. Again this is not 
possible. Since $A$ does not separate $B(p, r)$ it follows that $\phi$ is 
constant on $B(p, r) \sm A$, therefore on $B(p, r)$ and hence everywhere 
on $D$.
\end{proof}

\noindent {\it Remark:} It is now possible to conclude (see for 
example \cite{Kr}) that $D$ is simply 
connected. Indeed if $\gamma$ is a loop in $D$ then for $j$ large enough 
$\phi_j(\gamma)$ is a loop in $U \cap D$ by the above lemma. But $U \cap 
D$ is simply connected if $U$ is small enough and so $\phi_j(\gamma)$ and 
hence $\gamma$ (since $\phi_j \in {\rm Aut}(D)$) are both trivial loops. 
This will be useful later.

\medskip

The domain $D$ can now be scaled using the base point $p$ and the sequence 
$\{\phi_j\} \in {\rm Aut}(D)$. The transformations used in this process 
are the ones in \cite{BP4} and are briefly described as follows. First 
note that for $j$ large there exists a unique point $\ti p_j \in U \cap \pa D$ 
such that ${\rm dist}(\phi_j(p), U \cap \pa D) = \vert \ti p_j - \phi_j(p) 
\vert$. Next translate $p_{\infty}$ to the origin and rotate axes so that 
the defining function $\rho(z)$ takes the form
\begin{equation}
\rho(z) = 2 \Re z_2 + \sum_{k, l} c_{kl}(y_2) z_1^k \ov z_1^l
\end{equation}
where $c_{00}(y_2) = O(y_2^2)$ and $c_{10}(y_2) = \ov c_{01}(y_2) = 
O(y_2)$. Let $m < \infty$ be the $1$-type of $\pa D$ at the origin. It 
follows that there exist $k, l$ both at least one and $k + l = m$ for 
which $c_{kl}(0) \not= 0$ and $c_{kl}(0) = 0$ for all $k + l < m$. The 
pure terms in (2.1) up to order $m$ can be removed by a polynomial 
automorphism of the form
\begin{equation}
(z_1, z_2) \mapsto (z_1, z_2 + \frac{1}{2} \sum_{k \le m}c_{k0}(0)z_1^k).
\end{equation}
Let $\psi^j_{p, 1}(z) = z - \ti p_j$ so that $\psi^j_{p, 1}(\ti p_j) = 0$. 
Next let $\psi^j_{p, 2}(z)$ be a unitary transformation that rotates the 
outer real normal to $\psi^j_{p, 1}(U \cap \pa D)$ at the origin and makes 
it the real $z_2$-axis. The defining function for $\psi^j_{p, 2} \circ 
\psi^j_{p, 1}(U \cap D)$ near the origin is then of the form
\[
\rho_j(z) = 2 \Re z_2 + \sum_{k, l \ge 0}c^j_{kl}(y_2) z_1^k \ov z_1^l
\]
with the same normalisations on $c^j_{00}(y_2)$ and $c^j_{10}(y_2)$ as in 
(2.1). Since $\ti p_j \ra 0$ it follows that both $\psi^j_{p, 1}$ and 
$\psi^j_{p, 2}$ converge to the identity mapping uniformly on compact 
subsets of $\mbf C^2$. The $1$-type of $\psi^j_{p, 2} \circ \psi^j_{p, 
1}(U \cap \pa D)$ is at most $m$ for all large $j$ and an automorphism of 
the form (2.2) will remove all the pure terms up to order $m$ from 
$\rho_j(z)$. Call this $\psi^j_{p, 3}$. Finally $\phi_j(p)$ is on the 
inner real normal to $U \cap \pa D$ at $\ti p_j$ and it follows that 
$\psi^j_{p, 2} \circ \psi^j_{p, 1}(\phi_j(p)) = (0, - \delta_j)$ for some 
$\delta_j > 0$. Furthermore the specific form of (2.2) shows that this is 
unchanged by $\psi^j_{p, 3}$. Let $\psi^j_{p, 4}(z_1, z_2) = (z_1/\ep_j, 
z_2/\delta_j)$ where $\ep_j > 0$ is chosen in the next step. The defining 
function for $\psi^j_p(U \cap D)$ near the origin, where $\psi^j_p = 
\psi^j_{p, 4} \circ \psi^j_{p, 3} \circ \psi^j_{p, 2} \circ \psi^j_{p, 
1}$, is given by
\[
\rho_{j, p}(z) := \frac{1}{\delta_j} \rho_j(\ep_j z_1, \delta_j z_2) = 2 
\Re z_2 + \sum_{k, l} \ep_j^{k + l} \delta_j^{-1} c^j_{kl}(\delta_j y_2) 
z_1^k \ov z_1^l.
\]
Note that $\psi^j_p \circ \phi_j(p) = (0, -1)$ for all $j$. The choice of 
$\ep_j$ is determined by enforcing 
\[
\max \{ \vert \ep_j^{k + l} \delta_j^{-1} c^j_{kl}(0) \vert ; 
\;k + l \le m \} = 1 
\]
for all $j$. In 
particular $\{ \ep_j^m \delta_j^{-1} \}$ is bounded and by passing to a 
subsequence it follows that $\rho_{j, p}(z)$ converges to $2 \Re z_2 + 
P(z_1, \ov z_1)$. Therefore the domains $\psi^j_p(U \cap D)$ converge to 
\begin{equation}
G_p = \{(z_1, z_2) \in \mbf C^2 : 2 \Re z_2 + P(z_1, \ov z_1) < 0\}
\end{equation}
in the sense that every compact $K \subset G_p$ is eventually compactly 
contained in $\psi^j_p(U \cap D)$ and conversely every compact $K \subset 
\psi^j_p(U \cap D)$ for all large $j$ is compact in $G_p$. The polynomial 
$P(z_1, \ov z_1)$ is of degree at most $m$ and does not have any 
harmonic terms. The family $g_{j, p} = (\psi^j_p \circ \phi_j)^{-1} : 
\psi^j_p(U \cap D) \ra D$ is normal and the arguments in \cite{BP4} 
can be applied in this local setting as well to show that a subsequence 
converges to a biholomorphic mapping $g_p : G_p \ra D$. $G_p$ is 
invariant under the one parameter group of translations $T_t(z_1, z_2) = 
(z_1, z_2 + it), \;t \in \mbf R$ and hence the dimension of ${\rm Aut}(D)$ 
is at least one. For brevity we shall write $g, G$ in place of $g_p, G_p$ 
respectively.

\medskip

The holomorphic vector field corresponding to the action of $T_t$ is $i\; 
\pa /\pa z_2 = i/2\; \pa /\pa x_2 + 1/2 \;\pa /\pa y_2$. Then $\cal X = 
g_*(i \;\pa / \pa z_2)$ is a holomorphic vector field on $D$ whose real 
part $\Re \cal X = (\cal X + \ov {\cal X})/2$ generates the one parameter 
group $L_t = g \circ T_t \circ g^{-1} = \exp(t \;\Re \cal X) \in {\rm 
Aut}(D)$. In general the Lie algebra $\mathfrak{g}(M)$ of ${\rm Aut}(M)$, 
where $M$ is a Kobayashi hyperbolic complex manifold of dimension $n$, 
consists of real vector fields, i.e., those of the form 
\[
\sum_{j \le n} \Big( a_j(z) \pa / \pa z_j + \ov {a_j(z)} \pa / \pa \ov z_j 
\Big)
\]
where the $z_j$ are local coordinates and the coefficients $a_j(z)$ are 
holomorphic. Such fields are clearly determined by their $(1, 0)$ 
components and therefore it suffices to indicate only these components. 
This convention shall be followed everywhere in the sequel.

\begin{prop}
The group $(L_t)$ induces a local one parameter group of holomorphic 
automorphisms of a neighbourhood of $p_{\infty}$ in $\mbf C^2$. In 
particular $\cal X$ extends as a holomorphic vector field near 
$p_{\infty}$.
\end{prop}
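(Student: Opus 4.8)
The plan is to reduce the statement to the holomorphic extension of the single vector field $\mathcal{X} = a_1(z)\,\pa/\pa z_1 + a_2(z)\,\pa/\pa z_2$ across $\pa D$ near $p_\infty$. Once $\mathcal{X}$ is known to extend to a holomorphic vector field on a full neighbourhood $V$ of $p_\infty$ in $\mbf C^2$, its real part $\Re\mathcal{X}$ has a local flow $\{\Phi_t\}$ on $V$ for small $|t|$ by the existence theorem for ODE's with holomorphic coefficients. Since $\Phi_t$ and $L_t$ are both integral curves of $\Re\mathcal{X}$ and agree on $V\cap D$, uniqueness forces $\Phi_t = L_t$ there, so $\{\Phi_t\}$ is the desired local one-parameter group of automorphisms of $V$. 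Thus everything rests on extending $\mathcal{X}$.

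First I would record the tangency of $\Re\mathcal{X}$ to $\pa D$. Because $L_t = g\circ T_t\circ g^{-1}$ is a one-parameter subgroup of ${\rm Aut}(D)$, the domain $D$ is invariant under its flow. By Lemma 2.1 the coefficients $a_1,a_2$, being holomorphic on $D$, extend holomorphically to a neighbourhood of the maximally totally real stratum $S\subset\hat D$ with $p_\infty\in\ov S$, since every holomorphic function on $D$ extends to the envelope of holomorphy. Near $S$ the flow $L_t$ is therefore a genuine local biholomorphism preserving $D$, so invariance of $D$ forces $\Re\mathcal{X}$ to be tangent to $\pa D$ along $S$, i.e. $\Re(a_1\,\rho_{z_1}+a_2\,\rho_{z_2})=0$ on $\pa D$ near $S$. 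As this is a real analytic identity on the connected real analytic hypersurface $U\cap\pa D$, it propagates to all of $U\cap\pa D$.

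The heart of the argument is a reflection across $\pa D$ by means of Segre varieties. Complexifying $\rho(z,\ov z)$ to a function $\rho(z,\zeta)$ holomorphic in $(z,\zeta)$ near $(p_\infty,\ov{p_\infty})$ and replacing $\ov{a_j(z)}$ by the holomorphic function $\ti a_j(\zeta):=\ov{a_j(\ov\zeta)}$, the tangency identity complexifies to
\[
a_1(z)\,\rho_{z_1}(z,\zeta)+a_2(z)\,\rho_{z_2}(z,\zeta)+\ti a_1(\zeta)\,\rho_{\zeta_1}(z,\zeta)+\ti a_2(\zeta)\,\rho_{\zeta_2}(z,\zeta)=0
\]
on the complexified hypersurface $\{\rho(z,\zeta)=0\}$. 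Freezing $\zeta=\zeta^0$ with $\ov{\zeta^0}\in D$, the last two terms become a known holomorphic function of $z$, so along the Segre variety $\{z:\rho(z,\zeta^0)=0\}$ the combination $a_1(z)\rho_{z_1}(z,\zeta^0)+a_2(z)\rho_{z_2}(z,\zeta^0)$ equals a holomorphic function defined in a full neighbourhood of $p_\infty$. Since $\pa D$ is of finite type at $p_\infty$, the vectors $(\rho_{z_1}(z,\zeta^0),\rho_{z_2}(z,\zeta^0))$ obtained as $\zeta^0$ runs over the Segre variety $Q_z$ span $\mbf C^2$; choosing two independent values and inverting the resulting $2\times 2$ linear system expresses $a_1(z),a_2(z)$ as holomorphic functions of $z$ on a full neighbourhood $V$ of $p_\infty$. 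These extensions agree with the original $a_j$ on $V\cap D$ by construction, yielding the holomorphic extension of $\mathcal{X}$ and completing the proof.

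The step I expect to be the main obstacle is precisely the reflection, because pseudoconvexity of $D$ near $p_\infty$ is not yet available (it is established only in Sections $4$ and $5$), so the boundary regularity of the automorphisms $L_t$ cannot be quoted and the extension must be produced entirely from the real analyticity and finite type of $\pa D$ together with the envelope-of-holomorphy information of Lemma 2.1. The delicate bookkeeping is to guarantee a supply of frozen parameters $\zeta^0$ with $\ov{\zeta^0}\in D$ for which the gradient vectors span $\mbf C^2$, uniformly for $z$ near $p_\infty$ on both sides of $\pa D$; this is where finite type and the non-degeneracy of the Segre family enter decisively.
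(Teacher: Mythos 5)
Your reduction (extend $\cal X$ holomorphically, then integrate the flow) is fine and is the easy direction; the substance lies entirely in the extension of $\cal X$, and there your argument has two genuine gaps. The first is the propagation of tangency. You establish $\Re\bigl(a_1\rho_{z_1}+a_2\rho_{z_2}\bigr)=0$ on $\pa D$ only near the stratum $S\subset\hat D$, where $a_1,a_2$ are known to extend across the boundary. The claim that this is "a real analytic identity on the connected real analytic hypersurface $U\cap\pa D$" and hence propagates is circular: away from a neighbourhood of $S$ --- in particular at $p_{\infty}$ itself, along $T_1$, and on the pseudoconvex part of the boundary --- the coefficients $a_j$ are not known to have boundary values at all, so there is no globally defined real analytic identity to propagate. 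Having the identity with real analytic data along all of $U\cap\pa D$ would essentially presuppose the extension you are trying to prove.

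The second gap is in the reflection itself. For a frozen parameter $\zeta^0$ the complexified identity determines the combination $a_1(z)\rho_{z_1}(z,\zeta^0)+a_2(z)\rho_{z_2}(z,\zeta^0)$ only along the one dimensional Segre variety $\{\rho(z,\zeta^0)=0\}$, not on an open set; to recover $a_1(z),a_2(z)$ at a point $z$ outside $\ov D$ you must let $\zeta^0=\ov w$ vary over $w\in Q_z$, so your linear system has coefficients depending on branches of the multivalued Segre map $\la^{-1}$. Holomorphic dependence on $z$ and, above all, single-valuedness of the resulting extension near $p_{\infty}$ are precisely where the paper's proof does its work: the correspondences $V^{\pm}_t$ and their closedness (lemma 2.4), properness of the projections $\pi_t$ over $U_1\sm\ov\Om_r$, the Hartogs-type extension of the symmetric-function coefficients $a_{\mu},b_{\nu}$ across the tube $\Om_r$ around $T_1$ (the envelope of holomorphy of $U_1\sm\ov\Om_r$ being $U_1$), Shiffman's theorem for joint analyticity in $(w,t)$, and --- decisive for single-valuedness at $p_{\infty}$ --- the fact that the branch locus $\sigma_t$ enters $D$, combined with the disc theorem applied to the family $\Delta_{\ep}$; after that a separate bootstrap handles $p_{\infty}\in T_0$ and $Q_{p_{\infty}}=T^{\mbf C}_1$, cases your argument never distinguishes. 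Your final paragraph correctly names this obstacle but supplies none of the machinery: even granting that finite type makes the gradient vectors span, your construction a priori yields only a multivalued correspondence, and $p_{\infty}$ may well lie over its branch locus, so the desired extension of $\cal X$ in a full neighbourhood of $p_{\infty}$ does not follow from what you have written.
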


It is possible to choose sufficiently small neighbourhoods 
$p_{\infty} \in U_1 \subset U_2$ with $U_2$ relatively compact in $U$ 
such that for each $w \in U_1$, the associated Segre variety 
$Q_w = \{z \in U_2: \rho(z, \ov w) = 0\}$ is a closed complex hypersurface 
in $U_2$. Let $\cal S (U_1, U_2)$ be the aggregate of all Segre varieties 
and $\la : U_1 \ra \cal S (U_1, U_2)$ given by $\la(w) = Q_w$. It is 
known that (see \cite{DW}, \cite{DF1}) that $\cal S (U_1, U_2)$ admits 
the structure of a finite dimensional complex analytic set and that 
$\la$ is locally an anti-holomorphic finite-to-one branched covering. 
First assume that $p_{\infty} \in T_1$. Therefore by shrinking $U$ it 
follows that $p_{\infty}$ lies on an embedded real analytic arc, which 
will still be denoted by $T_1$ and $(U \cap \pa D) \sm T_1$ consists 
either of weakly pseudoconvex, finite type points or those that belong 
to $\hat D$. Furthermore $T_1$ admits a complexification, denoted by 
$T^{\mbf C}_1$, which is a closed, smooth one dimensional analytic set in 
$U$ (shrink $U$ further if needed) and since $\la$ is a finite map near 
$p_{\infty}$, there are only finitely many points in $U$ whose Segre 
varieties coincide with $T^{\mbf C}_1$. To start with we will then suppose 
that $Q_{p_{\infty}} \not= T^{\mbf C}_1$. The remaining cases when 
$Q_{p_{\infty}} = T^{\mbf C}_1$ or when $p_{\infty} \in T_0$ are similar 
and will need the knowledge of the conclusions obtainable in case 
$Q_{p_{\infty}} \not= T^{\mbf C}_1$. Now choose coordinates centered at 
$p_{\infty}$ so that $p_{\infty} = 0$ and the defining function $\rho(z)$ 
near the origin takes the form
\[
\rho(z) = 2 \Re z_2 + o(\vert z \vert)
\]
for $z \in U$. For $w = (w_1, w_2) \in U$, it reflection in $U \cap \pa 
D$, denoted by ${}^{\ka}w$, is the unique point whose first coordinate is 
$w_1$ and which lies on $Q_w$, i. e., ${}^{\ka}w = (w_1, {}^{\ka}w_2)$ and 
$\rho({}^{\ka}w, \ov w) = 0$. The map $\ka(w) = {}^{\ka}w$ is a real 
analytic diffeomorphism near the origin that depends on the choice of a 
coordinate system. Finally a word about notation: for a neighbourhood 
$\Omega$ of $p_{\infty} = 0$, let $\Omega^{\pm} = \{z \in \Omega: \pm 
\rho(z) > 0\}$ and ${}_{{}^{\ka}w}Q_w$ will denote the germ of $Q_w$ at 
${}^{\ka}w \in Q_w$.

\medskip

For $\eta > 0$ small and $\vert t \vert < \eta$, and a sufficiently small 
pair of neighbourhoods $U_1 \subset U_2$ that contain the origin define
\[
V^+_t = \{(w, \ti w) \in U^+_1 \times U^+_1 : L_t(Q_w \cap D) \supset 
{}_{{}^{\ka} \ti w}Q_{\ti w}\} \; {\rm and} \; V^-_t = \{(w, \ti w) \in 
U^-_1 \times U^-_1 : Q_{L_t(w)} = Q_{\ti w}\}.
\]
For each fixed $t \in (-\eta, \eta)$, $V^+_t$ is non-empty as there are  
boundary points arbitrarily close to the origin that belong to $\hat D$ 
and hence $L_t$ extends holomorphically across such points and the 
invariance property of Segre varieties (which is the defining condition 
for $V^+_t$) holds. Moreover $V^+_t$ is locally complex analytic near each 
of its points being the graph of the correspondence $w \mapsto \la^{-1} 
\circ L_t \circ \la(w)$, contains the graph of the extension of $L_t$ near 
points in $\hat D$ and is pure two dimensional after removing all 
components of lower dimension if any. On the other hand $V^-_t$ is the 
graph of the correspondence $w \mapsto \la^{-1} \circ \la \circ L_t(w)$ 
and hence it contains the graph of $L_t$ over a non-empty open set in 
$U^-_1$. However, note that the projection from $V^{\pm}_t$ to the first 
factor $U^{\pm}_1$ is not known to be globally proper. It is locally 
proper near each point in $V^{\pm}_t$ since $\la$ is a finite map. 
Evidently $V^-_t$ is also two dimensional and by their construction it 
follows that for each fixed $t \in (-\eta, \eta)$ the locally analytic 
sets $V^+_t, V^-_t$ can be glued together near all points on $U_1 \cap \pa D$ 
across which $L_t$ holomorphically extends. Let $V_t$ denote the locally 
complex analytic set that is obtained from $V^{\pm}_t$ in this manner.

\begin{lem}
Suppose that $Q_0 \not= T^{\mbf C}_1$. Then it is possible to choose $\eta 
> 0$ and $U_1 \subset U_2$ small enough so that $V^+_t \subset U^+_1 
\times U^+_1$ is a closed complex analytic set for all $\vert t \vert < 
\eta$.
\end{lem}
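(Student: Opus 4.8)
The plan is to prove relative closedness of $V^+_t$ in $U^+_1 \times U^+_1$; since $V^+_t$ is already known to be locally complex analytic and pure two dimensional, relative closedness upgrades it at once to a genuine closed complex analytic subset. Concretely, I would fix $t$ with $\vert t \vert < \eta$, take an arbitrary sequence $(w_k, \ti w_k) \in V^+_t$ converging to a point $(w_0, \ti w_0)$ lying in the \emph{open} set $U^+_1 \times U^+_1$, and show that the defining germ-containment $L_t(Q_{w_0} \cap D) \supset {}_{{}^{\ka} \ti w_0} Q_{\ti w_0}$ persists in the limit. Limit points that land on $\pa U^+_1 \times U^+_1$ or on $U^+_1 \times \pa U^+_1$ are irrelevant, since closedness is asserted only relative to the product of the two open sets; this is precisely what allows the argument to sidestep the failure of global properness of the projection $V^+_t \ra U^+_1$ noted just before the statement.

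First I would use the hypothesis $Q_0 \not= T^{\mbf C}_1$ to fix the neighbourhoods. Because $\la : U_1 \ra \cal S(U_1, U_2)$ is a finite anti-holomorphic branched covering, the fibre $\la^{-1}(T^{\mbf C}_1)$ is finite; shrinking $U_1 \subset U_2$ about the origin I may therefore assume that $Q_w \not= T^{\mbf C}_1$ for every $w \in U_1$, that each $Q_w$ is a closed hypersurface in $U_2$ depending continuously (indeed anti-holomorphically) on $w$, and that the number of sheets of $\la$ is uniformly bounded on $\ov U_1$. Shrinking $\eta$ I may further arrange that for $\vert t \vert < \eta$ the image $L_t(Q_w \cap D)$ remains inside $U_2$, so that it can be compared with the Segre germ ${}_{{}^{\ka} \ti w} Q_{\ti w}$. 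This uniform confinement is the role played by the separation $Q_0 \not= T^{\mbf C}_1$: it keeps the correspondence $w \mapsto \la^{-1} \circ L_t \circ \la(w)$ within a relatively compact family of Segre varieties, so that no sheet of $\la^{-1}$ is forced toward the degenerate configuration $Q_w = T^{\mbf C}_1$.

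With these choices made I would pass to the limit along the convergent sequence. Continuity of $\la$ gives $Q_{w_k} \ra Q_{w_0}$ and $Q_{\ti w_k} \ra Q_{\ti w_0}$ as analytic hypersurfaces in $U_2$, while the fact that $\ka$ is a real analytic diffeomorphism near the origin gives ${}^{\ka} \ti w_k \ra {}^{\ka} \ti w_0$. Since $L_t$ is a \emph{fixed} element of ${\rm Aut}(D)$, the sets $L_t(Q_{w_k} \cap D)$ converge to $L_t(Q_{w_0} \cap D)$, and the germ-containment holding for every $k$ survives in the limit to yield $L_t(Q_{w_0} \cap D) \supset {}_{{}^{\ka} \ti w_0} Q_{\ti w_0}$. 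The separation $Q_0 \not= T^{\mbf C}_1$ re-enters here to guarantee that the limiting data are non-degenerate, namely that $Q_{\ti w_0}$ is still a genuine one dimensional germ and ${}^{\ka} \ti w_0$ a genuine point of contact, so that the containment is neither vacuous nor lost. This places $(w_0, \ti w_0)$ in $V^+_t$ and establishes the relative closedness.

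The hard part will be making this germ-level passage to the limit rigorous in the presence of the branching of $\la$. A priori the sheet of $\la^{-1}$ tracking $\ti w_k$ could drift toward a branch locus of $\la$, or the contact germ ${}_{{}^{\ka} \ti w_k} Q_{\ti w_k}$ could slide out toward $\pa U_2$, either of which would break the containment before the limit is attained. Controlling this is exactly where the uniform sheet bound on $\ov U_1$ and the separation from $T^{\mbf C}_1$ do their work: together they force the relevant Segre varieties and their reflection points to remain in a compact part of $U_2$, so that no escape can occur and the limiting germ is honest. I expect this no-escape estimate, rather than the formal passage to the limit, to be the genuine obstacle, and it is the reason the three quantities $\eta$, $U_1$ and $U_2$ must be selected together.
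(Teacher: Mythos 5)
There is a genuine gap, and you have in fact pointed at it yourself: your last paragraph concedes that the ``no-escape estimate'' is the real obstacle and then offers only the finite sheet bound of $\la$ and the separation $Q_0 \not= T^{\mbf C}_1$ as the mechanism. That mechanism is insufficient. The sets $Q_{w_k}$ are closed hypersurfaces of $U_2$ that reach all the way to $\pa U_2$, so nothing about the multiplicity of $\la$ or about $Q_w$ staying away from $T^{\mbf C}_1$ prevents the witnessing contact points $q_k \in Q_{w_k} \cap D$ with $L_t(q_k) = {}^{\ka}\ti w_k$ from clustering at $Q_0 \cap \pa U_2 \cap \ov D$, which is exactly how the germ containment dies in the limit even when $(w_0, \ti w_0)$ stays interior. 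Relatedly, your claim that one can shrink $\eta$ so that $L_t(Q_w \cap D)$ ``remains inside $U_2$'' is unjustified: $L_t \ra L_0$ only uniformly on compact subsets of $D$, so there is no uniform control of $L_t$ on the part of $Q_w \cap D$ near $\pa D$, and this is precisely where the trouble lives.

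The paper's proof supplies what your sketch lacks. It introduces the contact sets $R_t(w) = \{q \in Q_w \cap \ov U_2 \cap \ov D : L_t(q) = {}^{\ka}\ti w \text{ for some } \ti w \in R'_t(w)\}$, notes that at $t = 0$ one has $R_0(w) = \ka \circ \la^{-1} \circ \la(w)$, which stays uniformly away from $\pa U_2$ once $U_1, U_2$ are chosen with $\la$ proper and $\ka(U_1) \Subset U_2$, and then argues by contradiction with shrinking $U_{1, j}$ and $t_j \ra 0$: if $q_j \in R_{t_j}(w_j)$ converged to $q_0 \in \pa U_2$, then either $q_0 \in D$, where uniform convergence on compacts gives ${}^{\ka}\ti w_j = L_{t_j}(q_j) \ra q_0 \not= 0$, contradicting ${}^{\ka}\ti w_j \ra 0$; or $q_0 \in \pa D$. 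The decisive point, and the true role of the hypothesis $Q_0 \not= T^{\mbf C}_1$, is that $Q_0 \cap \pa U_2 \cap \pa D$ then consists only of weakly pseudoconvex finite type points or points of $\hat D$ --- i.e., it avoids $T_1$ --- so that each $L_t$, $\vert t \vert$ small, extends holomorphically to a \emph{uniform} neighbourhood of $q_0$ with equicontinuous extensions (via local plurisubharmonic peak functions at the weakly pseudoconvex points, as in Berteloot's and Verma's work, and via the envelope of holomorphy at points of $\hat D$), after which the same convergence argument yields the same contradiction. This uniform boundary extension of the family $(L_t)$ is the missing idea in your proposal; without it, your passage to the limit of the germ containments has no foundation, since the separation from $T^{\mbf C}_1$ by itself keeps nothing compact.
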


\begin{proof}
There exists a neighbourhood basis of pairs $U_1 \subset U_2$ at the 
origin such that $\la : U_1 \ra \cal S(U_1, U_2)$ is proper and 
$\ka(U_1)$ is compactly contained in $U_2$. For such a pair and $w \in 
U^+_1$ define
\[
R'_t(w) = \{\ti w \in U^+_1 : L_t(Q_w \cap D) \supset {}_{{}^{\ka} \ti 
w}Q_{\ti w} \}
\]
and
\[
R_t(w) = \{q \in Q_w \cap \ov U_2 \cap \ov D : L_t(q) = {}^{\ka} \ti w \; 
{\rm for \; some} \; \ti w \in R'_t(w)\}.
\]
Note that $L_0$ is the identity and hence $R'_0(w) = \la^{-1} \circ 
\la(w)$ while $R_0(w) = \ka \circ \la^{-1} \circ \la(w)$. By the choice of 
$U_1, U_2$ it follows that $R_0(w)$ is at a positive distance from $\pa 
U_2$ uniformly for all $w \in U^+_1$. Now let $U_{1, j}$ shrink to the 
origin and $t_j \ra 0$ and suppose that there are points $w_j \in U^+_{1, 
j}$ for which $R_{t_j}(w_j)$ has points that cluster at $\pa U_2$. So let 
$q_j \in R_{t_j}(w_j)$ be such that $L_{t_j}(q_j) = {}^{\ka} \ti w_j$ and 
$q_j \ra q_0 \in \pa U_2$. Then $w_j, {}^{\ka} \ti w_j \ra 0$ and since 
$Q_{w_j} \ra Q_0$ it follows that $q_0 \in Q_0 \cap \pa U_2 \cap \ov D$. 
Two cases arise; first, if $q_0 \in Q_0 \cap \pa U_2 \cap D$, then since 
$L_t$ converges to $L_0$ uniformly on compact subsets of $D$, it follows 
that ${}^{\ka} \ti w_j = L_{t_j}(q_j) \ra L_0(q_0) = q_0 \not= 0$ and this 
contradicts the fact that ${}^{\ka} \ti w_j \ra 0$. The remaining 
possibility is that $q_0 \in Q_0 \cap \pa U_2 \cap \pa D$. Note that $Q_0 
\cap \pa U_2 \cap \pa D$ consists either of weakly pseudoconvex, finite 
type points or those that belong to $\hat D$. In case $q_0 \in \hat D$
each $L_t$, $t \in \mbf R$ extends to a uniform neighbourhood of $q_0$, 
the extensions being equicontinuous there, and the same argument as above 
shows that this leads to a contradiction. If $q_0$ is a weakly 
pseudoconvex, finite type point then there are local plurisubharmonic peak 
functions near $q_0$ and this can be used (see \cite{Ber1} or \cite{V}) to 
show that for small $\vert t \vert$, each $L_t$ extends to a uniform 
neighbourhood of $q_0$. Again this leads to the same contradiction as 
above. This reasoning shows that $V^+_t$ is closed for small $\vert t 
\vert$. Indeed for $t$ fixed, the main obstacle that possibly prevents 
$V^+_t$ from being closed is that its defining property may cease to hold 
in the limit. This happens exactly when points in $R_t(w)$ get arbitrarily 
close to $\pa U_2$. But it has been shown that this does not happen for a 
uniform choice of $U^+_1$ and $\vert t \vert < \eta$ for small enough 
$\eta > 0$.
\end{proof}

\no {\it Proof of Proposition 2.3.} This will be divided into three parts. 
First, it is shown that $(L_t)$ induces a local one parameter group of 
holomorphic automorphisms near $p_{\infty}$ under the assumption that 
$Q_{p_{\infty}} \not= T^{\mbf C}_1$. This is then used to show that lemma 
2.4 holds even when $Q_{p_{\infty}} = T^{\mbf C}_1$ or $p_{\infty} \in 
T_0$. Finally this in turn is used to show that $(L_t)$ induces a local 
one parameter group of holomorphic automorphisms near $p_{\infty}$ even 
when $Q_{p_{\infty}} = T^{\mbf C}_1$ or $p_{\infty} \in T_0$.

\medskip

 The real analyticity of $T_1$ implies that there exists a non-negative, 
strongly plurisubharmonic function $\tau(z)$ near $p_{\infty} = 0$ whose 
zero lous is exactly $U_1 \cap T_1$. Indeed $T_1$ can be locally 
straightened and so there are coordinates centered at the origin in which 
$U_1 \cap T_1$ coincides with the imaginary $z_1$-axis. The function 
$\tau(z) = (\Re z_1)^2 + \vert z_2 \vert^2$ is then the desired candidate. 
The sub-level sets $\Om_r = \{z \in U_1: \tau(z) < r\}$ are strongly 
pseudoconvex tubular neighbourhoods of $U_1 \cap T_1$. Fix $r > 0$ so 
small that $U_1 \sm \ov \Om_r$ is connected and such that the set of 
points $(c_1, c_2) \in U^+_1$ for which the slice $\{z_1 = c_1\} \cap U_1$ 
does not intersect $\ov \Om_r$ is non-empty and open. On the other hand, 
note that the slice $\{z_1 = c_1\} \cap \ov \Om_r$, if non-empty, is 
compactly contained in $\{z_1 = c_1\} \cap U_1$. By lemma 2.4, $V^+_t$ is 
a closed complex analytic set in $U^+_1 \times U^+_1$ for $\vert t \vert 
< \eta$. Define
\[
\ti V^+_t = \{(w, \ti w) \in U^+_1 \times U^+_1 : L_t({}_{{}^{\ka}w}Q_w) 
\subset Q_{\ti w} \cap D\}.
\]
Since $(L_t)^{-1} = L_{-t}$ it follows that $\ti V^+_t = V^+_{-t}$ for all 
$t$. Moreover $L_t$ extends holomorphically across points $w$ that are 
close to those in $\hat D$ and satisfies
\[
L_t({}_{{}^{\ka}w}Q_w) \subset Q_{L_t(w)} \cap D.
\]
This shows that both $V^+_t$ and $V^+_{-t}$ contain the graph of $L_t$ 
over an open set in $U^+_1$ and it follows that they coincide. Thus we may 
write $V^+_{\vert t \vert}$ in place of $V^+_t$ for $\vert t \vert < \eta$ 
and take the defining condition to be that for $\ti V^+_t$.

\medskip

Let $w_0 \in U^+_1$ have the following property: it is possible to choose 
a ball $B(w_0, \delta)$ (for some positive $\delta$) compactly contained 
in $U^+_1$ and such that $V^+_{\vert t \vert} \cap ((B(w_0, \delta) \times 
U^+_1) \not= \emptyset$ for all $\vert t \vert < \eta$. Then for perhaps a 
smaller $\eta > 0$ it follows that $\pa V^+_{\vert t \vert} \cap (B(w_0, 
\delta) \times \pa U^+_1) = \emptyset$ for all $\vert t \vert < \eta$. 
Indeed suppose that there exist $(w_j, \ti w_j) \in V^+_{\vert t_j \vert} 
\cap ((B(w_0, \delta) \times U^+_1)$,  $\vert t_j \vert \ra 0$ such 
that $\ti w_j$ clusters at $\pa U^+_1$. Then
\[
L_{t_j}({}_{{}^{\ka}w_j}Q_{w_j}) \subset Q_{\ti w_j} \cap D.
\]
for all $j$. Let $w_j \ra w_{\infty} \in \ov B(w_0, \delta)$ and $\ti w_j 
\ra \ti w_{\infty} \in \pa U^+_1$. Since $w_j \in B(w_0, \delta)$ which is 
compactly contained in $U^+_1$, the germs ${}_{{}^{\ka}w_j}Q_{w_j}$ move 
within a compact subset of $U^-_1$ and hence 
\[
L_{t_j}({}_{{}^{\ka}w_j}Q_{w_j}) \ra 
L_0({}_{{}^{\ka}w_{\infty}}Q_{w_{\infty}}) = 
{}_{{}^{\ka}w_{\infty}}Q_{w_{\infty}} \subset Q_{\ti w_{\infty}} \cap D.
\]
This shows that $\la(w_{\infty}) = \la(\ti w_{\infty})$ which contradicts 
the fact that $\la : U_1 \ra \cal S(U_1, U_2)$ is proper. The set of 
points $w_0$ for which this reasoning applies is non-empty and open. An 
example would be all points in $U^+_1$ that lie close to boundary points 
in $\hat D$. Each $L_t$ would be well defined near such points and will 
satisfy the invariance property of Segre varieties. This shows that for 
any compact $K^+ \subset U^+_1$, there exists $\eta > 0$ such that the 
projection $\pi^+_{\vert t \vert} : V^+_{\vert t \vert} \cap (K^+ \times 
U^+_1) \ra K^+$ is proper for all $\vert t \vert < \eta$.

\medskip

Now fix a compact $K^- \subset U^-_1$. Since $V^-_0$ is the graph of the 
proper correspondence $w \mapsto \la^{-1} \circ \la(w)$ and $L_t$ 
converges uniformly on $K^-$ to $L_0(z) \equiv z$, it follows that the 
projection $\pi^-_t : V^-_t \cap (K^- \times U^-_1) \ra K^-$ is proper for 
all $\vert t \vert < \eta$ for a smaller $\eta > 0$ perhaps. Finally note that 
$(U_1 \cap \pa D) \sm \Om_r$ consists entirely of weakly pseudoconvex, finite 
type points or those that belong to $\hat D$. In both cases, each $L_t$ 
extends to a uniform neighbourhood (see \cite{Ber1} or \cite{V}) of those 
points for some $\vert t \vert < \eta$. Thus after shrinking $U_1$ and 
$\eta > 0$ suitably it follows that $V_t \subset (U_1 \sm \ov \Om_r) 
\times U_1$ is a closed complex analytic set and the projection 
\[
\pi_t : V_t \ra U_1 \sm \ov \Om_r
\]
is proper. Associated with each $\vert t \vert < \eta$ are positive 
integers $m_t, k_t$ and functions $a_{\mu}(w, t), b_{\nu}(w, t)$ for 
$1 \le \mu \le m_t, 1 \le \nu \le k_t$ all of which are holomorphic in 
$U_1 \sm \ov \Om_r$ such that $V_t \subset \hat V_t \subset (U_1 \sm \Om_r) 
\times U_1$ where $\hat V_t$ is a pure two dimensional complex analytic 
set described by $P_1(\ti w_1, w) = P_2(\ti w_2, w) = 0$ 
(here $w = (w_1, w_2)$ and $\ti w = (\ti w_1, \ti w_2)$) with
\begin{align*}
P_1(\ti w_1, w) &= \ti w^{m_t}_1 + a_1(w, t) \ti w^{m_t - 1}_1 + \ldots + 
a_{m_t}(w, t), \\
P_2(\ti w_2, w) &= \ti w^{k_t}_2 + b_1(w, t) \ti w^{k_t - 1}_1 + \ldots + 
b_{k_t}(w, t).
\end{align*}
At this stage, the functions $a_{\mu}(w, t), b_{\nu}(w, t)$ are known to 
be holomorphic in $w$ for each fixed $t$. Their dependence on $t$, let 
alone any joint regularity in $(w, t)$ is not known. To remedy this 
situation, first observe that $V_t$ is the union of the graphs of 
correspondences that involve $\la^{-1}, \la$ and $L_t$. Since $\la$ and 
$L_t$ are single valued, the multiplicity of $\pi_t$ is the same as that 
of $\la$ and this shows that $m_t \equiv m$ and $k_t \equiv k$ for all $\vert 
t \vert < \eta$. Second, it follows by lemma A1.3 in \cite{Ch1} that the 
envelope of holomorphy of $U_1 \sm \ov \Om_r$ is $U_1$. Hence the 
functions $a_{\mu}(w, t), b_{\nu}(w, t)$ extend to $U_1$ for each fixed 
$t$ and this implies that $\hat V_t$ admits a closed analytic continuation 
to $U_1 \times U_1$. This extension will still be denoted by $\hat V_t$ 
and since the polynomials $P_1(\ti w_1, w), P_2(\ti w_2, w)$ are monic in 
their first arguments, the projections $\pi_t$ will continue to be proper 
over $U_1$. Define
\[
\Sigma = \bigcup_{z \in U_1 \cap T_1}Q_z
\]
which is a closed, three dimensional real analytic set in $U_2$ and is 
locally foliated by open pieces of Segre varieties at all of its regular 
points. The complement $U_1 \sm \Sigma$ is open and hence non-pluripolar. 
Fix $w_0 \in U_1 \sm \Sigma$ and a relatively compact neighbourhood 
$B(w_0, \delta) \subset U_1 \sm \Sigma$. Note that for any $w \in B(w_0, 
\delta), Q_w \cap T_1 = \emptyset$. Two cases arise; first if $Q_{w_0} 
\cap U_2 \cap \pa D = \emptyset$ then on shrinking $U_2$ if necessary it 
follows that $Q_w \cap U_2 \cap D$ is uniformly relatively oompact in $D$ 
for all $w \in B(w_0, \delta)$. The action of $L_t$ on $D$ is real 
analytic and hence for any fixed $w \in B(w_0, \delta)$, $L_t(Q_w \cap D) 
= L_t \circ \la(w)$ is real analytic in $t$. Second if $Q_{w_0} \cap \pa D 
\not= \emptyset$ then $Q_w \cap T_1 = \emptyset$ implies that $Q_w \cap 
\pa D \cap U_2$ consists of weakly pseudoconvex finite type points or 
those that belong to $\hat D$ for all $w$ close to $w_0$. As before it 
follows from \cite{Ber1}, \cite{V} that $L_t(Q_w \cap D)$ depends real 
analytically on $t$ for each fixed $w \in B(w_0, \delta)$. Let $\sigma$ be 
the branch locus of $\la$. Then $\la(\sigma)$ is a complex analytic 
subset of $\cal S(U_1, U_2)$ of strictly smaller dimension and hence it is 
possible to move $w_0$ slightly and to shrink $\delta > 0$ if necessary so 
that $L_t(Q_w \cap D) \cap \la(\sigma) = \emptyset$ for all $(w, t) \in 
B(w_0, \delta) \times (-\eta, \eta)$. Thus the various branches of 
$\pi_t^{-1}$ over $B(w_0, \delta)$ are well defined holomorphic functions 
for each fixed $t$. In particular $a_{\mu}(w, t), b_{\nu}(w, t)$ depend 
real analytically on $t$ for each fixed $w \in B(w_0, \delta)$. It follows 
from theorem 1 in \cite{Shi} that $a_{\mu}(w, t), b_{\nu}(w, t)$ are 
jointly real analytic in $(w, t) \in U_1 \times (-\eta, \eta)$. Let 
$\sigma_t$ be the branch locus of $w \mapsto \la^{-1} \circ L_t \circ 
\la$. Then $\sigma_t$ is the zero set of a universal polynomial function 
of the symmetric functions $a_{\mu}(w, t), b_{\nu}(w, t)$. Hence 
$\sigma_t$ also varies real analytically in $t$. We now work in normal 
coordinates around $p_{\infty} = 0$, i.e., coordinates in which the 
defining function for $U \cap \pa D$ becomes
\[
\rho(z) = 2 \Re z_2 + \sum_{j \ge 0} c_j(z_1, \ov z_1) (\Im z_2)^j
\]
where the coefficients $c_j(z_1, \ov z_1)$ are real analytic and whose 
complexifications $c_j(z_1, \ov w_1)$ satisfy $c_j(z_1, 0) = 0 = 
c_j(0, \ov w_1)$. It is shown in \cite{DP} that $\sigma$ enters $D$ and 
that $\la^{-1} \circ L_t \circ \la(\sigma_t) \subset \sigma$ for all 
$\vert t \vert < \eta$. By the invariance property of Segre varieties 
it follows that $\sigma_t$ also enters $D$ for all $\vert t \vert < \eta$. 
As in theorem 7.4 of \cite{DP} consider the discs
\[
\Delta'_{\ep} = \{(z_1, z_2) : \vert z_1 \vert < c, z_2 = -\ep\}
\]
for $c$ small and $\ep > 0$. Then $\Delta_{\ep} = (\la^{-1} \circ 
\la)(\Delta'_{\ep})$ is a family of discs in $U_1$ such that $\ov \Delta_0 
\cap \sigma_0 = \{0\}$. Hence by continuity
\[
\Big( \bigcup_{\vert t \vert < \eta} \sigma_t \Big) \cap \ov \Delta_{\ep} 
\Subset \Delta_{\ep}
\]
for all $0 < \ep < \ep_0$ and $\eta > 0$ small. This shows that all 
possible branching occurs only in the interior of $\Delta_{\ep}$. Thus 
there is a uniform neighbourhood of
\[
\bigcup_{0 \le \ep < \ep_0} \pa \Delta_{\ep}
\]
to which each $L_t$ extends for all $\vert t \vert < \eta$. The disc 
theorem applied to the family $\Delta_{\ep}, 0 < \ep < \ep_0$ shows that 
$L_t$ extends to a uniform neighbourhood of $p_{\infty}$ for all $\vert t 
\vert < \eta$. This finishes the first part of the proof. Assume now that 
$p_{\infty} \in T_0$ or $Q_{p_{\infty}} = T_1^{\mbf C}$. Note that the 
proof of lemma 2.4 depends on the fact that there is uniform control 
on $L_t$ at points of $Q_0 \sm \{0\}$. This is assured by the above 
arguments and thus lemma 2.4 holds even when $p_{\infty} \in T_0$ or 
$Q_{p_{\infty}} = T_1^{\mbf C}$. Once again we may argue as above to show 
the uniform extendability of $L_t$ across $p_{\infty}$ for all $\vert t 
\vert < \eta$. This completes the proof of proposition 2.3. Incidentally, 
this localises the main theorem in \cite{V}.

\medskip

We return to the biholomorphic equivalence $g : G \ra D$. Consider the 
sequence $(a_j, b_j) = g^{-1} \circ \phi_j(p) \in G$, which is the 
pullback of the orbit $\{ \phi_j(p) \} \in D$, and let $2 \ep_j = 
2 \Re b_j + P(a_j, \ov a_j)$. Note that $\ep_j < 0$ for all $j$.

\begin{prop}
Suppose that $\vert \ep_j \vert > c > 0$ uniformly for all large $j$. Then 
$\cal X$ vanishes to finite order at $p_{\infty}$.
\end{prop}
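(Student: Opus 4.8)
### Setting up the proof

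The plan is to analyze the vector field $\cal X = g_*(i\,\pa/\pa z_2)$ at $p_\infty$ under the uniform lower bound $\vert \ep_j \vert > c > 0$. By proposition 2.3 we already know that $\cal X$ extends holomorphically to a neighbourhood of $p_\infty$, so the only question is its order of vanishing there; we must exclude the possibility that $\cal X$ vanishes to infinite order, i.e. that $\cal X \equiv 0$ near $p_\infty$. The key observation is that the hypothesis controls how the pullback orbit $(a_j, b_j) = g^{-1} \circ \phi_j(p)$ approaches the boundary of $G$: since $2\ep_j = 2\Re b_j + P(a_j, \ov a_j)$ and $\vert \ep_j \vert > c$, the orbit stays at a definite ``height'' away from $\pa G$ in the normal direction, measured by the defining function $2\Re z_2 + P(z_1,\ov z_1)$.

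### The main argument

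First I would recall that on $G$ the one parameter group $T_t(z_1,z_2) = (z_1, z_2 + it)$ is conjugate via $g$ to $L_t = \exp(t\,\Re\cal X)$, and that the translations $T_t$ act on the pullback orbit without altering the quantity $2\Re b_j + P(a_j,\ov a_j) = 2\ep_j$, since $T_t$ changes only $\Im z_2$. The strategy is to combine this translation action with the scaling group. Recall from the initial scaling that $G$ is invariant under dilations of the form $S_s(z_1,z_2) = (\exp(s/m')z_1, \exp(s)z_2)$ precisely when $P$ is homogeneous; more generally the interplay between $T_t$ and the orbit structure forces rigidity. Concretely, I would track the images $L_t(g^{-1}\circ\phi_j(p)) = g^{-1}(L_t \circ \phi_j(p))$ and use the uniform separation $\vert\ep_j\vert > c$ to produce, in the limit, a nonconstant one parameter family of automorphisms of $G$ fixing a genuine interior point configuration. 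If $\cal X$ vanished to infinite order at $p_\infty$, then $\cal X \equiv 0$ identically (being holomorphic and hence determined by its germ at $p_\infty$ on the connected domain $D$), forcing $L_t \equiv \mathrm{id}$; but then $T_t = g^{-1}\circ L_t \circ g \equiv \mathrm{id}$ on $G$, which is absurd since $T_t$ is a nontrivial translation. This already shows $\cal X$ cannot vanish to infinite order.

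### Quantifying finiteness

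The subtler point is to convert ``not identically zero'' into ``finite order vanishing,'' which is automatic for a holomorphic vector field: the coefficients of $\cal X$ are holomorphic functions near $p_\infty$, and a holomorphic function that is not identically zero vanishes to finite order at any given point. Thus the real content is precisely the identity-theorem argument above, showing $\cal X \not\equiv 0$; finiteness of the vanishing order is then immediate from holomorphicity of the extended field together with the fact that $D$ (hence the neighbourhood on which $\cal X$ extends) is connected. The role of the hypothesis $\vert\ep_j\vert > c > 0$ is to guarantee that the limit of $g^{-1}\circ\phi_j(p)$, after suitable normalization by the $T_t$ action, is a well-defined interior point of $G$ rather than escaping to $\pa G$; this keeps the conjugation $L_t = g\circ T_t\circ g^{-1}$ nondegenerate near $p_\infty$ and prevents the extended field from collapsing.

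### Anticipated obstacle

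I expect the main obstacle to be establishing rigorously that the uniform separation $\vert\ep_j\vert > c$ prevents a degeneration in which $\cal X$, though extending holomorphically, vanishes to infinite order at $p_\infty$ while remaining nonzero elsewhere --- a scenario that the identity theorem rules out only once we know $\cal X$ is holomorphic on a connected set containing both $p_\infty$ and points where it is visibly nonzero. Pinning down that the germ of $\cal X$ at $p_\infty$ is connected through $D$ to the region where $T_t$ acts nontrivially, and ensuring the limiting normalization stays interior (this is exactly where $\vert\ep_j\vert > c$ enters), is the delicate step. The complementary case $\ep_j \to 0$, where the orbit does approach $\pa G$ tangentially and $\cal X$ may genuinely vanish to infinite order (parabolic behaviour), is presumably treated separately; the present proposition isolates the nondegenerate regime where that pathology is excluded.
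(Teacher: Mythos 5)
There is a genuine gap: you have proved only the easy half of the proposition and omitted its analytic core. In this paper ``$\cal X$ vanishes to finite order at $p_{\infty}$'' means that $\cal X(p_{\infty}) = 0$ \emph{and} the order of vanishing is finite --- indeed, Proposition 2.9 later cites this result precisely for the conclusion $\cal X(p_{\infty}) = 0$. Your proposal reads the statement as merely ``exclude infinite-order vanishing,'' and your main argument (infinite-order vanishing of the holomorphic coefficients forces $\cal X \equiv 0$ near $p_{\infty}$, hence on $D$ by the identity theorem, hence $L_t = T_t = \mathrm{id}$, absurd) reproduces, correctly, the paper's short concluding paragraph --- the paper phrases it via $L_t$ agreeing with the identity to infinite order and invokes Proposition 2.3 exactly as you do. But nothing in your proposal establishes the vanishing itself, and that is where the hypothesis $\vert \ep_j \vert > c > 0$ actually does its work. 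The paper's argument: since $(a_j, b_j + \vert \ep_j \vert \tau) \in G$ exactly when $\Re \tau < 1$, one gets analytic discs $f_j(\tau) = g(a_j, b_j + \vert \ep_j \vert \tau)$ on the half-plane $\cal H$ with $f_j(0) = \phi_j(p) \ra p_{\infty}$; by normality a subsequence converges to $f : \cal H \ra \ov D$ with $f(0) = p_{\infty}$, and if $f$ were nonconstant the finite type of $U \cap \pa D$ together with the strong disk theorem of Vladimirov would force $p_{\infty} \in \hat D$, contradicting the Greene--Krantz observation; so $f \equiv p_{\infty}$. Then the uniform bound $\vert \ep_j \vert > c$ converts the fixed compact segment $[-iM, iM] \subset \cal H$ into the time interval $(-cM, cM)$ of the flow of $\Re \cal X$ through $\phi_j(p)$, so integral curves of $\cal X$ over arbitrarily long time intervals collapse into arbitrarily small neighbourhoods of $p_{\infty}$; passing to the limit and using the holomorphic extension of $\cal X$ near $p_{\infty}$ (Proposition 2.3), the integral curve through $p_{\infty}$ is constant, i.e. $\cal X(p_{\infty}) = 0$.

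A secondary error: you assert that the role of $\vert \ep_j \vert > c$ is to keep a normalized limit of $g^{-1} \circ \phi_j(p)$ in the \emph{interior} of $G$. This is backwards --- the hypothesis forces the pulled-back orbit $(a_j, b_j)$ to cluster only at the point at infinity in $\pa G$ (it certainly does not converge to an interior point, since $\phi_j(p) \ra \pa D$), and its quantitative content is the uniform lower bound on the length of flow-time captured by the discs $f_j$, as above. Your sketch of ``producing a nonconstant one parameter family of automorphisms fixing an interior point configuration'' does not engage the finite type hypothesis, the normal-families limit, or the strong disk theorem, and cannot by itself yield $\cal X(p_{\infty}) = 0$.
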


\begin{proof}
First note that $g^{-1} \circ \phi_j(p)$ can cluster only at $\pa G$ 
since $\phi_j(p) \ra p_{\infty} \in \pa D$. The condition that $2 \Re b_j + 
P(a_j, \ov a_j)$ is uniformly strictly positive in modulus ensures that 
$g^{-1} \circ \phi_j(p)$ clusters only at the point at infinity in $\pa 
G$. For $\tau \in \mbf C$ note that $(a_j, b_j + \vert \ep_j \vert \tau) \in 
G$ whenever $\Re \tau < 1$. With $\cal H = \{\tau \in \mbf C : \Re \tau 
< 1\}$, the analytic disc $f_j : \cal H \ra D$ where
\[
f_j(\tau) = g(a_j, b_j + \vert \ep_j \vert \tau)
\]
is well defined. Note that $f_j(0) = g(a_j, b_j) = \phi_j(p) \ra 
p_{\infty}$. The integral curves of $i \; \pa / \pa z_2$ passing through 
$(a_j, b_j)$ are of the form $\gamma_j(t) = (a_j, b_j + i t), t \in \mbf 
R$ and hence $g \circ \gamma_j(t)$ defines the integral curves of $\cal X$ 
through $\phi_j(p)$. For $j$ fixed, as $\tau$ varies on the imaginary axis 
in $\cal H$, $(a_j, b_j + \vert \ep_j \vert \tau)$ sweeps out the integral 
curves of $i \; \pa / \pa z_2$ through $(a_j, b_j)$. Moreover, since 
$\vert \ep_j \vert > c > 0$ the image of the interval $(-M, M)$, for any 
$M > 0$, under the map $t \mapsto (a_j, b_j + i \vert \ep_j \vert t)$ 
contains the line segment from $(a_j, b_j - i c M)$ to $(a_j, b_j + i c 
M)$ which equals $\gamma_j((-c M, c M))$ for all $j$. 

\medskip

The family $f_j : \cal H \ra D$ is normal and since $f_j(0) \ra 
p_{\infty}$, there exists a subsequence of $f_j$ that converges uniformly 
on compact subsets of $\cal H$ to a holomorphic mapping $f : \cal H \ra 
\ov D$ with $f(0) = p_{\infty}$. Identify $\cal H$ with the unit disc 
$\Delta(0, 1)$ via a conformal map $\psi : \Delta(0, 1) \ra \cal H$ such 
that $\psi(0) = 0$. Choose $r \in (0, 1)$ such that $f \circ 
\psi(\Delta(0, r)) \subset U \cap \pa D$ where $U \cap \pa D$ is a smooth 
real analytic, finite type hypersurface. Suppose that $f$ is non-constant. 
Then the finite type assumption on $U \cap \pa D$ implies that $f \circ 
\psi(\Delta(0, r)) \cap D \not= \emptyset$. The strong disk theorem in 
\cite{Vl} forces $p_{\infty} \in \hat D$ which is a contradiction. Thus 
$f(\tau) \equiv p_{\infty}$ for all $\tau \in \cal H$. For an arbitrarily 
small neighbourhood $V$ of $p_{\infty}$ and the compact interval $[-iM, 
iM] \subset \cal H$, this means that $g \circ \gamma_j((-cM, cM)) \subset 
f_j([-iM, iM]) \subset V$ for all large $j$ which implies that the 
integral curve of $\cal X$ through $\phi_j(p)$ is contained in $V$ for all 
$t \in (-cM, cM)$. Since the $f_j$'s form a normal family, it follows that 
$g \circ \gamma_j$ converge uniformly on $[-cM, cM]$ to a path 
$\gamma_{\infty} : [-cM, cM] \ra V \cap \ov D$ with $\gamma_{\infty}(0) = 
p_{\infty}$. Since $\cal X$ is holomorphic in a neighbourhood of 
$p_{\infty}$, it follows that $\gamma_{\infty}$ is the integral curve of 
$\cal X$ through $p_{\infty}$. As $V$ and $M$ are arbitrary the vector 
field $\cal X$ must vanish at $p_{\infty}$. If $\cal X$ vanishes to 
infinite order at $p_{\infty}$ then $L_t = \exp (t \; \Re \cal X)$ shows 
that the identity mapping and $L_t$, for all $\vert t \vert < \eta$, agree 
to infinite order at $p_{\infty}$. By proposition 2.3, each $L_t$ 
for $\vert t \vert < \eta$ extends to a uniform neighbourhood of 
$p_{\infty}$ and hence $L_t$ is the identity mapping for all small $t$. 
This is a contradiction since the action of the one-parameter group 
$(L_t)$ on $D$ does not have fixed points.
\end{proof}

\begin{prop}
Suppose that $\dim {\rm Aut}(G) = 1$ and $g \in {\rm Aut}(G)$. Then 
$g(z_1, z_2) = (\alpha z_1 + \beta, \phi(z_1) + a z_2 + b)$ where $\alpha, 
a  \in \mbf C \sm \{0\}, \beta, b \in \mbf C$ and $\phi(z_1)$ is entire.
\end{prop}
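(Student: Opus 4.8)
The plan is to exploit the rigidity of $G$ together with the hypothesis $\dim {\rm Aut}(G) = 1$. First I would identify the identity component ${\rm Aut}_0(G)$ precisely. It contains the one parameter group of translations $T_t(z_1, z_2) = (z_1, z_2 + it)$, which is a copy of $\mbf R$ and has no fixed points in $G$. Since $\dim {\rm Aut}(G) = 1$, the group ${\rm Aut}_0(G)$ is a connected one dimensional Lie group, hence isomorphic either to $\mbf R$ or to the circle. The circle is excluded because the subgroup $(T_t)$ would then have to exhaust ${\rm Aut}_0(G)$ and would be periodic, which the translations are not. Therefore ${\rm Aut}_0(G) = (T_t)$, the full group of imaginary $z_2$-translations.

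Next, for an arbitrary $g \in {\rm Aut}(G)$, conjugation by $g$ is an inner automorphism and so preserves the identity component. Thus $g \circ T_t \circ g^{-1}$ is again a one parameter subgroup of ${\rm Aut}_0(G) = (T_t)$, which forces $g \circ T_t \circ g^{-1} = T_{\la t}$ for a continuous homomorphism $t \mapsto \la t$, $\la \in \mbf R \sm \{0\}$. Writing $g = (g_1, g_2)$ and comparing the two sides of $g \circ T_t = T_{\la t} \circ g$ coordinatewise gives $g_1(z_1, z_2 + it) = g_1(z_1, z_2)$ and $g_2(z_1, z_2 + it) = g_2(z_1, z_2) + i \la t$ for all real $t$. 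Differentiating at $t = 0$ yields $\pa g_1 / \pa z_2 \equiv 0$ and $\pa g_2 / \pa z_2 \equiv \la$; hence $g_1$ depends only on $z_1$ and $g_2(z_1, z_2) = \la z_2 + \phi(z_1)$ for a holomorphic $\phi$. Putting $a = \la$ (which is in fact real, and even positive, since $G$ contains points with $\Re z_2$ arbitrarily negative while the second coordinate of $g$ must stay below a fixed bound) and splitting off the constant term of $\phi$ as $b$, this is exactly the asserted form of $g_2$. Since every slice $\{z_1 = c\} \cap G$ is a nonempty half plane, $g_1$ and $\phi$ descend to well defined holomorphic functions on all of $\mbf C$, i.e. $\phi$ is entire.

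It remains to see that $g_1(z_1) = \alpha z_1 + \beta$ is affine. For this I would apply the same analysis to $g^{-1} \in {\rm Aut}(G)$: it too has the triangular shape $(h_1(z_1), a' z_2 + \psi(z_1))$ with $h_1$ entire. Composing $g^{-1} \circ g = {\rm id}$ and reading off first coordinates gives $h_1 \circ g_1 = {\rm id}$ and $g_1 \circ h_1 = {\rm id}$ on $\mbf C$, so $g_1$ is a biholomorphism of the complex plane. Every automorphism of $\mbf C$ is affine, whence $g_1(z_1) = \alpha z_1 + \beta$ with $\alpha \in \mbf C \sm \{0\}$, completing the argument. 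The one genuinely delicate point is the first step, namely pinning down ${\rm Aut}_0(G)$ exactly as the translation group; once the rigidity relations $\pa g_1 / \pa z_2 = 0$ and $\pa g_2 / \pa z_2 = \la$ are in hand, the remaining assertions follow formally, the affineness of $g_1$ being the classical fact that ${\rm Aut}(\mbf C)$ consists of affine maps.
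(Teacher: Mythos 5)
Your proof is correct and follows essentially the same route as the paper's: you identify ${\rm Aut}(G)^c$ with the translation group $(T_t)$, use its normality in ${\rm Aut}(G)$ to obtain $g \circ T_t = T_{f(t)} \circ g$, read off the two coordinate equations, conclude $g_1 = g_1(z_1)$ is entire via the surjectivity of the projection $\pi(G) = \mbf C_{z_1}$, and get $g_1 \in {\rm Aut}(\mbf C)$ (hence affine) by running the argument for $g^{-1}$ as well. Your one refinement is to observe that $f(t) = \la t$ is a linear homomorphism and to differentiate in $t$, which gives $\pa g_2 / \pa z_2 \equiv \la$ immediately and even shows $a = \la$ is real (indeed positive), a fact the paper only establishes later in Proposition 2.7 via boundary invariance, whereas the paper instead differentiates the functional equation in $z_1, z_2$ to first separate $g_2(z_1, z_2) = \phi(z_1) + \psi(z_2)$ and then shows $\psi$ is affine.
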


\begin{proof}
Since $\dim {\rm Aut}(G) = 1$ it follows that the connected component of 
the identity ${\rm Aut}(G)^c$ must be the group generated by the 
translations $T_t(z_1, z_2) = (z_1, z_2 + it)$ and each such $T_t$ 
evidently has the form mentioned above. So we may suppose that $g \notin 
{\rm Aut}(G)^c$. In this case, note that since ${\rm Aut}(G)^c$ is normal 
in ${\rm Aut}(G)$, it follows that for each $t \in \mbf R$ there exists 
$t' = f(t) \in \mbf R$ such that $g \circ T_t(z_1, z_2) = T_{f(t)} \circ 
g(z_1, z_2)$. If $g(z_1, z_2) = (g_1(z_1, z_2), g_2(z_1, z_2))$ this 
is equivalent to
\begin{align*}
g_1(z_1, z_2 + it) &= g_1(z_1, z_2),\\
g_2(z_1, z_2 + it) &= g_2(z_1, z_2) + i f(t).
\end{align*}
The first equation implies that $\pa g_1 / \pa z_2 \equiv 0$ and hence 
$g_1(z_1, z_2) = g_1(z_1)$. Moreover, if $\pi : \mbf C^2 \rightarrow \mbf 
C_{z_1}$ is the natural projection, then $\pi(G) = \mbf C_{z_1}$ as the 
defining function for $G$ shows. Therefore $g_1(z_1)$ is entire and 
furthermore $g_1(z_1) \in {\rm Aut}(\mbf C)$ since this reasoning applies 
to $g^{-1}$ as well. Hence $g_1(z_1) = \alpha z_1 + \beta$ for some 
$\alpha \in \mbf C \sm \{0\}$. For $g_2$ fix $t \in \mbf R$ arbitrarily 
and differentiate with respect to $z_1, z_2$. This gives
\begin{align*}
\pa g_2 / \pa z_1 (z_1, z_2 + it) &= \pa g_2 / \pa z_1 (z_1, z_2),\\
\pa g_2 / \pa z_2 (z_1, z_2 + it) &= \pa g_2 / \pa z_2 (z_1, z_2)
\end{align*}
both of which hold for all $t \in \mbf R$. From the first equation above 
it can be seen that $\pa g_2 / \pa z_1 (z_1, z_2)$ is independent of 
$z_2$, i.e., $g_2(z_1, z_2) = \phi(z_1) + \psi(z_2)$ for some holomorphic 
$\phi(z_1), \psi(z_2)$. Putting this in the second equation it follows 
that $\psi'(z_2 + it) = \psi'(z_2)$ and therefore $\psi(z_2) = a z_2 + b$ 
for some $a, b \in \mbf C$. Now observe that
\[
Dg(z_1, z_2) =
\begin{pmatrix}
\alpha & 0\\
\phi'(z_1) & a
\end{pmatrix}
\]
which must be non-singular for all $(z_1, z_2) \in G$ and so $\alpha a 
\not= 0$. It can then be noted that $g$ is in fact an automorphism of 
$\mbf C^2$ that stabilises $G$. The iterates of $g$ can also be computed. 
For $n \ge 1$, let $g^n = g \circ g \circ \cdots \circ g \in {\rm Aut}(G)$ 
be the $n$-fold composition of $g$ with itself. From the explicit form of 
$g$ it can be seen that $g^n(z_1, z_2) = (\alpha^n z_1 + \beta_n, \phi_n(z_1) 
+ a^n z_1 + b_n)$ where $\beta_n = \beta (1 - \alpha^n)/(1 - \alpha)$, 
$b_n = b(1 - a^n)/(1 - a)$, and $\phi_n(z_1)$ is entire for all $n \ge 1$. 
Note that it is possible to compute $\phi_n(z_1)$ explicitly in terms of 
$\phi(z_1), \alpha, \beta, a, b$ but this shall not be needed. A similar 
calculation can be done for $n \le -1$.
\end{proof}

\begin{prop}
The function $\phi(z_1)$ is a polynomial and $a \in \mbf R$. 
Moreover $\vert \alpha \vert = 1$ and consequently $\vert a \vert = 1$.
\end{prop}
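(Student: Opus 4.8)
The plan is to use that, by Proposition 2.7, $g$ extends to a polynomial automorphism of $\mbf C^2$ stabilising $G$, so it must carry $\pa G$ onto $\pa G$. Writing the defining function as $r(z) = 2\Re z_2 + P(z_1, \ov z_1)$ and parametrising $\pa G$ by $(z_1, y_2) \in \mbf C \times \mbf R$ through $z_2 = -\tfrac12 P(z_1,\ov z_1) + i y_2$, the condition $r\circ g \equiv 0$ on $\pa D$ becomes an identity in $(z_1, y_2)$. First I would substitute $g(z_1,z_2) = (\alpha z_1 + \beta, \phi(z_1) + a z_2 + b)$ and isolate the dependence on $y_2$: the only term involving $y_2$ is $2\Re(a\, i y_2)$, whose coefficient is $-2\,\Im a$. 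Since the identity holds for every $y_2 \in \mbf R$, this forces $\Im a = 0$, i.e. $a \in \mbf R$. What remains is the functional identity
\[
P(\alpha z_1 + \beta, \ov{\alpha z_1 + \beta}) = a\,P(z_1, \ov z_1) - 2\Re\phi(z_1) - 2\Re b, \qquad z_1 \in \mbf C. \quad (\star)
\]
In $(\star)$ the left side and $a\,P$ are polynomials in $(z_1,\ov z_1)$, so $2\Re\phi(z_1) = \phi(z_1) + \ov{\phi(z_1)}$ is a polynomial; being harmonic and entire, this is possible only if $\phi$ is itself a polynomial. This settles two of the assertions.

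For $|\alpha| = 1$ the main obstacle is the shear term $\beta$, which mixes homogeneous degrees on the left of $(\star)$ and so blocks a direct coefficient comparison. I would remove it by a preliminary normalisation. If $\alpha = 1$ then $|\alpha| = 1$ is immediate, and matching the top homogeneous part of $(\star)$ (which is unaffected both by $\beta$ and by the harmonic $\phi$-terms) gives $a\,P_{\rm top} = P_{\rm top}$, hence $a = 1$ and $|a| = 1$. If $\alpha \ne 1$, the map $z_1 \mapsto \alpha z_1 + \beta$ has the fixed point $z_1^{*} = \beta/(1-\alpha)$, and I would conjugate $g$ by the polynomial automorphism $\Phi(z_1, z_2) = (z_1 - z_1^{*}, \, z_2 + H(z_1 - z_1^{*}))$, where $H$ is the holomorphic polynomial chosen to cancel the harmonic terms produced by the $z_1$-translation. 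Conjugation by $\Phi$ preserves the multipliers $\alpha$ and $a$, sends $\beta$ to $0$, and $\Phi(G)$ is again of standard form $\{2\Re z_2 + \ti P(z_1, \ov z_1) < 0\}$ with $\ti P$ free of harmonic terms. The key point is that $\ti P$ cannot be homogeneous: a homogeneous $\ti P$ of degree $d$ would make $\Phi(G)$ invariant under the dilations $(z_1, z_2) \mapsto (e^{s/d} z_1, e^{s} z_2)$, which together with the translations $T_t$ generate a two dimensional subgroup, contradicting $\dim {\rm Aut}(G) = 1$.

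Finally, in the normalised coordinates $(\star)$ reads $\ti P(\alpha z_1, \ov\alpha\, \ov z_1) = a\,\ti P(z_1,\ov z_1) - 2\Re\ti\phi(z_1) - 2\Re \ti b$, and since every monomial of $\ti P$ carries both a holomorphic and an anti-holomorphic factor, comparing the non-harmonic parts gives $\ti P(\alpha z_1, \ov\alpha\, \ov z_1) = a\,\ti P(z_1,\ov z_1)$. Matching coefficients, every nonzero coefficient $c_{kl}$ of $\ti P$ satisfies $\alpha^{k} \ov\alpha^{\,l} = a$, whence $|\alpha|^{k+l} = |a|$. Because $\ti P$ is non-homogeneous it has monomials of two distinct total degrees $d_1 \ne d_2$, so $|\alpha|^{d_1} = |a| = |\alpha|^{d_2}$, which forces $|\alpha| = 1$ and then $|a| = 1$; as the multipliers are unchanged by $\Phi$, the same holds for the original $g$. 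I expect the delicate step to be the passage to the $\beta = 0$ normal form together with the verification that the normalised polynomial stays non-homogeneous — it is precisely the dimension hypothesis $\dim {\rm Aut}(G) = 1$ that guarantees this and thereby makes the degree comparison decisive.
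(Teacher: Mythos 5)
Your proof is correct, and in the decisive step it takes a genuinely different --- and markedly shorter --- route than the paper. The preliminary assertions match: comparing the coefficient of $y_2$ (the paper's $t$ in (2.4)) gives $a \in \mathbf{R}$ in both arguments, and where the paper identifies $\phi$ by complexifying (2.5) and setting $w = 0$ (using $P(z,0) \equiv 0$), you instead read off from $(\star)$ that $2\Re\phi$ is a polynomial and invoke the standard fact that an entire function with polynomial real part is a polynomial (Borel--Carath\'eodory, or harmonic conjugation); this is a harmless variant. The divergence is in proving $\vert \alpha \vert = 1$. The paper splits into $\beta = 0$ and $\beta \neq 0$: in the first case it compares coefficients of (2.8) across two distinct homogeneous degrees $J_{n-1} < J_n$, exactly as you do; in the second it iterates $g$, obtains the family (2.9) for $g^m$ with $\beta_m = \beta(1-\alpha^m)/(1-\alpha)$, and rules out $\vert\alpha\vert \neq 1$ via the infinite linear systems $(E_m)$, $(E'_m)$, vanishing $2\times 2$ determinants, and a root-of-unity/real-analytic-germ analysis. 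You bypass that entire case by conjugating $g$ with the automorphism $\Phi$ that moves the fixed point $\beta/(1-\alpha)$ of $z_1 \mapsto \alpha z_1 + \beta$ to the origin and then shears away the harmonic terms created by the translation --- an automorphism of the same shape as (2.2). This conjugation indeed preserves the multipliers $\alpha$ and $a$, keeps the domain in standard form with $\ti P$ free of harmonic terms (and $\ti P \not\equiv 0$, since the top homogeneous part $P^{J_n}$, being purely mixed, survives the translation), and reduces everything to the $\beta = 0$ situation, where the relations $\alpha^k \ov{\alpha}^l = a$ on every nonzero monomial of $\ti P$, combined with non-homogeneity, force $\vert\alpha\vert = \vert a \vert = 1$ at once. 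Your appeal to $\dim{\rm Aut}(G) = 1$ to exclude homogeneity of $\ti P$ is legitimate --- it is the same dilation argument the paper uses for $P$, and it transfers to $\ti G$ because $\ti G \backsimeq G$ --- and your separate treatment of $\alpha = 1$ is sound, since the translation $z_1 \mapsto z_1 + \beta$ leaves the degree-$J_n$ part unchanged while $-2\Re\phi$ contributes only harmonic terms, so comparing mixed monomials gives $a = 1$. What the linearization buys is the elimination of the paper's iterate-and-determinant machinery in sub-cases (a) and (b); nothing downstream is lost, since Lemma 2.8 and Proposition 2.9 use only (2.5), $a \in \mathbf{R}$ and $\vert a \vert = 1$, all of which your argument supplies.
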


\begin{proof}
Since $g$ extends to an automorphism of $\mbf C^2$, it must preserve $\pa 
G$ as well. Hence
\[
2 \Re(\phi(z_1) + a z_2 + b) + P(\alpha z_1 + \beta, \ov {\alpha z_1 + 
\beta}) = 0
\]
whenever $2 \Re z_2 + P(z_1, \ov z_1) = 0$. For $z_1 \in \mbf C$ and $t 
\in \mbf R$ note that the point $(z_1, -P(z_1, \ov z_1)/2 + it) \in \pa 
G$. Therefore
\[
2 \Re \Big( \phi(z_1) + a(-P(z_1, \ov z_1)/2 + it) + b \Big) + P(\alpha 
z_1 + \beta, \ov {\alpha z_1 + \beta}) = 0
\]
for all $z_1 \in \mbf C$ and $t \in \mbf R$. Let $a = \mu + i \nu$ so that
\begin{equation}
2 \Re (\phi(z_1) + b) + 2 (-\mu P(z_1, \ov z_1)/2 - t \nu) + P(\alpha z_1 
+ \beta, \ov {\alpha z_1 + \beta}) = 0. 
\end{equation}
By comparing the coefficient of $t$ on both sides it follows that $\nu = 
0$, i.e., $a = \mu \in \mbf R$. With this (2.4) becomes
\begin{equation}
2 \Re(\phi(z_1) + b) - \mu P(z_1, \ov z_1) + P(\alpha z_1 + \beta, \ov 
{\alpha z_1 + \beta}) = 0
\end{equation}
for all $z_1 \in \mbf C$. It is now possible to identify $\phi(z_1)$; to 
do this we simply write $z$ in place of $z_1$ and allow $z, \ov z$ to vary 
independently, i.e., consider
\[
F(z, w) = \phi(z) + b + \ov{\phi(\ov w)} + \ov b - \mu P(z, w) + P(\alpha 
z + \beta, \ov \alpha w + \ov \beta)
\]
which is holomorphic in $(z, w) \in \mbf C^2$. Now (2.5) shows that $F = 
0$ on $\{w = \ov z\}$ which is maximally totally real and hence $F \equiv 
0$ everywhere by the uniqueness theorem. Putting $w = 0$ and noting that 
$P(z, 0) \equiv 0$ (since $P(z, \ov z)$ does not have pure terms) it 
follows that
\begin{equation}
\phi(z) = - \ov{\phi(0)} - b - \ov b - P(\alpha z + \beta, \ov \beta)
\end{equation}
which shows that $\phi(z)$ is a polynomial. 

\medskip

This expression for $\phi(z)$ can be used in (2.5) to get
\begin{equation}
\mu P(z, \ov z) = P(\alpha z + \beta, \ov {\alpha z + \beta}) - 
P(\alpha z + \beta, \ov \beta) - \ov{P(\alpha z + \beta, \ov \beta)} - 
(\phi(0) + b + \ov{\phi(0) +  b})
\end{equation}
for all $z \in \mbf C$. Let $P_j, P_{j \ov q}$ denote derivatives of 
the form $\pa^j P / \pa z^j$ and $\pa^{j + q} P / \pa z^j \pa \ov z^q$ 
respectively. Note that
\[
P(\alpha z + \beta, \ov \alpha \ov z + \ov \beta) =  P(\beta, \ov 
\beta)  + \sum_{j \ge 1} \Big( \alpha^j P_j(\beta, \ov \beta) 
\frac{z^j}{j!}  + \ov \alpha^j \ov {P_j(\beta, \ov \beta)} \frac {\ov 
z^j}{j!} \Big)\\
+ \sum_{j, q > 0} P_{j \ov q}(\beta, \ov \beta) \alpha^j \ov \alpha^q 
\frac{z^j \ov z^q}{j!q!} 
\]
the right side of which will be written as $P(\beta, \ov \beta) + I + II$ 
where $I$ consists of all the pure terms and $II$ contains 
only mixed terms of the form $z^j \ov z^q$ with both $j, q > 0$. Putting 
this in (2.7) gives
\[
\mu P(z, \ov z) = P(\beta, \ov \beta) + I + II - P(\alpha z + \beta, \ov 
\beta) - \ov{P(\alpha z + \beta, \ov \beta)} - (\phi(0) + b + \ov{\phi(0) 
+  b}).
\]
The left side above has no harmonic terms and therefore
\[
P(\beta, \ov \beta) + I - P(\alpha z + \beta, \ov 
\beta) - \ov{P(\alpha z + \beta, \ov \beta)} - (\phi(0) + b + \ov{\phi(0)
+ b}) \equiv 0
\]
which shows that
\begin{equation}
\mu P(z, \ov z) = \sum_{j, q > 0} P_{j \ov q} (\beta, \ov \beta) \alpha^j 
\ov \alpha^q \frac{z^j \ov z^q}{j!q!}.
\end{equation}
The point to note now is that in deriving (2.8) only the observation that 
$g$ stabilises the boundary of $G$ was used and no special properties of 
$\phi(z)$ played a role. Exactly the same reasoning can therefore be 
applied to $g^m$, which also stabilises $\pa G$, and this shows 
that
\begin{equation}
\mu^m P(z, \ov z) = \sum_{j, q > 0} P_{j \ov q}(\beta_m, \ov \beta_m) 
(\alpha^m)^j (\ov \alpha^m)^q \frac{z^j \ov z^q}{j! q!}
\end{equation}
for all $m \in \mbf Z$. Note that if $P(z, \ov z)$ is homogeneous of 
degree $l > 0$ then $G$ will admit a one-parameter subgroup $s \mapsto 
S_s(z_1, z_2) = (\exp(s/l)z_1, \exp(s) z_2), s \in \mbf R$ in addition to 
the translations $T_t$ and hence $\dim {\rm Aut}(G) \ge 2$. Therefore, let 
\[
P(z, \ov z) = P^{J_1}(z, \ov z) + P^{J_2}(z, \ov z) + \ldots + P^{J_n}(z, 
\ov z)
\]
be the decomposition of $P(z, \ov z)$ into homogeneous summands of degree 
$J_i > 0$ ($1 \le i \le n$) where $J_1 < J_2 < \ldots < J_n$. Let
\[
P^{J_i}(z, \ov z) = \sum_{k + l = J_i} C^{J_i}_{k \ov l} \; z^k \ov z^l
\]
for $1 \le i \le n$. Two cases need to be considered:

\medskip

\no {\it Case 1:} When $\beta = 0$ rewrite (2.5) as
\[
\mu P(z, \ov z) = 2 \Re (\phi(z) + b) + P(\alpha z, \ov \alpha \ov z)
\]
and observe that the left side does not have harmonic terms. Therefore 
$\Re (\phi(z) + b) \equiv 0$. Pick pairs of positive indices $(k, l), (j, q)$ 
such that $k + l = J_n, j + q = J_{n - 1}$ and for which $C^{J_{n - 1}}_{j 
\ov q}, C^{J_n}_{k \ov l}$ are both non-zero. Comparing the coefficients 
of $z^k \ov z^l$ and $z^j \ov z^q$ in (2.8) gives
\[
\mu = \alpha ^k \ov \alpha^l = \alpha^j \ov \alpha^q
\]
which implies that
\[
\vert \mu \vert = \vert \alpha \vert^{k + l} = \vert \alpha \vert^{J_n} = 
\vert \alpha \vert^{j + q} = \vert \alpha \vert^{J_{n - 1}}.
\]
Since $J_{n - 1} < J_n$ it follows that $\vert \alpha \vert = 1$ and 
consequently $\vert \mu \vert = 1$.

\medskip

\no {\it Case 2:} Suppose that $\beta \not= 0$. Pick $j, q > 0$ such 
that $j + q = J_{n - 1}$ and for which $C^{J_{n - 1}}_{j \ov q} \not= 0$. 
Compare the coefficient of $z^j \ov z^q$ in (2.9) to get
\begin{align}
\mu^m C^{J_{n - 1}}_{j \ov q} &= P_{j \ov q}(\beta_m, \ov \beta_m) 
\frac{(\alpha^m)^j (\ov \alpha^m)^q}{j! q!}, \notag \\
 	&=(\alpha^m)^j (\ov \alpha^m)^q C^{J_{n - 1}}_{j \ov q} + 
P^{J_n}_{j \ov q}(\beta_m, \ov \beta_m) \frac{(\alpha^m)^j (\ov 
\alpha^m)^q}{j! q!}
\end{align}
for all $m \in \mbf Z$. To obtain a contradiction, assume that $0 < \vert 
\alpha \vert < 1$ and consider the above equation for $m \ge 1$. 
Similar arguments will work in the case $\vert \alpha \vert > 1$ by 
considering $m \le -1$. The argument again splits into two parts.

\medskip

\no {\it Sub-Case (a):} Suppose that $P^{J_n}(z, \ov z) = C^{J_n}_{\sigma 
\ov \sigma} z^{\sigma} \ov z^{\sigma}$. Comparing the coefficient of 
$z^{\sigma} \ov z^{\sigma}$ in (2.8) shows that $\mu = \alpha^{\sigma} \ov 
{\alpha}^{\sigma} = \vert \alpha \vert^{2 \sigma}$. Using this and 
recalling that $\beta_m = \beta(1 - \alpha^m)/(1 - \alpha)$ allows (2.10) 
to be rewritten as
\[
(E_m): \qquad \Big( 1 - (\alpha^m)^{\sigma - j} (\ov \alpha^m)^{\sigma - 
q} \Big) C^{J_{n - 1}}_{j \ov q} + C^{J_n}_{\sigma \ov \sigma} (\ast) 
\Big( \frac{\beta}{1 - \alpha} \Big)^{J_n - J_{n - 1}} 
(1 - \alpha^m)^{\sigma - j} (1 - \ov \alpha^m)^{\sigma - q} = 0
\]
for all $m \ge 1$, where $(\ast)$ denotes an unimportant but non-zero 
constant depending on $\sigma, j, q$. Then $(E_m)$ forms an infinite system of 
linear equations in the unknowns $C^{J_{n - 1}}_{j \ov q}$ and 
$C^{J_n}_{j \ov q}$. The rank of any pair $(E_m), (E_{m'})$ cannot be two 
as otherwise both $C^{J_{n - 1}}_{j \ov q}$ and $C^{J_n}_{\sigma \ov 
\sigma}$ will vanish which cannot be true. Therefore 
\[
\begin{vmatrix}
1 - (\alpha^m)^{\sigma - j}(\ov \alpha^m)^{\sigma - q} & (1 - 
\alpha^m)^{\sigma - j} (1 - \ov \alpha^m)^{\sigma - q} \\
1 - (\alpha^{2m})^{\sigma - j}(\ov \alpha^{2m})^{\sigma - q} & (1 - 
\alpha^{2m})^{\sigma - j} (1 - \ov \alpha^{2m})^{\sigma - q}
\end{vmatrix}
 = 0
\]
for all $m \ge 1$. After removing the non-vanishing factors $1 - 
(\alpha^m)^{\sigma - j}(\ov \alpha^m)^{\sigma - q}$ and $(1 -
\alpha^m)^{\sigma - j} (1 - \ov \alpha^m)^{\sigma - q}$ from the columns 
respectively, it follows that
\begin{equation}
(1 + \alpha^m)^{\sigma - j} (1 + \ov \alpha^m)^{\sigma - q} - 1 - 
(\alpha^m)^{\sigma - j} (\ov \alpha^m)^{\sigma - q} = 0
\end{equation}
for all $m \ge 1$. Hence the complex valued function
\[
f(z, \ov z) = (1 + z)^{\sigma - j} (1 + \ov z)^{\sigma - q} - (1 + 
z^{\sigma - j} \ov z^{\sigma - q})
\]
vanishes at $\alpha^m$ for all $m \ge 1$. However it can be seen that
\begin{align*}
f(z, \ov z) &= (\sigma - j)z + (\sigma - q) \ov z + \ldots\\
	&= \Big( x(J_n - J_{n - 1}) + \ldots \Big) + i\Big( y(q - j) + 
	\ldots \Big)
\end{align*}
where the lower dots indicate terms of higher order and $z = x + iy$. In 
case $j \not= q$ it follows that the real and imaginary parts of $f(z, \ov 
z)$ vanish along smooth real analytic arcs that intersect at the origin 
transversally. Hence the origin is an isolated zero of $f(z, \ov z)$. On 
the other hand $\vert \alpha \vert^m \ra 0$ as $m \ra \infty$ and this 
forces $\alpha = 0$ which is a contradiction. In case $j = q$, (2.11) 
becomes
\begin{equation}
\vert 1 + \alpha^m \vert^{2(\sigma - j)} = 1 + \vert \alpha^m 
\vert^{2(\sigma - j)}
\end{equation}
for all $m \ge 1$. This leads to the consideration of the real analytic 
germ at the origin defined by
\[
A = \{z \in \mbf C : \vert 1 + z \vert^{2(\sigma - j)} - 1 - \vert z 
\vert^{2(\sigma - j)} = 0\}.
\]
By looking at the lowest order terms, it follows that $A$ is smooth at the 
origin. Fix a small disc $\Delta(0, \epsilon)$ so that $A \cap \Delta(0, 
\epsilon)$ is a smooth real analytic arc and define $\theta : A \cap 
\Delta(0, \epsilon) \ra S^1$ by $\theta(z) = z / \vert z \vert$. When 
$\sigma - j = 1$, it can be seen that $A$ is just the $y$-axis and 
therefore the range of $\theta$ is a two point set on $S^1$. When $\sigma 
- j > 1$ the smoothness of $A$ implies that the range of $\theta$ is the 
disjoint union of two open arcs of total length strictly less than $2 
\pi$. Since $\alpha^m \in A \cap \Delta(0, \epsilon)$ for $m \gg 1$ it 
follows that $\alpha / \vert \alpha \vert$ must be a root of 
unity as otherwise the set $\{ \alpha^m / \vert \alpha \vert^m : m \ge 1 
\}$, which is contained in the range of $\theta$, will be dense in $S^1$. 
So let $\alpha^{\eta} \in \mbf R$ for some positive integer $\eta$. Now 
let $m$ range over all integral multiples of $\eta$ in (2.12) and this 
gives
\[
\Big( 1 + (\alpha^\eta)^m \Big)^{2(\sigma - j)} = 1 + \Big( 
(\alpha^\eta)^m \Big)^{2(\sigma - j)}
\]
for all $m \ge 1$. The polynomial $(1 + \delta)^{2(\sigma - j)} = 1 + 
\delta^{2(\sigma - j)}$ in the real indeterminate $\delta$ hence has 
infinitely many roots given by $(\alpha^\eta)^m$ for all $m \ge 1$. It 
follows that $(\alpha^\eta)^{m_1} = (\alpha^\eta)^{m_2}$ for some $m_1 
\not= m_2$ and this shows that either $\vert \alpha \vert = 0$ or $1$ 
which is a contradiction.

\medskip

\no {\it Sub-Case (b):} When $P^{J_n}(z, \ov z)$ is no longer a monomial 
such as the one considered above, pick $k, l > 0, k \not= l$ such that $k 
+ l = J_n$ and for which $C^{J_n}_{k \ov l} \not= 0$. Compare the 
coefficients of $z^k \ov z^l$ and $z^l \ov z^k$ in (2.8) to get $\mu = 
\alpha^k \ov \alpha^l = \alpha^l \ov \alpha^k$. If $\alpha = \vert \alpha 
\vert e^{i \theta}$ it follows that $\theta \in 2 \pi \mbf Q$, i.e., 
$\alpha / \vert \alpha \vert$ is a root of unity. Let $r = \vert \alpha 
\vert$. Now choose $j, q > 0$ with $j + q = J_{n - 1}$ such that $C^{J_{n 
- 1}}_{j \ov q} \not= 0$ and assume that $\vert \alpha \vert \not= 1$. 
Consider (2.10) for $m \ge 1$. Since $\mu = \alpha^k \ov \alpha^l$ and 
$\beta_m = \beta(1 - \alpha^m)/(1 - \alpha)$, it can be rewritten as
\[
\Big(1 - \Big(\frac{\mu}{\alpha^j \ov \alpha^q} \Big)^m \Big) C^{J_{n - 
1}}_{j \ov q} + \sum_{\sigma, \tau} C^{J_n}_{\sigma \ov \tau} 
(\ast) \Big(\frac{\beta}{1 - \alpha} \Big)^{\sigma - j} 
\Big(\frac{\ov \beta}{1 - \ov \alpha} \Big)^{\tau - q} 
(1 - \alpha^m)^{\sigma - j} (1 - \ov \alpha)^{\tau - q} = 0
\]
for all $m \ge 1$, where the $(\ast)$'s are non-zero constants that depend 
on $\sigma, \tau, j, q$ and the sum extends over only those pairs 
$(\sigma, \tau)$ for which $\sigma \ge j$ and $\tau \ge q$. If $\theta = 2 
\pi (\delta / \eta)$ for relatively 
prime integers $\delta, \eta$, we may let $m$ vary over all the positive 
multiples of $\eta$ to get
\[
(E'_m) : \qquad  \Big(1 - (r^{\eta})^{m(J_n - J_{n - 1})} \Big) C^{J_{n - 
1}}_{j \ov q} + \Big(1 - (r^{\eta})^m \Big)^{J_n - J_{n - 1}} 
\underbrace{\Big(\sum_{\sigma, \tau} C^{J_n}_{\sigma \ov \tau} (\ast) \Big( 
\frac{\beta}{1 - \alpha} \Big)^{\sigma - j} \Big( \frac{\ov \beta}{1 - \ov 
\alpha} \Big)^{\tau - q} \Big)}_{C} = 0
\]
for all $m \ge 1$. As above, the $(E'_m)$ form an infinite linear system 
in the unknowns $C^{J_{n - 1}}_{j \ov q}$ and $C$. The rank of the pair 
$(E'_m)$ and $(E'_{2m})$ must be less than two for all $m \ge 1$ as 
otherwise $C^{J_{n - 1}}_{j \ov q} = 0$ in particular, which is false by 
assumption. Therefore
\[
\begin{vmatrix}
1 - (r^{\eta})^{m(J_n - J_{n - 1})} & (1 - (r^{\eta})^m)^{J_n - J_{n - 
1}}\\
1 - (r^{\eta})^{2m(J_n - J_{n - 1})} & (1 - (r^{\eta})^{2m})^{J_n - J_{n - 
1}}
\end{vmatrix}
= 0
\]
for all $m \ge 1$. Factor the non-vanishing terms $1 - (r^{\eta})^{m(J_n - 
J_{n - 1})}$ and $(1 - (r^{\eta})^m)^{J_n - J_{n - 1}}$ from the columns 
and this implies
\[
1 + r^{\eta m(J_n - J_{n - 1})} - \Big(1 + (r^{\eta m}) \Big)^{J_n - J_{n 
- 1}} 
= 0
\]
for all $m \ge 1$. The reasoning used in sub-case (a) now shows that $r = 
0$ or $1$ which is a contradiction.

\medskip

To conclude, pick $k, l > 0$ such that $k + l = J_n$ (where possibly $k = 
l$) and compare the coefficient of $z^k \ov z^l$ in (2.8) to get $\mu = 
\alpha^k \ov \alpha^l$ and thus $\vert a \vert = \vert \mu \vert = \vert 
\alpha \vert^{k + l} = 1$.

\end{proof}

\begin{lem}
Suppose that $\dim {\rm Aut}(G) = 1$. Let $g(z_1, z_2) = (g_1(z_1, z_2), 
g_2(z_1, z_2)) = (\alpha z_1 + \beta, \phi(z_1) + a z_2 + b) \in {\rm 
Aut}(G)$ and $q = (q_1, q_2) \in G$ be an arbitrary point. Define
\[
E = 2 \Re (g_2(q_1, q_2)) + P(g_1(q_1, q_2), \ov {g_1(q_1, q_2)}).
\]
Then $E$ is independent of the parameters involved in $g(z_1, z_2)$ and in 
fact $\vert E \vert = \vert 2 \Re q_2 + P(q_1, \ov q_1) \vert$.
\end{lem}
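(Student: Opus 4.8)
The plan is to recognise that $E$ is simply the defining expression of $G$ evaluated at the image point $g(q)$, and then to feed in the boundary-invariance identity (2.5) that was already extracted during the proof of Proposition 2.7. Write $\Phi(z_1, z_2) = 2\Re z_2 + P(z_1, \ov z_1)$, so that $G = \{\Phi < 0\}$ and $E = \Phi(g(q))$. Substituting the explicit form $g(z_1, z_2) = (\alpha z_1 + \beta, \phi(z_1) + a z_2 + b)$ supplied by Proposition 2.6 gives
\[
E = 2\Re\big(\phi(q_1) + a q_2 + b\big) + P(\alpha q_1 + \beta, \ov{\alpha q_1 + \beta}).
\]
By Proposition 2.7 the coefficient $a = \mu$ is real, whence $2\Re(a q_2) = 2\mu \Re q_2$ and the display becomes $E = 2\Re(\phi(q_1) + b) + 2\mu \Re q_2 + P(\alpha q_1 + \beta, \ov{\alpha q_1 + \beta})$.

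The key step is to invoke (2.5), which was derived as an identity valid for \emph{every} $z_1 \in \mbf C$ --- not merely on $\pa G$ --- by the totally real uniqueness argument inside Proposition 2.7. Evaluating (2.5) at $z_1 = q_1$ yields
\[
P(\alpha q_1 + \beta, \ov{\alpha q_1 + \beta}) = \mu P(q_1, \ov q_1) - 2\Re(\phi(q_1) + b).
\]
Substituting this into the expression for $E$, the two occurrences of $2\Re(\phi(q_1) + b)$ cancel and we are left with $E = 2\mu \Re q_2 + \mu P(q_1, \ov q_1) = \mu\,(2\Re q_2 + P(q_1, \ov q_1))$. Thus $E = a\,(2\Re q_2 + P(q_1, \ov q_1))$, so that $E$ depends on $g$ only through the single scalar $a$ and not on $\alpha, \beta, \phi$ or $b$. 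Finally, Proposition 2.7 also records that $\vert a \vert = \vert \mu \vert = 1$, so passing to moduli gives $\vert E \vert = \vert 2\Re q_2 + P(q_1, \ov q_1) \vert$, which is the assertion.

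I expect no genuine obstacle here: the statement is a short algebraic consequence of the normalisations already established for $g$. The only point deserving care is that (2.5) must be used as an identity in $z_1$ throughout $\mbf C$ rather than only on $\pa G$; this is precisely what the uniqueness-theorem step in Proposition 2.7 guarantees, and it is exactly this global validity that lets the $\phi$ and $b$ contributions cancel cleanly at the arbitrary interior point $q$.
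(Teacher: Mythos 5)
Your proof is correct and is essentially the paper's own argument: both substitute the explicit form of $g$, use $a \in \mbf R$ and the identity (2.5) at $z_1 = q_1$ to obtain $E = a\,(2\Re q_2 + P(q_1, \ov q_1))$, and conclude with $\vert a \vert = 1$. One small mis-attribution: the validity of (2.5) for every $z_1 \in \mbf C$ comes directly from the fact that $\pa G$ is a graph over all of $\mbf C_{z_1} \times i\mbf R$ (every $z_1$ occurs as the first coordinate of a boundary point), not from the uniqueness-theorem step, which is invoked in Proposition 2.7 only for the complexified function $F(z,w)$ used to identify $\phi$.
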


\begin{proof}
Observe that
\begin{align*}
E &= 2 \Re(\phi(q_1) + a q_2 + b) + P(\alpha q_1 + \beta, \ov{\alpha q_1 + 
\beta})\\
 &= 2 a \Re q_2 + 2 \Re(\phi(q_1) + b) + P(\alpha q_1 + \beta, \ov{\alpha 
q_1 + \beta})\\
 &= 2 a \Re q_2 + a P(q_1, \ov q_1)
\end{align*}
where the second equality uses the fact that $a \in \mbf R$ while the 
third follows from (2.5). It remains to note that $\vert a \vert = 1$ and 
this completes the proof.
\end{proof}

\begin{prop}
The dimension of ${\rm Aut}(D)$ is at least two.
\end{prop}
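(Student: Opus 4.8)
\no The plan is to argue by contradiction. Since $\dim {\rm Aut}(D) \ge 1$ has already been established, suppose that $\dim {\rm Aut}(D) = 1$; because $g : G \ra D$ is biholomorphic this forces $\dim {\rm Aut}(G) = 1$ as well. Consequently every element of ${\rm Aut}(G)$ has the normalized form supplied by Propositions 2.6--2.8 (the translations $T_t$ being the special case $\alpha = a = 1$, $\beta = 0$, $\phi \equiv 0$, $b = it$), so Lemma 2.10 is available for each of them. Put $q_0 = (q_{0,1}, q_{0,2}) = g^{-1}(p) \in G$, and for each $j$ set $h_j = g^{-1} \circ \phi_j \circ g \in {\rm Aut}(G)$, so that $h_j(q_0) = g^{-1} \circ \phi_j(p) = (a_j, b_j)$. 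Applying Lemma 2.10 to $h_j$ at the point $q_0$ yields
\[
\vert 2 \ep_j \vert = \vert 2 \Re b_j + P(a_j, \ov a_j) \vert = \vert 2 \Re q_{0,2} + P(q_{0,1}, \ov q_{0,1}) \vert =: 2 c_0,
\]
a constant independent of $j$, with $c_0 > 0$ because $q_0 \in G$. Thus $\vert \ep_j \vert > c_0 > 0$ uniformly in $j$, and Proposition 2.5 now shows that $\cal X$ vanishes to finite order at $p_{\infty}$.

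\no The conceptual content of this reduction is that, when $\dim {\rm Aut}(G) = 1$, Lemma 2.10 says the automorphisms of $G$ preserve $\vert 2 \Re z_2 + P(z_1, \ov z_1) \vert$. The orbit $\{(a_j, b_j)\}$ is therefore pinned to a single level set of the defining function and can only run off to the point at infinity of $\pa G$; it never genuinely approaches the finite boundary. This is exactly the obstruction that keeps $\vert \ep_j \vert$ bounded away from zero and supplies the hypothesis needed in Proposition 2.5.

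\no It remains to turn the finite-order vanishing of $\cal X$ into a contradiction, and here I would invoke the analysis of a holomorphic tangential vector field of parabolic type. By Proposition 2.3 the field $\cal X$ is holomorphic near $p_{\infty}$, is tangent to the smooth real analytic finite type hypersurface $\pa D$, and vanishes to some finite order $k \ge 1$ at $p_{\infty}$. Writing $\cal X$ in normal coordinates and expanding the tangency relation along $\pa D$ according to homogeneous weights, the leading part of $\cal X$ is forced into the shape of a dilation generator, and this is compatible with $G$ only when the polynomial $P$ is homogeneous; equivalently, rescaling $G$ along the escaping orbit $(a_j, b_j)$ produces a homogeneous limit domain and pins $G$ itself to be homogeneous, say of degree $l$. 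But as observed in the proof of Proposition 2.8, when $\dim {\rm Aut}(G) = 1$ the polynomial $P$ cannot be homogeneous, for otherwise $G$ would already admit the one parameter group $S_s(z_1, z_2) = (\exp(s/l) z_1, \exp(s) z_2)$ in addition to the $T_t$, giving $\dim {\rm Aut}(G) \ge 2$. This contradiction proves $\dim {\rm Aut}(D) \ge 2$.

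\no The reduction via Lemma 2.10 and the appeal to Proposition 2.5 are the routine steps. The main obstacle is the last passage: showing that a nonzero holomorphic vector field tangent to a finite type real analytic hypersurface and vanishing to finite order at $p_{\infty}$ can exist only when the model $G$ is homogeneous. This is the heart of the parabolic-field analysis, and it is precisely where the finite type and real analyticity of $\pa D$ near $p_{\infty}$, together with the explicit polynomial structure of $P$, enter decisively in matching the homogeneous weights of $\cal X$ against those of the defining function of $G$.
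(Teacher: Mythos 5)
Your first half is exactly the paper's argument: writing $(a_j,b_j) = (g^{-1}\circ\phi_j\circ g)(g^{-1}(p))$ with $g^{-1}\circ\phi_j\circ g \in {\rm Aut}(G)$, applying the invariance lemma (Lemma 2.8 in the paper; your ``Lemma 2.10'') to get $\vert 2\Re b_j + P(a_j, \ov a_j)\vert = \vert 2\Re q_2 + P(q_1,\ov q_1)\vert > 0$ uniformly, and feeding this into Proposition 2.5 to conclude that $\cal X$ vanishes to finite order at $p_{\infty}$, with the orbit escaping only to the point at infinity of $\pa G$. Up to here there is nothing to object to.

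The second half, however, has a genuine gap, and it sits precisely where you yourself flag ``the main obstacle.'' The paper does not deduce homogeneity of $P$ from the finite-order vanishing of $\cal X$; what the Bedford--Pinchuk parabolic-field machinery delivers is the equivalence $D \backsimeq \{\vert z_1\vert^2 + \vert z_2\vert^{2m} < 1\}$, and its essential \emph{input} is the parabolicity of the action of $(L_t)$ on $D$, i.e., that $L_t(p') \ra p_{\infty}$ for \emph{every} $p' \in D$ as $\vert t \vert \ra \infty$. You never establish this. The paper gets it by a separate argument: lemma 3.5 of \cite{BP4} shows $p_{\infty}$ is an isolated point of the zero set of $\cal X$ on $\pa D$, the cluster set of the point at infinity of $\pa G$ under $g$ is connected and hence equals $\{p_{\infty}\}$, so $g$ extends continuously with $g(\infty) = p_{\infty}$, and then $L_t(p') = g(q'_1, q'_2 + it) \ra p_{\infty}$ for all $p'$. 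Only with parabolicity in hand do the arguments of \cite{BP4} apply, yielding $\dim{\rm Aut}(D) = 4$, the contradiction with $\dim = 1$. Your substitute mechanism is moreover a non sequitur as stated: rescaling $G$ along the escaping orbit produces a limit governed by the top-degree homogeneous part of $P$, but the homogeneity of a scaling limit at infinity says nothing about $P$ itself, and the dilations $S_s$ belong to ${\rm Aut}(G)$ only if $P$ is already homogeneous --- which is circular. So while your intended final contradiction (homogeneous $P$ gives the extra subgroup $S_s$, hence $\dim{\rm Aut}(G) \ge 2$) would be valid \emph{if} homogeneity were proved, the decisive step is asserted rather than carried out, and the route the paper actually takes (parabolicity plus the \cite{BP4} classification, contradicting via $\dim = 4$) is missing from your write-up.
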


\begin{proof}
Suppose not. Then $\dim {\rm Aut}(D) = \dim {\rm Aut}(G) = 1$. Observe 
that $(a_j, b_j) = g^{-1} \circ \phi_j(p) = g^{-1} \circ \phi_j \circ g 
(g^{-1}(p))$ where $g^{-1} \circ \phi_j \circ g \in {\rm Aut}(G)$ for all 
$j \ge 1$. Let $g^{-1}(p) = q = (q_1, q_2) \in G$. It follows from 
propositions 2.6, 2.7 and lemma 2.8 that
\[
\vert 2 \Re b_j + P(a_j, \ov a_j) \vert = \vert 2 \Re q_2 + P(q_1, \ov 
q_1) \vert > 0
\]
for all $j$. This also shows that $\{ g^{-1} \circ \phi_j(p) \}$ can 
cluster only at the point at infinity in $\pa G$. Now proposition 2.5 
shows that $\cal X(p_{\infty}) = 0$ and 
by lemma 3.5 in \cite{BP4} the intersection of the zero set of $\cal X$ 
with $\pa D$ contains $p_{\infty}$ as an isolated point. Moreover $\cal X$ 
does not vanish in $D$ as the action of $(L_t)$ on $D$ does not have fixed 
points. The sequence $g^{-1} \circ \phi_j(p)$ converges to the point at 
infinity in $\pa G$ and its image under $g$, namely $\phi_j(p)$ 
converges to $p_{\infty}$. Since the cluster set of the point at infinity 
in $\pa G$ under $g$ is connected it must equal $\{p_{\infty}\}$. By 
defining $g(\infty) = p_{\infty}$, the mapping $g : G \cup \{\infty\} 
\ra D \cup \{p_{\infty}\}$ is therefore continuous. Now
\[
L_t(p) = L_t \circ g(q_1, q_2) = g \circ T_t(q_1, q_2) = 
g(q_1, q_2 + i t)
\]
shows that $\lim_{\vert t \vert \ra \infty} L_t(p) = p_{\infty}$. For any 
other $p' \in D$ and $g^{-1}(p') = (q'_1, q'_2) \in G$ it follows 
that
\[
L_t(p') = L_t \circ g(q'_1, q'_2) = g \circ T_t(q'_1, q'_2) = 
g(q'_1, q'_2 + i t)
\]
where $g(q'_1, q'_2 + i t) \ra p_{\infty}$ since $\vert (q'_1, 
q'_2 + i t) \vert \ra \infty$. Thus $\lim_{\vert t \vert \ra \infty} 
L_t(p') = p_{\infty}$. This establishes the parabolicity of the action of 
$(L_t)$ on $D$ and now the arguments in \cite{BP4} show that $D \backsimeq 
\{(z_1, z_2) \in \mbf C^2: \vert z_1 \vert^2 + \vert z_2 \vert^{2m} < 1\}$ 
for some integer $m \ge 1$. This shows that $\dim{\rm Aut}(D) = 4$ which 
is a contradiction.
\end{proof}

\section{Model domains when ${\rm Aut}(D)$ is two dimensional}

\no Since $g : G \ra D$ is biholomorphic and $G$ is invariant under 
the translations $T_t(z_1, z_2) = (z_1, z_2 + it), t \in \mbf R$, it 
follows that if ${\rm Aut}(D)^c$ is abelian then it must be isomorphic to 
$\mbf R^2$ or $\mbf R \times S^1$. The model domains corresponding to 
these cases can be computed as follows.

\begin{prop}
Let $D$ be as in the main theorem and suppose that $\dim {\rm Aut}(D) = 
2$. If ${\rm Aut}(D)^c$ is abelian then either $D \backsimeq \cal D_1 
= \{(z_1, z_2) \in \mbf C^2 : 2 \Re z_2 + P_1(\Re z_1) < 0 \}$ or $D 
\backsimeq \cal D_2 = \{(z_1, z_2) \in \mbf C^2 : 2 \Re z_2 + 
P_2(\vert z_1 \vert^2) < 0\}$ for some polynomials $P_1(\Re z_1)$ and 
$P_2(\vert z_1 \vert^2)$ that depend only on $\Re z_1$ and $\vert z_1 
\vert^2$ respectively.
\end{prop}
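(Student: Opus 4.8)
The plan is to exploit the one-parameter group $T_t(z_1,z_2)=(z_1,z_2+it)$ that already sits inside $\mathrm{Aut}(G)^c$, where $g:G\ra D$ is the biholomorphism constructed in Section 2 and $G=\{2\Re z_2+P(z_1,\ov z_1)<0\}$ with $P$ free of harmonic terms. Since $\dim\mathrm{Aut}(G)=2$ and $\mathrm{Aut}(G)^c$ is abelian, its Lie algebra is spanned by the field $X_1=i\,\pa/\pa z_2$ generating $T_t$ together with a second field $Y$ satisfying $[X_1,Y]=0$, and the connected group is accordingly $\mbf R^2$ or $\mbf R\times S^1$. First I would determine the shape of $Y$. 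Writing $Y=a_1\,\pa/\pa z_1+a_2\,\pa/\pa z_2$ in its $(1,0)$-components, the vanishing of $[X_1,Y]$ forces $\pa a_1/\pa z_2=\pa a_2/\pa z_2=0$, so $a_1=a_1(z_1)$ and $a_2=a_2(z_1)$. Tangency of $\Re Y$ to $\pa G$ reads $\Re(a_1P_{z_1}+a_2)=0$ on $\{2\Re z_2+P=0\}$; since this expression is independent of $z_2$ while $\Re z_2$ still ranges over a half-line for each fixed $z_1$, it must hold for every $z_1\in\mbf C$, giving the identity $\Re a_2(z_1)=-\Re\big(a_1(z_1)P_{z_1}(z_1,\ov z_1)\big)$ on all of $\mbf C$.

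Next I would pin down $a_1$. Because $\pi(G)=\mbf C_{z_1}$ and $a_1$ is independent of $z_2$, the flow of $\Re Y$ projects to the complete flow of the holomorphic ODE $\dot z_1=a_1(z_1)$ on $\mbf C$, and completeness forces $a_1$ to be affine, $a_1(z_1)=\la z_1+c$. If $a_1\equiv 0$ the identity above gives $\Re a_2\equiv 0$, whence $a_2$ is an imaginary constant and $Y$ is a multiple of $X_1$, contradicting $\dim=2$; hence $a_1\not\equiv 0$. I would then use the invariance of the whole boundary: the flow has the form $(z_1,z_2)\mapsto(\psi_s(z_1),z_2+\eta_s(z_1))$, and preserving $\pa G$ yields $P(\psi_s(z_1),\ov{\psi_s(z_1)})-P(z_1,\ov z_1)=-2\Re\eta_s(z_1)$, so $P$ is carried to itself modulo harmonic terms. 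Applying $\pa^2/\pa z_1\pa\ov z_1$, which annihilates harmonic terms, shows that the Levi density $R:=P_{z_1\ov z_1}$ is genuinely invariant under $z_1\mapsto\psi_s(z_1)$.

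The classification now splits according to $\la$. If $\la\ne 0$ the flow $\psi_s$ has a fixed point; after translating it to the origin and absorbing the resulting harmonic terms in $P$ by a shift $z_2\mapsto z_2+h(z_1)$, I may take $a_1=\la z_1$, so $\psi_s(z_1)=e^{\la s}z_1$. Matching each monomial forces $\la k+\ov\la l=0$ for every nonzero coefficient $c_{kl}$ of $P$; as $k+l\ge 2$ this gives $\Re\la=0$, so $\la$ is purely imaginary, $\psi_s$ is a rotation (the $S^1$-factor), and only terms with $k=l$ survive. Thus $P$ (free of harmonic terms) is a polynomial in $|z_1|^2$ and $D\backsimeq\cal D_2$. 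If instead $\la=0$, then $a_1=c\ne 0$ is a translation; rotating $z_1$ by a unitary map I may take the flow to be $z_1\mapsto z_1+is$, so $R$ is invariant under imaginary translation and depends only on $\Re z_1$. Solving $P_{z_1\ov z_1}=\Phi(z_1+\ov z_1)$ under the constraint that $P$ has no harmonic terms reconstructs $P$ degree by degree, a short binomial computation identifying its degree-$d$ component with a multiple of the mixed part of $(\Re z_1)^d$; this gives $D\backsimeq\cal D_1$.

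The step I expect to be the main obstacle is this last reconstruction in the translation case: passing from ``$P_{z_1\ov z_1}$ depends on a single real coordinate'' back to ``$P$ is a polynomial in that coordinate.'' Here the no-harmonic-terms normalisation of $P$ must be used decisively, since otherwise the conclusion fails outright, any pluriharmonic summand being invisible to $P_{z_1\ov z_1}$. I expect the rotation case to be comparatively clean, handled monomial by monomial, while the translation case carries the real computational weight. The apparently separate possibility that $\la$ is a nonzero real number (a genuine dilation) is dispatched along the way, since $\la k+\ov\la l=0$ with $\la$ real and $k+l\ge 2$ would force $P\equiv 0$, which is impossible because $G$ is Kobayashi hyperbolic.
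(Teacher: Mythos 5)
Your proposal is correct, and while its skeleton coincides with the paper's (use commutation with $T_t$ to force the triangular form of the second symmetry, reduce its $z_1$-component to the two affine normal forms $z_1\mapsto z_1+is$ and $z_1\mapsto e^{\la s}z_1$, then exploit invariance of $\pa G$), you diverge from the paper in two genuine ways. First, you work infinitesimally with a field $Y=a_1(z_1)\,\pa/\pa z_1+a_2(z_1)\,\pa/\pa z_2$ and get affineness of $a_1$ from completeness of the projected flow, where the paper works at the group level, showing directly that $S^1_s\in{\rm Aut}(\mbf C)$; these are equivalent. Second, and more substantively, in the translation case the paper must first prove that $h(s,z_1)$ is a polynomial, then solve the cocycle identity $h(s_1+s_2,z_1)=h(s_2,z_1)+h(s_1,z_1+is_2)$ by a difference-quotient argument to get $h(s,z_1)=q(z_1+is)-q(z_1)$, and finally straighten the flow by $z_2\mapsto z_2-q(z_1)$ so that the new defining polynomial is visibly invariant; your Levi-density trick bypasses all of this, since applying $\pa^2/\pa z_1\pa\ov z_1$ annihilates the pluriharmonic term $2\Re\eta_s$ regardless of any regularity of $\eta_s$ in $s$, and the reconstruction of $P$ from $P_{z_1\ov z_1}=\Phi(\Re z_1)$ by integrating twice in $\Re z_1$ works exactly as you say. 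Two small points to tighten: your blanket assertion that $R=P_{z_1\ov z_1}$ is ``genuinely invariant'' under $\psi_s$ is only true up to the Jacobian factor $\vert\psi_s'\vert^2$, which equals $1$ precisely in the translation and rotation cases where you invoke it (for a dilation with $\Re\la\neq 0$ the correct law is $R(\psi_s(z_1),\ov{\psi_s(z_1)})e^{2\Re\la s}=R(z_1,\ov z_1)$, though you never use invariance there); and after reconstructing $P$ as the mixed part of $T(\Re z_1)$ you still need one final shift $z_2\mapsto z_2-\chi(z_1)$, with $2\Re\chi$ the harmonic part of $T(\Re z_1)$, to exhibit $G$ as the tube $\cal D_1$ --- the same absorption move you already perform in the $\la\neq 0$ case, and the analogue of the paper's substitution $z_2\mapsto z_2-q(z_1)$. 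What your route buys is economy in the translation case (no polynomiality lemma for $h$, no cocycle to solve); what the paper's route buys is an explicit conjugation of the flow to a pure translation, which it then reuses when listing the generators of ${\rm Aut}(\cal D_1)^c$ and ${\rm Aut}(\cal D_2)^c$ for Corollary 3.2.
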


\begin{proof}
The hypotheses imply that ${\rm Aut}(G)^c$ is two dimensional. Choose a 
one-parameter subgroup of the form $S_s(z_1, z_2) = (S^1_s, S^2_s)$ 
that along with $T_t(z_1, z_2) = (z_1, z_2 + i t), t \in 
\mbf R$ generates ${\rm Aut}(G)^c$. Since ${\rm Aut}(G)^c$ is abelian, 
it follows that $S_s \circ T_t = T_t \circ S_s$ which in turn implies that
\begin{align*}
S^1_s(z_1, z_2 + i t) &= S^1_s(z_1, z_2), \; {\rm and}\\ 
S^2_s(z_1, z_2 + i t) &= S^2_s(z_1, z_2) + i t
\end{align*}
for all $s, t \in \mbf R$. The first equation above shows that $\pa S^1_s 
/ \pa z_2 \equiv 0$ in $G$ which means that $S^1_s(z_1, z_2)$ is 
independent of $z_2$, i.e., $S^1_s(z_1, z_2) = S^1_s(z_1)$ for all $s$. 
Now note that the projection $\pi(z_1, z_2) = z_1$ maps $G$ 
surjectively onto the $z_1$-axis and this implies that $(S^1_s(z_1)) 
\subset {\rm Aut}(\mbf C)$ is a non-trivial one-parameter subgroup. The 
second equality above shows that $\pa S^2_s / \pa z_2 \equiv 1$ in 
$\Om$, i.e., $S^2_s(z_1, z_2) = z_2 + h(s, z_1)$ where $h(s, z_1)$ 
is an entire function in $z_1$ for all $s \in \mbf R$. Moreover note 
that $h(0, z_1) = 0$. Thus 
$S_s$ is an automorphism of $\mbf C^2$ for all $s$. But since $S_s \in 
{\rm Aut}(G)$ it follows that
\[
\Re (S^2_s(z_1, z_2)) + P(S^1_s(z_1), \ov S^1_s(z_1)) = 0
\] 
whenever $\Re z_2 = -  P(z_1, \ov z_1)$. Hence
\[
\Re h(s, z_1) = P(z_1, \ov z_1) - P(S^1_s(z_1), \ov S^1_s(z_1))
\]
for all $z_1 \in \mbf C$. It follows that $h(s, z_1)$ is a polynomial for 
all $s \in \mbf R$.

\medskip

Returning to the one-parameter subgroup $(S^1_s(z_1)) \subset {\rm 
Aut}(\mbf C)$, it is possible to make an affine change of coordinates in 
$z_1$ so that for all $s \in \mbf R$, $S^1_s(z_1) = z_1 + i s$ or 
$S^1_s(z_1) = \exp(\alpha s) z_1$ for some $\alpha \in \mbf C \sm \{0\}$. 
This can be done using the known description of ${\rm Aut}(\mbf C)$. Two 
cases arise, the first being
\[
S_s(z_1, z_2) = (S^1_s(z_1), S^2_s(z_1, z_2)) = (z_1 + is, z_2 + h(s, 
z_1))
\]
for all $s \in \mbf R$ which implies that
\[
h(s_1 + s_2, z_1) = h(s_2, z_1) + h(s_1, z_1 + i s_2)
\]
for all $s_1, s_2 \in \mbf R$. Writing this as 
\[
(h(s_1 + s_2, z_1) - h(s_2, z_1))/s_1 = h(s_1, z_1 + i s_2)/ s_1
\]
for non-zero $s_1$ we get that $\pa h/\pa s (s_2, z_1) = \pa h/\pa s(0, 
z_1 + i s_2)$ for all $s_2$. Since $h(s, z_1)$ is a polynomial for all $s$ 
it follows that $\pa h/ \pa s(0, z_1 + is)$ is a polynomial as well. 
Integration gives
\[
h(s, z_1) = q(z_1 + is) + C
\]
for some polynomial $q(z)$ and since $h(0, z_1) = 0$, it follows that 
$h(s, z_1) = q(z_1 + is) - q(z_1)$. Hence
\[
S_s(z_1, z_2) = (z_1 + is, z_2 + q(z_1 + is) - q(z_1))
\]
for all $s \in \mbf R$. The automorphism $(z_1, z_2) \mapsto (\ti z_1, \ti 
z_2) = (z_1, z_2 - q(z_1))$ maps $G$ biholomorphically to $\ti G = 
\{(z_1, z_2) \in \mbf C^2: 2 \Re \ti z_2 + Q(\ti z_1, \ov {\ti z_1}) < 
0\}$ for some polynomial $Q(\ti z_1, \ov {\ti z_1})$ and conjugates the 
action of $S_s$ to the automorphism of $\ti G$ given by
\[
(\ti z_1, \ti z_2) \mapsto (\ti z_1 + is, \ti z_2).
\]
This shows that $Q(\ti z_1, \ov {\ti z_1})$ is invariant under the map 
$\ti z_1 \mapsto \ti z_1 + is$ and hence $Q(\ti z_1, \ov {\ti z_1}) = 
Q(\Re z_1)$. This realizes $G$ as a tube domain after a global change 
of coordinates.

\medskip

\noindent The other case to consider is when
\[
S_s(z_1, z_2) = (S^1_s(z_1), S^2_s(z_1, z_2)) = (\exp(\alpha s) z_1, z_2 + 
h(s, z_1))
\]
for some $\alpha \in \mbf C \sm \{0\}$. As above $h(s, z_1)$ is a 
polynomial for all $s \in \mbf R$ and hence $S_s(z_1, z_2) \in {\rm 
Aut}(\mbf C^2)$ that preserves $G$. This means that
\[
\Re(z_2 + h(s, z_1)) + P(\exp(\alpha s) z_1, \exp(\ov{\alpha}s) \ov 
z_1) = \Re z_2 + P(z_1, \ov z_1)
\]
for all $s \in \mbf R$ which shows that
\[
\Re(h(s, z_1)) +  P(\exp(\alpha s) z_1, \exp(\ov{\alpha} s) \ov z_1) = 
P(z_1, \ov z_1).
\]
Since the right side has no harmonic terms it follows that $\Re(h(s, z_1)) 
\equiv 0$ for all $s \in \mbf R$ and hence $h(s, z_1) \equiv i \beta s$ 
for some real $\beta$. Therefore $P(\exp(\alpha s) z_1, \exp(\ov 
{\alpha}s) \ov z_1) = P(z_1, \ov z_1)$ for all $s \in \mbf R$. This 
forces $\alpha = i \gamma$ for some non-zero real $\gamma$ and that 
$P(z_1, \ov z_1)$ must consist only of terms of the form $\vert z_1 
\vert^{2k}$ for some integer $k \ge 1$. Thus $P(z_1, 
\ov z_1) = P(\vert z_1 \vert^2)$.

\medskip

\no Thus if ${\rm Aut}(D)^c$ is two dimensional and abelian the following 
dichotomy holds:

	\begin{itemize}
	    \item $D \backsimeq \cal D_1 = \{(z_1, z_2) \in \mbf C^2 : 
	    2 \Re z_2 + P_1(\Re z_1) < 0 \}$ and ${\rm Aut}(\cal 
	    D_1)^c$ is generated by $T_t(z_1, z_2) = (z_1, z_2 + it)$ and 
	    $S_s(z_1, z_2) = (z_1 + is, z_2)$ for $s, t \in \mbf R$.

	    \item $D \backsimeq \cal D_2 = \{(z_1, z_2) \in \mbf C^2 : 
	    2 \Re z_2 + P_2(\vert z_1 \vert^2) < 0\}$ and ${\rm 
	    Aut}(\cal D_2)^c$ is generated by $T_t(z_1, z_2) = (z_1, 
	    z_2 + it)$ and $S_s(z_1, z_2) = (\exp(i \gamma s) z_1, z_2 + i 
	    \beta s)$ for $s, t \in \mbf R$ and $\beta, \gamma \not= 0$.
	\end{itemize}

\end{proof}

\begin{cor}
Suppose that $\dim {\rm Aut}(D) = 2$. If $\phi_j \in {\rm Aut}(D)^c$ for 
all large $j$, then ${\rm Aut}(D)^c$ cannot be abelian.
\end{cor}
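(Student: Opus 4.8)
The plan is to argue by contradiction, reducing the abelian case to the mechanism already used at the end of Proposition 2.9. Suppose that ${\rm Aut}(D)^c$ is abelian. By Proposition 3.1 (and the explicit description obtained in its proof) the group ${\rm Aut}(G)^c$ is generated by the translations $T_t(z_1, z_2) = (z_1, z_2 + it)$ together with a one parameter subgroup $S_s$ which, in the coordinates on $G$, is either of the form $(z_1 + is, z_2 + q(z_1 + is) - q(z_1))$ or of the form $(\exp(i\gamma s) z_1, z_2 + i\beta s)$. The first step is to observe that the function $\Lambda(z) = 2\Re z_2 + P(z_1, \ov z_1)$ is invariant under all of ${\rm Aut}(G)^c$. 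Invariance under $T_t$ is immediate. Invariance under $S_s$ follows in the second case from $P(z_1, \ov z_1) = P(\vert z_1 \vert^2)$ together with $\Re(i\beta s) = 0$, and in the first case from the identity $2\Re\big(q(z_1 + is) - q(z_1)\big) = P(z_1, \ov z_1) - P(z_1 + is, \ov{z_1 + is})$, valid for all $z_1 \in \mbf C$, which is precisely what makes $S_s$ preserve the level sets of $\Lambda$ in the proof of Proposition 3.1.

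Next I would transfer this invariance to the orbit. Since $\phi_j \in {\rm Aut}(D)^c$, the conjugate $g^{-1} \circ \phi_j \circ g$ lies in ${\rm Aut}(G)^c$, and therefore $(a_j, b_j) = g^{-1} \circ \phi_j(p) = (g^{-1} \circ \phi_j \circ g)(g^{-1}(p))$ satisfies $2 \ep_j = \Lambda(a_j, b_j) = \Lambda(g^{-1}(p))$, a fixed negative constant. In particular $\vert \ep_j \vert > c > 0$ uniformly in $j$, and the orbit $\{g^{-1} \circ \phi_j(p)\}$, being confined to a single level set of $\Lambda$ that lies strictly inside $G$, can cluster only at the point at infinity of $\pa G$. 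This is exactly the hypothesis of Proposition 2.5, which then shows that $\cal X$ vanishes to finite order at $p_{\infty}$; by lemma 3.5 in \cite{BP4} the point $p_{\infty}$ is isolated in the zero set of $\cal X$ on $\pa D$, and $\cal X$ has no zero in $D$ since the action of $(L_t)$ is fixed-point free.

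With these facts in hand I would run the concluding argument of Proposition 2.9 essentially verbatim. The cluster set of the point at infinity of $\pa G$ under $g$ is connected and hence reduces to the single point $p_{\infty}$, so that setting $g(\infty) = p_{\infty}$ makes $g : G \cup \{\infty\} \ra D \cup \{p_{\infty}\}$ continuous. Consequently $L_t(p') = g(T_t(g^{-1}(p'))) \ra p_{\infty}$ as $\vert t \vert \ra \infty$ for every $p' \in D$, which establishes the parabolicity of the action of $(L_t)$ on $D$. The arguments of \cite{BP4} then force $D \backsimeq E_{2m} = \{(z_1, z_2) \in \mbf C^2 : \vert z_1 \vert^2 + \vert z_2 \vert^{2m} < 1\}$ for some integer $m \ge 1$, whence $\dim {\rm Aut}(D) = 4$. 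This contradicts the standing assumption $\dim {\rm Aut}(D) = 2$, and shows that ${\rm Aut}(D)^c$ cannot be abelian.

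I expect the main obstacle to lie in the passage from the convergence of the single orbit $\{g^{-1} \circ \phi_j(p)\}$ to the uniform parabolic behaviour $L_t(p') \ra p_{\infty}$ for all $p'$: one must verify that the cluster set of the end at infinity of $G$ under $g$ is genuinely a single point, and it is here that the constancy of $\Lambda$ along ${\rm Aut}(G)^c$-orbits and the connectedness argument of Proposition 2.9 are indispensable. A secondary point requiring care is the invariance of $\Lambda$ in the tubular case, since there it is tied to the particular cocycle $q(z_1 + is) - q(z_1)$ produced in Proposition 3.1 rather than being visible directly from the defining function.
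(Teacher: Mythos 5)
Your proposal is correct and follows essentially the same route as the paper: both establish that the quantity $2 \Re z_2 + P(z_1, \ov z_1)$ is constant along the pulled-back orbit (you verify its invariance under ${\rm Aut}(G)^c$ directly in the $G$-coordinates, while the paper conjugates into the normalized models $\cal D_1, \cal D_2$ and writes each conjugated $\phi_j$ as $S_{s_j} \circ T_{t_j}$ to compute the same invariant), and both then feed the resulting uniform bound $\vert \ep_j \vert > c > 0$ into Proposition 2.5 and the parabolicity argument concluding Proposition 2.9 to force $D \backsimeq E_{2m}$ and hence $\dim {\rm Aut}(D) = 4$, a contradiction. The cocycle identity $2\Re\big(q(z_1 + is) - q(z_1)\big) = P(z_1, \ov z_1) - P(z_1 + is, \ov{z_1 + is})$ you flag in the tubular case is exactly the boundary-invariance computation the paper performs implicitly, so no genuine divergence or gap arises.
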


\begin{proof}
If ${\rm Aut}(D)^c$ is abelian then the model domains and the generators 
for the connected component of the identity of the automorphism group are 
given above. Let $g_1 : D \ra \cal D_i$, $i = 1, 2$ be the biholomorphic 
equivalences. The sequence $\{g_i \circ \phi_j(p)\}_{j \ge 1}$ then clusters 
only at $\pa \cal D_i$ and in fact only at the point at infinity in $\pa 
\cal D_i$. To show this let us consider $g_1 : D \ra \cal D_1$. For each 
$j \gg 1$, $g_1 \circ \phi_j \circ g^{-1}_1 \in {\rm Aut}(\cal D_1)^c$ and 
thus there are unique reals $s_j, t_j$ such that $g_1 \circ \phi_j \circ 
g^{-1}_1 = S_{s_j} \circ T_{t_j}$. If $g_1(p) = (a, b) \in \cal D_1$ then
\[
g_1 \circ \phi_j(p) = S_{s_j} \circ  T_{t_j}(a, b) = (a + i s_j, b + i 
t_j)
\]
for all $j \gg 1$ and since $\{\phi_j\}$ is non-compact at least one of 
$\vert 
s_j \vert$ or $\vert t_j \vert \ra \infty$. Hence $\vert g_1 \circ 
\phi_j(p) \vert \ra \infty$. Moreover note that
\[
\vert 2 \Re(b + i t_j) + P_1(\Re(a + i s_j)) \vert = \vert 2 \Re b + 
P_1(\Re a) \vert > 0
\]
for all $j$. Similarly for $g_2 : D \ra \cal D_2$ we get
\[
g_2 \circ \phi_j(p) = S_{s_j} \circ T_{t_j}(a, b) = (\exp(i \gamma s_j) a, 
b + i t_j + i \beta s_j)
\]
for all $j \gg 1$. In this case $\vert t_j \vert \ra \infty$ and thus 
$\vert g_2 
\circ \phi_j(p) \vert \ra \infty$. Moreover
\[
\vert 2 \Re(b + i t_j + i \beta s_j) + P_2(\vert \exp(i \gamma s_j) a 
\vert^2) \vert = \vert 2 \Re b + P_2(\vert a \vert^2) \vert > 0
\]
for all $j$. The arguments used in proposition 2.5 and 2.9 now show that 
$D \backsimeq \{(z_1, z_2) \in \mbf C^2 : \vert z_1 \vert^2 + \vert z_2 
\vert^{2m} < 1\}$ for some integer $m \ge 1$ which means that $\dim {\rm 
Aut}(D) = 4$. This is a contradiction.
\end{proof}

\begin{prop}
Let $\dim {\rm Aut}(D) = 2$ and suppose that ${\rm Aut}(D)^c$ 
is non-abelian. Then $D$ is biholomorphic to a domain of the form
\[
\cal D_3 = \{(z_1, z_2) \in \mbf C^2 : 2 \Re z_2 + P_{2m}(z_1, \ov z_1) < 
0\}
\]
where $P_{2m}(z_1, \ov z_1)$ is a homogeneous polynomial of degree $2m$ 
without harmonic terms.
\end{prop}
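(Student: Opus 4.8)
The plan is to transfer everything to the model domain $G \backsimeq D$ of (2.3), so that $\dim {\rm Aut}(G) = 2$ with ${\rm Aut}(G)^c$ non-abelian, and to exploit that the only connected two dimensional non-abelian real Lie group is the affine group of $\mbf R$. Its Lie algebra has a basis $\{X, Y\}$ with $[X, Y] = Y$, and $\mbf R Y$ integrates to the unique one parameter normal subgroup. Since the translations $T_t(z_1, z_2) = (z_1, z_2 + it)$ already lie in ${\rm Aut}(G)^c$, the first and crucial task is to show that $(T_t)$ is exactly this normal subgroup; once this is known the second generator can be put in a tractable form and the shape of $P$ read off. I expect the identification of $(T_t)$ as the normal subgroup to be the main obstacle.

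To settle it, write the second generator as a complete holomorphic field $\cal X_2 = a(z)\,\pa/\pa z_1 + b(z)\,\pa/\pa z_2$ alongside $\cal X_1 = i\,\pa/\pa z_2$. A direct computation gives $[\cal X_1, \cal X_2] = i(a_{z_2}\,\pa/\pa z_1 + b_{z_2}\,\pa/\pa z_2)$, and matching this against the two possible brackets $[\cal X_1, \cal X_2] = \cal X_1$ and $[\cal X_1, \cal X_2] = \cal X_2$ integrates to either $a = a(z_1),\ b = z_2 + b_0(z_1)$ (the case in which $(T_t)$ is normal) or $a = a_0(z_1)e^{-iz_2},\ b = b_0(z_1)e^{-iz_2}$. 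I would discard the second alternative by demanding that $\cal X_2$ be tangent to $\pa G = \{2 \Re z_2 + P = 0\}$: parametrising $\pa G$ by $z_2 = -P/2 + i\tau$ and cancelling the factor $e^{\tau} \neq 0$, tangency reduces to $\Re[\,e^{iP/2}(a_0 P_{z_1} + b_0)\,] \equiv 0$ on the whole $z_1$-plane, i.e. to $e^{iP}(a_0 P_{z_1} + b_0) = -\ov{(a_0 P_{z_1} + b_0)}$. Letting $z_1 \ra \infty$ along a ray on which the leading homogeneous part of $P$ does not vanish makes the left hand side oscillate while the right hand side tends to a limit on the unit circle; this is impossible unless $a_0 P_{z_1} + b_0 \equiv 0$, and since $P_{z_1}$ depends genuinely on $\ov z_1$ this forces $a_0 \equiv b_0 \equiv 0$, a contradiction.

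With $(T_t)$ normal, $S_s$ normalises it, so $S_s \circ T_t = T_{\la(s)t} \circ S_s$ with $\la : \mbf R \ra \mbf R^*$ a continuous homomorphism; thus $\la(s) = e^{cs}$ and $c \neq 0$ by non-commutativity. The computation in the proof of Proposition 2.6 then yields $S_s(z_1, z_2) = (S^1_s(z_1),\ e^{cs} z_2 + k(s, z_1))$ with $(S^1_s) \subset {\rm Aut}(\mbf C)$ a one parameter subgroup. Imposing that $S_s$ preserve $\pa G$ and substituting $2 \Re z_2 = - P(z_1, \ov z_1)$ gives
\[
-e^{cs} P(z_1, \ov z_1) + 2 \Re k(s, z_1) + P\big(S^1_s(z_1), \ov{S^1_s(z_1)}\big) = 0 .
\]
Since $P$ has no harmonic terms, splitting into harmonic and non-harmonic parts forces $2 \Re k(s, z_1) \equiv 0$ and $P(S^1_s(z_1), \ov{S^1_s(z_1)}) = e^{cs} P(z_1, \ov z_1)$. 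A pure translation $S^1_s(z_1) = z_1 + \beta s$ is impossible, as comparing top degree homogeneous parts would give $e^{cs} P_{(d)} = P_{(d)}$ with $c \neq 0$; hence $S^1_s(z_1) = e^{\alpha s} z_1$ with $\alpha \neq 0$, after conjugating by a model automorphism (of the type (2.2)) that recentres the fixed point of $S^1_s$ and clears the harmonic terms so introduced.

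Finally I would substitute $S^1_s(z_1) = e^{\alpha s} z_1$ into $P(e^{\alpha s} z_1, e^{\ov \alpha s} \ov z_1) = e^{cs} P(z_1, \ov z_1)$ and compare coefficients, so that every monomial $z_1^k \ov z_1^l$ occurring in $P$ satisfies $\alpha k + \ov \alpha l = c$. The non-abelian hypothesis rules out the rotationally invariant case $P = P(\vert z_1 \vert^2)$ (which would land us at $\cal D_2$ or $\cal D_5$, with strictly larger automorphism group), so some occurring monomial has $k \neq l$; taking imaginary parts then forces $\alpha \in \mbf R$, and the real parts give $k + l = c/\alpha$ for all occurring monomials. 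Thus $P$ is homogeneous of some degree $d$. Since $G \backsimeq D$ is Kobayashi hyperbolic, $P$ cannot have odd degree — a homogeneous $P$ without harmonic terms of odd degree yields a non-hyperbolic $G$ — so $d = 2m$ is even with $m \ge 1$. Writing $P = P_{2m}$ we conclude $D \backsimeq G = \cal D_3$, as required.
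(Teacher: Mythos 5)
Your proposal follows the paper's skeleton faithfully (prove $(T_t)$ is normal in ${\rm Aut}(G)^c$, put the complementary one-parameter group in the form $(S^1_s(z_1), e^{cs}z_2 + k(s,z_1))$, kill the translation case by comparing top-degree homogeneous parts, and read homogeneity off the weight equation $\alpha k + \ov \alpha l = c$), but two of your steps have genuine gaps. The first is the oscillation argument discarding the non-normal case. At that point $a_0$ and $b_0$ are only known to be \emph{entire} (all that is available is that $G$ projects onto the $z_1$-axis), and for entire coefficients the quotient $-\ov g/g$ with $g = a_0 P_{z_1} + b_0$ has no reason to converge along any ray: take $a_0(z) = \exp(icz^N)$ to see that the phase of $g$ can oscillate exactly as fast as $e^{i\mu P}$ does. (Incidentally, since $\cal X_1$ is pinned as the translation field, the second bracket alternative is $[\cal X_1, \cal X_2] = \mu \cal X_2$ with $\mu \in \mbf R \sm \{0\}$, not $\mu = 1$; this is harmless but should be tracked.) Your comparison is sound only after $a_0, b_0$ are known to be polynomials, and establishing that is precisely the hard part of the paper's proof of Proposition 3.3: it first shows $h_1$ $(= a_0)$ is zero-free by restricting the algebra to lines $\{z_1 = z_0\}$, then expands the tangency identity in powers of $\ov z$ to get $r = 1$, $h_2$ polynomial (its (3.2)), $h_1$ rational (its (3.3)) hence constant, and only then runs an oscillation-type contradiction in a legitimate setting, namely (3.4), where $\exp(-2i\mu P(z,w))$ is equated with a rational function along a line. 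You could rescue your shortcut: complexify $e^{i\mu P}g = -\ov g$, set $w = 0$ (using $P(z,0) \equiv 0$) to get $b_0$ polynomial, and extract the coefficient of $w$ to get $a_0$ rational, hence polynomial; after that bootstrap your ray comparison does yield $g \equiv 0$ and would even bypass the zero-free lemma. But as written the step is unproved.

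The second gap is the parity step. You assert as known that a homogeneous $P$ of odd degree without harmonic terms makes $G$ non-hyperbolic; this is not a standard fact and you do not prove it. Observe that the paper deliberately avoids this claim, because hyperbolicity is not the operative obstruction: $G \backsimeq D$ is automatically hyperbolic, so if odd degree forced non-hyperbolicity the contradiction would be immediate and no further argument would be needed. What the paper actually uses is that for odd $k$ the envelope of holomorphy of $G$ contains a full neighbourhood of $0 \in \pa G$; then, transporting the dilation orbit of $S_s(z_1,z_2) = (\exp(s/k)z_1, \exp(s)z_2)$, which clusters at the origin as $s \ra -\infty$, through a biholomorphism $f : D \ra G$, one obtains a boundary orbit accumulation point of $D$ lying in $\hat D$, contradicting the Greene--Krantz observation recalled in Section 2. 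You need this dynamical argument (or an actual proof of your non-hyperbolicity assertion) to conclude that $k = 2m$ is even. A smaller point in the same part of your write-up: the assertion that splitting into harmonic and non-harmonic parts ``forces $2\Re k(s,z_1) \equiv 0$'' is valid only in the dilation case, where $P(e^{\alpha s}z_1, e^{\ov\alpha s}\ov z_1)$ creates no harmonic terms; in the translation case harmonic terms do appear and must be cancelled against $2 \Re k$, though your separate top-degree argument disposes of that case correctly.
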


\begin{proof}
We will again work with $G$ instead of $D$ as in the previous 
proposition. The first step is to show that the one-parameter subgroup 
$(T_t)$ is normal in ${\rm Aut}(G)^c$. Suppose not. The real vector field 
that generates $(T_t)$ is 
\[
X = i/2 \; \pa / \pa z_2 - i/2 \; \pa / \pa \ov z_2
\]
and it is possible to find another real vector field $Y$ such that $X, Y$ 
generate $\mathfrak{g}(G)$ the Lie algebra of ${\rm Aut}(G)^c$ and 
\begin{equation}
[X, Y] = \mu Y 
\end{equation}
for some non-zero real $\mu$, i.e., the one-parameter subgroup generated 
by $Y$ is normal in ${\rm Aut}(G)^c$. Let
\[
Y = \Re \Big( f_1(z_1, z_2) \; \pa / \pa z_1 + f_2(z_1, z_2) \; \pa / \pa 
z_2 \Big)
\]
for some $f_1(z_1, z_2), f_2(z_1, z_2) \in \cal O(G)$. Now (3.1) is 
equivalent to
\[
\pa f_1 / \pa z_2 = - 2i \mu f_1 \;\; {\rm and} \;\; \pa f_2 / \pa z_2 = - 
2i \mu f_2
\]
which yield
\[
f_1(z_1, z_2) = \exp(-2i \mu z_2)\; h_1(z_1) \;\; {\rm and} \;\; f_2(z_1, 
z_2) = \exp(-2i \mu z_2)\; h_2(z_1).
\]
Both $h_1(z_1)$ and $h_2(z_1)$ are entire functions since $G$ projects 
surjectively onto the $z_1$-axis. The special form of $f_1(z_1, z_2), 
f_2(z_1, z_2)$ shows that $Y$ generates a one-parameter subgroup of ${\rm 
Aut}(\mbf C^2)$. Now suppose that $h_1(z_0) = 0$ for some $z_0 \in \mbf 
C$. Then the restrictions of both $X, Y$ to the line $L = \{(z_1, z_2) \in 
\mbf C^2 : z_1 = z_0\}$ are multiples of $\pa / \pa z_2$ and hence ${\rm 
Aut}(G)$ acts on $L$ and this action leaves the half-plane 
\[
G \cap L = \{(z_0, z_2) \in \mbf C^2 : 2 \Re z_2 + P(z_0, \ov z_0) < 0\}
\]
invariant. Hence the restriction of ${\rm Aut}(G)^c$ to $L$ can be 
identified with a two dimensional subgroup of ${\rm Aut}(\mbf C)$ that 
preserves a half-plane. Such a subgroup clearly contains the restriction 
of $(T_t)$ to $L$ as a normal subgroup and this contradicts the assumption 
that $(T_t)$ is not normal in ${\rm Aut}(G)^c$. The conclusion is that 
$h_1(z_1)$ is a non-vanishing entire function. 

\medskip

Now $Y$ is well defined on $\mbf C^2$ and so the one-parameter subgroup 
generated by it maps the boundary of $G$ to itself. This implies that 
$Y$ is a tangential vector field, i.e.,
\[
Y( 2 \Re z_2 +  P(z_1, \ov z_1) ) = 0
\]
whenever $2 \Re z_2 = - P(z_1, \ov z_1)$. This simplifies as
\[
\Big(h_1(z_1) \;\pa P / \pa z_1 + h_2(z_1) \Big) + 
\exp(-2i \mu  P(z_1, \ov z_1)) \Big(\ov{h_1(z_1)} \;\pa P / \pa \ov 
z_1 + \ov{h_2(z_1)} \Big) = 0
\]
for all $z_1 \in \mbf C$.

\medskip

\noindent {\it Claim:} Define $F(z, \ov z) = \Big(h_1(z) \;\pa P / \pa 
z + h_2(z) \Big) + \exp(-2i \mu P(z, \ov z)) 
\Big(\ov{h_1(z)} \;\pa P / \pa \ov z + \ov{h_2(z)} \Big)$ a real 
analytic, complex valued function on the plane. If $F(z, \ov z) \equiv 0$ 
for all $z \in \mbf C$, then $h_1(z)$ must vanish somewhere.

\medskip

\noindent To see this let 
\[
h_1(z) = \sum_{j \ge 0} c_j z^j \;\; {\rm and} \;\; h_2(z) = \sum_{j \ge 
0} d_j z^j
\]
where $c_0 = h_1(0) \not= 0$ by assumption. Also write 
\[
P(z, \ov z) = \ov z^r q_r(z) + \ov z^{r + 1} q_{r + 1}(z) + \ldots
\]
where the sum is finite and the coefficients $q_r(z), q_{r + 1}(z), 
\ldots$ are holomorphic polynomials. Note that $r \ge 1$ since $P(z, \ov z)$
does not have harmonic terms. For the same reason none of 
$q_r(z), q_{r + 1}(z), \ldots$ have a constant term nor are any of them 
identically equal to a constant. Suppose that $r > 1$. Then the 
coefficient of $\ov z^{r - 1}$ in 
\begin{align*}
	F(z, \ov z) &= \Big( (c_0 + c_1 z + \ldots)(\ov z^r q'_r(z) + 
	\ov z^{r + 1} q'_{r + 1}(z) + \ldots) + (d_0 + d_1 z + \ldots) \Big)\\
		    &+ \Big(1- 2i \mu(\ov z^r q_r(z) + \ov z^{r + 1} q_{r 
	+ 1}(z) + \ldots) + \ldots \Big) \Big( (\ov c_0 + \ov c_1 \ov z + 
	\ldots)(r \ov z^{r - 1} q_r(z) + (r + 1)\ov z^r q_{r + 1}(z)\\
                    & + \ldots) + (\ov d_0 + \ov d_1 \ov z + \ldots) \Big)
\end{align*}
\noindent is $\ov d_{r - 1} + \ov c_0 r q_r(z)$ and this must be identically 
zero. Thus $q_r(z)$ is a constant which is not possible. It follows that $r = 
1$. In this case the holomorphic part of the above expansion, which must 
also be identically zero, can be shown to be $(h_2(z) + \ov c_0 q_1(z) + 
\ov d_0)$. Hence
\begin{equation}
h_2(z) = -(\ov c_0 q_1(z) + \ov d_0)
\end{equation}
which implies that $h_2(z)$ is a polynomial. The coefficient of $\ov z$, 
which must also be identically zero, is $(h_1(z) q'_1(z) + \ov c_1 q_1(z) 
+ 2 \ov c_0 q_2(z) + \ov d_1 - 2i \mu (\ov c_0 q_1(z) + \ov d_0))$. Hence
\begin{equation}
h_1(z) = \Big(2i \mu(\ov c_0 q_1(z) + \ov d_0) - (\ov c_1 q_1(z) + 2 \ov 
c_0 q_2(z) + \ov d_1) \Big) \Bigl/ q'_1(z)
\end{equation}
which implies that $h_1(z)$ is a rational function. If the degree of this 
rational function is atleast one then $h_1(z)$ will vanish somewhere. The 
only other possibility is that the rational function is a constant which 
forces $h_1(z) \equiv c_0$. 

\medskip

In case $h_1(z) \equiv c_0$, consider the new holomorphic function 
obtained by complexifying $F(z, \ov z)$, i.e., by replacing $\ov z $ by an 
independent complex variable $w$ we can consider
\[
\ti F(z, w) = \Big(c_0 \;\pa P / \pa z \;(z, w) + h_2(z) \Big) + \exp(-2i 
\mu P(z, w) \Big(\ov c_0 \;\pa P / \pa w \;(z, w) + \ov h_2(w) \Big) \in 
\cal O(\mbf C^2)
\]
where $\ov h_2(w) = \ov{h_2(\ov w)}$. Note that $\ti F(z, w)$ vanishes 
when $w = \ov z$ and so $\ti F(z, w) \equiv 0$. This shows that
\begin{equation}
\exp(-2i \mu P(z, w)) \equiv - \Big(c_0 \;\pa P / \pa z \;(z, w) + h_2(z) 
\Big) \Bigl/ \Big(\ov c_0 \;\pa P / \pa w \;(z, w) + \ov h_2(w) \Big).
\end{equation}
The left side above is the exponential of a non-constant polynomial $P(z, 
w)$ and hence there is $\lambda \in \mbf C^{\ast}$ such that the 
restriction of the left side to the line $L = \{w = \lambda z\}$ is 
non-constant. Moreover it is non-vanishing as well. However the right side 
in (3.4) is a rational function on $L$ and this must vanish somewhere. 
This is a contradiction and the claim follows.

\medskip

Now $(T_t)$ is normal in ${\rm Aut}(G)^c$ and let $(S_s)$ be a 
one-parameter subgroup that generates ${\rm Aut}(G)^c$ along with $(T_t)$. 
Then for all real $s, t$
\begin{equation}
S_s \circ T_t = T_{\theta(s, t)} \circ S_s
\end{equation}
for some smooth function $\theta(s, t)$. Composing with $T_t$ once more on 
the right gives
\[
S_s \circ T_{2t} = T_{\theta(s, t)} \circ S_s \circ T_t = T_{2 \theta(s, 
t)} \circ S_s
\]
which shows that $\theta(s, 2t) = 2 \theta(s, t)$ and inductively we get 
$\theta(s, mt) = m \theta(s, t)$ for all integers $m \ge 1$. Putting $t = 
0$ it follows that $\theta(s, 0) = 0$ for all $s$. Now for $t_1, t_2, s 
\in \mbf R$, (3.5) shows that
\[
S_s \circ T_{t_1 + t_2} = S_s \circ T_{t_1} \circ T_{t_2} = T_{\theta(s, 
t_1) + \theta(s, t_2)} \circ S_s
\]
which gives $\theta(s, t_1 + t_2) = \theta(s, t_1) + \theta(s, t_2)$ and 
hence that $\theta(s, t) = tf(s)$ for some smooth function $f(s)$. Again 
for $s_1, s_2, t \in \mbf R$, (3.5) shows that
\[
S_{s_1 + s_2} \circ T_t = S_{s_1} \circ S_{s_2} \circ T_t = T_{\theta(s_1, 
\theta(s_2, t))} \circ S_{s_1}
\]
which gives $\theta(s_1 + s_2, t) = \theta(s_1, \theta(s_2, t))$. In terms 
of $f(s)$ this means that $f(s_1 + s_2) = f(s_1)f(s_2)$. Hence $f(s) = 
\exp(\alpha s)$ for some non-zero real $\alpha$. If $S_s(z_1, z_2) = 
(S^1_s(z_1, z_2), S^2_s(z_1, z_2))$ then (3.5) is equivalent to
\begin{align*}
 S^1_s(z_1, z_2 + it) &= S^1_s(z_1, z_2), {\rm and}\\
 S^2_s(z_1, z_2 + it) &= S^2_s(z_1, z_2) + i \exp(\alpha s)t
\end{align*}
\noindent for all real $s, t$. As in proposition 3.1, the first equation 
forces $S^1_s(z_1, z_2) = S^1_s(z_1)$ and that $(S^1_s(z_1))$ is a 
non-trivial subgroup of ${\rm Aut}(\mbf C)$. The second equation implies 
that $\pa S^2_s / \pa z_2 = \exp(\alpha s)$, i.e., $S^2_s(z_1, z_2) = 
\exp(\alpha s) z_2 + f(s, z_1)$ where $z_1 \mapsto f(s, z_1)$ is entire 
for all $s \in \mbf R$. It follows that $(S_s)$ is a one-parameter 
subgroup of ${\rm Aut}(\mbf C^2)$ that preserves $G$. An argument 
similar to that in proposition 3.1 shows that $f(s, z_1)$ is a holomorphic 
polynomial for all $s$. Moreover after a change of coordinates in $z_1$, 
it follows that $S^1_s(z_1) = z_1 + is$ or $S^1_s(z_1) = \exp(\beta s) 
z_1$ for some $\beta \in \mbf C \sm \{0\}$.

\medskip

If $S_s(z_1, z_2) = (z_1 + is, \exp(\alpha s)z_2 + f(s, z_1))$ then it 
follows that
\[
P(z_1, \ov z_1) = 2 \exp(-\alpha s)(\Re f(s, z_1)) + \exp(-\alpha s) 
P(z_1 + is, \ov z_1 - is)
\]
for all $z_1 \in \mbf C$ and $s \in \mbf R$. This cannot hold by 
considering $P_N(z_1, \ov z_1)$ the homogeneous summand of highest degree 
in $P(z_1, \ov z_1)$. On the other hand, if $S_s(z_1, z_2) = 
(\exp(\beta s) z_1, \exp(\alpha s) z_2 + f(s, z_1))$ it follows that
\[
P(z_1, \ov z_1) = 2 \exp(- \alpha s)(\Re f(s, z_1)) + \exp(\alpha s) 
P(\exp(\beta s) z_1, \exp(\ov \beta s) \ov z_1)
\]
for all $z_1 \in \mbf C$ and $s \in \mbf R$. But $P(z_1, \ov z_1)$ 
has no harmonic summands and this forces
\[
P(z_1, \ov z_1) = \exp(-\alpha s) P(\exp(\beta s) z_1, \exp(\ov 
\beta s) \ov z_1).
\]
Since $P(z_1, \ov z_1)$ is not identically zero there is at least one 
non-vanishing homogeneous summand in it. Let the degree of this summand be 
$k > 0$. By comparing terms of degree $k$ on both sides it follows that 
$\alpha = k(\Re \beta)$. This shows that $P(z_1, \ov z_1) = P_k(z_1, \ov 
z_1)$ is homogeneous of degree $k$. 

\medskip

If $k$ is odd it is known that the envelope of holomorphy of the model 
domain
\[
G = \{(z_1, z_2) \in \mbf C^2 : 2 \Re z_2 + P_k(z_1, \ov z_1) < 0\}
\]
contains a full open neighbourhood of the origin in $\mbf C^2$. Let $f : D 
\ra G$ be a biholomorphism. Then $f^{-1}$ extends holomorphically 
across $0 \in \pa G$ and it follows that $f^{-1}(0) \in \pa D$ belongs 
to $\hat D$. Now note that the dilations $S_s(z_1, z_2) = (\exp(s/k) z_1, 
\exp(s) z_2), \; s \in \mbf R$ are automorphisms of $G$ that cluster 
at the origin and thus the orbit of 
$f^{-1}(z_1, z_2) \in D$ under the one-parameter group $f^{-1} \circ S_s 
\circ f \in {\rm Aut}(D)$ clusters at $f^{-1}(0) \in \hat D$. Since $D$ is 
bounded, this contradicts the Greene-Krantz observation mentioned earlier. 
Hence $k = 2m$ is even.
\end{proof}

\section{Model domains when ${\rm Aut}(D)$ is three dimensional}

\noindent Consider the natural action of ${\rm Aut}(D)^c$ on $D$. It was 
shown in \cite{I1} that for every $p \in D$, the ${\rm Aut}(D)^c$-orbit 
$O(p)$, which is a connected closed submanifold in $D$, has real 
codimension $1$ or $2$. In case $O(p)$ 
has codimension $2$, then it is either a complex curve biholomorphically 
equivalent to $\Delta$, the unit disc in the plane or else maximally 
totally real in $D$. In case the codimension is $1$, then $O(p)$ is either 
a strongly pseudoconvex hypersurface or else Levi flat everywhere. In the 
latter case the leaves of the Levi foliation are biholomorphically 
equivalent to $\Delta$ and the explicit analysis in proposition 4.1 of 
\cite{I1} shows that each leaf is closed in $D$. Moreover it was also 
shown that if there are no codimension $1$ orbits then $D \backsimeq 
\Delta \times R$ where $R$ is any hyperbolic Riemann surface with a 
discrete group of automorphisms. In our case $R$ is forced to be simply 
connected since $D$ is so and hence $R \backsimeq \Delta$. This is a 
contradiction. Thus there is at least one codimension $1$ orbit in $D$.

\medskip

The main goal in this section will be to identify which of the examples 
that occur in \cite{I1} can be equivalent to $D$ as in the main theorem. 
It will turn out that $D \backsimeq \cal D_4 = \{(z_1, z_2) \in \mbf C^2 : 
2 \Re z_2 + (\Re z_1)^{2m} < 0\} \backsimeq R_{1/2m, -1, 1}$, which is 
defined below.
For this purpose the classification in \cite{I1} is divided into groups, 
the examples in each group having a distinguishing property. Whether $D$ 
can sustain such properties is then checked case by case. Only the 
relevant properties of the examples have been listed for ready reference 
in the following subsections. The reader is referred 
to \cite{I1} for more detailed information and it must be pointed out that 
the notation used to describe the domains below is the same as in 
\cite{I1}. Moreover unless stated otherwise, the word `orbit' in what 
follows will always refer to the ${\rm Aut}(D)^c$-orbit.

\subsection{Some examples with Levi flat orbits:}

The following domains admit Levi flat orbits:

\vskip8pt

\noindent $\bullet$ Fix $b \in \mbf R$, $b \not=  0, 1$ and let 
$-\infty \le s < 0 < t \le \infty$ where at least one of $s, t$ is finite. 
Define
\[
R_{b, s, t} = \Big\{ (z_1, z_2) \in \mbf C^2: s(\Re z_1)^b < \Re z_2 < 
t(\Re z_1)^b, \; \Re z_1 > 0 \Big\}.
\]
which has a unique Levi flat orbit given by
\[
{\cal O}_1 = \Big \{(z_1, z_2) \in \mbf C^2 : \Re z_1 > 0, \; \Re z_2 = 
0 \Big\}.
\]
Note that $R_{1/2, s, -s} \backsimeq \mbf B^2$ for all $s < 0$ and 
therefore these values of the parameters $b, s, t$ will not be considered. 

\vskip8pt

\noindent $\bullet$ For $b > 0, b \not= 1$ and $-\infty < s < 0 < t < 
\infty$ define
\[
\hat R_{b, s, t} = R_{b, s, \infty} \cup \Big \{(z_1, z_2) \in \mbf C^2 : 
\Re z_2 > t (- \Re z_1)^b, \; \Re z_1 < 0 \Big\} \cup {\hat{\cal O}}_1
\]
where
\[
R_{b, s, \infty} = \Big \{(z_1, z_2) \in \mbf C^2 : s(\Re z_1)^b < \Re 
z_2, \; \Re z_1 > 0 \Big\}
\]
and 
\[
\hat{\cal O}_1 = \Big\{ (z_1, z_2) \in \mbf C^2 : \Re z_1 = 0, \; \Re z_2 
> 0 \Big\}.
\]

\vskip8pt

\noindent $\bullet$ For $-\infty < t < 0 < s < \infty$ define
\[
\hat U _{s, t} = U_{s, \infty} \cup \Big\{ (z_1, z_2) \in \mbf C^2 : \Re 
z_1 > \Re z_2 \cdot \ln(t \; \Re z_2), \; \Re z_2 < 0 \Big\} \cup {\cal 
O}_1
\]
where 
\[
U_{s, \infty} = \Big\{ (z_1, z_2) \in \mbf C^2  : \Re z_2 \cdot \ln(s \; 
\Re z_2) < \Re z_1, \; \Re z_2 > 0 \Big\}
\]
and ${\cal O}_1$ is as in the definition of $R_{b, s, t}$ above.

\vskip8pt

\begin{prop}
Let $D$ be as in the main theorem. If $D$ admits a Levi flat orbit then it 
is equivalent to $\cal D_4 = \{(z_1, z_2) \in \mbf C^2 : 
2 \Re z_2 + (\Re z_1)^{2m} < 0\}$.
\end{prop}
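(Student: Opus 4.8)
The plan is to combine the scaling normal form of Section~2 with the classification in \cite{I1} and then to pin down the model by an elementary boundary computation at the distinguished orbit accumulation point. From Section~2 we already have a biholomorphism of $D$ onto $G = \{(z_1,z_2) : 2\Re z_2 + P(z_1,\ov z_1) < 0\}$, with $P$ a polynomial without harmonic terms, whose boundary is globally smooth real analytic and of finite type and at which $p_{\infty}$ corresponds to the apex $0 \in \pa G$. Since $\dim {\rm Aut}(D) = 3$ and $D$ (hence $G$) carries a Levi flat orbit, \cite{I1} shows that $G$ is biholomorphic to one of the three families exhibited above, namely $R_{b,s,t}$, $\hat R_{b,s,t}$ or $\hat U_{s,t}$, by an equivalence conjugating the two three-dimensional automorphism groups. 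In each model the dilation subgroup $(z_1,z_2)\mapsto(\lambda z_1,\lambda^b z_2)$ (or its analogue) has orbits clustering, as $\lambda \to 0^+$, at the single finite boundary point $0$; this is the point that must correspond to $p_{\infty}$, and I would test the hypothesis that $\pa D$ is smooth real analytic of finite type by reading off each model's boundary near $0$.

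First I would dispose of $\hat U_{s,t}$ and $\hat R_{b,s,t}$. For $\hat U_{s,t}$ the relevant boundary sheet is governed by $\Re z_1 = \Re z_2 \cdot \ln(s\,\Re z_2)$, and the logarithm forces the boundary to be of infinite type at $0$, contradicting the finite type hypothesis. For $\hat R_{b,s,t}$, writing $\eta = \Re z_1$ and $\xi = \Re z_2$, the lower boundary is $\xi = s\,\eta^{b}$ for $\eta>0$ and $\xi = t(-\eta)^{b}$ for $\eta<0$; solving for $\eta$ as a graph over $\xi$ across $0$ yields $\eta = (\xi/s)^{1/b}$ for $\xi<0$ and $\eta = -(\xi/t)^{1/b}$ for $\xi>0$. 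Real analyticity together with finite type forces $1/b$ to be a positive integer; if it is even the two one-sided leading coefficients have opposite signs and cannot match a single real analytic function, while if it is odd the corresponding model has odd homogeneous degree, so its apex lies in $\hat D$ and $p_{\infty}\in\hat D$ is excluded by the Greene--Krantz observation used earlier. Thus $\hat R_{b,s,t}$ is ruled out as well.

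It then remains to treat $R_{b,s,t} = \{s(\Re z_1)^{b} < \Re z_2 < t(\Re z_1)^{b},\ \Re z_1 > 0\}$. Near $0$ both sheets come in tangent to the $\Re z_2$-axis, so I would again describe the boundary as a graph $\eta(\xi)$: one finds $\eta = (\xi/t)^{1/b}$ for $\xi>0$ and $\eta = (\xi/s)^{1/b}$ for $\xi<0$, both positive. Smoothness and real analyticity at $0$ force $1/b$ to be an even integer $2m$ (an odd integer again gives a sign mismatch), and matching the two one-sided $2m$-jets forces $t^{2m} = |s|^{2m}$, i.e. $s = -t$. After the harmless rescaling $z_2 \mapsto z_2/t$ this identifies $D$ with $R_{1/2m,-1,1}$, and the linear map $(w_1,w_2)\mapsto(z_1,z_2) = (-2w_2, w_1)$ carries $\cal D_4 = \{2\Re w_2 + (\Re w_1)^{2m} < 0\}$ onto $\{\Re z_1 > (\Re z_2)^{2m}\} = R_{1/2m,-1,1}$, giving $D \backsimeq \cal D_4$. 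Finally $m=1$ (that is $b=1/2$) is excluded because $R_{1/2,-1,1} \backsimeq \mbf B^2$ has $\dim {\rm Aut} = 8 \neq 3$, so $m \ge 2$ as claimed.

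The step I expect to be the main obstacle is the justification that the local boundary condition near $p_{\infty}$ may legitimately be read off from the model of \cite{I1}: one must locate the finite orbit accumulation point precisely in each model via the dilation subgroup and verify that the equivariant biholomorphism respects this point and transfers the smooth, real analytic, finite type structure of $\pa D$ near $p_{\infty}$ to the boundary near $0$. Once this transfer is in place, the remaining work is the elementary jet comparison sketched above, together with the already-established envelope of holomorphy argument that rules out odd homogeneous degree.
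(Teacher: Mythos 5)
There is a genuine gap, and it is precisely the step you flag at the end as ``the main obstacle'': nothing in your argument lets you read off the local boundary structure of $\pa D$ near $p_{\infty}$ from the boundary of a model at a distinguished finite point. The equivalence $g : G \ra D$ coming from the scaling in Section 2 is a biholomorphism of open sets with no a priori boundary regularity, and $p_{\infty}$ does \emph{not} correspond to $0 \in \pa G$; indeed the pulled-back orbit $g^{-1} \circ \phi_j(p)$ clusters at the point at infinity of $\pa G$ in the relevant regime (Propositions 2.5 and 2.9), so even the location of $p_{\infty}$ relative to the model boundary is not the one your jet computation needs. The same objection applies after you invoke \cite{I1}: an abstract biholomorphism onto $R_{b,s,t}$ need not extend to the boundary, need not carry $p_{\infty}$ to the apex (rather than, say, to the point at infinity), and hence cannot be used to ``transfer'' smoothness, real analyticity, or finite type. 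Boundary extension in this setting is exactly what costs the paper its heavy machinery. The paper's actual route is entirely different and avoids any such transfer: it first shows (Propositions 4.2--4.4, together with the Diederich--Pinchuk cluster set theorem, Shiffman's theorem and the strong disk theorem) that a Levi flat orbit, whose leaves are closed analytic discs, cannot coexist with non-pseudoconvexity of $\pa D$ near $p_{\infty}$ --- the cluster sets of the leaves would be forced onto the stratum $T_1$ (or a curve $\gamma$ when $p_{\infty} \in T_0$), yielding contradictions. This gives pseudoconvexity near $p_{\infty}$, whence \cite{Ber1} produces $D \backsimeq \ti D = \{2 \Re z_2 + P_{2m}(z_1, \ov z_1) < 0\}$ with $P_{2m}$ homogeneous subharmonic; the identification $\ti D \backsimeq \cal D_4$ is then made not by jet matching but by a Lie algebra argument: $\mathfrak{g}(\ti D)$ is a three dimensional extension of the subalgebra with $[X, Y] = X$, the Jacobi identity leaves three possible structures, and lemmas 5.3, 5.6 and 5.8 of \cite{O} show that all but one force $\dim {\rm Aut}(\ti D) = 4$, the surviving case giving $(\Re z_1)^{2m}$, i.e. $R_{1/2m, -1, 1}$.

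A second, independent defect is that your trichotomy from \cite{I1} is incomplete. Besides $R_{b,s,t}$, $\hat R_{b,s,t}$ and $\hat U_{s,t}$, the classification contains further manifolds with Levi flat orbits: those obtained from finite and infinite sheeted covers related to $D_{s,t}$ and $\Om_{s,t}$, and in particular the domains $\mathfrak{D}^{(1)}_s$, $\hat{\mathfrak{D}}^{(1)}_t$, $\mathfrak{D}^{(n)}_{s,t}$, $\mathfrak{D}^{(\infty)}_{s,t}$ of subsection 4.7, all of which carry Levi flat orbits (this is stated explicitly in the remark following Proposition 4.1). Worse, the paper eliminates those examples in Proposition 4.16 \emph{by using} Proposition 4.1 (comparing the CR structure of the orbits $O^{R_{1/2m}}_{\alpha}$ and $\nu_{\alpha}$ via \cite{I3}), so deriving Proposition 4.1 from the classification of Levi-flat-orbit models, as you propose, would be circular unless you exclude that extra class independently. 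Your formal computations are fine as far as they go --- the jet matching forcing $1/b = 2m$ and $s = -t$, the affine identification of $\{\Re z_1 > (\Re z_2)^{2m}\}$ with $\cal D_4$, and the exclusion of $m = 1$ via $R_{1/2, s, -s} \backsimeq \mbf B^2$ --- but they become a proof only after the boundary correspondence and the complete case list are in place, and neither is supplied.
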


\noindent Recall that $D$ is not pseudoconvex near $p_{\infty}$. By 
\cite{Ber1} this is equivalent to saying that none of the orbits in $D$ 
accumulate at the weakly pseudoconvex points near $p_{\infty}$. The proof 
depends on several intermediate steps. First assume 
that the orbit accumulation point $p_{\infty} \in T_1$. Choose coordinates 
around $p_{\infty} = 0$ so that $T_1$ coincides with the imaginary 
$z_1$-axis near the origin and fix a polydisk $U = \{\vert z_1 \vert 
< \eta, \vert z_2 \vert < \eta\}$ centered at the origin with $\eta > 0$ 
small enough so that $U \cap T_1$ is an embedded piece of the $y_1$-axis. 
The function $\tau(z) = (\Re z_1)^2 + \vert z_2 \vert^2$ is then a 
non-negative, strongly plurisubharmonic function in $U$ whose zero locus 
is exactly $U \cap T_1$. Also note that $i\pa \ov \pa \tau = 
i\pa \ov \pa \vert z \vert^2$. For $s > 0$ and $E \subset \mbf C^2$ 
let $\cal H_s(E)$ denote the $s$-dimensional Hausdorff measure of $E$. 
Finally for a sequence of closed sets $E_j$ in some domain 
$W \subset \mbf C^2$, define their cluster set ${\rm cl}(E_j)$ as
\[
{\rm cl}(E_j) = \{z \in W : {\rm there \; exists} \;z_j \in E_j \;{\rm 
such \; that} \; z \;{\rm is \; an \; accummulation \; point \; of \; the 
\; sequence} \;\{z_j\}\}.
\]

\noindent The relevant case will be the one in which the $\{E_j\}$ are 
complex analytic sets in $W$ of a pure fixed dimension. In this situation the 
following theorem that was proved by Diederich-Pinchuk in \cite{DP1} will 
be useful.

\medskip

\noindent{\it Let $M \subset W \subset \mbf C^2 $ be a smooth real 
analytic hypersurface of finite type. Let $E_j \subset W$ be closed complex 
analytic sets of pure dimension 1. Then ${\rm cl}(E_j)$ is not contained
in $M$}.

\begin{prop}
Let $A_j \subset U \cap D$ be a sequence of pure one dimensional 
complex analytic sets such that $\pa A_j \subset \pa U \cap 
D$ for all $j$. Assume that there are points $a_j \in A_j$ such that $a_j 
\ra a \in U \cap T_1$. Then there exists a compact set $K \subset U$ 
disjoint from $T_1$ such that $A_j \cap K \not= \emptyset$ for all large $j$. 
\end{prop}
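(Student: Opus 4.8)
The plan is to argue by contradiction and to play the hypotheses off against the Diederich--Pinchuk cluster set theorem recalled just above. The first thing I would establish is a reinterpretation of the hypothesis: each $A_j$ is in fact a \emph{closed}, pure one dimensional complex analytic subset of the full polydisk $U$, not merely of $U \cap D$. Indeed, since $\pa A_j \subset \pa U \cap D$, the closure $\ov{A_j}$ meets $U$ only in $A_j$ itself, so $A_j$ is closed in $U$; and because $A_j \subset U \cap D$ is analytic in the open subset $U \cap D \subset U$ while staying disjoint from the remaining part $\pa D \cap U$, it is analytic at each of its own points and trivially so on the complement of its (closed) support. Hence $A_j$ is a closed pure one dimensional analytic set in $W := U$, of exactly the type to which the cluster set theorem applies, with the smooth real analytic finite type hypersurface taken to be $M := U \cap \pa D$ (which qualifies by the standing choice of $U$).

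Next I would set up the contradiction. Suppose the conclusion fails. Then for every compact $K \subset U$ with $K \cap T_1 = \emptyset$ there are infinitely many indices $j$ with $A_j \cap K = \emptyset$. Fixing an exhaustion $K_1 \subset K_2 \subset \cdots$ of $U \sm T_1$ by compacta, a diagonal choice produces a single subsequence $A_{j_n}$ that eventually avoids every compact subset of $U \sm T_1$; concretely, selecting $j_1 < j_2 < \cdots$ with $A_{j_n} \cap K_n = \emptyset$ forces $A_{j_n} \cap K_m = \emptyset$ for all $n \ge m$. Consequently ${\rm cl}(A_{j_n}) \cap (U \sm T_1) = \emptyset$, i.e. ${\rm cl}(A_{j_n}) \subset T_1$. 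This cluster set is nonempty since $a_{j_n} \ra a$, and because $T_1 \subset U \cap \pa D = M$ we conclude ${\rm cl}(A_{j_n}) \subset M$. This directly contradicts the Diederich--Pinchuk theorem, which forbids the cluster set of a sequence of pure one dimensional analytic sets from being contained in a smooth real analytic finite type hypersurface. The contradiction proves the proposition.

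The step I expect to carry the real weight is the first one: recognizing that the hypothesis $\pa A_j \subset \pa U \cap D$ is precisely what makes each $A_j$ a genuinely closed analytic subset of $U$ (so that no individual $A_j$ accumulates on $\pa D$ inside $U$), which is exactly what licenses the application of the cluster set theorem with $M = U \cap \pa D$. The only other delicate point is the upgrade from ``a subsequence meets a fixed compact set'' — all a bare cluster set argument yields — to the stronger ``$A_j \cap K \not= \emptyset$ for all large $j$'' demanded here; this is why the argument is run as a contradiction combined with the diagonal extraction rather than as a direct production of $K$ from ${\rm cl}(A_j)$. I note that the strongly plurisubharmonic $\tau$ and its tubular sublevel sets $\Om_r$ offer an alternative, more quantitative route, bounding from below the area of the part of $A_j$ escaping a small tube via $i \pa \ov \pa \tau = i \pa \ov \pa \vert z \vert^2$ together with the maximum principle on $\tau\vert_{A_j}$; but the cluster set theorem gives the cleanest path.
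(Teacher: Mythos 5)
Your argument is correct, but it is not the paper's proof: it is exactly the alternative route that the paper itself flags, and deliberately declines, in the remark immediately following this proposition. Your opening step (that the hypothesis $\pa A_j \subset \pa U \cap D$ makes each $A_j$ a closed, pure one dimensional analytic subset of all of $U$) coincides with the paper's first observation; after that the two proofs diverge. You negate the conclusion, diagonalize over an exhaustion of $U \sm T_1$ by compacta to extract a subsequence whose cluster set in $U$ is nonempty (it contains $a$, which matters, since the quoted Diederich--Pinchuk statement is only meaningful for a nonempty cluster set) and contained in $T_1 \subset U \cap \pa D$, and then contradict the cluster set theorem. The paper instead argues quantitatively and without the Diederich--Pinchuk theorem: with $\tau(z) = (\Re z_1)^2 + \vert z_2 \vert^2$ and the tube $\Om_r = \{\tau < r\}$, it observes that if $A_j \subset \Om_r$ then the function $\varrho_j(z) = \tau(z) - \vert z - a_j \vert^2/2$, whose restriction to $A_j$ is subharmonic because $i \pa \ov \pa \varrho_j = i \pa \ov \pa \vert z \vert^2/2 > 0$, is positive at $a_j$ yet negative on $\pa A_j \subset \pa U \cap D$ once $2r < (\eta - \vert a \vert)^2$, violating the maximum principle; hence all but finitely many $A_j$ meet $U \cap \{\tau(z) \ge r\}$ with $r$ explicit. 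What your route buys is brevity, at the price of invoking the deep cluster-set theorem and producing $K$ non-constructively; what the paper's route buys is the explicit location of $K$ inside $\{\tau \ge r\}$ and, as the remark records, a choice of $r$ depending only on the distance from $a$ to $\pa U$, hence uniform as $a$ varies over a relatively compact subset of $U \cap T_1$. That refinement, not the bare existence of some compact $K$, is what is actually used downstream: the compact sets $K^-_{r, \delta} = (U^- \cap \{\tau \ge r\}) \sm U^-_{\delta}$ in Proposition 4.4 and in the proof of Proposition 4.1 are built directly from the level set $\{\tau \ge r\}$, and your contradiction argument, proving the proposition exactly as stated, would not by itself supply this uniform quantitative version.
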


\begin{proof}
Observe that each $A_j \subset U$ is analytic since none of them clusters 
at points of $U \cap \pa D$. The domain $\Om_r = \{(z \in U : \tau(z) < 
r\}$ is a strongly pseudoconvex tubular neighbourhood of $U \cap T_1$. Let 
$0 < r \ll \eta$ whose precise value will be fixed later. Then only 
finitely many $A_j$ can be contained in $\Om_r$. Indeed if this does not 
hold then there is a sequence, still to be denoted by $A_j$, for which 
each $A_j \subset \Om_r$. Define $\varrho_j(z) = \tau(z) - \vert z - a_j 
\vert^2 / 2$ and note that $\varrho_j(a_j) = \tau(a_j) > 0$. Moreover
\[
i \pa \ov \pa \varrho_j = i \pa \ov \pa \tau - i \pa \ov \pa \vert z - a_j 
\vert^2 / 2 = i \pa \ov \pa \tau - i \pa \ov \pa \vert z \vert^2 / 2 = i 
\pa \ov \pa \vert z \vert^2 / 2 > 0
\]
shows that the restriction of $\varrho_j$ to $A_j$ is subharmonic for all 
$j$. Now fix $j$ and let $w \in \pa A_j \subset \pa U \cap D$. Then
\[
\varrho_j(w) = \tau(w) - \vert w - a_j \vert^2 /2 \le r - \vert w - a_j 
\vert^2 / 2 < 0
\]
the last inequality holding whenever $r >0$ is chosen to satisfy
\[
2r < (\eta - \vert a \vert)^2 \approx (\eta - \vert a_j \vert)^2 \le 
(\vert z \vert - \vert a_j \vert)^2 \le \vert z - a_j \vert^2
\]
for all $z \in \pa U$. This contradicts the maximum principle and hence 
all except finitely many $A_j$ must intersect $U \cap \{ \tau(z) \ge r 
\}$.
\end{proof}

\noindent {\it Remark:} The above lemma is a version for sequences of 
analytic sets of the well known fact (see \cite{Ch2} for example) that an 
analytic set of positive dimension cannot approach a totally real manifold 
tangentially. Moreover the choice of $r$ depended upon the distance of the 
limit point $a$ from $\pa U$. It is clear from the proof that a uniform 
$r$ can be chosen if $a$ is allowed to vary in a relatively compact subset 
of $U \cap T_1$. Finally, the theorem on the cluster set of analytic sets 
mentioned above could have been used at this stage to conclude that 
${\rm cl}(A_j)$ intersects $U^-$. However a more precise description of 
the sub-domain in $U^-$ that intersects ${\rm cl}(A_j)$ is afforded by the 
above lemma and this will be needed in the sequel.

\begin{lem}
Let $q \in D$ be such that $O(q) \subset D$ is Levi flat. Fix a ball 
$B_w(r)$ around some $w \in \pa D$ for some $r > 0$ and suppose that $O(q) 
\cap B_w(r) \not= \emptyset$. Let $C_q \subset O(q) \cap B_w(r)$ be a 
connected component. Then $C_q$ is itself a closed Levi flat hypersurface 
that admits a codimension one foliation by closed hyperbolic Riemann 
surfaces each of which is itself closed in $B_w(r) \cap D$. Suppose further 
that there is a leaf $R$ in $C_q$ that does not cluster at points of 
$B_w(r) \cap \pa D$. Then there is an arbitrarily small neighbourhood 
$W$ of $R$, $W$ open in $\mbf C^2$ and compactly contained in $D$ with the 
property that if $S \subset C_q$ is leaf with $S \cap W \not= \emptyset$ 
then $S \subset W$.
\end{lem}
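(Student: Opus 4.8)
The plan is to separate the structural assertions about $C_q$ from the construction of the trapping neighbourhood $W$. First I would record that $C_q$, being a connected component of the relatively closed set $O(q) \cap B_w(r) = O(q)\cap(B_w(r)\cap D)$ inside the manifold $O(q)$, is relatively closed in $B_w(r)\cap D$, and, being open in the Levi flat hypersurface $O(q)$, is itself Levi flat. Its leaves are the connected components of $L \cap C_q$ as $L$ ranges over the leaves of the Levi foliation of $O(q)$; each such $L$ is biholomorphic to $\Delta$ and closed in $D$ by proposition 4.1 of \cite{I1}, so every leaf of $C_q$ is an open subset of a disc, hence a hyperbolic Riemann surface, and is relatively closed in $B_w(r)\cap D$ by local connectedness. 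This settles the first two assertions.

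Next I would analyse the distinguished leaf $R$. Since $R$ does not cluster at $B_w(r)\cap \pa D$ and is relatively closed in $B_w(r)\cap D$, every limit point of $R$ lying in the open ball $B_w(r)$ must already belong to $R$; thus $R$ is closed in the full ball $B_w(r)$ and is therefore a pure one dimensional, smooth, closed complex analytic subset of $B_w(r)$ contained in $D$. As a positive dimensional holomorphic curve cannot be compact, $\ov R$ must meet $\pa B_w(r)$, and the non-clustering hypothesis places these remaining limit points in $D$; hence $\ov R$ is a compact subset of $D$. I then fix a neighbourhood $N \Subset D$ of $\ov R$.

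For the trapping neighbourhood I would exploit that $O(q)$ is a real analytic Levi flat hypersurface near $\ov R$, so in suitable local holomorphic coordinates it is $\{\mathrm{Im}\, w = 0\}$ with leaves $\{w = c\}$, $c \in \mbf R$, giving a local pluriharmonic first integral at each point of $\ov R$. Because the leaf $L \supseteq R$ is simply connected, every loop in $R$ is null-homotopic in $L$ and the holonomy of the ambient foliation along $R$ is trivial; using compactness of $\ov R$ I would patch the finitely many local first integrals into a single pluriharmonic $h$ on $N$ with $C_q \cap N = \{\mathrm{Im}\, h = 0\}$, leaves $\{h = c\}$, normalised so that $h \equiv 0$ on $R$. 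Setting $W = \{z \in N : |h(z)| < \ep\}$ produces an open subset of $\mbf C^2$, compactly contained in $D$ and containing $R$, arbitrarily small on shrinking $N$ and $\ep$. Any leaf $S$ meeting $W$ carries the real value $h \equiv c$ with $|c| < \ep$ on its plaque, so $S \cap N \subset W$.

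The main obstacle is the final implication, namely that a leaf $S$ which merely \emph{meets} $W$ is captured in its entirety rather than wandering out of $N$ through $C_q$. I would control this through the stability of the foliation near the compact piece $\ov R$ — trivial holonomy realises nearby leaves as graphs over $R$ — reinforced by the analytic-set estimates of proposition 4.2 together with the Diederich--Pinchuk cluster theorem: were there leaves $S_j$ meeting ever smaller $W$ yet escaping $N$, the associated sequence of pure one dimensional analytic sets would have a cluster set driven toward $\pa D$ inside $B_w(r)$, contradicting either the non-clustering property of $R$ or the impossibility of one dimensional analytic sets clustering into the finite type hypersurface $\pa D$. This normal-families and maximum-principle step, rather than the soft foliation bookkeeping, is where the substantive work lies.
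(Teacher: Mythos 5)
Your structural paragraph and your first-integral construction are sound, and in fact they amount to an explicit version of what the paper does: the paper picks an arbitrary neighbourhood $\ti W$ of $R$ compactly contained in $D$, a short real one dimensional transversal $T$ through a point of $R$, and uses the persistence of the Levi foliation of $C_q$ on a neighbourhood of $\ov{\ti W}$ to conclude that the CR orbits of $C_q \cap \ti W$ through points of $T$ stay close to $R$; your pluriharmonic $h$ and the tube $\{ \vert h \vert < \ep \}$ encode the same transversal-plus-trivial-holonomy data. The genuine gap is in your final capture step, which you correctly flag as the crux but then close with the wrong tools. First, the assertion that a leaf $S$ meeting $W$ satisfies $S \cap N \subset W$ is false as written: $S \cap N$ can have several components, and only the plaque through the point of $S \cap W$ carries the value $h \equiv c$ with $\vert c \vert < \ep$; a priori $S$ could exit $N$ and re-enter with a different value of $h$. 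Second, the repair you propose --- proposition 4.2 plus the Diederich--Pinchuk cluster theorem applied to leaves $S_j$ meeting shrinking tubes yet escaping $N$ --- points the wrong way. Those results force cluster sets of one dimensional analytic sets to enter $D$, i.e.\ off $\pa D$; they give no contradiction from leaves approaching $\pa D$, and nothing in the hypotheses forbids leaves of $C_q$ other than $R$ from clustering at $B_w(r) \cap \pa D$. Indeed, in the very application of this lemma (case 1 of proposition 4.4) the nearby leaves $R_j$ do cluster at $U \cap T_1 \subset \pa D$, and the content of the lemma is precisely that such leaves cannot simultaneously pass near $R$; saying their escape would contradict ``the non-clustering property of $R$'' conflates $R$ with the $S_j$. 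Also, leaves escaping $N$ need not be driven toward $\pa D$ at all --- they could cluster anywhere in $B_w(r) \cap D$ away from $R$.

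What actually closes the argument, and what the paper's one-line stability claim encodes, is a clopen/maximality observation using only facts you have already established: $\ov R$ is a compact subset of $D$, and the relative boundary of $R$ inside its ambient leaf $L$ lies on $\pa B_w(r)$ (a boundary point of $R$ in $L$ interior to the ball would belong to the open set $L \cap B_w(r)$, hence to $R$). Trivial holonomy (the leaf $L \backsimeq \Delta$ is simply connected) realises the plaque of a nearby leaf $S$ through $W$ as a graph over all of $R$. This graph is open in $S$; it is also relatively closed in $S$, since any limit point of the graph inside the open ball lies over a limit point of $R$ in $B_w(r) \cap D$, hence over a point of $R$ itself (here $\ov R \Subset D$ and the closedness of $R$ in $B_w(r) \cap D$ are used), while all other boundary points of the graph within the ambient leaf sit over $\pa_L R \subset \pa B_w(r)$, outside the open ball where $S$ lives. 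As $S$ is connected, the graph is all of $S$, so $S \subset W$. Insert this step and your formulation becomes a correct, indeed more explicit, rendering of the paper's proof; as it stands, the capture of the whole leaf is unestablished.
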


\begin{proof}
$O(q)$ has all the above mentioned properties and hence $C_q$ inherits 
them as well. Now choose an arbitrary neighbourhood $\ti W$ of $R$, $\ti 
W$ open in $\mbf C^2$ and compactly contained in $D$. If needed shrink it 
so that $C_q \cap \ti W$ is connected. Note that the leaves of $C_q \cap 
\ti W$ are exactly the CR orbits on $C_q \cap \ti W$. Choose a real one 
dimensional section $T \subset C_q \cap \ti W$ that passes through some $r 
\in R$ and which is transverse to the leaves of $C_q \cap \ti W$ near $r$. 
Since the foliation of $C_q$ exists in a neighbourhood of the closure of 
$\ti W$, it follows that if $T$ is small enough, the CR orbits of 
$C_q \cap \ti W$ through points in $T$ stay close to $R$. The existence of 
$W \subset \ti W$ follows.
\end{proof}

\begin{prop}
Suppose there exists $q \in D$ such that $\ov {O(q)} \cap (U \cap T_1) 
\not= \emptyset$. Then it is not possible to find a component 
of $O(q) \cap U$, say $C_q$ such that $\ov C_q \cap (U \cap T_1) \not= 
\emptyset$. In particular the number of components of $O(q) \cap U$ is not 
finite.
\end{prop}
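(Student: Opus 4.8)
The plan is to argue by contradiction. Suppose some component $C_q$ of $O(q) \cap U$ satisfies $\ov{C_q} \cap (U \cap T_1) \ni a$. Since we are in the Levi flat regime, $C_q$ is a closed Levi flat hypersurface foliated by closed complex leaves (lemma 2.4 applies verbatim with $U$ in place of the ball $B_w(r)$; if instead $O(q)$ were a codimension two complex curve the argument only simplifies, with $C_q$ itself in the role of a single leaf). I would choose interior points $c_j \in C_q$ with $c_j \ra a$ and let $L_j \subset C_q$ be the leaf through $c_j$, so that each $L_j$ is a pure one dimensional complex analytic subset of $U \cap D$. The strategy is to feed the sequence $\{L_j\}$, whose marked points $c_j$ approach the totally real arc $T_1$, into proposition 4.3, and then to use the rigidity of the foliation recorded in lemma 4.4 to produce a contradiction between two incompatible facts: the leaves must reach out to a compact set bounded away from $T_1$, yet the foliation forces them to stay inside a fixed neighbourhood compactly contained in $D$.

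Granting for the moment that $\pa L_j \subset \pa U \cap D$, proposition 4.3 yields a compact $K \subset U$ with $K \cap T_1 = \emptyset$ such that $L_j \cap K \not= \emptyset$ for all large $j$. Picking $d_j \in L_j \cap K$ and passing to a subsequence, I may assume $d_j \ra d \in K$, so $d \notin T_1$. Now split into two cases. If $d \in \pa D$ then $d$ is either a weakly pseudoconvex finite type point or a point of $\hat D$, and the orbit points $d_j \in O(q)$ accumulate at $d$; this is excluded by \cite{Ber1} (no orbit accumulates at weakly pseudoconvex points near $p_{\infty}$) and by the Greene-Krantz observation (no orbit accumulation point lies in $\hat D$). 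Hence $d \in U \cap D$, and since $C_q$ is relatively closed in $U \cap D$ and $d \in \ov{C_q}$, the point $d$ lies on a leaf $R \subset C_q$. As $R$ does not cluster on $\pa D$, lemma 4.4 supplies a neighbourhood $W$ of $R$, open in $\mbf C^2$ and with $\ov W \subset D$, such that every leaf of $C_q$ meeting $W$ is contained in $W$. For $j$ large $d_j \in W$, whence $L_j \subset W$ and in particular $c_j \in W$; but $c_j \ra a \in \pa D$ while $\ov W$ is at a positive distance from $\pa D$, a contradiction. This rules out $\ov{C_q} \cap (U \cap T_1) \not= \emptyset$. The final assertion is then immediate: if $O(q) \cap U$ had only finitely many components, their closures would cover $\ov{O(q) \cap U}$ and one of them would be forced to contain $a$.

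The hard part, deferred twice above, is the verification that the leaves do not cluster on $U \cap \pa D$ — this is needed both to guarantee $\pa L_j \subset \pa U \cap D$ (so that proposition 4.3 applies) and to make the hypothesis of lemma 4.4 available for $R$. Clustering of a leaf at a weakly pseudoconvex finite type point or at a point of $\hat D$ is impossible exactly as in the case analysis above, since a leaf lies inside the orbit. The genuinely delicate possibility is that a single closed leaf clusters at the totally real arc $T_1$ itself, which is precisely the locus where orbits are permitted to accumulate. To exclude this I would invoke that $\pa D$ is of finite type along $T_1$: a closed complex curve in $U \cap D$ whose closure met $T_1$ would, by the strong disc theorem of \cite{Vl} (already used in proposition 2.5), force the corresponding boundary point into $\hat D$; but that point is an accumulation point of $O(q)$ and so cannot lie in $\hat D$ by Greene-Krantz. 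This is the step that genuinely combines the finite type hypothesis with the totally real nature of $T_1$, and it is what licenses the barrier estimate in the proof of proposition 4.3. The upshot, reflected in the subharmonicity of $\tau$ along the leaves, is that the accumulation of $\ov{C_q}$ at $T_1$ can never be realised by a single component but must be dispersed over infinitely many leaves, none of which actually reaches $T_1$.
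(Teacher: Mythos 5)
Your first two paragraphs reproduce, in essence, the paper's Case 1 (the case in which no leaf of $C_q$ clusters at $U \cap T_1$): marked leaves through $c_j \ra a$, the barrier result (Proposition 4.2 of the paper; your numbering is shifted by one throughout) forces them to meet a compact set away from $T_1$, the limit point $d$ must lie in $D$ because orbits cannot accumulate at the pseudoconvex or pseudoconcave points of $(U \cap \pa D) \sm T_1$, and the foliation-trapping lemma (the paper's Lemma 4.3) then confines the $L_j$ to a set compactly contained in $D$, contradicting $c_j \ra a$. That part is sound, as is your derivation of the final assertion about components. The genuine gap is exactly in the step you deferred to the last paragraph: your claim that a single closed leaf clustering at $T_1$ is excluded outright by the strong disc theorem is false, and this configuration is where the bulk of the paper's proof actually lives.

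The strong disc theorem applies only when $\cal H_1(\ov R \cap U \cap T_1) = 0$: then Shiffman's theorem makes $\ov R$ a closed analytic set in $U$ contained in $\ov {U^-}$, small discs in $\ov R$ have boundaries in $R \subset D$ while shrinking to the contact points, and those points land in $\hat D$, contradicting Greene--Krantz. But if $\cal H_1(\ov R \cap U \cap T_1) > 0$, the leaf $R$ continues analytically across $T_1$, and by the uniqueness theorem the continuation is the complexification $T_1^{\mbf C}$, which contains $T_1$ and crosses $\pa D$ into $U^+$; in particular $U \cap T_1 \subset \ov R$. Here no continuity-principle argument is available: any disc in $R$ shrinking to a point $\beta \in T_1$ has boundary degenerating to $\pa D$ as well, since the relative boundary of $R$ inside $T_1^{\mbf C}$ includes $T_1$ itself, so the disc boundaries do not stay in a fixed compact subset of $D$ and the contact points are not forced into $\hat D$. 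The paper's Case 2 treats precisely this situation and requires three further ideas absent from your proposal: uniqueness of analytic continuation shows $R$ is the \emph{only} leaf of $C_q$ that can cluster at $T_1$; the trapping lemma is then used a second time to show that the cluster set in $U^-$ of nearby leaves $R_j$ is exactly $R$; and finally, working in a polydisk $U_1 \times U_2$ with $U_1$ along a line tangent to $T_1^{\mbf C}$ at $\beta$, the Diederich--Pinchuk theorem on cluster sets of analytic sets forces the projections $\pi : R_j \cap (U_1 \times U_2) \ra U_1$ to be proper, hence surjective; since $U_1$ meets both $U^+$ and $U^-$, the $R_j$ would have to cluster on $U \cap \pa D$, which is the contradiction. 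As written, your argument covers only the measure-zero sub-case, so the proposition does not follow from it.
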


\begin{proof}
First observe that $O(q)$ cannot cluster at points of $(U \cap \pa D) \sm 
T_1$ for these contain pseudoconvex or pseudoconcave points and while all 
model domains are known in the former case, the latter possibility is 
ruled out as these points belong to $\hat D$. The hypotheses therefore 
imply that $\ov{O(q)} \cap U \subset T_1$. To argue by contradiction, 
suppose that $C_q \subset O(q) \cap U$ is a component that satisfies $\ov 
C_q \cap (U \cap T_1) \not= \emptyset$. Pick $a_j \in C_q$ such that $a_j 
\ra a \in U \cap T_1$. Two cases now arise:

\medskip

\no {\it Case 1:} None of the leaves of $C_q$ clusters at points of $U 
\cap T_1$.

\medskip

Let $R_j$ be the leaves of $C_q$ that contain $a_j$ for $j \ge 1$. It 
follows from proposition 4.2 that each $R_j$ (except finitely many which 
can be ignored) must intersect $U^- \cap \{\tau(z) \ge r\}$ for a fixed 
small $r > 0$. For $0 < \delta \ll r$, let 
\[
U^-_{\delta} = \{z \in U^- : - \delta < \rho(z) < 0\}
\]
where $\rho(z)$ is a defining function for $U \cap \pa D$. $U^-_{\delta}$ 
is then a one-sided collar around $U \cap \pa D$ of width $\delta$. Since 
$C_q$ cannot cluster at points of $(U \cap \pa D) \sm T_1$ it follows that 
each $R_j$ must intersect
\begin{equation}
K^-_{r, \delta} = (U^- \cap \{ \tau(z) \ge r \}) \sm U^-_{\delta}
\end{equation}
which is compact in $D$. Choose $\alpha_j \in R_j \cap K^-_{r, \delta}$ 
and let $\alpha_j \ra \alpha \in K^-_{r, \delta}$ after perhaps passing to 
a subsequence. Since $C_q$ is closed in $U \sm T_1$ it follows that 
$\alpha \in C_q$. Let $R_{\alpha}$ be the leaf of $C_q$ that contains 
$\alpha$. By the hypothesis of case 1, $R_{\alpha}$ does not cluster at 
$U \cap T_1$ and hence it stays uniformly away from $U \cap \pa D$. 
By lemma 4.3 there is a neighbourhood $W$ of $R_{\alpha}$ that is compactly 
contained in $D$ such that $R_j \subset W$ for all large $j$. But then 
$a_j \in R_j \subset W$ and this contradicts the fact that 
$a_j \ra a \in U \cap T_1$.

\medskip

\no {\it Case 2:} There is at least one leaf $R \subset C_q$ that clusters 
at points of $U \cap T_1$.

\medskip

There are two subcases to consider. First if $\cal H_1(\ov R \cap U 
\cap T_1) = 0$ it follows from Shiffman's theorem that $\ov R \subset U$ 
is a closed, one dimensional complex analytic set. But then $\ov R \subset 
\ov{U^-}$ and so the strong disk theorem shows that all 
points in $\ov R \cap U \cap T_1$ lie in the envelope of holomorphy of 
$D$. This is a contradiction. Second if $\cal H_1(\ov R \cap U \cap T_1) > 
0$ then $R$ admits analytic continuation across $T_1$, i.e., there is a 
neighbourhood $V$ of $U \cap T_1$ and a closed complex analytic set 
$A \subset V$ of pure dimension one such that $R \cap V \subset A$. In 
fact the uniqueness theorem shows that $A = T_1^{\mbf C}$ the 
complexification of $U \cap T_1$ and $R \cup T_1^{\mbf C}$ is analytic in 
$U^- \cup V$. In particular $U \cap T_1 \subset \ov R$.

\medskip

\no Let $R'$ be a leaf in $C_q$ that is distinct from $R$. Then $R'$ 
cannot cluster at $U \cap T_1$. Indeed if $\cal H_1(\ov R' \cap U \cap 
T_1) = 0$ then as before all points in $\ov R' \cap U \cap T_1$ will be in 
the envelope of holomorphy of $D$ which cannot happen or else both $R'$ 
and $R$ have the same analytic continuation, namely $T_1^{\mbf C}$. The 
uniqueness theorem shows that $R' = R$ which is a contradiction. The 
conclusion is that no leaf apart from $R$ can cluster at $U \cap T_1$.

\medskip

By the remark before lemma 4.3 it is possible to choose $r > 0$ and $0 < 
\delta \ll r$ such that $R \cap K^-_{r, \delta} \not= \emptyset$ where 
$K^-_{r, \delta}$ is as in (4.1). Fix $\alpha \in R \cap K^-_{r, \delta}$ 
and choose $\alpha_j \in C_q \cap K^-_{r, \delta}$ such that $\alpha_j \ra 
\alpha$. Let $R_j$ be the leaves of $C_q$ that contain $\alpha_j$. Choose 
$\beta_j \in R$ such that $\beta_j \ra \beta \in U \cap T_1$. By the proof 
of lemma 4.3 it is possible to find a subsequence, still denoted by $R_j$, 
and points $r_j \in R_j$ such that $\vert r_j - \beta_j \vert \ra 0$. This 
implies that $r_j \ra \beta$. Since $R_j \cap K^-_{r, \delta} \not= 
\emptyset$ for all $j$, it follows that their cluster set 
${\rm cl}(R_j) \not= \emptyset$ in $U^-$. Choose 
$c \in {\rm cl}(R_j) \cap K^-_{r, \delta}$ and let $R_c$ be the leaf of 
$C_q$ that contains it. If $R_c$ does not cluster at $U \cap \pa D$ then lemma 
4.3 shows that $R_j$ are compactly contained in $D$ contradicting the fact 
that $r_j \in R_j$ is such that $r_j \ra \beta$. Hence $R_c$ must cluster 
at $U \cap T_1$ and therefore $R_c = R$. This shows that the cluster set 
${\rm cl}(R_j)$ in $U^-$ is exactly $R$.

\medskip

To conclude translate coordinates so that $\beta = 0$ and let $L$ be a 
complex line that is tangent to $T_1^{\mbf C}$ at the origin. Since $R$ 
coincides with $T_1^{\mbf C}$ near the origin it follows that $L$ must 
intersect $U^-$. Choose a small polydisk $U_1 \times U_2$ around the 
origin such that $U_1$ is parallel to $L$. The projection
\[
\pi : T_1^{\mbf C} \cap (U_1 \times U_2) \ra U_1
\]
is then proper for an appropriate choice of $U_1, U_2$. This is equivalent 
to the condition that $T_1^{\mbf C}$ has no limit points on $U_1 \times 
\pa U_2$. Now $R_j \cap (U_1 \times U_2)$ are pure one dimensional 
analytic sets in $U^- \cap (U_1 \times U_2)$ that do not cluster at 
$U \cap \pa D$. Hence they are analytic in $U_1 \times U_2$. Moreover the 
cluster set of $\{R_j \cap (U_1 \times U_2)\}$ in $U^-$ is exactly $R \cap 
U^- \cap (U_1 \times U_2) = T_1^{\mbf C} \cap U^- \cap (U_1 \times U_2)$. 
There can be no other points in $(U \cap \pa D) \sm T_1$ that lie in 
${\rm cl}(R_j \cap (U_1 \times U_2))$ as the Diederich-Pinchuk theorem 
mentioned above shows and hence $R_j \cap (U_1 \times U_2)$ has no limit 
points on $U_1 \times \pa U_2$ for all large $j$, i.e., the projection
\[
\pi : R_j \cap (U_1 \times U_2) \ra U_1
\]
is proper for all large $j$. Therefore $\pi(R_j \cap (U_1 \times U_2)) = 
U_1$ and this forces $R_j$ to cluster at points of $U \cap \pa D$ as the disk 
$U_1$ intersects both $U^{\pm}$. This is a contradiction.
\end{proof}

\no {\it Proof of Proposition 4.1}: Suppose $q \in D$ is such that $O(q)$ 
is Levi flat. By lemma 2.2 it is known that $\phi_j(q) \ra p_{\infty} = 0$ 
after a translation of coordinates. Two cases need to be considered.

\medskip

\no {\it Case 1:} For large $j$ none of the $\phi_j$'s belong to ${\rm 
Aut}(D)^c$.

\medskip

Since ${\rm Aut}(D)^c$ is normal in ${\rm Aut}(D)$ it follows that 
$\phi_j(O(q)) = O(\phi_j(q))$ for all $j$. Thus $O(\phi_j(q))$ is a 
family of distinct Levi flat hypersurfaces for all large $j$. Fix an 
arbitrarily small neighbourhood $U$ of the origin. Then 
observe that $O(\phi_j(q)) \cap U^- \not= \emptyset$ for $j$ large and let 
$C_j \subset O(\phi_j(q)) \cap U^-$ be the connected components that 
contain $\phi_j(q)$. The proof of proposition 4.4 shows that none of the 
$C_j$'s clusters at $U \cap T_1$. Let $R_j \subset C_j$ be the leaves such 
that $\phi_j(q) \in R_j$. Then $R_j \subset U$ are pure one dimensional 
analytic sets and it follows from the Diederich-Pinchuk result and 
proposition 4.2 that ${\rm cl}(R_j) \cap U^- \cap \{ \tau(z) \ge r \} 
\not= \emptyset$ for a suitable $r > 0$. In fact more can be said about 
${\rm cl}(R_j)$; indeed let $a \in {\rm cl}(R_j) \cap U^-$ and pick $a_j 
\in R_j$ such that $a_j \ra a$ after perhaps passing to a subsequence. 
Choose $f \in {\rm Aut}(D)^c$ such that $f_j \circ \phi_j(q) = a_j$. Then 
$\{ f_j \circ \phi_j \} \ra f \in {\rm Aut}(D)$ since $f_j \circ \phi_j(q) 
= a_j \ra a \in U^-$. It follows that ${\rm cl}(R_j) \cap U^-$ is 
contained in the ${\rm Aut}(D)$-orbit of $q$. 

\medskip

This observation now shows that ${\rm cl}(R_j) \cap ((U \cap \pa D) \sm 
T_1) = \emptyset$ as otherwise the orbit of $q$ will cluster at either 
pseudoconvex or pseudoconcave points. All model domains are known in the 
former case while the latter possibility does not hold. Therefore it is 
possible to find $r > 0$ and $0 < \delta \ll r$ such that ${\rm cl}(R_j) 
\cap K^-_{r, \delta} \not= \emptyset$ where $K^-_{r, \delta}$ is as in 
(4.1). Choose $c_j \in R_j \cap K^-_{r, \delta}$ such that $c_j \ra c_0 
\in {\rm cl}(R_j) \cap K^-_{r, \delta}$ after passing to a subsequence and 
re-indexing and let $d_j \in R_j$ converge to $d_0 \in U^-$. Let $g_j \in 
{\rm Aut}(D)^c$ be such that $g_j(c_j) = d_j$. Let $g$ be the holomorphic 
limit of the $g_j$'s and observe that $g(c_0) = d_0$. Therefore $g \in 
{\rm Aut}(D)^c$, this being closed in ${\rm Aut}(D)$. This shows that 
${\rm cl}(R_j) \cap U^- \subset O(c_0) \cap U^-$. This strengthens the 
observation made above that ${\rm cl}(R_j) \cap U^-$ is contained in the 
${\rm Aut}(D)$-orbit of $q$. Moreover since the analytic sets $\{ R_j \}$ 
contain points arbitrarily close to the origin it follows that $O(c_0)$ 
also clusters there. 

\medskip

To conclude, note that there are infinitely many distinct components of 
$O(c_0) \cap U^-$ by proposition 4.4. If $a_j \in O(c_0) \cap U^-$ 
converges to the origin, we can consider the distinct components $S_j 
\subset O(c_0) \cap U^-$ that contain $a_j$ and the one dimensional 
analytic sets $A_j \subset S_j$ that contain $a_j$. By proposition 4.2 all 
except finitely many $A_j$ (and hence the same for $S_j$) intersect 
$K^-_{r, \delta}$ for some $r > 0$ and $0 < \delta \ll r$. This argument 
shows that there are infinitely many components of $O(c_0) \cap U^-$ that 
intersect a compact subset of $D$ and this forces $O(c_0)$ to cluster on 
itself in $D$. This is a contradiction since $O(c_0)$ is a closed 
submanifold of $D$.

\medskip

\no {\it Case 2:} After passing to a subsequence and re-indexing, all the 
$\phi_j$'s are contained in ${\rm Aut}(D)^c$.

\medskip

In this case it can be seen that $O(q)$ itself clusters at the origin. By 
proposition 4.4 there are infinitely many distinct components of $O(q) 
\cap U^-$ and now the arguments in the last paragraph of case 1 apply to 
provide a contradiction. In particular the intermediate step about finding 
$c_0 \in K^-_{r, \delta}$ is not needed.

\medskip

\noindent The steps leading up to this point were all proved under the 
assumption that $p_{\infty} \in T_1$. In case $p_{\infty} \in T_0$, let 
$\gamma \subset U \cap \pa D$ be a germ of an embedded real analytic arc 
that contains $p_{\infty}$ in its interior and which intersects any 
stratum in $T_1$ clustering at $p_{\infty}$ in a discrete set only. Such a 
choice is possible by the local finiteness of the semi-analytic 
stratification of ${\cal B}$, the border between the pseudoconvex and 
pseudoconcave points. The complement of $\gamma$ in $U \cap \pa D$ near 
$p_{\infty}$ consists of pseudoconvex or pseudoconcave points or those 
that lie on $T_1$. The arguments mentioned above show that if there is 
a Levi flat orbit in $D$ then it cannot cluster at points on $T_1$. The 
same steps now can be applied with $T_1$ replaced by $\gamma$ and this 
finally shows that if $D$ is not pseudoconvex then it 
cannot admit Levi flat orbits. It should be noted that the arguments used 
to obtain this conclusion did not use the fact that $\dim{\rm Aut}(D) = 
3$. Only the existence of a Levi flat orbit with closed leaves each of 
which is equivalent to the unit disc was used.

\medskip

To continue, if there is one such orbit then 
$D$ must be a pseudoconvex domain, i.e., the boundary $\pa D$ 
near $p_{\infty}$ is weakly pseudoconvex and of finite type. It follows by 
\cite{Ber1} that $D \backsimeq \ti D = \{(z_1, z_2) \in \mbf C^2 : 2 \Re 
z_2 + P_{2m}(z_1, \ov z_1) < 0\}$ where $P_{2m}(z_1, \ov z_1)$ is a 
homogeneous subharmonic polynomial of degree $2m$ without harmonic terms. 
Now observe that $\ti D$ is invariant under the one-parameter subgroups
$T_t(z_1, z_2) = (z_1, z_2 + it)$ and $S_s(z_1, z_2) = (\exp(s/2m)z_1, 
\exp(s)z_2)$ and the corresponding real vector fields are $X = \Re(i \; 
\pa / \pa z_2)$ and $Y = \Re((z_1/2m) \; \pa / \pa z_1 + z_2 \pa / \pa z_2)$.
It can be seen that $[X, Y] = X$. Let $\mathfrak{h} \subset 
\mathfrak{g}(\ti D)$ be the Lie subalgebra generated by $X, Y$. 
Let $X, Y, Z$ be the generators of $\mathfrak{g}(\ti D)$. Using the 
Jacobi identity it can be seen that $\mathfrak{g}(\ti D)$ must be 
isomorphic to one of the following:

\begin{enumerate}
\item[(a)] $[X, Y] = X, \; [Z, X] = 0, \; [Z, Y] = \alpha Z$ \;for some
real $\alpha$
\item[(b)] $[X, Y] = X, \; [Z, X] = 0, \; [Z, Y] = X + Z$
\item[(c)] $[X, Y] = X, \; [Z, X] = Y, \; [Z, Y] = -Z$
\end{enumerate}

There are two possibilities that arise in case (a), namely when $\alpha =
0$ and when $\alpha \not= 0$. In the former case, lemma 5.3 of \cite{O}
can be applied to show that $\ti D \backsimeq \{(z_1, z_2) \in \mbf C^2 :
2 \Re z_2 + \vert z_1 \vert^{2m} < 0\}$ and thus $\dim {\rm Aut}(\ti D) =
4$ which is a contradiction. In the latter case, lemma 5.6 of \cite{O}
shows that $\ti D \backsimeq \{(z_1, z_2) \in \mbf C^2 : 2 \Re z_2 + (\Re
z_1)^{2m} < 0\}$ which is isomorphic to 
\[
R_{1/2m, -1, 1} = \Big\{ (z_1, z_2) \in \mbf C^2 : -(\Re z_1)^{1/2m} < \Re 
z_2 < (\Re z_1)^{1/2m}, \; \Re z_1 > 0 \Big\}.
\]
This domain has a unique Levi flat orbit namely $\cal O_1$. The structure of 
$\mathfrak{g}(\ti D)$ in case (b) is similar to that of case (a) when 
$\alpha \not= 0$. As above we have a contradiction. Finally in case (c) 
we have $\mathfrak{g}(\ti D) \backsimeq \mathfrak{so}_{2, 1}(\mbf R) 
\backsimeq \mathfrak{sl}_2(\mbf R)$. A detailed calculation for this case 
has been done in lemma 5.8 in \cite{O} (for this situation only 
the calculations done there are needed; none of the arguments that lead up 
to lemma 5.8 are needed here as the conditions are fulfilled in our case)
which again shows that $\ti D \backsimeq \{(z_1, z_2) \in \mbf C^2 : 2 \Re
z_2 + \vert z_1 \vert^{2m} < 0\}$ and hence that $\dim {\rm Aut}(\ti D) =
4$. This is not possible.

\vskip10pt

\noindent {\it Remark:} Proposition 4.1 shows that $D$ cannot be 
equivalent to either $\hat R_{b, s, t}$ or 
$\hat U_{s, t}$. A different class of examples 
with Levi flat orbits are also mentioned in \cite{I1}, namely those that 
are obtained by considering finite and infinite sheeted covers of $D_{s, t}$ 
and $\Om_{s, t}$. These will be defined later.

\vskip10pt

\subsection{Two tube domains:} Here we consider the following examples 
each of which is a tube domain over an unbounded base in the $(\Re z_1, 
\Re z_2)$ plane.

\vskip8pt

\noindent $\bullet$ For $0 \le s < t \le \infty$ where either $s > 0$ or 
$t < \infty$ define
\[
U_{s, t} = \Big \{(z_1, z_2) \in \mbf C^2 : \Re z_2 \cdot \ln(s \Re z_2) < 
\Re z_1 < \Re z_2 \cdot \ln(t \Re z_2), \; \Re z_2 > 0 \Big\}.
\]
The group ${\rm Aut}(U_{s, t})$ is connected and is generated by  
one-parameter subgroups of the form 
\begin{align*}
 \sigma^1_t(z_1, z_2) &= (\exp(t)\; z_1 + t \exp(t)\; z_2, \exp(t)\; z_2),\\
 \sigma^2_{\beta}(z_1, z_2) &=  (z_1 + i \beta, z_2),\\
 \sigma^3_{\gamma}(z_1, z_2) &=  (z_1, z_2 + i \gamma)
\end{align*}
where $t, \beta, \gamma \in \mbf R$. The holomorphic vector 
fields corresponding to these are
\[
X = (z_1 + z_2)\; \pa / \pa z_1 + z_2 \; \pa / \pa z_2, \;\; Y = i\; \pa / 
\pa z_1, \;\; Z = i \;\pa / \pa z_2
\]
respectively and they satisfy the relations
\[
[X, Y] = - Y, \;\;\; [Y, Z] = 0, \;\;\; [Z, X] = Y + Z.
\]
The commutator ${\rm Aut}(U_{s, t})' \backsimeq (\mbf R^2, +)$ is 
generated by $\sigma^2_{\beta}, \sigma^3_{\gamma}$ and it is evident from 
the relations between $X, Y, Z$ that the subgroup $(\sigma_{\beta}) 
\subset {\rm Aut}(U_{s, t})'$ is normal in ${\rm Aut}(U_{s, t})$. 
Moreover, the Lie algebra $\mathfrak{g}(U_{s, t})$ has a unique two 
dimensional abelian subalgebra, namely the one generated by $Y, Z$.

\vskip8pt

\noindent $\bullet$ Fix $b > 0$ and for $0 < t < \infty$ and $\exp(-2 \pi 
b) t < s < t$ define 
\[
V_{b, s, t} = \Big \{(z_1, z_2) \in \mbf C^2 : s \exp(b \phi) < r < t 
\exp(b \phi) \Big\}
\]
where $(r, \phi)$ are polar coordinates in the $(\Re z_1, \Re z_2)$ plane 
and $\phi \in (-\infty, \infty)$. The group ${\rm Aut}(V_{b, s, t})$ is 
connected and is generated by one-parameter subgroups of the form
\begin{align*}
 \sigma^1_{\psi}(z_1, z_2) &= (\exp(b \psi) \cos \psi \; z_1 + \exp(b 
 \psi) \sin \psi \; z_2, - \exp(b \psi) \sin \psi \; z_1 + \exp(b \psi) 
 \cos \psi \; z_2)\\
 \sigma^2_{\beta}(z_1, z_2) &= (z_1 + i \beta, z_2)\\
 \sigma^3_{\gamma}(z_1, z_2) &= (z_1, z_2 + i \gamma)
\end{align*}
where $\psi, \beta, \gamma \in \mbf R$. The holomorphic vector fields 
corresponding to these are
\[
X = (b z_1 + z_2) \; \pa / \pa z_1 + (-z_1 + b z_2) \; \pa / \pa z_2, \; 
\; Y = i \; \pa / \pa z_1, \; \; Z = i \; \pa / \pa z_2
\]
respectively and they satisfy the relations
\[
[X, Y] = -b Y + Z, \;\;\; [Y, Z] = 0, \;\;\;[Z, X] = Y + b Z.
\]
As before ${\rm Aut}(V_{b, s, t})' \backsimeq (\mbf R^2, +)$ is generated 
by $\sigma^2_{\beta}, \sigma^3_{\gamma}$. The matrix 
formed by the non-trivial structure constants
\[
M = 
\begin{pmatrix}
-b & 1\\
1 & b
\end{pmatrix}
\]
is triangularisable over $\mbf R$ and hence there is a change of 
coordinates involving $Y, Z$ (and which does not affect $X$) after which 
it is possible to conclude that there is a one-parameter subgroup in ${\rm 
Aut}(V_{b, s, t})'$ that is normal in ${\rm Aut}(V_{b, s, t})$. Finally, 
the Lie algebra $\mathfrak{g}(V_{b, s, t})$ contains a unique two 
dimensional abelian subalgebra, namely the one generated by $Y, Z$.

\begin{prop}
$D$ cannot be equivalent to either $U_{s, t}$ or $V_{b, s, t}$.
\end{prop}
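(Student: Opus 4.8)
The plan is to transfer everything to the model domain $G$ and to locate the generator of $(T_t)$ inside the abstractly prescribed Lie algebra $\mathfrak g(U_{s,t})$ (resp. $\mathfrak g(V_{b,s,t})$). Recall that $g : G \ra D$ is biholomorphic and that $(T_t)(z_1,z_2) = (z_1, z_2 + it)$ is generated by $\mathcal X_0 = i\,\pa/\pa z_2$. Suppose, for contradiction, that $D \backsimeq U_{s,t}$; composing with $g$ yields a biholomorphism $G \ra U_{s,t}$ and hence a Lie algebra isomorphism $\Psi : \mathfrak g(G) \ra \mathfrak g(U_{s,t}) = \langle X, Y, Z \rangle$, and I set $W = \Psi(\mathcal X_0)$. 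The first observation, read off from the structure constants, is that in both $\mathfrak g(U_{s,t})$ and $\mathfrak g(V_{b,s,t})$ the derived algebra coincides with the unique two dimensional abelian subalgebra $\langle Y, Z \rangle$ (for $U_{s,t}$ this follows from $[X,Y]=-Y$ and $[Z,X]=Y+Z$, and for $V_{b,s,t}$ from the invertibility of the matrix $M$). Consequently the intrinsic dichotomy ``$\mathcal X_0 \in [\mathfrak g(G),\mathfrak g(G)]$ or not'' is exactly the dichotomy ``$W \in \langle Y, Z \rangle$ or not.''

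First I would treat the case $\mathcal X_0 \in [\mathfrak g(G),\mathfrak g(G)]$. Then $\mathcal X_0$ lies in a two dimensional abelian subalgebra of $\mathfrak g(G)$, so $(T_t)$ is contained in a two dimensional abelian subgroup of ${\rm Aut}(G)^c$. The computation carried out in Proposition 3.1 uses nothing beyond the presence of such an abelian subgroup containing $(T_t)$ together with the invariance of $\pa G$; running it verbatim shows that $G$ is biholomorphic to a tube domain $\cal D_1$ or to the Reinhardt type domain $\cal D_2$.

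Next, in the case $\mathcal X_0 \notin [\mathfrak g(G),\mathfrak g(G)]$, I would exploit that ${\rm ad}_{\mathcal X_0}$ acts non-trivially on the derived algebra. For $U_{s,t}$ a direct computation gives that ${\rm ad}_W$ has the nonzero real eigenvalue $-\alpha$ on $\langle Y, Z \rangle$, where $\alpha$ is the $X$-coefficient of $W$, so there is a real vector field $V \in \mathfrak g(G)$ with $[\mathcal X_0, V] = \lambda V$ and $\lambda \not= 0$. This is precisely the hypothesis $[X,Y]=\mu Y$ of Proposition 3.5; integrating $\mathcal X_0 = i\,\pa/\pa z_2$ forces $V = \exp(-i\lambda z_2)\big(h_1(z_1)\,\pa/\pa z_1 + h_2(z_1)\,\pa/\pa z_2\big)$, and the argument of that proposition (non-vanishing of $h_1$, tangency of $V$ to $\pa G$, and the resulting functional equation for $P$) then applies and shows that $P(z_1,\ov z_1)$ is homogeneous, i.e. $G \backsimeq \cal D_3$. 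Assembling the two cases, $G$ — and hence $D$ — would be biholomorphic to one of the normal forms $\cal D_1, \cal D_2, \cal D_3$. But these are, by the classification in \cite{I1}, inequivalent to the tube domains $U_{s,t}$ and $V_{b,s,t}$: once the automorphism dimension is three they reduce to $R_{1/2m,-1,1} \backsimeq \cal D_4$, which \cite{I1} lists as distinct from both, while Proposition 4.1 has already excluded the possibility of a Levi flat orbit. This is the desired contradiction, and the same scheme applies to $V_{b,s,t}$.

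The step I expect to be the main obstacle is the non-abelian case for $V_{b,s,t}$. There the eigenvalues of ${\rm ad}_W$ on $\langle Y, Z \rangle$ form the conjugate pair $-\alpha(b \pm i)$ with $b > 0$, so there is no real eigenvector and $(T_t)$ acts on the derived algebra by a genuine rotation–dilation; Proposition 3.5 must then be run on a two dimensional ${\rm ad}_{\mathcal X_0}$-invariant subspace, with the associated field $V = \exp(-i\lambda z_2)(\cdots)$ now carrying a complex exponent, and one must verify that the corresponding one parameter family still preserves $G$ and yields the same homogeneity of $P$. The remaining delicate point is the final inequivalence itself — ruling out by hand that a homogeneous or tube normal form could, after all, be biholomorphic to $U_{s,t}$ or $V_{b,s,t}$ — for which I would fall back on the pairwise inequivalence of the normal forms in \cite{I1} and on the orbit and foliation data recorded there.
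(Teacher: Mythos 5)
Your skeleton is in fact the same as the paper's (the dichotomy ``$(T_t)$ inside versus outside the two-dimensional abelian derived subgroup,'' with Proposition 3.1 handling the abelian case and the vector-field computation of Section 3 handling the other), but two steps are wrong or missing. First, your Case B misreads the relevant result (Proposition 3.3; there is no Proposition 3.5). The relation $[\cal X_0, V] = \la V$ with $\la \not= 0$, where the translation generator $\cal X_0$ spans the \emph{non}-ideal direction, is precisely the configuration whose \emph{impossibility} is the content of the first step of that proof: the Claim there forces $h_1$ to vanish somewhere, while the line-restriction argument forces $h_1$ to be nonvanishing. It never produces the homogeneity functional equation; homogeneity of $P$ is derived from the opposite configuration, in which $(T_t)$ itself is the normal subgroup, via $S_s \circ T_t = T_{t\exp(\alpha s)} \circ S_s$ (consistently, in $\mathfrak{g}(\cal D_3)$ the translation field spans the ideal, since $[X, Y] = X$). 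So Case B terminates in an outright contradiction --- this is exactly the paper's Case 1, run through the group $N$ generated by $(T_t)$ and the normal one-parameter subgroup --- and not in $G \backsimeq \cal D_3$. Your remark that for $V_{b, s, t}$ the adjoint action of $W$ on $\langle Y, Z \rangle$ has the complex eigenvalues $-\alpha(b \pm i)$ is correct, and is a genuine subtlety (it is even in tension with the paper's assertion that $M$ is triangularisable over $\mathbf R$); but you flag this obstacle without resolving it, so Case B for $V_{b, s, t}$ is simply left open in your write-up.

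The decisive gap, however, is the endgame. Having reduced $G$ to one of the normal forms $\cal D_1, \cal D_2, \cal D_3$, you assert inequivalence with $U_{s, t}$ and $V_{b, s, t}$ by appealing to the list in \cite{I1} and to Proposition 4.1, and neither applies: Proposition 4.1 only says that \emph{if} $D$ has a Levi flat orbit then $D \backsimeq \cal D_4$, whereas the orbits of $U_{s, t}$ and $V_{b, s, t}$ are not Levi flat, so it yields nothing here; and your claim that a tube of type $\cal D_1$ with three-dimensional automorphism group must ``reduce to $R_{1/2m, -1, 1}$'' is unproven --- the reduction at the end of Proposition 4.1 uses pseudoconvexity and the homogeneous model from \cite{Ber1}, which are unavailable in this subsection. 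What the paper actually does, and what your proof needs, is the following: since $(T_t) \subset f_{\ast}({\rm Aut}')\backsimeq (\mathbf R^2, +)$, the Reinhardt alternative $\cal D_2$ (whose two-dimensional group is of type $\mathbf R \times S^1$) is excluded and only the tube $\ti G$ survives; then, because $\mathfrak{g}(U_{s, t})$ (resp. $\mathfrak{g}(V_{b, s, t})$) and $\mathfrak{g}(\ti G)$ each contain a \emph{unique} two-dimensional abelian subalgebra, the equivalence must match the subgroups of imaginary translations, and hence by \cite{Oe}, \cite{KS} is affine --- contradicting the fact that $\pa \ti G$ is polynomial while $\pa U_{s, t}$ and $\pa V_{b, s, t}$ are not algebraic. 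Without this affineness-plus-algebraicity argument your proof does not close.
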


\begin{proof}
The arguments for either of $U_{s, t}$ or $V_{b, s, t}$ are the same and 
it will suffice to work with say $V_{b, s, t}$. So let 
\[
f : V_{b, s, t} \ra G \backsimeq D
\]
be a biholomorphism where $G$ is as in (2.3). For a subgroup $H$ of 
${\rm Aut}(V_{b, s, t})$ let $f_{\ast}H \subset {\rm Aut}(G)$ be the 
subgroup given by
\[
f_{\ast}H = \{f \circ h \circ f^{-1} : h \in H \}
\]
and likewise define $f^{\ast}S = (f^{-1})_{\ast}S$ if $S$ is a subgroup of 
${\rm Aut}(G)$. Let $(\psi_t)$ be the one-parameter subgroup in ${\rm 
Aut}(V_{b, s, t})'$ that is normal in ${\rm Aut}(V_{b, s, t})$. Recall 
that $G$ is invariant under the translations $(T_t)$ along the imaginary 
$z_2$-axis. 

\medskip

Suppose that $(T_t)$ is not contained in $f_{\ast}({\rm Aut}(V_{b, s, 
t})') \backsimeq (\mbf R^2, +)$. Let $N$ be the subgroup of 
${\rm Aut}(G)$ that is generated by $f_{\ast}(\psi_t)$ and $(T_t)$. Then 
$N$ is non-abelian and it contains $f_{\ast}(\psi_t)$ as a normal subgroup. 
This situation cannot happen as the proof of proposition 3.2 shows. So 
$(T_t) \subset f_{\ast}({\rm Aut}(V_{b, s, t})') \backsimeq (\mbf R^2, 
+)$. By proposition 3.1 it follows that
\[
G \backsimeq \ti G = \{(z_1, z_2) \in \mbf C^2 : 2 \Re z_2 + P(\Re 
z_1) < 0\}
\]
after a global change of coordinates and $P(\Re z_1)$ is a polynomial 
without harmonic terms that depends only on $\Re z_1$, i.e., $\ti G$ is 
itself a tube domain. Let $f$ still denote the equivalence between $V_{b, 
s, t}$ and $\ti G$. Evidently $f$ induces an isomorphism of the 
corresponding Lie algebras. Since there is a unique two dimensional 
abelian subalgebra in $\mathfrak{g}(V_{b, s, t})$, the same must be true 
of $\mathfrak{g}(\ti G)$. But the subalgebra generated by the 
translations along the imaginary $z_1, z_2$-axes is one such in 
$\mathfrak{g}(\ti G)$ and so it must be the only one. Thus the 
corresponding unique two dimensional abelian subalgebras must be mapped 
into each other. These abelian subalgebras are the Lie algebras of the 
subgroups formed by translations along the imaginary $z_1, z_2$-axes in 
both $V_{b, s, t}$ and $\ti G$. Being isomorphic to $(\mbf R^2, +)$ they 
are simply connected and hence the isomorphism between the Lie 
algebras extends to an isomorphism between the subgroups of 
imaginary translations. It follows (see for example \cite{Oe} or 
\cite{KS}) that
\[
f : V_{b, s, t} \ra \ti G
\]
must be affine. This however cannot happen as $\pa \ti G$ is algebraic 
(even polynomial) while $\pa V_{b, s, t}$ is evidently not.

\end{proof}

\subsection{Another tube domain and its finite and infinite sheeted 
covers:} For $0 \le s < t < \infty$ define
\[
\mathfrak{S}_{s, t} = \Big \{(z_1, z_2) \in \mbf C^2 : s < (\Re z_1)^2 + 
(\Re z_2)^2 < t \Big\}
\]
and for $0 < t < \infty$ define
\[
\mathfrak{S}_t = \Big\{ (z_1, z_2) \in \mbf C^2 : (\Re z_1)^2 + (\Re 
z_2)^2 < t \Big\}.
\]
Both ${\rm Aut}(\mathfrak{S}_{s, t})^c$ and ${\rm Aut}(\mathfrak{S}_t)^c$ 
are generated by maps of the form
\begin{align*}
\sigma^1_{\psi}(z_1, z_2) &= (\cos \psi \;z_1 - \sin \psi \; z_2, \cos 
\psi \;z_1 + \sin \psi \;z_2)\\
\sigma^2_{\beta}(z_1, z_2) &= (z_1 + i \beta, z_2)\\
\sigma^3_{\gamma}(z_1, z_2) &= (z_1, z_2 + i \gamma)
\end{align*}
where $\psi, \beta, \gamma \in \mbf R$. The holomorphic 
vector fields corresponding to these are
\[
X = - z_2 \; \pa / \pa z_1 + z_1 \; \pa / \pa z_2, \;\; Y = i \; \pa / \pa 
z_1, \;\; Z = i \; \pa / \pa z_2
\]
respectively and they satisfy the relations
\[
[X, Y] = -Z, \;\;\; [Y, Z] = 0, \;\;\; [Z, X] = -Y.
\]
Hence the one-parameter subgroup corresponding to $Y + Z$, for example, is 
contained in the commutator of ${\rm Aut}(\mathfrak{S}_t)^c$ and is normal 
in ${\rm Aut}(\mathfrak{S}_t)^c$. As before there is a unique two 
dimensional abelian subalgebra in $\mathfrak{g}(\mathfrak{S}_t)$, namely 
that generated by $Y, Z$.

\begin{prop}
D cannot be equivalent to either $\mathfrak{S}_{s, t}$ or $\mathfrak{S}_t$ 
for any $0 \le s < t < \infty$.
\end{prop}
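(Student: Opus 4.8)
The plan is to adapt the template of Proposition 4.5. Suppose for contradiction that there is a biholomorphism $f:\mathfrak{S}_t \ra G \backsimeq D$, with $G$ as in (2.3). I will first locate the translation group $(T_t)$ inside $f_*({\rm Aut}(\mathfrak{S}_t)^c)$, then reduce $G$ to a tube domain and force $f$ to be affine, and finally contradict the shapes of the two boundaries. Since ${\rm Aut}(\mathfrak{S}_{s,t})^c = {\rm Aut}(\mathfrak{S}_t)^c$ and the base of each domain in the $(\Re z_1,\Re z_2)$-plane (an annular region, respectively a disc) is bounded, the same argument disposes of $\mathfrak{S}_{s,t}$ as well. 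In fact $\mathfrak{S}_{s,t}$ is a tube over an annular region, so $\pi_1(\mathfrak{S}_{s,t})\backsimeq\mbf Z$ and it cannot be biholomorphic to the simply connected $D$; thus only $\mathfrak{S}_t$ genuinely requires the argument below.

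The decisive step is the first, where the reasoning must depart from Proposition 4.5. There the matrix of structure constants was triangularisable over $\mbf R$, yielding a one-parameter subgroup of the commutator normal in the whole group, which fed into the argument of Proposition 3.3. Here no such normal subgroup exists: ${\rm ad}_X$ acts on the commutator $\langle Y,Z\rangle$ as a rotation, with eigenvalues $\pm i$, so no real line in it is invariant. I would replace normality by \emph{compactness}. The group ${\rm Aut}(\mathfrak{S}_t)^c$ is the special Euclidean group $SE(2)$, in which $X$ generates the circle of rotations $\sigma^1_\psi$ and $\langle Y,Z\rangle$ is the translation subgroup $\backsimeq(\mbf R^2,+)$; a one-parameter subgroup is compact precisely when its generator has a non-zero $X$-component (it is then conjugate to a pure rotation), and the non-compact ones are exactly those lying in $\langle Y,Z\rangle$. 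Since $(T_t)$ acts freely on $G$ it is non-compact, so its generator $i\,\pa/\pa z_2$ must correspond to an element of the derived algebra $\langle Y,Z\rangle$; hence $(T_t)\subset f_*({\rm Aut}(\mathfrak{S}_t)')\backsimeq(\mbf R^2,+)$.

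With $(T_t)$ inside this abelian $(\mbf R^2,+)$, the argument of Proposition 3.1 yields $G \backsimeq \ti G = \{2\Re z_2 + P(\Re z_1)<0\}$ after a global change of coordinates; the alternative model $\mathcal D_2$ does not arise because the unique two-dimensional abelian subalgebra $\langle Y,Z\rangle$ integrates to the genuine translation group $(\mbf R^2,+)$ rather than to $S^1\times\mbf R$. Now $f$ induces an isomorphism of Lie algebras, and since $\langle Y,Z\rangle$ is the \emph{unique} two-dimensional abelian subalgebra of $\mathfrak{g}(\mathfrak{S}_t)$, its image is the unique such subalgebra of $\mathfrak{g}(\ti G)$, namely the imaginary $z_1,z_2$-translations. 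Both are simply connected copies of $(\mbf R^2,+)$, so the isomorphism integrates to an isomorphism of the imaginary-translation subgroups, and by \cite{Oe} or \cite{KS} the map $f$ is affine. Writing $f(z)=Az+b$, the conjugation of imaginary translations forces $A(\mbf R^2)\subset\mbf R^2$, so $A$ is a real matrix; then $\Re f = A\,\Re z + \Re b$, and $f$ induces an invertible affine map of the real bases carrying the bounded base of $\mathfrak{S}_t$ onto the unbounded base of $\ti G$. This is impossible, which proves the proposition. The one delicate point is the compactness dichotomy that replaces the normal-subgroup step of Proposition 4.5; the remainder runs exactly as in the tube-domain case treated there.
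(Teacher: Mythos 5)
Your proof is correct, and it follows the same broad route as the paper --- dispose of $\mathfrak{S}_{s,t}$ by simple connectivity of $D$, force $(T_t)$ into $f_{\ast}({\rm Aut}(\mathfrak{S}_t)')$, reduce $G$ to a tube $\ti G$ via Proposition 3.1, use the unique two-dimensional abelian subalgebra together with \cite{Oe}, \cite{KS} to make $f$ affine, and then contradict the boundary geometry --- but it diverges precisely at the step where the paper's own treatment is defective, and your divergence is a repair rather than merely an alternative. The paper's proof just says the arguments of Proposition 4.5 apply, leaning on its earlier assertion that the one-parameter subgroup generated by $Y+Z$ is normal in ${\rm Aut}(\mathfrak{S}_t)^c$; as you observe, that assertion is false: from $[X,Y]=-Z$ and $[Z,X]=-Y$ one gets $[X, Y+Z]=Y-Z$, and since ${\rm ad}_X$ acts on $\langle Y,Z\rangle$ as a rotation (eigenvalues $\pm i$), no real line in the commutator is invariant, so the normal-subgroup step of Proposition 4.5 has no analogue here. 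Your compactness dichotomy in $SE(2)$ --- a one-parameter subgroup whose generator has non-zero $X$-component is conjugate, by a translation, to the periodic rotation subgroup $(\sigma^1_{\psi})$, hence is a circle, whereas $t \mapsto f^{-1}\circ T_t \circ f$ is injective because $(T_t)$ acts freely --- validly forces the generator of $(T_t)$ into $\langle Y,Z\rangle$, which is exactly what the rest of the Proposition 4.5 template needs. Two further points where you are more careful than the source: you explain why the rotational model $\cal D_2$ of Proposition 3.1 cannot arise (the subgroup $f_{\ast}({\rm Aut}(\mathfrak{S}_t)')$ is a genuine $(\mbf R^2,+)$, while the rotational case would produce $S^1 \times \mbf R$), and your closing contradiction --- an invertible affine map with real linear part, forced by the matching of imaginary translations, cannot carry the bounded disc base of $\mathfrak{S}_t$ onto the unbounded base $\{2x_2 + P(x_1)<0\}$ of $\ti G$ --- is the right concretization of the paper's terse remark that the ``explicit description'' of the two boundaries rules this out; note that the algebraicity contrast used at the end of Proposition 4.5 would not work here, since $\pa \mathfrak{S}_t$ is itself polynomial.
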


\begin{proof}
The domain $\mathfrak{S}_{s, t}$ is not simply connected and hence $D$ 
cannot be equivalent to it. On the other hand, the properties of 
$\mathfrak{S}_t$ as listed above are similar to those of $U_{s, t}, V_{b, 
s, t}$. The arguments of proposition 4.5 can be applied in this case as 
well and they show that if $D \backsimeq \mathfrak{S}_t$, then there is an 
affine equivalence between $\ti G$ and $\mathfrak{S}_t$. However, the 
explicit description of $\pa \ti G$ and $\pa \mathfrak{S}_t$ shows that 
this is not possible.
\end{proof}

\noindent Next finite and infinite sheeted covers of $\mathfrak{S}_{s, 
t}$ can be considered. We start with the finite case first. For an integer 
$n \ge 2$, consider the $n$-sheeted covering self map $\Phi^{(n)}_{\chi}$ 
of $\mbf C^2 \sm \{\Re z_1 = \Re z_2 = 0\}$ whose components are given by
\begin{align*}
\ti z_1 &= \Re \Big( (\Re z_1 + i \Re z_2)^n \Big) + i \Im z_1,\\
\ti z_2 &= \Im \Big( (\Re z_1 + i \Re z_2)^n \Big) + i \Im z_2.
\end{align*}
Let $M^{(n)}_{\chi}$ denote the domain of $\Phi^{(n)}_{\chi}$ endowed with 
the pull-back complex structure using $\Phi^{(n)}_{\chi}$. For $0 \le s < 
t < \infty$ and $n \ge 2$ define
\[
\mathfrak{S}^{(n)}_{s, t} = \Big \{(z_1, z_2) \in M^{(n)}_{\chi} : s^{1/n} 
< (\Re z_1)^2 + (\Re z_2)^2 < t^{1/n} \Big\}.
\]
It can be seen that $\mathfrak{S}^{(n)}_{s, t}$ is an $n$-sheeted cover of 
$\mathfrak{S}_{s, t}$, the holomorphic covering map being $\Phi^{(n)}_{\chi}$.

\begin{prop}
D cannot be equivalent to $\mathfrak{S}^{(n)}_{s, t}$ for $n \ge 2$ and $0 
\le s < t < \infty$.
\end{prop}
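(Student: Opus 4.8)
The plan is to follow the strategy of Proposition 4.6, where the equivalence of $D$ with $\mathfrak{S}_{s, t}$ was excluded on purely topological grounds: $\mathfrak{S}_{s, t}$ is not simply connected, whereas $D$ is simply connected by the Remark following Lemma 2.2. It therefore suffices to check that passing to the $n$-sheeted cover does not restore simple connectivity, i.e. that $\mathfrak{S}^{(n)}_{s, t}$ is again not simply connected. To set up the topology, write $w = \Re z_1 + i \Re z_2$, so that the base of $\mathfrak{S}_{s, t}$ in the $(\Re z_1, \Re z_2)$-plane is the annular region $\{\sqrt s < |w| < \sqrt t\}$ (a punctured disc when $s = 0$); as a tube over this base, $\mathfrak{S}_{s, t}$ is homotopy equivalent to $S^1$ and has $\pi_1 \cong \mbf Z$, which is exactly why it fails to be simply connected.

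First I would record how $\Phi^{(n)}_{\chi}$ interacts with these coordinates. In the real variables it acts as the power map $w \mapsto w^n$ and it is the identity in the imaginary directions $\Im z_1, \Im z_2$; a direct computation gives $(\Re \ti z_1)^2 + (\Re \ti z_2)^2 = |w|^{2n}$, so the defining condition of $\mathfrak{S}^{(n)}_{s, t}$ inside $M^{(n)}_{\chi}$ becomes $s^{1/(2n)} < |w| < t^{1/(2n)}$. Next I would compute $\pi_1(\mathfrak{S}^{(n)}_{s, t})$ in either of two equivalent ways. Topologically, forgetting the pull-back complex structure on $M^{(n)}_{\chi}$ (which is irrelevant to the fundamental group), the underlying space of $\mathfrak{S}^{(n)}_{s, t}$ is once more a tube over an annular region in the $w$-plane, hence connected with $\pi_1 \cong \mbf Z$. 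Equivalently, $\Phi^{(n)}_{\chi}$ restricts to a connected $n$-sheeted covering $\mathfrak{S}^{(n)}_{s, t} \ra \mathfrak{S}_{s, t}$, since the power map is connected and exactly $n$-to-one on the annulus; consequently $\pi_1(\mathfrak{S}^{(n)}_{s, t})$ is the index-$n$ subgroup $n \mbf Z$ of $\pi_1(\mathfrak{S}_{s, t}) \cong \mbf Z$, which is nontrivial.

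Either description shows that $\mathfrak{S}^{(n)}_{s, t}$ is not simply connected. Since a biholomorphism is in particular a homeomorphism, a simply connected domain cannot be equivalent to one that is not; as $D$ is simply connected, it follows that $D$ is not equivalent to $\mathfrak{S}^{(n)}_{s, t}$, which completes the proof. There is no real obstacle in this argument; the only point requiring any care is the observation that the exotic pull-back complex structure on $M^{(n)}_{\chi}$ has no bearing on the topology, so that a finite connected cover of the non-simply-connected domain $\mathfrak{S}_{s, t}$ can only be non-simply-connected as well.
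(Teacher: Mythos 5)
Your proof is correct, and it takes a genuinely different and far more elementary route than the paper's. The paper never touches $\pi_1$ here: it composes the hypothetical biholomorphism $f$ with the covering map to obtain an unbranched proper holomorphic map $\Phi^{(n)}_{\chi} \circ f : D \ra \mathfrak{S}_{s, t}$ and then runs a boundary analysis at $p_{\infty}$ --- picking a point $a$ in the two dimensional stratum $S \subset \hat D$ supplied by Lemma 2.1, extending the map across $a$, and using the invariance of the Levi form (plus, when $s = 0$, the finite type hypothesis and the Jacobian zero locus) to rule out the cluster set meeting $\pa \mathfrak{S}^{\pm}_{s, t}$ or $i \mbf R^2$; this forces $\pa D$ to be weakly pseudoconvex near $p_{\infty}$, whence $D \backsimeq \ti D$ by \cite{Ber1}, and the contradiction then comes from pseudoconvexity of the proper image when $s > 0$, and from Webster's algebraicity theorem \cite{We} together with an analysis of the branches of the inverse near $i \mbf R^2$ when $s = 0$. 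Your observation short-circuits all of this: the pull-back complex structure on $M^{(n)}_{\chi}$ lives on the same underlying topological space $\mbf C^2 \sm \{\Re z_1 = \Re z_2 = 0\}$, your coordinate computation is accurate ($(\Re \ti z_1)^2 + (\Re \ti z_2)^2 = \vert w \vert^{2n}$ with the identity in the imaginary directions, so the defining condition reads $s^{1/(2n)} < \vert w \vert < t^{1/(2n)}$), and hence $\mathfrak{S}^{(n)}_{s, t}$ is topologically a tube over an annulus (punctured disc when $s = 0$) with $\pi_1 \cong \mbf Z$ --- equivalently, the index-$n$ subgroup $n \mbf Z \subset \pi_1(\mathfrak{S}_{s, t}) \cong \mbf Z$ attached to the connected $n$-sheeted cover --- while $D$ is simply connected by the remark after Lemma 2.2. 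This mechanism is entirely consistent with the paper's own toolkit: it is exactly how Proposition 4.6 dismisses $\mathfrak{S}_{s, t}$ and how Lemma 4.11 handles $\Om_{s, t}$ and $D_{s, t}$. What the paper's heavier argument buys is the neighbouring case where your method has no purchase: the infinite cover $\mathfrak{S}^{(\infty)}_{s, t}$ of Proposition 4.8 has total space the slab $\{(\ln s)/2 < \Re z_1 < (\ln t)/2\}$ (a half-space when $s = 0$) in $\mbf C^2$, which is convex and simply connected, and Proposition 4.8 reuses the analytic machinery of the present proposition almost verbatim, with properness replaced by control of the boundary cluster set. So your topological shortcut correctly disposes of the finite covers, but the analytic content of the paper's proof is still needed one proposition later.
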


\begin{proof}
Let $f : D \ra \mathfrak{S}^{(n)}_{s, t}$ be a biholomorphism. Since 
$\mathfrak{S}^{(n)}_{s, t}$ inherits the pull-back complex structure using 
$\Phi^{(n)}_{\chi}$ it follows that 
\[
\Phi^{(n)}_{\chi} \circ f : D \ra \mathfrak{S}_{s, t}
\]
is an unbranched, proper holomorphic mapping between domains that are 
equipped with the standard complex structure. First suppose that 
$0 < s < t < \infty$. The boundary $\pa \mathfrak{S}_{s, t}$ has two 
components, namely 
\[
\pa \mathfrak{S}^+_{s, t} = \{(z_1, z_2) \in \mbf C^2 : (\Re z_1)^2 + 
(\Re z_2)^2 = t\}, \; {\rm and}\;
\pa \mathfrak{S}^-_{s, t} = \{(z_1, z_2) \in \mbf C^2 : (\Re z_1)^2 + 
(\Re z_2)^2 = s\}.
\]
Using the orientation induced on these by $\mathfrak{S}_{s, t}$, it can be 
seen that $\pa \mathfrak{S}^{\pm}_{s, t}$ are strongly pseudoconvex and 
strongly pseudoconcave hypersurfaces respectively. For brevity let $\pi = 
\Phi^{(n)}_{\chi} \circ f$ and work near $p_{\infty} \in \pa D$, the orbit 
accumulation point. By lemma 2.1 there is a two dimensional stratum $S$ of 
the Levi degenerate points that clusters at $p_{\infty}$ and $S \subset 
\hat D$. Choose $a \in S$ near $p_{\infty}$ and fix a neighbourhood $U$ of 
$a$ small enough so that $\pi$ extends holomorphically to $U$. Note that 
$(U \cap \pa D) \sm S$ consists of either strongly pseudoconvex or strongly 
pseudoconcave points. Since $\pa D$ is of finite type near $p_{\infty}$ and 
$\pa \mathfrak{S}^{\pm}_{s, t}$ are strongly pseudoconvex/pseudoconcave 
everywhere, the invariance property of Segre varieties shows that $\pi$ is 
proper near $a$. Now suppose that $\pi(a) \in \pa \mathfrak{S}^+_{s, 
t}$. Since $\pi : D \ra \mathfrak{S}_{s, t}$ is proper it follows that 
$\pi(U \cap \pa D) \subset \pa \mathfrak{S}^+_{s, t}$. But then $\pi$ is 
proper near $a$ as well and hence there is an open dense set of strongly 
pseudoconcave points in $(U \cap \pa D) \sm S$ that are mapped locally 
biholomorphically to points in $\pa \mathfrak{S}^+_{s, t}$ and this 
contradicts the invariance of the Levi form. Hence $\pi(a) \not \in \pa 
\mathfrak{S}^+_{s, t}$. The same argument shows that $\pi(a) \not \in \pa 
\mathfrak{S}^-_{s, t}$.

The only possibility then is that there are no pseudoconcave points near 
$p_{\infty}$, i.e., the boundary $\pa D$ is weakly pseudoconvex near 
$p_{\infty}$. In this case it follows from \cite{Ber1} that $D$ is 
equivalent to the model domain at $p_{\infty}$ which means that
\begin{equation}
D \backsimeq \ti D = \Big\{ (z_1, z_2) \in \mbf C^2 : 2 \Re z_2 + 
P_{2m}(z_1, \ov z_1) < 0 \Big\}
\end{equation}
where $P_{2m}(z_1, \ov z_1)$ is a homogeneous subharmonic polynomial of 
degree $2m$ (this being the $1$-type of $\pa D$ at $p_{\infty}$) without 
harmonic terms. In particular $D$ is globally pseudoconvex and since $\pi 
: D \ra \mathfrak{S}_{s, t}$ is proper it follows that $\mathfrak{S}_{s, 
t}$ is also pseudoconvex. This evidently is not the case.

\medskip

\noindent Now suppose that $0 = s < t < \infty$. The two components of the 
boundary of $\mathfrak{S}_{0, t}$ are
\[
\pa \mathfrak{S}^+_{0, t} = \{(z_1, z_2) \in \mbf C^2 : (\Re z_1)^2 + (\Re 
z_2)^2 = t\}, \; {\rm and}\; i \mbf R^2 = \{\Re z_1 = \Re z_2 = 0\}.
\]
Choose neighbourhoods $U, U'$ of $a, \pi(a)$ respectively so that $\pi : U 
\ra U'$ is a well defined holomorphic mapping. Suppose that $\pi(a) \in i 
\mbf R^2$. Let $Z_{\pi} \subset U$ be the closed analytic set defined by 
the vanishing of the Jacobian determinant of $\pi$. Note that if it is 
non-empty, $Z_{\pi} \subset U \sm \ov D$ since $\pi : D \ra 
\mathfrak{S}_{0, t}$ is unbranched. Since $\pa D$ is of finite type near 
$p_{\infty}$ it follows that no open piece of $Z_{\pi}$ can be contained 
in $\pa D$. Then there are points in $U \cap \pa D$ that are mapped 
locally biholomorphically to points in $i \mbf R^2$. This is not possible. 
For similar reasons $\pi(a) \not \in \pa \mathfrak{S}^+_{0, t}$. Hence the 
only possibility is that $\pa D$ is weakly pseudoconvex near $p_{\infty}$. 
As before it follows from \cite{Ber1} that $D \backsimeq \ti D$ where $\ti 
D$ is as in (4.2). Let $\pi$ still denote the proper mapping
\[
\pi : \ti D \ra \mathfrak{S}_{0, t}.
\]
\noindent {\it Claim:} There exists a point on $\pa \ti D$ whose cluster 
set under $\pi$ intersects $\pa \mathfrak{S}^+_{0, t}$.

\medskip

\noindent This may be proved as follows by a suitable adaption of the 
ideas that were used in \cite{CP}. Pick $a' \in \pa \mathfrak{S}^+_{0, t}$ 
and let $\psi$ be a local holomorphic peak function at $a'$. Choose a 
neighbourhood $U'$ of $a'$ such that $\psi(a') = 1$ and $\vert \psi 
\vert < 1$ on $(\ov U' \cap \ov{\mathfrak{S}}_{0, t}) \sm \{a'\}$. Then 
$\vert \psi \vert \le 1 - 2\alpha$ on $\pa U' \cap \ov{\mathfrak{S}}_{0, 
t}$ for some $\alpha > 0$. Choose $b' \in U'$ such that $\vert \psi(b') 
\vert = 1 - \alpha$ and let $b \in \ti D$ be such that $\pi(b) = b'$. By 
proposition 1.1 of \cite{CP} there is a smooth analytic disc $h : 
\ov {\Delta}(0, r) \ra \ti D$ such that $h(0) = b$ and $h(\pa \Delta(0, 
r)) \subset \pa \ti D$. Let $E \subset \Delta(0, r)$ be the connected 
component of $(\pi \circ h)^{-1}(U' \cap \mathfrak{S}_{0, t})$ that 
contains the origin. If $E$ is compactly contained in $\Delta(0, r)$ then 
$\psi \circ \pi \circ h$ is subharmonic on $\ov E$ and $\vert \psi \circ 
\pi \circ h \vert \le 1 - 2 \alpha$ on $\pa E$ while
\[
\vert \psi \circ \pi \circ h(0) \vert = \vert \psi(b') \vert = 1 - \alpha
\]
which contradicts the maximum principle. So there is a sequence $\la_j \in 
E$ that converges to $\la_0 \in \pa \Delta(0, r)$. Then $\pi \circ 
h(\la_j) \subset U' \cap \mathfrak{S}_{0, t}$ for all $j$ and if $h(\la_j) 
\ra h(\la_0) \in \pa \ti D$ it follows that the cluster set of $h(\la_0)$ 
under $\pi$ contains a point in $\ov U' \cap \pa \mathfrak{S}_{0, t}$. 

\medskip

It is now known that $\pi$ extends continuously up to $\pa \ti 
D$ near $h(\la_0)$. This extension is even locally biholomorphic across 
strongly pseudoconvex points that are known to be dense on $\pa \ti D$ by 
\cite{PT}. Now \cite{We} shows that $\pi$ is algebraic. Denote the 
coordinates in the domain and the range of $\pi$ by $z = (z_1, z_2)$ and 
$w = (w_1, w_2)$ respectively. Then there are polynomials
\begin{align*}
P_1(z_1, w_1, w_2) &= a_k(w_1, w_2) z_1^k + a_{k - 1}(w_1, w_2) z_1^{k - 
1} + \ldots + a_0(w_1, w_2),\\
P_2(z_2, w_1, w_2) &= b_l(w_1, w_2) z_2^l + b_{l - 1}(w_1, w_2) z_2^{l - 
1} + \ldots + b_0(w_1, w_2)
\end{align*}
of degree $k, l$ respectively and the coefficients $a_{\mu}, b_{\nu}$ are 
polynomials in $w_1, w_2$, with the property that $P_1(z_1, w_1, w_2) = 
P_2(z_2, w_1, w_2) = 0$ whenever $\pi^{-1}(w) = z$. Away from an algebraic 
variety $V_1 \subset \mbf C^2_w$ of dimension one, the zero locus of these 
polynomials defines a correspondence $\hat F : \mbf C^2_w \ra \mbf C^2_z$ 
such that ${\rm Graph}(\pi^{-1}) \subset \mathfrak{S}_{0, t} \times \ti D$ 
is an irreducible component of ${\rm Graph}(\hat F) \cap (\mathfrak{S}_{0, 
t} \times \ti D)$. Let $V_2 \subset \mbf C^2_w$ be the branching locus of 
$\hat F$ which is again of dimension one. Note that near each point of 
$\mbf C^2_w \sm (V_1 \cup V_2)$ the correspondence $\hat F$ splits as the 
union of locally well defined holomorphic functions. Also observe that the 
real dimension of $i \mbf R^2 \cap (V_1 \cup V_2)$ is at most one. So pick 
$a' \in i \mbf R^2 \sm (V_1 \cup V_2)$ and choose a neighbourhood $U'$ of 
$a'$ that does not intersect $V_1 \cup V_2$. All branches of $\hat F$ are 
then well defined on $U'$ and some of these are exactly the various 
branches of $\pi^{-1}$. Let $\pi^{-1}_1$ be one such branch of $\pi^{-1}$ 
that is holomorphic on $U'$. Since $\pi$ is globally proper, $\pi^{-1}_1$ 
maps $U' \cap i \mbf R^2$ into $\pa \ti D$. The branch locus of 
$\pi^{-1}_1$, if non-empty has dimension at most one since $\pi$ is 
unbranched. Therefore the intersection of the branch locus with $i \mbf 
R^2$ has real dimension at most one. Hence it is possible to find points 
on $i \mbf R^2$ near $a'$ that are mapped locally biholomorphically by 
$\pi^{-1}_1$ to points on $\pa \ti D$. This cannot hold as $\pa \ti D$ is 
not totally real. 
\end{proof}

\vskip10pt

\noindent Next consider the infinite sheeted covering
\[
\Phi^{(\infty)}_{\chi} : \mbf C^2 \ra \mbf C^2 \sm \{\Re z_1 = \Re z_2 = 
0\}
\]
whose components are given by
\begin{align*}
\ti z_1 &= \exp(\Re z_1) \cos(\Im z_1) + i \Re z_2,\\
\ti z_2 &= \exp(\Re z_1) \sin(\Im z_1) + i \Im z_2.
\end{align*}
The domain of $\Phi^{(\infty)}_{\chi}$ is equipped with the pull-back 
complex structure and the resulting complex manifold is denoted by 
$M^{(\infty)}_{\chi}$. For $0 \le s < t < \infty$ define
\[
\mathfrak{S}^{(\infty)}_{s, t} = \Big\{(z_1, z_2) \in M^{(\infty)}_{\chi} 
: (\ln s)/2 < \Re z_1 < (\ln t)/2 \Big\}
\]
and this is seen to be an infinite sheeted covering of $\mathfrak{S}_{s, 
t}$, the holomorphic covering map being $\Phi^{(\infty)}_{\chi}$.

\begin{prop}
D cannot be equivalent to $\mathfrak{S}^{(\infty)}_{s, t}$ for $0 \le s < 
t < \infty$.
\end{prop}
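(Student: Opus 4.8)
The plan is to follow the scheme of Proposition 4.8 as closely as possible. Let $f : D \ra \mathfrak{S}^{(\infty)}_{s, t}$ be a biholomorphism and set $\pi = \Phi^{(\infty)}_{\chi} \circ f : D \ra \mathfrak{S}_{s, t}$. As before this is an unbranched holomorphic map between domains carrying the standard complex structure, but it is now an \emph{infinite} sheeted covering of $\mathfrak{S}_{s, t}$ and is therefore \emph{not} proper. This single fact is exactly what stops the arguments of Proposition 4.8 from being quoted verbatim: those used properness both to force the composed map to carry $\pa D$ into $\pa \mathfrak{S}_{s, t}$ and, when $s = 0$, to invoke the algebraicity theorem of \cite{We}. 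The whole point will be to recover enough of this boundary behaviour \emph{locally} near the orbit accumulation point $p_{\infty}$.

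It is worth noting first that the \emph{end} of the argument remains robust. Suppose we knew that $\pa D$ were weakly pseudoconvex near $p_{\infty}$. Then by \cite{Ber1} we would have $D \backsimeq \ti D = \{2 \Re z_2 + P_{2m}(z_1, \ov z_1) < 0\}$, a pseudoconvex model domain. Since $\Phi^{(\infty)}_{\chi}$ is a local biholomorphism, $\pi$ is a local biholomorphism that extends across the relevant boundary points; as pseudoconvexity of a hypersurface boundary is a purely \emph{local} CR invariant it would transfer from $\pa \ti D$ to $\pa \mathfrak{S}_{s, t}$, forcing $\mathfrak{S}_{s, t}$ to be pseudoconvex. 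But $\mathfrak{S}_{s, t}$ is a tube over the non-convex base $\{s < x_1^2 + x_2^2 < t\}$ and so is not pseudoconvex, a contradiction. The crucial point is that this concluding transfer needs only the local biholomorphism and is untouched by the failure of global properness.

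The genuinely new work — and the step I expect to be the main obstacle — is therefore to \emph{force} $\pa D$ to be weakly pseudoconvex near $p_{\infty}$, i.e. to rule out strongly pseudoconcave boundary points there. By Lemma 2.1 there is a two dimensional stratum $S$ of the Levi degenerate locus clustering at $p_{\infty}$ with $S \subset \hat D$, and since the components of $\pi$ are holomorphic on $D$ they continue to the envelope of holomorphy, so $\pi$ extends to a neighbourhood $U$ of a point $a \in S$. If one knew $\pi$ were \emph{locally proper} near $a$, i.e. that it carried $U \cap \pa D$ into $\pa \mathfrak{S}_{s, t}$, then the induced local CR diffeomorphism would send the Levi degenerate point $a$ to a point of $\pa \mathfrak{S}_{s, t}$, which for $s > 0$ is strongly pseudoconvex or strongly pseudoconcave everywhere and carries no Levi degenerate points of finite type; this contradiction, argued exactly as in Proposition 4.8, would force the desired weak pseudoconvexity. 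To establish this local properness one must prevent infinitely many sheets of the covering from accumulating at points of $U \cap \pa D$. Here I would exploit the deck transformation: a direct computation with $\Phi^{(\infty)}_{\chi}$ shows that the rotation $\sigma^1_{\psi}$ lifts to the one parameter subgroup $(z_1, z_2) \mapsto (z_1 + i \psi, \exp(i \psi) z_2)$ of ${\rm Aut}(\mathfrak{S}^{(\infty)}_{s, t})^c$, whose value at $\psi = 2\pi$ is the generator $\ti T(z_1, z_2) = (z_1 + 2\pi i, z_2)$ of the deck group. Thus $T = f^{-1} \circ \ti T \circ f$ lies in ${\rm Aut}(D)^c$, the fibre $\pi^{-1}(\pi(a))$ is the single orbit $\{T^n(x_0) : n \in \mbf Z\}$, and since $\ti T^n$ drives $\Im z_1 \ra \pm\infty$ while fixing $\Re z_1$ the points $T^n(x_0)$ escape to infinity in $\mathfrak{S}^{(\infty)}_{s, t}$ and so, as $D$ is bounded, cluster only at a fixed pair of boundary orbit accumulation points of $\langle T \rangle$, which can be separated from $S$. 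This confinement of the fibres is what I would use to show that no sheet accumulates at $a$, so that $\pi$ is locally proper there.

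Finally, the case $s = 0$ needs the extra care already present in Proposition 4.8, since $\pa \mathfrak{S}_{0, t}$ contains the totally real piece $i \mbf R^2 = \{\Re z_1 = \Re z_2 = 0\}$. Once local properness near $a$ is in hand, the Levi degenerate point $a$ can map neither to the strongly pseudoconvex sphere $\pa \mathfrak{S}^+_{0, t}$ (Levi form mismatch) nor to $i \mbf R^2$ (an open piece of a Levi nondegenerate hypersurface cannot be carried biholomorphically onto a totally real plane), so we again reach $D \backsimeq \ti D$ and run the peak function and analytic disc construction of \cite{CP} to produce a boundary point of $\ti D$ whose cluster set meets $\pa \mathfrak{S}^+_{0, t}$. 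The only substitution required is that the global algebraicity step, which relied on properness and \cite{We}, is replaced by the local correspondence furnished by the locally proper extension of $\pi$ near $a$: the local branches of $\pi^{-1}$ near a generic point of $i \mbf R^2$ would then carry an open piece of the totally real plane onto $\pa \ti D$, which is impossible as $\pa \ti D$ is not totally real. Assembling the two cases shows that $D \backsimeq \mathfrak{S}^{(\infty)}_{s, t}$ cannot occur.
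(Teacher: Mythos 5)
Your overall skeleton (extend $\pi = \Phi^{(\infty)}_{\chi} \circ f$ across a point $a \in S \subset \hat D$, rule out pseudoconcave points near $p_{\infty}$, conclude $D \backsimeq \ti D$, then derive a contradiction case by case) matches the paper, and your computation that $\sigma^1_{\psi}$ lifts to $(z_1, z_2) \mapsto (z_1 + i \psi, e^{i\psi} z_2)$, with $\ti T = L_{2\pi}$ generating the deck group, is correct. But the step you yourself flag as the main obstacle is where your argument genuinely breaks. You assert that the fibre $\{T^n(x_0)\}$ clusters ``only at a fixed pair of boundary orbit accumulation points of $\langle T \rangle$, which can be separated from $S$.'' Nothing supports this: $\pa D$ is smooth real analytic and of finite type only near $p_{\infty}$, so subsequential limits $h$ of the iterates $T^n$ (normal family plus Cartan's theorem) satisfy $h(D) \subset \pa D$ but need not be constant away from the good part of the boundary; the cluster set of $T^n(K)$ for a compact $K \subset D$ can therefore be a continuum, and even if the limits were constants they could equal $p_{\infty} \in \ov S$, so separation from the stratum $S$ would still require proof. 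What actually makes local properness unnecessary in the paper is an elementary observation you never use: the explicit formula for $\Phi^{(\infty)}_{\chi}$ carries $\pa \mathfrak{S}^{(\infty)}_{s, t} = \{\Re z_1 = (\ln s)/2\} \cup \{\Re z_1 = (\ln t)/2\}$ into $\pa \mathfrak{S}_{s, t}$, whence the cluster set of $\pa D$ under $\pi$ is contained in $\pa \mathfrak{S}_{s, t}$; this containment is the ``effective replacement for properness'' that lets the Levi form arguments of Proposition 4.7 run verbatim near $a$.

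Both of your endgames also have gaps. For $0 < s < t$, transferring pseudoconvexity ``as a local CR invariant'' presupposes that $\pi$ extends as a local biholomorphism near boundary points of $\ti D$ lying over \emph{every} boundary point of $\mathfrak{S}_{s, t}$, in particular over the strongly pseudoconcave component, which is exactly where no extension is available; the paper instead notes that $\ti D$ is complete hyperbolic, that completeness of the Kobayashi metric descends along a holomorphic covering to the base, and that completeness forces $\mathfrak{S}_{s, t}$ to be pseudoconvex, a contradiction. For $s = 0$, after producing a point of $\pa \ti D$ whose cluster set meets $\pa \mathfrak{S}^+_{0, t}$, you propose to replace the algebraicity step by ``the local correspondence furnished by the locally proper extension of $\pi$ near $a$'' --- but you showed $\pi(a) \notin i \mbf R^2$, and local data at $a$ furnishes no branches of $\pi^{-1}$ near a generic point of $i \mbf R^2$, which may be far from any point where extension is known. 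The propagation from the sphere to generic points of $i \mbf R^2$ is precisely what the continuous extension from \cite{Su}, the local biholomorphic extension across the strongly pseudoconvex points (dense by \cite{PT}), and Webster's theorem \cite{We} supply; Webster's theorem is local, applying to a germ of a map between real algebraic hypersurfaces, so the failure of global properness does not obstruct it --- and, as the paper observes in the analogous Proposition 4.13, algebraicity of $\pi$ by itself already contradicts the infinite sheeting, since an algebraic map has generically finite fibres.
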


\begin{proof}
Let $f : D \ra \mathfrak{S}^{(\infty)}_{s, t}$ be a biholomorphism. As 
before 
\[
\Phi^{(\infty)}_{\chi} \circ f : D \ra \mathfrak{S}_{s, t}
\]
is then a holomorphic infinite sheeted covering (in particular it 
is non-proper) between domains with the standard complex structure. The 
explicit description of $\Phi^{(\infty)}_{\chi}$ shows that 
$\Phi^{(\infty)}_{\chi} \Big(\pa \mathfrak{S}^{(\infty)}_{s, t} \Big) 
\subset \pa \mathfrak{S}_{s, t}$ and this implies that the cluster set of 
$\pa D$ under $\pi = \Phi^{(\infty)}_{\chi} \circ f$ is contained in 
$\pa \mathfrak{S}_{s, t}$. This observation is an effective replacement 
for the properness of $\pi$ in the previous proposition.

\medskip

In case $0 < s < t < \infty$, this observation allows the use of 
the arguments in the previous proposition and they show that
\[
\pi : D \backsimeq \ti D \ra \mathfrak{S}_{s, t}
\]
is a covering with $\ti D$ as in (4.2). Note that the Kobayashi metric on 
$\ti D$ is complete and hence the same holds for the base 
$\mathfrak{S}_{s, t}$. Completeness then forces $\mathfrak{S}_{s, t}$ to 
be pseudoconvex which however is not the case. Now suppose that $0 = s < t 
< \infty$. Again the arguments used before apply, including the claim made 
there. So there are points on $\pa \ti D$ whose cluster set under the 
covering map $\pi$ intersects $\pa \mathfrak{S}^+_{0, t}$. It follows from 
\cite{Su} that $\pi$ extends continuously up to $\pa \ti D$ near such 
points. Moreover the extension is locally proper near $\pa \ti D$. The 
rest of the proof proceeds exactly as in the previous proposition.
\end{proof}

\subsection{A domain in $\mbf P^2$:} Let $\cal{Q}_+ \subset \mbf C^3$ 
be the smooth complex analytic set given by
\[
z_0^2 + z_1^2 + z_2^2 = 1
\]
and for $t \ge 0$ consider the spheres
\[
\Sigma_t = \Big\{ (z_0, z_1, z_2) \in \mbf C^3 : \vert z_0 \vert ^2 + 
\vert z_1 \vert^2 + \vert z_2 \vert^2 = t \Big\}.
\]
Note that $\Sigma_t \cap \cal{Q}_+ = \emptyset$ for $0 \le t < 1$. If 
$t > 1$ it can be checked that $\Sigma_t$ intersects $\cal{Q}_+$ 
transversally everywhere. On the other hand $\Sigma_1 \cap \cal{Q}_+ 
= \mbf R^3 \cap \cal{Q}_+$ which is totally real in $\mbf C^3$.\\

\noindent For $1 \le s < t < \infty$ define
\[
E^{(2)}_{s, t} = \Big\{(z_0, z_1, z_2) \in \mbf C^3 : s < \vert z_0 
\vert^2 + \vert z_1 \vert^2 + \vert z_2 \vert^2 < t \Big\} \cap \cal Q_+.
\]
This domain is a $2$-sheeted covering of
\[
E_{s, t} = \Big\{ [z_0 : z_1 : z_2] \in \mbf P^2 : s \; \vert z_0^2 + 
z_1^2 + z_2^2 \vert < \vert z_0 \vert^2 + \vert z_1 \vert^2 + 
\vert z_2 \vert^2 < t \; \vert z_0^2 + z_1^2 + z_2^2 \vert \Big\}
\]
via the natural map $\psi(z_0, z_1, z_2) = [z_0 : z_1 : z_2]$. In the same 
way, for $1 < t < \infty$ the domain
\[
E_t =  \Big\{ [z_0 : z_1 : z_2] \in \mbf P^2 : \vert z_0 \vert^2 + \vert 
z_1 \vert^2 + \vert z_2 \vert^2 < t \; \vert z_0^2 + z_1^2 + z_2^2 \vert 
\Big\}
\]
is covered in a $2:1$ manner by
\[
E^{(2)}_t = \Big \{(z_0, z_1, z_2) \in \mbf C^3 : \vert z_0 \vert^2 + 
\vert z_1 \vert^2 + \vert z_2 \vert^2 < t \Big\} \cap \cal Q_+
\]
by the same map $\psi$. Note that $E_{1, t} \cup \psi(\Sigma_1 \cap \cal 
Q_+) = E_t$ for all $1 < t < \infty$.\\

\noindent Consider the map $\Phi_{\mu} : \mbf C^2 \sm \{0\} \ra \cal Q_+$ 
given by
\begin{align*}
\ti z_1 &= -i(z_1^2 + z_2^2) + i (z_1 \ov z_2 - \ov z_1 z_2)/(\vert z_1 
\vert^2 + \vert z_2 \vert^2)\\
\ti z_2 &= z_1^2 - z_2^2 - (z_1 \ov z_2 + \ov z_1 z_2)/(\vert z_1 \vert^2 
+ \vert z_2 \vert^2)\\
\ti z_3 &= 2 z_1 z_2 + (\vert z_1 \vert^2 - \vert z_2 \vert^2)/(\vert z_1 
\vert^2 + \vert z_2 \vert^2)
\end{align*}
which is a two sheeted covering onto $\cal Q_+ \sm \mbf R^3$. The domain 
of $\Phi_{\mu}$ is now equipped with the pull back complex structure using 
$\Phi_{\mu}$ and the resulting complex manifold is denoted by 
$M^{(4)}_{\mu}$. For $1 \le s < t < \infty$ define
\[
E^{(4)}_{s, t} = \Big \{(z_1, z_2) \in M^{(4)}_{\mu} : \sqrt{(s - 1)/2} < 
\vert z_1 \vert^2 + \vert z_2 \vert^2 < \sqrt{(t - 1)/2} \Big\}.
\]
Note that $E^{(4)}_{s, t}$ is a four-sheeted cover of $E_{s, t}$, the 
covering map being $\psi \circ \Phi_{\mu}$.

\begin{prop}
There cannot exist a proper holomorphic mapping between $D$ and $E_{s, 
t}$ for $1 \le s < t < \infty$. In particular $D$ cannot be equivalent to 
either $E^{(2)}_{s, t}$ or $E^{(4)}_{s, t}$ for $1 \le s < t < \infty$.
\end{prop}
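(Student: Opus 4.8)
The plan is to rule out a proper holomorphic map $\pi : D \ra E_{s, t}$; since the $2$-sheeted covering $\psi : E^{(2)}_{s, t} \ra E_{s, t}$ and the $4$-sheeted covering $\psi \circ \Phi_{\mu} : E^{(4)}_{s, t} \ra E_{s, t}$ are themselves proper, a biholomorphism of $D$ with either cover would compose with these to produce such a $\pi$, so the non-existence of $\pi$ yields the ``in particular'' clause. I would work throughout in an affine chart of $\mbf P^2$, where $E_{s, t}$ is cut out by polynomial inequalities, so that algebraicity results apply. The first step is to record the boundary structure of $E_{s, t}$ by pulling it back through the $2$-sheeted cover $E^{(2)}_{s, t} \subset \cal Q_+$: the function $\|z\|^2$ restricted to the Stein manifold $\cal Q_+$ is strictly plurisubharmonic, so the outer component $\pa E^+_{s, t}$ (the image of $\{\|z\|^2 = t\} \cap \cal Q_+$) is strongly pseudoconvex, while the inner component $\pa E^-_{s, t}$ (the image of $\{\|z\|^2 = s\} \cap \cal Q_+$) is strongly pseudoconcave when $1 < s$. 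When $s = 1$ the inner component degenerates: it becomes $\psi(\Sigma_1 \cap \cal Q_+) = \psi(\mbf R^3 \cap \cal Q_+)$, a totally real $2$-manifold. Since $\psi$ and $\Phi_{\mu}$ are local biholomorphisms these properties descend to $E_{s, t}$.

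The local analysis mirrors proposition 4.6. By lemma 2.1, fix a two dimensional stratum $S \subset \hat D$ of Levi-degenerate points clustering at $p_{\infty}$, choose $a \in S$ near $p_{\infty}$, and take a neighbourhood $U$ to which $\pi$ extends holomorphically. Because $\pa D$ is of finite type near $p_{\infty}$ and $\pa E^{\pm}_{s, t}$ are strongly pseudoconvex/pseudoconcave, the invariance property of Segre varieties makes $\pi$ proper near $a$. If $\pi(a)$ lay on the strongly pseudoconvex (resp.\ pseudoconcave) component, properness would carry a dense set of strongly pseudoconcave (resp.\ pseudoconvex) points of $(U \cap \pa D) \sm S$ locally biholomorphically onto it, contradicting the invariance of the Levi form; hence $\pi(a)$ lies on neither nondegenerate component. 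For $s = 1$ one must also exclude $\pi(a) \in \psi(\Sigma_1 \cap \cal Q_+)$: if $\pi$ mapped the real hypersurface $U \cap \pa D$ into this totally real set $N$, then since $d\pi$ is $\mbf C$-linear and $N$ contains no complex tangent line, $d\pi$ would annihilate $T^c(\pa D)$ all along $U \cap \pa D$, so $\det d\pi$ would vanish on a real hypersurface and therefore vanish identically, contradicting that the proper (hence generically rank two) map $\pi$ is nondegenerate near $a$. As in 4.6 these exclusions force $\pa D$ to be weakly pseudoconvex near $p_{\infty}$, so by \cite{Ber1} $D \backsimeq \ti D$ as in $(4.2)$, a globally pseudoconvex model.

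The global contradiction splits according to $s$. For $1 < s < t$, $E_{s, t}$ carries a strongly pseudoconcave boundary component and so is not pseudoconvex; but a proper image of the pseudoconvex $\ti D$ is pseudoconvex (equivalently, near a point of $\pa E^-_{s, t}$ the extended $\pi^{-1}$ would carry a strongly pseudoconvex piece of $\pa \ti D$ onto a pseudoconcave hypersurface, again violating Levi invariance), which is a contradiction. For $s = 1$ this route is unavailable, since the only degenerate part of $\pa E_{1, t}$ is the totally real inner piece and $E_{1, t}$ need not fail pseudoconvexity. Here I would instead adapt the peak-function argument of 4.6: using \cite{CP} produce an analytic disc in $\ti D$ and a point of $\pa \ti D$ whose $\pi$-cluster set meets the strongly pseudoconvex component $\pa E^+_{1, t}$; then \cite{We} shows $\pi$ is algebraic, and the associated correspondence furnishes a branch of $\pi^{-1}$ holomorphic near a generic point of $\psi(\Sigma_1 \cap \cal Q_+)$ (off the one dimensional branch and critical loci) that would carry an open piece of this totally real set biholomorphically into $\pa \ti D$, which is impossible because a hypersurface is not totally real.

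The routine component is the Levi-form/Segre bookkeeping of the second paragraph, which transcribes almost verbatim from proposition 4.6. The main obstacle is the case $s = 1$: the inner boundary collapses to the totally real real-sphere rather than to a pseudoconcave hypersurface, so the clean ``proper image of a pseudoconvex domain is pseudoconvex'' contradiction disappears and one must instead run the reflection/algebraicity machinery in the projective setting and keep control of the branch loci of the induced correspondence on the totally real set. A secondary subtlety, and the reason the totally real exclusion is phrased through a rank computation rather than through local biholomorphy, is that $\pi$ is an arbitrary proper map here and not, as in the covering situation of 4.6, an unbranched map.
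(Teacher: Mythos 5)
Your reduction of the ``in particular'' clause, your boundary analysis of $E_{s,t}$, your local exclusions at a stratum point $a \in S \subset \hat D$ (which transcribe from proposition 4.7, not 4.6 --- and your rank argument for excluding $\pi(a) \in \psi(\Sigma_1 \cap \cal Q_+)$ is a clean substitute for the paper's Jacobian-locus argument there), and your treatment of the case $1 < s < t$ all match the paper's proof in substance. The genuine gap is in the endgame of the case $s = 1$. After obtaining algebraicity from \cite{We} you pick a generic $p' \in \psi(\Sigma_1 \cap \cal Q_+)$ off the exceptional loci and assert that some branch of $\pi^{-1}$ ``would carry an open piece of this totally real set biholomorphically into $\pa \ti D$.'' But nothing in your argument guarantees that any branch of $\pi^{-1}$ takes a \emph{finite} boundary value at $p'$: the correspondence lives in $\mbf P^2_z \times \mbf P^2_w$, the model $\ti D$ is unbounded, and every branch that agrees with $\pi^{-1}$ on $E_{1,t}$ near $p'$ could a priori take its value at $p'$ on the hyperplane at infinity $H_0 = \{z_0 = 0\}$. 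Equivalently, the entire totally real $2$-sphere could lie in the cluster set under $\pi$ of the single point at infinity of $\pa \ti D$ --- cluster sets of one boundary point can perfectly well be two dimensional --- and in that scenario your contradiction evaporates, since no piece of $\psi(\Sigma_1 \cap \cal Q_+)$ is matched with points of $\pa \ti D$ at all.

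The paper closes exactly this hole with a separate Claim: for each $p' \in \psi(\Sigma_1 \cap \cal Q_+)$ there is a point $p \in \pa \ti D$ whose cluster set under $f$ contains $p'$. Its proof is a global device you never invoke: take a holomorphic function $\phi$ on $\ti D$ peaking at the point at infinity of $\pa \ti D$ (\cite{BF}), form the symmetrized product $\ti \phi = (\phi \circ f^{-1}_1) \cdots (\phi \circ f^{-1}_l)$ over all branches, which is well defined and holomorphic on $E_{1,t}$ with $\vert \ti \phi \vert < 1$, extend it across the codimension-two set $\psi(\Sigma_1 \cap \cal Q_+)$ to all of $E_t$, and observe that if all preimages escaped to infinity over some $p'$ then $\vert \ti \phi(p') \vert = 1$, whence $\vert \ti \phi \vert \equiv 1$ by the maximum principle --- a contradiction. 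You need this (or an equivalent argument) before selecting $p'$; with it, one branch $\ti f^{-1}$ satisfies $\ti f^{-1}(p') \in \pa \ti D$ and your conclusion goes through. A smaller remark: ``a hypersurface is not totally real'' is not by itself the contradiction (a hypersurface contains many totally real surfaces, and the paper's phrasing is equally terse here); the actual mechanism is that the branch maps the full neighbourhood of $p'$ minus the codimension-two set $N = \psi(\Sigma_1 \cap \cal Q_+)$ into the open set $\ti D$, so the preimage of $\pa \ti D$ under the local biholomorphism --- a germ of a real hypersurface through $p'$ --- would have to be contained in the two dimensional set $N$, which is absurd.
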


\begin{proof}
Let $f : D \ra E_{s, t}$ be a proper holomorphic mapping. First consider 
the case when $1 < s < t < \infty$. The boundary of $E_{s, t}$ has two 
components which are covered in a 2:1 manner by $\Sigma_t \cap \cal Q_+$ 
and $\Sigma_s \cap \cal Q_+$ respectively using $\psi$. Let
\begin{align*}
\pa E^+_{s, t} &= \Big\{[z_0 : z_1 : z_2] \in \mbf P^2 : \vert z_0 \vert^2 
+ \vert z_1 \vert^2 + \vert z_2 \vert^2 = t \vert z_0^2 + z_1^2 + z_2^2 
\vert \Big\} = \psi(\Sigma_t \cap \cal Q_+), \; {\rm and}\\
\pa E^-_{s, t} &= \Big\{[z_0 : z_1 : z_2] \in \mbf P^2 : \vert z_0 
\vert^2 + \vert z_1 \vert^2 + \vert z_2 \vert^2 = t \vert z_0^2 + z_1^2 + 
z_2^2 \vert \Big\} = \psi(\Sigma_s \cap \cal Q_+)
\end{align*}
which are strongly pseudoconvex and strongly pseudoconcave hypersurfaces 
respectively. Once again the structure of $\pa D$ near $p_{\infty}$ can be 
exploited exactly as in proposition 4.7, the conclusion being that $\pa D$ 
must be weakly pseudoconvex near $p_{\infty}$. By \cite{Ber1} it follows 
that $D \backsimeq \ti D$ where $\ti D$ is as in (4.2). Thus
\[
f : D \backsimeq \ti D \ra E_{s, t}
\]
is proper. Now all branches of $f^{-1}$ will extend across $\pa E^-_{s, 
t}$ and the extension will map $\pa E^-_{s, t}$ into $\pa \ti D$. This 
implies that there are strongly pseudoconcave points on $\pa \ti D$ which 
is not possible.

\medskip

Now suppose that $1 = s < t < \infty$. Then
\[
\pa E_{1, t} = \pa E^+_{1, t} \cup \psi(\Sigma_1 \cap \cal Q_+)
\]
where $\psi(\Sigma_1 \cap \cal Q_+)$ is maximally totally real. Let $\phi$ 
be a holomorphic function on $\ti D$ that peaks at the point at infinity 
in $\pa \ti D$ (see \cite{BF} for example) and denote by $f^{-1}_1, 
f^{-1}_2, \ldots, f^{-1}_l$ the locally defined branches of $f^{-1}$ that 
exist away from a closed analytic set of dimension one in $E_{1, t}$. Then
\[
\ti \phi = (\phi \circ f^{-1}_1) \cdot (\phi \circ f^{-1}_2) \cdot \ldots 
\cdot (\phi \circ f^{-1}_l)
\]
is a well defined holomorphic function on $E_{1, t}$ such that $\vert \ti 
\phi \vert < 1$ there.

\medskip

\noindent {\it Claim:} For each $p' \in \psi(\Sigma_1 \cap \cal Q_+)$ 
there exists $p \in \pa \ti D$ such that the cluster set of $p$ under $f$ 
contains $p'$.

\medskip

\noindent Indeed it has been noted that $E_{1, t} \cup \psi(\Sigma_1 \cap 
\cal Q_+) = E_t$ and that $\psi(\Sigma_1 \cap \cal Q_+)$ has real 
codimension two. Therefore $\ti \phi \in \cal O(E_{1, t})$ extends 
holomorphically to $E_t$, and the extension that is still denoted by $\ti 
\phi$ satisfies $\vert \ti \phi \vert \le 1$ there. If $\vert \ti \phi(p') 
\vert = 1$ for some $p' \in \psi(\Sigma_1 \cap \cal Q_+)$ then the maximum 
principle implies that $\vert \ti \phi \vert \equiv 1$ on $E_{1, t} 
\subset E_t$ and this is a contradiction. The claim follows.

\medskip

The claim made in proposition 4.7 holds here as well; indeed the only 
property in the range space that is used is the existence of a local 
holomorphic peak function and these exist near each point of $\pa E^+_{1, 
t}$. Therefore there are points $a \in \pa \ti D$ and $a' \in \pa E^+_{1, 
t}$ such that the cluster set of $a$ under $f$ contains $a'$. Working in a 
coordinate system around $a'$ it is known that (see for example \cite{Su} 
or \cite{CPS}) for sufficiently small neighbourhoods $U, U'$ around 
$a, a'$ respectively, 
\begin{equation}
{\rm dist}(f(z), U' \cap \pa E^+_{1, t}) \lesssim {\rm dist}(z, U \cap \pa 
\ti D)
\end{equation}
whenever $z \in U \cap \ti D$ is such that $f(z) \in U' \cap E^+_{1, 
t}$. Using the estimates on the infinitesimal Kobayashi metric near $a, a'$ 
it follows that $f$ extends continuously up to $\pa \ti D$ near $a$ and $f(a) 
= a'$. Therefore $f$ extends locally biholomorphically across all the 
strongly pseudoconvex points near $a$. Let $w = (w_1, w_2)$ denote affine 
coordinates near $a'$ and $f^{-1}_1$ be a branch of $f^{-1}$ that extends 
locally biholomorphically across near $a'$. Then by \cite{We} the graph of 
$f^{-1}_1$ near $(a, a')$ is contained in the zero locus of the 
polynomials $P_1(z_1, w_1, w_2), P_2(z_2, w_1, w_2)$ as in proposition 4.7 
and where $f^{-1}_1(w) = (z_1, z_2) = z$. Projectivize these polynomials 
by replacing $w_i \mapsto w_i / w_0, z_i \mapsto z_i / z_0$ for $i = 1, 2$ 
to get an algebraic variety $A \subset \mbf P^2_z \times \mbf P^2_w$ such 
that ${\rm Graph}(f)$ is an irreducible component of $A \cap (\ti D \times 
E_{1, t})$. Away from an algebraic variety $V \subset \mbf P^2_w$ of 
dimension one, $A$ is the union of the graphs of locally well defined 
holomorphic functions. Since $\psi(\Sigma_1 \cap \cal Q_+)$ is totally 
real it follows that $\psi(\Sigma_1 \cap \cal Q_+) \cap V$ has real 
dimension at most one. Choose $p' \in \psi(\Sigma_1 \cap \cal Q_+) \sm V$ 
and a neighbourhood $U'$ of $p'$ that does not intersect $V$. Then all 
branches of the correspondence defined by $A$ are well defined in $U'$ and 
some of these will coincide with those of $f^{-1}$. By the claim above, 
there is at least one branch of $f^{-1}$, call it $\ti f^{-1}$ such that 
$\ti f^{-1}(p') \not \in H_0 = \{z_0 = 0\}$ the hyperplane at infinity in 
$\mbf P^2_z$, i.e., $\ti f^{-1}(p') \in \pa \ti D$. Therefore near $p'$ 
there are points on $\psi(\Sigma_1 \cap \cal Q_+)$ that are mapped locally 
biholomorphically by $\ti f^{-1}$ to points on $\pa \ti D$ and this is a 
contradiction since $\pa \ti D$ is not totally real.

\medskip

To conclude, if $D \backsimeq E^{(2)}_{s, t}$ or $E^{(4)}_{s, t}$ then 
this would imply the existence of an unbranched proper mapping between $D$ 
and $E_{s, t}$ and this cannot happen by the arguments given above.
\end{proof}

\begin{prop}
There cannot exist a proper holomorphic mapping between $D$ and $E_t$ for 
$1 < t < \infty$. In particular $D$ cannot be equivalent to $E^{(2)}_t$ 
for $1 < t < \infty$.
\end{prop}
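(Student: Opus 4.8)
The plan is to argue by contradiction: suppose $f : D \ra E_t$ is a proper holomorphic mapping for some $1 < t < \infty$. The decisive structural point, which I would establish first, is that the boundary of $E_t$ is a \emph{single} strongly pseudoconvex hypersurface. Indeed $\pa E_t = \pa E^+_t = \psi(\Sigma_t \cap \cal Q_+)$ is strongly pseudoconvex everywhere, while the totally real set $\psi(\Sigma_1 \cap \cal Q_+)$, which was a boundary component of $E_{1, t}$ in proposition 4.9, has been absorbed into the \emph{interior} of $E_t$ because $E_{1, t} \cup \psi(\Sigma_1 \cap \cal Q_+) = E_t$ and $\psi(\Sigma_1 \cap \cal Q_+)$ has real codimension two. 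Thus $E_t$ has neither a pseudoconcave boundary nor a totally real boundary, which places the present situation in the same regime as the case $0 < s < t$ of proposition 4.7 rather than the delicate case $1 = s < t$ of proposition 4.9.

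With this in hand I would reproduce the Levi form argument of proposition 4.7. By lemma 2.1 there is a two dimensional stratum $S \subset \hat D$ clustering at $p_{\infty}$ along which $\cal L_{\rho}$ vanishes to odd order, so $\cal L_{\rho}$ changes sign across $S$ and there are strongly pseudoconcave points of $\pa D$ arbitrarily near any $a \in S$ close to $p_{\infty}$. Fix such an $a$ and a neighbourhood $U$ to which $f$ extends holomorphically (possible since $a \in \hat D$). As $\pa D$ is of finite type near $p_{\infty}$ and $\pa E^+_t$ is strongly pseudoconvex, the invariance of Segre varieties shows that $f$ is proper near $a$; moreover, because $a \in \pa D$ and $f$ is globally proper, the extended value $f(a)$ lies on $\pa E_t = \pa E^+_t$, and properness forces $f(U \cap \pa D) \subset \pa E^+_t$. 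But then the strongly pseudoconcave points of $(U \cap \pa D) \sm S$ are carried by the (generically locally biholomorphic) map $f$ into the strongly pseudoconvex hypersurface $\pa E^+_t$, contradicting the invariance of the signature of the Levi form. This is the desired contradiction, so no proper $f$ exists.

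I expect the main obstacle to be the transcription of the Segre variety step — that $f$ extends across and is finite near the Levi degenerate point $a$, and that it carries the germ of $\pa D$ there into $\pa E^+_t$ — which must be imported from proposition 4.7 with care, together with the routine dimension count ensuring that the three real dimensional family of pseudoconcave points is not entirely contained in the complex one dimensional branch locus of the extension of $f$. It is worth stressing that, unlike proposition 4.9, no peak function or Webster algebraicity argument is required here: since $\psi(\Sigma_1 \cap \cal Q_+)$ lies in the interior of $E_t$, its preimage under $f$ is a harmless totally real subset of the interior of $D$ and never enters the boundary analysis. Finally, the statement about $E^{(2)}_t$ is immediate, for a biholomorphism $D \backsimeq E^{(2)}_t$ composed with the two sheeted holomorphic covering $\psi : E^{(2)}_t \ra E_t$ would yield a proper, indeed unbranched, holomorphic map $D \ra E_t$, which has just been shown to be impossible.
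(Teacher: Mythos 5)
Your Levi--form argument at the stratum $S$ only disposes of one half of the problem, and you then declare the proof finished. Lemma 2.1 is established under the standing assumption that $p_{\infty} \in \cal B$, i.e., that $\pa D$ is \emph{not} pseudoconvex near $p_{\infty}$ (its proof derives a contradiction from ``$p_{\infty}$ is a weakly pseudoconvex point''). If instead $\pa D$ is weakly pseudoconvex near $p_{\infty}$ --- a case every other proposition in section 4 treats explicitly after its Levi--form step --- there is no stratum of sign change, no strongly pseudoconcave point to transport, and your argument produces nothing. Nor can any Levi--form consideration close this case: a proper map from a weakly pseudoconvex domain onto a strongly pseudoconvex one is perfectly possible (e.g., $(z_1, z_2) \mapsto (z_1, z_2^m)$ maps $E_{2m} = \{\vert z_1 \vert^2 + \vert z_2 \vert^{2m} < 1\}$ properly onto $\mbf B^2$), so ``properness into the strongly pseudoconvex $\pa E_t$'' is Levi--consistent with a pseudoconvex source. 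Your explicit remark that no peak function or boundary--rigidity input is required is therefore wrong: it is exactly in the pseudoconvex case that the paper needs the attraction estimate (4.3) near a strongly pseudoconvex target point, continuous extension of $f$ up to $\pa D$ near $p_{\infty}$, theorem 1.2 of \cite{CPS} to conclude that $p_{\infty}$ is weakly spherical, and then \cite{Ber1} to get $D \backsimeq \{2 \Re z_2 + \vert z_1 \vert^{2m} < 0\} \backsimeq E_{2m}$, whose automorphism group is four dimensional --- contradicting $\dim {\rm Aut}(D) = 3$. That dimension count, not a Levi sign clash, is the actual obstruction.

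For what it is worth, the paper's proof also handles the first half differently and more cheaply: since $\varrho(z) = 1 + \vert z_1 \vert^2 + \vert z_2 \vert^2 - t \vert 1 + z_1^2 + z_2^2 \vert$ is strongly plurisubharmonic in affine charts, $E_t$ is holomorphically convex, and pulling back an exhaustion through the proper map forces $D$ to be pseudoconvex outright --- so the non-pseudoconvex alternative (the only one your argument addresses) is vacuous and the Segre--variety/properness-near-$a$ machinery you import from proposition 4.7 is never needed. Your observation that $\psi(\Sigma_1 \cap \cal Q_+)$ is absorbed into the interior of $E_t$, and your reduction of the $E^{(2)}_t$ statement to an unbranched proper map $D \ra E_t$, are both correct and agree with the paper; but as written the proof has a genuine gap where the pseudoconvex case should be.
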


\begin{proof}
Let $f : D \ra E_t$ be proper. Working in affine coordinates near each 
point in $\pa E_t$ it can be seen that $E_t$ is described by $\{\varrho(z) 
< 0\}$ where $\varrho(z) = 1 + \vert z_1 \vert^2 + \vert z_2 \vert^2 - t 
\vert 1 + z_1^2 + z_2^2 \vert$ is strongly plurisubharmonic. It follows 
that $E_t$ must be holomorphically convex (see for example \cite{Fu}) and 
thus $D$ is pseudoconvex. Choose $p_j \in D$ converging to $p_{\infty}$ and 
let $f(p_j) \ra p'_{\infty} \in \pa E_t$. As in the previous proposition, 
by working in affine coordinates around $p'_{\infty}$ it is possible to 
show that (4.3) holds near $p_{\infty}, p'_{\infty}$. Hence $f$ extends 
continuously up to $\pa D$ near $p_{\infty}$ with 
$f(p_{\infty}) = p'_{\infty}$. It follows from theorem 1.2 of \cite{CPS} 
that $\pa D$ is weakly spherical near $p_{\infty}$, i.e., the defining 
function for $\pa D$ near $p_{\infty} = 0$ has the form
\[
\rho(z) = 2 \Re z_2 + \vert z_1 \vert^{2m} + \;{\rm higher \; order\; terms}.
\]
Since $p_{\infty} \in \pa D$ is an orbit accumulation point it follows 
from \cite{Ber1} that $D$ is equivalent to the model domain at 
$p_{\infty}$, i.e., $D \backsimeq \{(z_1, z_2) \in \mbf C^2 : 2 \Re z_2 + 
\vert z_1 \vert^{2m} < 0\}$. This shows that $\dim {\rm Aut}(D) = 4$ which 
is a contradiction.

\medskip

To conclude, if $D \backsimeq E^{(2)}_t$ then there would exist an 
unbranched proper mapping between $D$ and $E_t$ and this is not possible.
\end{proof}

\vskip10pt

\subsection{Domains constructed by using an analogue of Rossi's map:} Let 
$\cal Q_- \subset \mbf C^3$ be the smooth complex analytic set given by
\[
z_1^2 + z_2^2 - z_3^2 = 1.
\]
Let $\Omega = \Big \{(z_1, z_2) \in \mbf C^2 : \vert z_1 \vert^2 - \vert 
z_2 \vert^2 \not= 0 \Big\}$ and consider the map $\Phi : \Om \ra \cal Q_-$ 
given by
\begin{align*}
\ti z_1 &= -i(z_1^2 + z_2^2) - i(z_1 \ov z_2 + \ov z_1 z_2)/(\vert z_1 
\vert^2 - \vert z_2 \vert^2),\\
\ti z_2 &= z_1^2 - z_2^2 + (z_1 \ov z_2 - \ov z_1 z_2)/(\vert z_1 \vert^2 
- \vert z_2 \vert^2),\\
\ti z_3 &= -2i z_1 z_2 - i(\vert z_1 \vert^2 + \vert z_2 \vert^2)/(\vert 
z_1 \vert^2 - \vert z_2 \vert^2).
\end{align*}

\noindent It can be checked that $\Phi(\Om) = \cal Q_- \sm (\cal Q_- \cap 
\cal W)$ where $\cal W = i \mbf R^3 \cup \mbf R^3 \cup \Big \{(z_1, z_2, 
z_3) \in \mbf C^3 \sm \mbf R^3 : \vert i z_1 + z_2 \vert = 
\vert i z_3 - 1 \vert, \; \vert i z_1 - z_2 \vert = \vert i z_3 + 1 \vert
\Big\}$. Set 
\[
\Om^> = \Big \{(z_1, z_2) \in \mbf C^2 : \vert z_1 \vert^2 - \vert z_2 
\vert^2 > 0 \Big\}
\]
and let $\Phi^>$ be the restriction of $\Phi$ to $\Om^>$. Then $\Phi^> : 
\Om ^> \ra \Phi^>(\Om^>)$ is a two sheeted covering where $\Phi^>(\Om^>)$ 
is the union of
\begin{align*}
\Sigma^{\nu} &= \Big \{(z_1, z_2, z_3) \in \mbf C^3 : -1 < \vert z_1 
\vert^2 + \vert z_2 \vert^2 - \vert z_3 \vert^2 < 1, \; \Im z_3 < 0 \Big\} 
\cap \cal Q_-,\\
\Sigma^{\eta} &= \Big \{(z_1, z_2, z_3) \in \mbf C^3 : \vert z_1 \vert^2 + 
\vert z_2 \vert^2 - \vert z_3 \vert^2 > 1, \; \Im(z_2(\ov z_1 + \ov z_3)) 
>0 \Big\} \cap \cal Q_-,\\
O_5 &= \Big\{(z_1, z_2, z_3) \in \mbf C^3 \sm \mbf R^3 : \vert i z_1 + 
z_2 \vert = \vert i z_3 + 1 \vert, \vert i z_1 - z_2 \vert = \vert i z_3 - 1 
\vert, \Im z_3 < 0 \Big\} \cap \cal Q_-.
\end{align*}
In particular if $\Phi^{\nu}, \Phi^{\eta}$ are the restrictions of 
$\Phi^>$ to $\Om^{\nu} = \Big\{(z_1, z_2) \in \mbf C^2 : 0 < \vert z_1 
\vert^2 - \vert z_1 \vert^2 < 1 \Big\}$ and 
$\Om^{\eta} = \Big \{(z_1, z_2) \in \mbf C^2 : \vert z_1 \vert^2 - \vert 
z_2 \vert^2 > 1 \Big\}$ respectively, then $\Phi^{\nu} : \Om^{\nu} \ra 
\Sigma^{\nu}$ and $\Phi^{\eta} : \Om^{\eta} \ra \Sigma^{\eta}$ are also 
two sheeted coverings. It follows from the definition that $z_3 \not= 0$ on 
$\Sigma^{\nu}$ and therefore $\Psi^{\nu} : \Sigma^{\nu} \ra \mbf C^2$ 
given by $\Psi^{\nu}(z_1, z_2, z_3) = (z_1/z_3, z_2/z_3)$ is holomorphic. 
Moreover $\Psi^{\nu}$ is injective as well on $\Sigma^{\nu}$ for if 
\[
\Psi^{\nu}(z^{\ast}_1, z^{\ast}_2, z^{\ast}_3) = (z^{\ast}_1/ z^{\ast}_3, 
z^{\ast}_2/z^{\ast}_3) = (z_1/z_3, z_2/z_3) = \Psi^{\nu}(z_1, z_2, z_3)
\]
then $z^{\ast}_1 = \la z^{\ast}_3, z_1 = \la z_3$ and $z^{\ast}_2 = \mu 
z^{\ast}_3, z_2 = \mu z_3$ for scalars $\la, \mu$. But then $z_1^2 + z_2^2 
- z_3^2 = 1 = (z^{\ast}_1)^2 + (z^{\ast}_2)^2 - (z^{\ast}_3)^2$ implies 
that $z_3 = \pm z^{\ast}_3$ and only the first alternative can hold since 
$\Im z_3$ and $\Im z^{\ast}_3$ are both positive. Therefore 
\[
D^{\nu} = \Psi^{\nu}(\Sigma^{\nu}) = \Big \{(z_1, z_2) \in \mbf C^2 : 
-\vert z_1^2 + z_2^2 - 1 \vert < \vert z_1 \vert^2 + \vert z_2 \vert^2 - 1 < 
\vert z_1^2 + z_2^2 - 1 \vert \Big \}\backsimeq \Sigma^{\nu}.
\]
Likewise note that for $-1 \le s < t \le 1$, the domain
\[
\Sigma^{\nu}_{s, t} = \Big\{(z_1, z_2, z_3) \in \mbf C^3 : s < \vert z_1 
\vert^2 + \vert z_2 \vert^2 - \vert z_3 \vert^2 < t, \; \Im z_3 < 0 \Big\} 
\cap \cal Q_- \subset \Sigma^{\nu}
\]
is mapped biholomorphically by $\Psi^{\nu}$ onto
\[
\Om_{s, t} = \Psi^{\nu}(\Sigma^{\nu}_{s, t}) = \Big\{(z_1, z_2) \in \mbf 
C^2 : s \vert z_1^2 + z_2^2 - 1 \vert < \vert z_1 \vert^2 + 
\vert z_2 \vert^2 - 1 < t \vert z_1^2 + z_2^2 - 1 \vert \Big\}.
\]
It is also possible to consider the domain
\[
\Om_t = \Big \{(z_1, z_2) \in C^2 : \vert z_1 \vert^2 + \vert z_2 \vert^2 
- 1 < t \vert z_1^2 + z_2^2 - 1 \vert \Big\}
\]
for $t \in (-1, 1)$. It has been observed in \cite{I1} that this domain 
has a unique maximally totally real ${\rm Aut}(\Om_t)^c$-orbit, namely
\[
\cal O_5 = \Big\{(\Re z_1, \Re z_2) \in \mbf R^2 : (\Re z_1)^2 + (\Re 
z_2)^2 < 1 \Big\}
\]
for all $t \in (-1, 1)$. Furthermore note that $\Om_t = \Om_{-1, t} \cup 
\cal O_5$ for all $t \in (-1, 1)$. 

\medskip

On the other hand observe that $z_1 \not= 0$ on $\Sigma^{\eta}$ and 
therefore $\Psi^{\eta} : \Sigma^{\eta} \ra \mbf C^2$ given by 
$\Psi^{\eta}(z_1, z_2, z_3) = (z_2/z_1, z_3/z_1)$ is holomorphic. A 
similar calculation shows that $\Psi^{\eta}(z_1, z_2, z_3) = 
\Psi^{\eta}(-z_1, -z_2, -z_3)$ and therefore 
\[
\Psi^{\eta} : \Sigma^{\eta} 
\ra D^{\eta} = \Psi^{\eta}(\Sigma^{\eta}) = \Big\{(z_1, z_2) \in \mbf C^2 
: \vert 1 + z_1^2 - z_2^2 \vert < 1 + \vert z_1 \vert^2 - \vert z_2 \vert^2, 
\; \Im(z_1(1 + \ov z_2)) > 0 \Big\}
\]
is a two sheeted covering. Likewise note that for $1 \le s < t \le \infty$ 
the domain
\[
\Sigma^{\eta}_{s, t} = \Big\{(z_1, z_2, z_3) \in \mbf C^3 : s < \vert z_1 
\vert^2 + \vert z_2 \vert^2 - \vert z_3 \vert^2 < t, \; \Im(z_2(\ov z_1 + 
\ov z_3)) > 0 \Big\} \cap \cal Q_- \subset \Sigma^{\eta}
\]
is a two sheeted covering of
\[
D_{s, t} = \Psi^{\eta}(\Sigma^{\eta}_{s, t}) = \Big \{(z_1, z_2) \in \mbf 
C^2 : s \vert 1 + z_1^2 - z_2^2 \vert < 1 + \vert z_1 \vert^2 - \vert z_2 
\vert^2 < t \vert 1 + z_1^2 - z_2^2 \vert, \; \Im(z_1(1 + \ov z_2)) > 0 
\Big\}.
\]
When $t = \infty$ the domain $D_{s, \infty}$ is assumed not to include 
the complex curve $\cal O = \Big \{(z_1, z_2) \in \mbf C^2 : 1 + z_1^2 - 
z_2^2 = 0, \; \Im(z_1(1 + \ov z_2)) > 0 \Big\}$. It is also possible to 
consider the domain
\[
D_s = \Big \{(z_1, z_2) \in \mbf C^2 : s \vert 1 + z_1^2 - z_2^2 \vert < 1 
+ \vert z_1 \vert^2 - \vert z_2 \vert^2, \; \Im(z_1(1 + \ov z_2)) > 0 
\Big\}
\]
for $1 \le s < \infty$. Here $\cal O$ is allowed to be in $D_s$ and 
hence $D_s = D_{s, \infty} \cup \cal O$.

\medskip

\begin{lem}
$D$ cannot be equivalent to either $\Omega_{s, t}$ or $D_{s, 
t}$ for all permissible values of $s, t$.
\end{lem}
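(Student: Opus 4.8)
The plan is to rule out both domains on purely topological grounds, using the observation recorded in the Remark following Lemma 2.2 that $D$ is simply connected. It therefore suffices to prove that neither $\Om_{s, t}$ nor $D_{s, t}$ is simply connected for any permissible pair $s, t$; in each case I would do this by realising the domain as the base of a non-trivial finite covering with connected total space, which forces its fundamental group to be non-trivial.

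First take $D_{s, t}$, $1 \le s < t \le \infty$. By construction $\Psi^{\eta} : \Sigma^{\eta}_{s, t} \ra D_{s, t}$ is a two-sheeted covering, and since $\Psi^{\eta}(z_1, z_2, z_3) = \Psi^{\eta}(-z_1, -z_2, -z_3)$ its deck transformation is the restriction to $\cal Q_-$ of $z \mapsto -z$. This involution is fixed-point free on $\cal Q_-$, as $z = -z$ would force $z = 0 \notin \cal Q_-$, and it preserves $\Sigma^{\eta}_{s, t}$ because both $r(z) = \vert z_1 \vert^2 + \vert z_2 \vert^2 - \vert z_3 \vert^2$ and $\Im(z_2(\ov z_1 + \ov z_3))$ are invariant under it. Hence, once $\Sigma^{\eta}_{s, t}$ is known to be connected, $D_{s, t}$ is the quotient of a connected manifold by a free $\mbf Z / 2$ action, and so $\pi_1(D_{s, t}) \not= 0$. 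Thus $D \not\backsimeq D_{s, t}$.

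For $\Om_{s, t}$, $-1 \le s < t \le 1$, I would argue identically, using $\Om_{s, t} \backsimeq \Sigma^{\nu}_{s, t}$ via the biholomorphism $\Psi^{\nu}$. Now $\Sigma^{\nu}_{s, t}$ is an open subset of $\Sigma^{\nu}$, and $\Sigma^{\nu}$ is the base of the two-sheeted covering $\Phi^{\nu} : \Om^{\nu} \ra \Sigma^{\nu}$, where $\Om^{\nu} = \{(z_1, z_2) \in \mbf C^2 : 0 < \vert z_1 \vert^2 - \vert z_2 \vert^2 < 1\}$. A glance at the formulas defining $\Phi$ shows that each of its components is invariant under $(z_1, z_2) \mapsto (-z_1, -z_2)$, so this map is the deck transformation of $\Phi^{\nu}$; it is fixed-point free on $\Om^{\nu}$ since there $z_1 \not= 0$. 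Because $r \circ \Phi^{\nu}$ and $\Im \ti z_3$ are likewise invariant under it, the preimage $W = (\Phi^{\nu})^{-1}(\Sigma^{\nu}_{s, t}) \subset \Om^{\nu}$ is invariant, and $\Phi^{\nu} \vert_W : W \ra \Sigma^{\nu}_{s, t}$ is a two-sheeted covering with fixed-point free deck transformation. Provided $W$ is connected this again gives $\pi_1(\Om_{s, t}) \not= 0$, whence $D \not\backsimeq \Om_{s, t}$.

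The only genuine obstacle is the connectedness of the total spaces $\Sigma^{\eta}_{s, t}$ and $W$, since this is exactly what distinguishes a non-trivial double cover from two disjoint copies of the base. I would settle it by parametrising $\cal Q_-$ explicitly and following the level sets of $r = \vert z_1 \vert^2 + \vert z_2 \vert^2 - \vert z_3 \vert^2$: the slabs $\{s < r < t\}$ carve out connected annular regions, and the single open half-space condition cutting out $\Sigma^{\eta}_{s, t}$ (resp. $\Sigma^{\nu}_{s, t}$) does not disconnect them, precisely because, as noted above, that condition is preserved rather than bisected by the deck involution. This is in keeping with the structure of the argument as a whole: the bases $\Om_{s, t}$ and $D_{s, t}$ fail to be simply connected and are disposed of here, whereas their finite and infinite sheeted covers are simply connected and must instead be excluded by the proper mapping techniques employed for the covers treated above.
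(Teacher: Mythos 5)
Your strategy is essentially the paper's own: both arguments rest on the simple connectedness of $D$ (the Remark after Lemma 2.2) combined with the covering structure of $\Om_{s, t}$ and $D_{s, t}$, and your formulation (the base of a non-trivial connected double cover has non-trivial fundamental group) is logically the same statement as the paper's (a finite-sheeted covering of a simply connected domain must be a homeomorphism, which these evidently are not). Your identification of the deck transformations is also correct: $z \mapsto -z$ preserves $\cal Q_-$, $\vert z_1 \vert^2 + \vert z_2 \vert^2 - \vert z_3 \vert^2$ and $\Im(z_2(\ov z_1 + \ov z_3))$, and is fixed-point free there since $0 \notin \cal Q_-$; likewise $(z_1, z_2) \mapsto (-z_1, -z_2)$ leaves every component of $\Phi$ unchanged and has no fixed point on $\Om^{\nu}$.

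The one genuine soft spot is exactly the step you flag, connectedness of the total spaces, and the justification you sketch for it does not work as reasoning: that the half-space condition cutting out $\Sigma^{\eta}_{s, t}$ is ``preserved rather than bisected by the deck involution'' is a non-sequitur, since invariance of a defining inequality under the involution says nothing about whether the set it cuts out is connected (an invariant set can perfectly well have two components that the involution preserves or interchanges). The clean repair --- and the way the paper packages the argument so that the issue never arises --- is to pull everything back to $\mbf C^2$: the compositions $\Psi^{\nu} \circ \Phi^{\nu}$ and $\Psi^{\eta} \circ \Phi^{\eta}$ are, respectively, two- and four-sheeted coverings of $\Om_{s, t}$ and $D_{s, t}$ whose total space is the explicit shell $\{(z_1, z_2) \in \mbf C^2 : \sqrt{(s + 1)/2} < \vert z_1 \vert^2 - \vert z_2 \vert^2 < \sqrt{(t + 1)/2}\}$, and this shell is evidently connected (its image under $(z_1, z_2) \mapsto (\vert z_1 \vert, \vert z_2 \vert)$ is a connected region between two hyperbolas in the quarter plane, with connected fibres). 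In particular your $W$ is precisely this shell, and $\Sigma^{\eta}_{s, t}$, being the continuous image of the corresponding shell under $\Phi^{\eta}$, is connected automatically. With that substitution your proof closes up and coincides with the paper's; note that using the full four-sheeted cover for $D_{s, t}$, rather than just the two-sheeted layer $\Psi^{\eta}$, costs nothing and removes the need for any separate connectedness argument on $\cal Q_-$.
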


\begin{proof}
First suppose that $D \backsimeq \Om_{s, t}$. For $-1 \le s < t \le 1$ the 
discussion above shows that
\[
\Psi^{\nu} \circ \Phi^{\nu} : \Big\{(z_1, z_2) \in \mbf C^2 : \sqrt{(s + 
1)/2} < \vert z_1 \vert^2 - \vert z_2 \vert^2 < \sqrt{(t + 1)/2} \Big\} 
\ra \Om_{s, t} \backsimeq D
\]
is a two sheeted cover. Since $D$ is simply connected it follows that 
$\Psi^{\nu} \circ \Phi^{\nu}$ is a homeomorphism which is evidently not 
the case.

\medskip

On the other hand if $1 \le s < t \le \infty$, it again follows that
\[
\Psi^{\eta} \circ \Phi^{\eta} : \Big\{(z_1, z_2) \in \mbf C^2 : \sqrt{(s +
1)/2} < \vert z_1 \vert^2 - \vert z_2 \vert^2 < \sqrt{(t + 1)/2} \Big\}
\ra D_{s, t} \backsimeq D
\]
is a four sheeted cover. As above, since $D$ is simply connected it 
follows that $\Psi^{\eta} \circ \Phi^{\eta}$ is forced to be a 
homeomorphism which is not the case.
\end{proof}

\begin{prop}
There cannot exist a proper holomorpic map from $D$ onto $\Om_t$ for $t 
\in (-1, 1)$ or to $D_s$ for $1 \le s < \infty$.
\end{prop}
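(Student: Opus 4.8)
The plan is to follow the template of propositions 4.7 and 4.9, exploiting the fact that the distinguished orbits $\cal O_5 \subset \Om_t$ and $\cal O \subset D_s$ lie in the \emph{interior} of their domains, so that $\pa \Om_t$ and $\pa D_s$ are honest smooth real analytic hypersurfaces to which the Levi-form machinery applies. Suppose, for contradiction, that $f$ is a proper holomorphic map onto $\Om_t$ (the case of $D_s$ being identical). Being a proper map between equidimensional domains, $f$ is automatically a finite branched covering, and in particular surjective. The first geometric input I would record is that neither $\Om_t$ nor $D_s$ is pseudoconvex. Using the realisations $\Om_t \backsimeq \Sigma^{\nu}$ and $D_s \backsimeq \Sigma^{\eta}$ inside $\cal Q_-$, one computes the Levi form of the level sets of $\phi(z) = \vert z_1 \vert^2 + \vert z_2 \vert^2 - \vert z_3 \vert^2$ restricted to $\cal Q_-$ --- whose ambient $(1,1)$-form has signature $(2,1)$ --- and exhibits strongly pseudoconcave points on these boundaries. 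Alternatively one may argue indirectly: were $\Om_t$ (respectively $D_s$) a bounded pseudoconvex domain with the stated non-compact automorphism group, the Bedford--Pinchuk and Wong--Rosay theorems would force it to be $\mbf B^2$ or an ellipsoid, contradicting $\dim {\rm Aut} = 3$; hence strongly pseudoconcave boundary points must be present.

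Next I would localise near $p_{\infty}$ exactly as in proposition 4.7. By lemma 2.1 there is a two dimensional stratum $S$ of the Levi-degenerate set clustering at $p_{\infty}$ with $S \subset \hat D$. Since the components of $f$ extend holomorphically across points of $\hat D$, the map $f$ extends across a point $a \in S$ near $p_{\infty}$. The set $(U \cap \pa D) \sm S$ consists of strongly pseudoconvex and strongly pseudoconcave points, $S$ separating the two, and off its branch locus the extension of $f$ is locally biholomorphic and carries $\pa D$ into $\pa \Om_t$ preserving the sign of the Levi form. If $\pa D$ carried strongly pseudoconcave points arbitrarily close to $p_{\infty}$, then whatever the sign of the Levi form of $\pa \Om_t$ at $f(a)$, one of the two families of Levi-definite points of $\pa D$ near $a$ would be mapped locally biholomorphically to boundary points of $\pa \Om_t$ of the opposite sign; moving $a$ along $S$ so that $f(a)$ avoids the lower dimensional Levi-degenerate locus of $\pa \Om_t$, this contradicts the invariance of the Levi form. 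Hence $\pa D$ must be weakly pseudoconvex near $p_{\infty}$, and by \cite{Ber1} one has $D \backsimeq \ti D = \{(z_1, z_2) \in \mbf C^2 : 2 \Re z_2 + P_{2m}(z_1, \ov z_1) < 0\}$ with $P_{2m}$ subharmonic, so that $D$ is globally pseudoconvex.

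Finally I would push pseudoconvexity forward. The map $f : \ti D \ra \Om_t$ is now a finite, surjective, proper holomorphic map between equidimensional domains; since $\ti D$ is pseudoconvex it carries a plurisubharmonic exhaustion $u$, and the fibrewise sum (taken with multiplicities) $w \mapsto \sum_{z \in f^{-1}(w)} u(z)$ is a plurisubharmonic exhaustion of $\Om_t$. Indeed the pushforward of a plurisubharmonic function under a finite proper map is plurisubharmonic, and properness ($f^{-1}(w) \to \pa \ti D$ as $w \to \pa \Om_t$) makes it an exhaustion. Thus $\Om_t$ is pseudoconvex, contradicting the first step. The same three steps dispose of $D_s$; the only difference is that the interior orbit is now the complex curve $\cal O$ rather than a totally real disc, which affects neither the boundary analysis near $p_{\infty}$ nor the exhaustion argument.

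The step I expect to be the main obstacle is the verification that $\pa \Om_t$ and $\pa D_s$ genuinely carry strongly pseudoconcave points --- that is, that these domains fail to be pseudoconvex --- since this is what powers both the localisation at $p_{\infty}$ and the final contradiction. The delicate part is the Levi-form computation on $\cal Q_-$ together with the control, while applying the invariance of the Levi form, of the interior degeneracy loci $\cal O_5, \cal O$ and of the branch locus of $f$. The pushforward of a plurisubharmonic exhaustion under a finite proper map is the clean technical engine that replaces, in the proper setting, the Kobayashi-completeness argument used for the infinite sheeted covers.
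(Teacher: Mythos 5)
Your proposal breaks down at its first and last steps, which are really the same step: the claim that $\Om_t$ and $D_s$ carry strongly pseudoconcave boundary points is false, and this is precisely why these two domains require a separate proposition rather than being absorbed into the argument for $\Om_{s,t}$ and $D_{s,t}$. For $\Om_{s,t}$ with $s > -1$ the inner boundary component $\nu_s$ is indeed strongly pseudoconcave, but $\Om_t = \Om_{-1,t} \cup \cal O_5$ is obtained exactly by letting $s = -1$ and attaching the totally real disc $\cal O_5$ to the interior, which deletes that component: what remains is $\pa \Om_t \sm \pa \cal O_5$, a \emph{strongly pseudoconvex} hypersurface (the paper records this in the first lines of its proof), together with the one dimensional circle $\pa \cal O_5$. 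Likewise $D_s = D_{s,\infty} \cup \cal O$ loses the pseudoconcave piece $\eta_t$ when $t = \infty$ and swallows the complex curve $\cal O$; for $s > 1$ its boundary is the disjoint union of the strongly pseudoconvex $\cal C^1$, the Levi flat $\cal C^2$, and the corner $\cal C^3$ --- nothing pseudoconcave anywhere. Consequently your closing move (the fibrewise sum of a plurisubharmonic exhaustion under the finite proper map, which is in itself a legitimate device) terminates by proving the target pseudoconvex, which contradicts nothing. Your fallback ``indirect'' argument fails as well: $\pa \Om_t$ and $\pa D_s$ are not smooth real analytic hypersurfaces (they have the circle edge $\pa \cal O_5$, respectively the corner $\cal C^3$ and the singular point of $\{\Im(z_1(1+\ov z_2)) = 0\}$), so neither Bedford--Pinchuk nor Wong--Rosay applies to them.

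Your middle step --- localisation at $a \in S \subset \hat D$, the Levi sign bookkeeping away from the branch locus $Z_f$, and the conclusion that $\pa D$ is weakly pseudoconvex near $p_{\infty}$, hence $D \backsimeq \ti D$ by \cite{Ber1} --- does match the paper, except that for $D_s$ one must additionally rule out $f(p) \in \cal C^3$: the paper does this by comparing germs of three dimensional real analytic sets to show $f$ would map a neighbourhood of $p$ biholomorphically onto a piece of the Levi flat hypersurface, impossible since there are Levi nondegenerate points near $p$; a totally-real-target argument does not cover this piece. But the decisive content you are missing is the endgame that replaces your pseudoconvexity contradiction. The paper uses $\dim {\rm Aut}(D) = 3$ together with the Lie algebra classification from the end of proposition 4.1 (the relations $[X,Y] = X$ for the fields generating translations and dilations of $\ti D$, and lemmas 5.3, 5.6, 5.8 of \cite{O}) to conclude $\ti D \backsimeq \cal D_4 = \{2 \Re z_2 + (\Re z_1)^{2m} < 0\}$, a tube; then the peak-function/analytic-disc claim of proposition 4.7 produces a boundary point of $\cal D_4$ whose cluster set under the proper map contains a strongly pseudoconvex target point, the map extends continuously there, and \cite{CPS} forces that point of $\pa \cal D_4$ to be weakly spherical --- which the explicit tube form excludes. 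Finally the endpoint case $s = 1$, which your ``identical'' treatment of $D_s$ silently includes, needs yet another argument: $\Psi_{\eta} : D^{(2)}_1 \ra D_1$ is proper with $D^{(2)}_1 \backsimeq \Delta^2$, so a proper map onto $D_1$ would yield a proper map from the bidisc onto $D$, ruled out by Henkin's theorem \cite{He} since $\pa D$ has strongly pseudoconvex points near $p_{\infty}$.
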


\begin{proof}
First work with $\Om_t$. Write $z_1 = x + iy,\; z_2 = u + iv$ so that 
$\cal O_5 = \{(x, u) \in \mbf R^2 : x^2 + u^2 < 1\}$. Note that 
$\pa \cal O_5 = \{(x, u) \in \mbf R^2 : x^2 + u^2 = 1\} \subset \pa \Om_t$ 
for all $t \in (-1, 1)$ and that $\pa \Om_t \sm \pa \cal O_5$ is a smooth 
strongly pseudoconvex hypersurface. Let $f : D \ra \Om_t$ be proper. As in 
proposition 
4.7, we work near $p_{\infty} \in \pa D$ and choose $a \in S$, where $S$ 
is a two dimensional stratum of the Levi degenerate points that clusters 
at $p_{\infty}$ and which is contained in $\hat D$. Then $f$ extends 
holomorphically near $a$ and $f(a) \in \pa \Om_t$. Choose neighbourhoods 
$U, U'$ of $a, f(a)$ respectively so that $f : U \ra U'$ is a well 
defined holomorphic mapping with $f(U \cap \pa D) \subset U' \cap \pa 
\Om_t$. The closed complex analytic set $Z_f \subset U$ defined by 
the vanishing of the Jacobian of $f$ has dimension one and the finite type 
assumption on $\pa D$ near $p_{\infty}$ implies that the real dimension of 
$Z_f \cap \pa D$ is at most one. Therefore it is possible to choose $p \in 
(U \cap S) \sm Z_f$. Suppose that $f(p) \in \pa \Om_t \sm \pa \cal O_5$. 
Then there are strongly pseudoconcave points near $p$ that are mapped 
locally 
biholomorphically to points near $f(p)$ which however is a strongly 
pseudoconvex point. This cannot happen. The other possibility is that 
$f(p) \in \pa \cal O_5 = \{(x, u) \in \mbf R^2 : x^2 + u^2 = 1\}$ which is 
totally real. It follows that $f^{-1}(U' \cap \pa \cal O_5) \subset U \cap 
\pa D$ is nowhere dense and therefore there are strongly pseudoconcave 
points near $p$ that are mapped locally biholomorphically to strongly 
pseudoconvex points near $f(p)$. This is again a contradiction. 

\medskip

Hence the boundary $\pa D$ near $p_{\infty}$ is weakly pseudoconvex and by 
\cite{Ber1} it follows that $D \backsimeq \ti D$ where $\ti D$ is as in 
(4.2). Then $f : D \backsimeq \ti D \ra \Om_t$ is still biholomorphic. 
Observe that $\ti D$ is invariant under the one-parameter subgroups 
$T_t(z_1, z_2) = (z_1, z_2 + it)$ and $S_s(z_1, z_2) = (\exp(s/2m)z_1, 
\exp(s)z_2)$ and the corresponding real vector fields are 
$X = \Re(i \; \pa / \pa z_2)$ and $Y = \Re((z_1/2m) \; \pa / \pa z_1 + 
z_2 \pa / \pa z_2)$. It can be seen that $[X, Y] = X$. By the reasoning 
given in the last part of proposition 4.1 it follows that 
$\ti D \backsimeq \cal D_4 = \{(z_1, z_2) \in 
\mbf C^2 : 2 \Re z_2 + (\Re z_1)^{2m} < 0\}$. Let $f : \cal D_2 \ra 
\Om_t$ still denote the proper map. Choose ${\ti p}' \in \pa \Om_t \sm 
\pa \cal O_5$ a strongly pseudoconvex point. Then the claim made in 
proposition 4.7 shows the existence of $\ti p \in \pa \cal D_2$ such that 
the 
cluster set of $\ti p$ under $f$ contains ${\ti p}'$. Then $f$ will extend 
continuously up to the boundary $\pa \cal D_2$ near $\ti p$ and $f(\ti p) 
= {\ti p}'$. Now it follows from \cite{CPS} that $\ti p \in \pa \cal D_2$ 
must be a weakly spherical point, i.e., there exists a coordinate system 
around $\ti p$ such that the defining equation for $\pa \Om_2$ near 
$\ti p$ is of the form
\[
\rho(z) = 2 \Re z_2 + \vert z_1 \vert^{2m} + \; {\rm higher \; order \; 
terms}.
\]
However the explicit form of $\pa \cal D_2$ shows that no point on it is 
weakly spherical. 

\medskip

On the other hand observe that if $s > 1$ then $\pa D_s$ is the disjoint 
union of 
\begin{align*}
\cal C^1 &= \{1 + \vert z_1 \vert^2 - 
\vert z_2 \vert^2 = s \vert 1 + z_1^2 - z_2^2 \vert, \; 
\Im(z_1(1 + \ov z_2)) > 0\},\\ 
\cal C^2 &= \{1 + \vert z_1 \vert^2 - \vert z_2 
\vert^2 > s \vert 1 + z_1^2 - z_2^2 \vert, \; \Im(z_1(1 + \ov z_2)) = 
0\},\\ 
\cal C^3 &= \{1 + \vert z_1 \vert^2 - \vert z_2 \vert^2 = s \vert 1 + 
z_1^2 - z_2^2 \vert, \; \Im(z_1(1 + \ov z_2)) = 0\}. 
\end{align*}
For an arbitrary set 
$E \subset \mbf C^2$ and $e \in E$, let $E_e$ denote the germ of $E$ at 
$e$. Note that $\cal C^1$ is a strongly pseudoconvex hypersurface. Also 
$\Im(z_1(1 + \ov z_2)) = 0$ has an isolated singularity at $(z_1, z_2) = 
(0, -1)$ away from which it is a smooth Levi flat hypersurface. Observe 
that $(0, -1) \notin \cal C^2$ since $s > 1$. As above choose 
neighbourhoods $U, U'$ of $a, f(a)$ respectively and note that the real 
dimension of $Z_f \cap U$ is at most one. Pick $p \in (U \cap S) \sm Z_f$. 
Since $\cal C^1$ is strongly pseudoconvex it follows that $f(p) \notin 
\cal C^1$. Moreover $f(p) \notin \cal C^2$ since it is Levi flat and all 
points of $U \sm S$ are Levi non-degenerate. The only possibility is that 
$f(p) \in \cal C^3$. We now work near $p$. Suppose that $f((\pa D)_p) 
\subset \cal C^3_{f(p)} \subset \{ \Im(z_1(1 + \ov z_2)) = 0\}_{f(p)}$. 
The first and third sets are real analytic germs of dimension 3 and hence 
they must coincide. It follows that $f$ maps a neighbourhood of $p$ 
locally biholomorphically onto a neighbourhood of $f(p)$ on the Levi flat 
hypersurface $\{\Im(z_1(1 + \ov z_2)) = 0\}$. This cannot happen as there 
are Levi non-degenerate points near $p$. This means that there is an open 
dense set of points near $p$ that are mapped by $f$ locally 
biholomorphically into $\cal C^1$ or $\cal C^2$. Both possibilities 
however violate the invariance of the Levi form. The rest of the argument 
runs along in the same way as mentioned above.

\medskip

When $s = 1$ it was observed in \cite{I1} that $\Psi_{\eta} : D^{(2)}_1 
\ra 
D_1$ is proper holomorphic and that $D^{(2)}_1 \backsimeq \Delta^2$ the 
bidisc. The domain $D^{(2)}_1$ will be discussed later but assuming this 
for the moment, it 
follows that if $D \backsimeq D_1$, then there will exist a proper 
holomorphic mapping from $\Delta^2$ onto $D$. Since there are strongly 
pseudoconvex points on $\pa D$ near $p_{\infty}$ it follows from \cite{He} 
that such a proper mapping cannot exist. 
\end{proof}

\subsection{Finite and infinite sheeted covers of $D_{s, t}$, $\Omega_{s, 
t}$:} Let $[z_0 : z_1 : z_2 : z_3]$ and $[\zeta : z: w]$ denote 
coordinates on $\mbf P^3$ and $\mbf P^2$ respectively where $\{z_0 = 0\}$ 
and $\{\zeta = 0\}$ are to be regarded as the respective hyperplanes at 
infinity. Let $\cal Q_- \subset \mbf P^3$ be the smooth projective variety 
given by $z_0^2 = z_1^2 + z_2^2 - z_3^2$ and set 
$\Sigma = \{[\zeta : z : w] \in \mbf P^2 : \vert w \vert < \vert z 
\vert\}$. For each integer $n \ge 2$ consider the map $\Phi^{(n)} : \Sigma 
\ra \cal Q_-$ whose components are given by
\begin{align*}
z_0 &= {\zeta}^n,\\
z_1 &= -i(z^n + z^{n - 2}w^2) - i(z \ov w + w \ov z) {\zeta}^n/(\vert z 
\vert^2 - \vert w \vert^2),\\
z_2 &= z^n - z^{n - 2} w^2 + (z \ov w - w \ov z) {\zeta}^n/(\vert z 
\vert^2 - \vert w \vert^2),\\
z_3 &= -2i z^{n - 1} w - i(\vert z \vert^2 + \vert w \vert^2) 
{\zeta}^n/(\vert z \vert^2 - \vert w \vert^2).
\end{align*}
\noindent Define
\begin{align*}
\cal A^{(n)}_{\nu} &= \Big\{(z, w) \in \mbf C^2 : 0 < \vert z \vert^n - 
\vert z \vert^{n - 2} \vert w \vert < 1 \Big\},\\
\cal A^{(n)}_{\eta} &= \Big\{(z, w) \in \mbf C^2 : \vert z \vert^n - \vert 
z \vert^{ n - 2} \vert w \vert^2 > 1 \Big\}
\end{align*}
and regard them as subsets of the affine part of $\mbf P^2$ where $\zeta = 
1$. Then $\cal A^{(n)}_{\nu}, \cal A^{(n)}_{\eta} \subset \Sigma$ for all 
$n \ge 2$. Let $\Phi^{(n)}_{\nu}, \Phi^{(n)}_{\eta}$ be the restrictions 
of $\Phi^{(n)}$ to $\cal A^{(n)}_{\nu}, \cal A^{(n)}_{\eta}$ respectively. 
Then these maps are $n$-sheeted covering maps from $\cal A^{(n)}_{\nu}, 
\cal A^{(n)}_{\eta}$ onto 
\begin{align*}
\cal A_{\nu} &= \Big\{(z_1, z_2, z_3) \in \mbf C^3 : - 1 < \vert z_1 
\vert^2 + \vert z_2 \vert^2 - \vert z_3 \vert^2 < 1, \; \Im z_3 < 0 \Big\} 
\cap \cal Q_-,\\
\cal A_{\eta} &= \Big \{(z_1, z_2, z_3) \in \mbf C^3 : \vert z_1 \vert^2 + 
\vert z_2 \vert^2 - \vert z_3 \vert^2 > 1, \; \Im(z_2(\ov z_1 + \ov z_3)) 
> 0 \Big\} \cap \cal Q_-
\end{align*}
respectively. Equip $\cal A^{(n)}_{\nu}, \cal A^{(n)}_{\eta}$ with the 
pull back complex structures using $\Phi^{(n)}_{\nu}, \Phi^{(n)}_{\eta}$ 
respectively and call the resulting complex manifolds $M^{(n)}_{\nu}, 
M^{(n)}_{\eta}$ respectively. For $-1 \le s < t \le 1$ and $n \ge 2$ 
define
\[
\Om^{(n)}_{s, t} = \Big\{(z, w) \in M^{(n)}_{\nu} : \sqrt{(s + 1)/2} < 
\vert z \vert^n - \vert z \vert^{n - 2} \vert w \vert^2 < \sqrt{(t + 1)/2} 
\Big\}.
\]
Moreover if $\Psi_{\nu} : \cal A_{\nu} \ra \mbf C^2$ is given by 
$(z_1, z_2, z_3) \mapsto (z_1/z_3, z_2/z_3)$ then 
$\Psi_{\nu} \circ \Phi^{(n)}_{\nu} : \Om^{(n)}_{s, t} \ra \Om_{s, t}$ is 
an $n$-sheeted cover.

\medskip

On the other hand, let $\Lambda : \mbf C \times \Delta \ra \Sigma \cap \{ 
\zeta = 1\}$ be the covering map given by $\Lambda(z, w) = (e^z, we^z)$ 
where $(z, w) \in \mbf C \times \Delta$. Define
\begin{align*}
U_{\nu} &= \Big\{(z, w) \in \mbf C^2 : \vert w \vert < 1, \; \exp(2 \Re z) 
(1 - \vert w \vert^2) < 1 \Big\},\\
U_{\eta} &= \Big \{(z, w) \in \mbf C^2 : \vert w \vert < 1, \; \exp(2 \Re 
z) (1 - \vert w \vert^2) > 1 \Big\}
\end{align*}
and denote by $\Lambda_{\nu}, \Lambda_{\eta}$ the restrictions of 
$\Lambda$ to $U_{\nu}, U_{\eta}$ respectively. Then $\Lambda_{\nu} : 
U_{\nu} \ra M^{(2)}_{\nu}$ and $\Lambda_{\eta} : U_{\eta} \ra 
M^{(2)}_{\eta}$ are infinite coverings. Equip $U_{\nu}, U_{\eta}$ with 
the pull back complex structures using $\Lambda_{\nu}, \Lambda_{\eta}$ 
respectively and call the resulting complex manifolds $M^{(\infty)}_{\nu}$ 
and $M^{(\infty)}_{\eta}$ respectively. For $-1 \le s < t \le 1$ define
\[
\Om^{(\infty)}_{s, t} = \Big \{(z, w) \in M^{(\infty)}_{\nu} : \sqrt{(s + 
1)/2} < \exp(2 \Re z) (1 - \vert w \vert^2) < \sqrt{(t + 1)/2} \Big\}
\]
and note that $\Psi_{\nu} \circ \Phi^{(2)}_{\nu} \circ \Lambda_{\nu} : 
\Om^{(\infty)}_{s, t} \ra \Om_{s, t}$ is an infinite covering.

\begin{prop}
D cannot be equivalent to either $\Om^{(n)}_{s, t}$ for $n \ge 2$ or 
$\Om^{(\infty)}_{s, t}$ for all $-1 \le s < t \le 1$.
\end{prop}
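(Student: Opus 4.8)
The plan is to follow the same strategy that succeeded in Proposition 4.7 for the finite covers $\mathfrak{S}^{(n)}_{s,t}$, since $\Om^{(n)}_{s,t}$ and $\Om^{(\infty)}_{s,t}$ stand in exactly the same relation to $\Om_{s,t}$ (an $n$-sheeted or infinite cover via the composite map $\Psi_{\nu}\circ\Phi^{(n)}_{\nu}$ or $\Psi_{\nu}\circ\Phi^{(2)}_{\nu}\circ\Lambda_{\nu}$) that $\mathfrak{S}^{(n)}_{s,t}$ bears to $\mathfrak{S}_{s,t}$. First I would assume $f:D\ra\Om^{(n)}_{s,t}$ is a biholomorphism and compose with the covering map to obtain an unbranched proper (respectively, infinite-sheeted, hence non-proper) holomorphic map $\pi:D\ra\Om_{s,t}$ between domains carrying the standard complex structure. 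The key structural input is Proposition 4.13, which establishes that there is no proper holomorphic map from $D$ onto $\Om_t$ for $t\in(-1,1)$. Since the arguments there already analyze the boundary of $\Om_{s,t}$ and $\Om_t$ (the strongly pseudoconvex part $\cal C^1$, the Levi flat part, and the totally real orbit $\cal O_5$ or the curve $\cal O$), I would invoke that analysis directly.

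The main step is to locate $p_{\infty}$ in the boundary decomposition and derive a contradiction from the boundary behavior. By Lemma 2.1 there is a two dimensional stratum $S\subset\hat D$ of Levi degenerate points clustering at $p_{\infty}$; choosing $a\in S$ near $p_{\infty}$, the map $\pi$ extends holomorphically near $a$. As in Proposition 4.7 and 4.13, the invariance of the Levi form forces $\pi(a)$ to avoid the strongly pseudoconvex and strongly pseudoconcave components of $\pa\Om_{s,t}$, and the finite type hypothesis on $\pa D$ together with the totally real or Levi flat nature of the remaining boundary pieces rules out the other possibilities. The only surviving alternative is that $\pa D$ is weakly pseudoconvex near $p_{\infty}$, whence by \cite{Ber1} $D\backsimeq\ti D$ as in (4.2), and the Lie algebra computation at the end of Proposition 4.1 further forces $\ti D\backsimeq\cal D_4$. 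Then $\pi$ restricted to this explicit model must cover $\Om_{s,t}$, and I would import the peak-function and weak-sphericity argument (via \cite{CPS} and \cite{We}) used in Proposition 4.13 to conclude that some point of $\pa\cal D_4$ is weakly spherical, contradicting the explicit algebraic form of $\pa\cal D_4$.

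The two cases $s>-1$ and $s=-1$ require separate treatment, mirroring the split in Proposition 4.13 between $\Om_{s,t}$ and $\Om_t$. When $s=-1$ the totally real orbit $\cal O_5$ enters the boundary and the peak-function construction must be applied near a strongly pseudoconvex point of $\pa\Om_t$; the analytic continuation of the branches of $\pi^{-1}$ as algebraic correspondences, together with the fact that $\cal O_5$ is totally real (so its intersection with the branch and singularity loci has real dimension at most one), yields points mapped locally biholomorphically into $\pa\cal D_4$, which is impossible since $\pa\cal D_4$ is not totally real.

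The hard part will be handling the infinite-sheeted cover $\Om^{(\infty)}_{s,t}$, where $\pi$ fails to be proper. Here I would replace properness by the weaker cluster-set containment $\pi(\pa D)\subset\pa\Om_{s,t}$, which follows from the explicit boundary behavior of $\Lambda_{\nu}$ exactly as the analogous observation was used in Proposition 4.9 for $\mathfrak{S}^{(\infty)}_{s,t}$. For $s>-1$ the completeness of the Kobayashi metric on $\ti D$ transfers to $\Om_{s,t}$ and forces pseudoconvexity, contradicting the indefinite Levi form of $\Om_{s,t}$; for $s=-1$ the local properness of the extension near the strongly pseudoconvex boundary points (via \cite{Su}) restores enough control to run the algebraicity argument. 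The subtlety is ensuring that the peak function and the local proper extension survive the loss of global properness, but these adaptations are precisely the ones already carried out in Proposition 4.9, so they may be invoked rather than reproved.
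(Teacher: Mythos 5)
Your proposal follows the paper's own proof in all essentials: compose the biholomorphism with the covering map ($\Psi_{\nu} \circ \Phi^{(n)}_{\nu}$, resp. $\Psi_{\nu} \circ \Phi^{(2)}_{\nu} \circ \Lambda_{\nu}$) to get a holomorphic covering $\pi : D \ra \Om_{s, t}$; replace properness in the infinite-sheeted case by the cluster-set containment of $\pa D$ in $\pa \Om_{s, t}$; run the Levi-form analysis at a point $a \in S \subset \hat D$ supplied by Lemma 2.1, using the Jacobian locus of $\pi$ and the finite type hypothesis to exclude the totally real piece; conclude that $\pa D$ is weakly pseudoconvex near $p_{\infty}$, so $D \backsimeq \ti D$ by \cite{Ber1}; and close with Kobayashi completeness forcing pseudoconvexity of the base when $s > -1$, and with the peak-function claim, \cite{Su} and algebraicity via \cite{We} when $s = -1$ (the paper's final contradiction there being that an algebraic map has generically finite fibers, incompatible with an infinite covering). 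Two deviations are harmless: the proposition you cite as forbidding proper maps onto $\Om_t$ is the one preceding the present statement, and your detour through the Lie-algebra reduction $\ti D \backsimeq \cal D_4$ with the \cite{CPS} weak-sphericity contradiction is legitimate (it is exactly what the paper does for $\Om_t$ and $D_s$, and $\dim {\rm Aut}(D) = 3$ holds throughout the section) but unnecessary here --- once $D \backsimeq \ti D$, properness or completeness pushes pseudoconvexity onto $\Om_{s, t}$ and the strongly pseudoconcave piece $\nu_s$ already gives the contradiction.

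The one genuine gap is in the parameter range: the statement covers all $-1 \le s < t \le 1$, but your case split ($s > -1$ versus $s = -1$) tacitly assumes $t < 1$. At $t = 1$ the boundary piece $\nu_1$ is no longer strongly pseudoconvex --- it is Levi flat, foliated by pieces of complex lines --- so the peak-function anchor vanishes; for $-1 < s < 1$ your completeness contradiction survives because $\nu_s$ remains strongly pseudoconcave (and Levi flatness of the target is excluded near Levi nondegenerate points exactly as in the neighbouring propositions), but this needs to be said. More seriously, at the corner $(s, t) = (-1, 1)$ every mechanism you propose fails simultaneously: $\Om_{-1, 1} \backsimeq \Delta^2$ is pseudoconvex and complete hyperbolic, so neither the completeness argument nor any Levi-form comparison yields a contradiction, and $\pa \Om_{-1, 1}$ contains no strongly pseudoconvex or strongly pseudoconcave points at which to launch the peak-function/algebraicity endgame --- your argument simply stalls without contradiction. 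The paper dispatches this case separately by noting that $\Om_{-1, 1} \backsimeq \Delta^2$ has six-dimensional automorphism group, incompatible with $\dim {\rm Aut}(D) = 3$ (alternatively, $\Delta^2$ is simply connected, so the covering $\pi$ would be a biholomorphism and the same dimension count applies). Without some such observation your case analysis does not close.
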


\begin{proof}
The reasoning for $\Om^{(n)}_{s, t}$ is subsumed by that for 
$\Om^{(\infty)}_{s, t}$ and so it will suffice to focus on 
$\Om^{(\infty)}_{s, t}$. Let $f : D \ra \Om^{(\infty)}_{s, t}$ be a 
biholomorphism. Then $\pi = \Psi_{\nu} \circ \Phi^{(2)}_{\nu} \circ 
\Lambda_{\nu} \circ f : D \ra \Om_{s, t}$ is then a holomorphic covering 
between domains with the standard complex structure. Note that the cluster 
set of $\pa D$ under $\pi$ is contained in $\pa \Om_{s, t}$. First suppose 
that $-1 < s < t < 1$. Write $z_1 = x + iy, z_2 = u + iv$. The smooth 
hypersurfaces $\nu_t = \{\vert z_1 \vert^2 + \vert z_2 \vert^2 - 1 = t 
\vert z_1^2 + z_2^2 - 1 \vert\} \sm \{x^2 + u^2 = 1\}$ and $\nu_s = 
\{\vert z_1 \vert^2 + \vert z_2 \vert^2 - 1 = s \vert z_1^2 + z_2^2 - 1 
\vert\} \sm \{x^2 + u^2 = 1\}$ are strongly pseudoconvex and strongly 
pseudoconcave pieces respectively of $\pa \Om_{s, t}$. As in proposition 
4.7 choose $a \in S \subset T$ such that $p_{\infty} \in \ov S$ and $S 
\subset \hat D$. Then $\pi$ extends holomorphically across $a$ and $\pi(a) 
\in \pa \Om_{s, t}$. Choose neighbourhoods $U, U'$ of $a, \pi(a)$ 
respectively so that $\pi : U \ra U'$ is well defined holomorphic and 
$\pi(U \cap \pa D) \subset U' \cap \pa \Om_{s, t}$. If $\pi(a) \in \nu_t$ 
then it is possible to find strongly pseudoconcave points near $a$ that 
are mapped locally biholomorphically by $\pi$ to points on $\nu_t$ which 
violates the invariance of the Levi form. Likewise $\pi(a) \notin \nu_s$. 
The remaining possibility is that $\pi(a) \in \{(x, u) \in \mbf R^2 : x^2 
+ u^2 = 1\} = \pa \cal O_5$. Let $Z_{\pi} \subset U$ be the closed 
analytic set defined by the vanishing of the Jacobian determinant of 
$\pi$. Since $\pi$ is a covering it follows that $Z_{\pi} \cap (U \cap D) 
= \emptyset$ and hence $\dim Z_{\pi} \le 1$. Since $\pa D$ is of finite 
type near $p_{\infty}$ it follows that the real dimension of $Z_{\pi} \cap 
\pa D$ is at most one. Choose $p \in (U \cap S) \sm (Z_{\pi} \cap \pa D)$. 
For reasons discussed above $\pi(p) \notin \nu_t$ or $\nu_s$. Therefore 
$\pi(p) \in \pa \cal O_5$ and since $p$ is arbitrary it follows that 
$\pi((U \cap S) \sm (Z_{\pi} \cap \pa D)) \subset \pa \cal O_5$. This 
shows that an open piece of $S$ is mapped locally biholomorphically into 
$\pa \cal O_5$ and this cannot happen by dimension considerations.
The only possibility that remains is that $\pa D$ is weakly pseudoconvex 
near $p_{\infty}$. By \cite{Ber1} it follows that $D \backsimeq \ti D$ 
where $\ti D$ is as in (4.2). Since $\ti D$ is complete hyperbolic, the 
same must be true of $\Om_{s, t}$. But then completeness implies that 
$\Om_{s, t}$ must be pseudoconvex which cannot be true since all points on 
$\nu_s \subset \pa \Om_{s, t}$ are strongly pseudoconvex.

\medskip

Now suppose that $-1 = s < t < 1$. Then the boundary of $\Om_{-1, t}$ has 
a strongly pseudoconvex piece $\nu_t$ and $\cal O_5 = \{(x, u) \in \mbf 
R^2 : x^2 + u^2 < 1\}$ which is maximally totally real. For reasons discussed 
above $\pi(a) \notin \nu_t$ and therefore $\pi(a) \in \ov {\cal O}_5$. As 
mentioned above $\dim Z_{\pi} \le 1$ and hence the real dimension of 
$Z_{\pi} \cap \pa D$ is at most one. Choose $p \in (U \cap S) \sm (Z_{\pi} 
\cap \pa D)$ and note that $\pi(p) \notin \nu_t$. Hence $\pi(p) \in \ov 
{\cal O}_5$. The strongly pseudoconcave points near $p$ are then mapped 
locally biholomorphically by $\pi$ to points on $\pa \Om_{-1, t}$ near 
$\pi(p)$. However note that a neighbourhood of $\pi(p)$ on $\pa \Om_{-1, 
t}$ consists entirely of either totally real points or those that are 
strongly pseudoconvex which again leads to a contradiction. The only 
possibility is that $D$ is pseudoconvex near $p_{\infty}$ and hence that 
$D \backsimeq \ti D$. Thus
\[
\pi : D \backsimeq \ti D \ra \Om_{-1, t}
\]
is also an infinite covering. Now pick $p' \in \nu_t \subset \pa \Om_{-1, 
t}$ and note that there is a local holomorphic peak function at $p'$. The 
claim made in proposition 4.7 applies here as well and it shows the 
existence of $p \in \pa \ti D$ such that the cluster set of $p$ under 
$\pi$ contains $p'$. It follows that $\pi$ extends locally 
biholomorphically across the strongly pseudoconvex points near $p$ and 
hence that $\pi$ is algebraic. In particular, for a generic $z' \in 
\Om_{-1, t}$ the cardinality of $\pi^{-1}(z')$ must be finite which 
contradicts the fact that $\pi$ is an infinite covering map.

\medskip

Similar arguments show that $D$ cannot be equivalent to $\Om_{s, 1}$ for 
$-1 < s < 1$. Finally it has been noted in \cite{I1} that $\Om_{-1, 
1} \backsimeq \Delta^2$ whose automorphism group is six dimensional and 
thus $D$ cannot be equivalent to $\Om_{-1, 1}$.

\end{proof}

\noindent On the other hand, for $1 \le s < t \le \infty$ and $n \ge 2$ 
define
\begin{align*}
D^{(2)}_{s, t} &= \Big \{(z_1, z_2, z_3) \in \mbf C^3 : s < \vert z_1 
\vert^2 + \vert z_2 \vert^2 - \vert z_3 \vert^2 < t, \; 
\Im(z_2(\ov z_1 + \ov z_3)) > 0 \Big\} \cap \cal Q_-,\\
D^{(2n)}_{s, t} &= \Big \{(z, w) \in M^{(n)}_{\eta} : \sqrt{(s + 1)/2} 
< \vert z \vert^n - \vert z \vert^{n - 2} \vert w \vert^2 < \sqrt{(t + 
1)/2} \Big\},\\
D^{(\infty)}_{s, t} &= \Big \{(z, w) \in M^{(\infty)}_{\eta} : \sqrt{(s + 
1)/2} < \exp(2 \Re z) (1 - \vert w \vert^2) < \sqrt{(t + 1)/2} \Big\}.
\end{align*}
The domain $D^{(2)}_{s, t}$ is a two sheeted cover of $D_{s, t}$, the 
covering map being $\Psi_{\eta} : \cal A_{\eta} \ra \mbf C^2$ given by 
$(z_1, z_2, z_3) \mapsto (z_2/z_1, z_3/z_1)$. Furthermore for $n \ge 2$ it 
can be seen that $\Psi_{\eta} \circ \Phi^{(n)}_{\eta} : D^{(2n)}_{s, t} 
\ra D_{s, t}$ is a $2n$-sheeted covering while $\Psi_{\eta} \circ 
\Phi^{(2)}_{\eta} \circ \Lambda_{\eta} : D^{(\infty)}_{s, t} \ra D_{s, t}$ 
is an infinite covering. The procedure for getting an $n$-sheeted cover of 
$D_{s, t}$ for $n$ odd has been explained in \cite{I1}. Briefly put, it 
is observed that there is a 
cyclic group of order $4$ that acts freely on $D^{(4n)}_{s, t}$. The 
quotient space $D^{(n)}_{s, t} = \Pi^{(n)}\Big( D^{(4n)}_{s, t} \Big)$ 
(where $\Pi^{(n)}$ is the factorization map) is then an $n$-sheeted 
cover of $D_{s, t}$.

\begin{prop}
$D$ cannot be equivalent to $D^{(n)}_{s, t}$ for $n \ge 2$ or to 
$D^{(\infty)}_{s, t}$ for $1 \le s < t \le \infty$.
\end{prop}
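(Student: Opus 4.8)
The plan is to follow the template of Proposition 4.13 verbatim, with $\Om_{s,t}$ replaced by $D_{s,t}$ and Proposition 4.12 used wherever Proposition 4.13 invoked the properties of $\Om_t$. Assuming $D \backsimeq D^{(n)}_{s,t}$ for some $n \ge 2$, or $D \backsimeq D^{(\infty)}_{s,t}$, and composing the equivalence with the relevant covering map — $\Psi_{\eta} \circ \Phi^{(n)}_{\eta}$ (and the order four quotient when $n$ is odd) in the finite case, and $\Psi_{\eta} \circ \Phi^{(2)}_{\eta} \circ \Lambda_{\eta}$ in the infinite case — one obtains a holomorphic covering $\pi : D \ra D_{s,t}$ that is proper precisely when it is finite sheeted. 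As in Proposition 4.13 the explicit form of these maps shows that the cluster set of $\pa D$ under $\pi$ is contained in $\pa D_{s,t}$. The relevant boundary pieces of $D_{s,t}$ are exactly those found for $D_s$ in Proposition 4.12: on $\Im(z_1(1 + \ov z_2)) > 0$ the $s$-level is a strongly pseudoconvex hypersurface, the locus $\Im(z_1(1 + \ov z_2)) = 0$ contributes a Levi flat piece together with its degenerate intersection with the $s$-level, and for $t < \infty$ the $t$-level furnishes a strongly pseudoconcave hypersurface, while for $t = \infty$ the complex curve $\cal O$ is deleted from the domain.

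First I would run the Levi form and Segre variety argument of Propositions 4.7, 4.12 and 4.13. By Lemma 2.1 there is a two dimensional Levi degenerate stratum $S \subset \hat D$ clustering at $p_{\infty}$, so $\pi$ extends holomorphically across a point $a \in S$ with $\pi(a) \in \pa D_{s,t}$. The covering being unbranched, its Jacobian locus is at most one dimensional and meets $\pa D$ in a set of real dimension at most one; hence a generic $p \in S$ can be chosen at which $\pi$ is a local biholomorphism. Invariance of the Levi form then prevents $\pi(p)$ from lying on any strongly pseudoconvex piece or, when $t < \infty$, on the strongly pseudoconcave $t$-level, since the strongly pseudoconcave (respectively pseudoconvex) points of $\pa D$ clustering at $S$ would be carried to points of the opposite signature. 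The Levi flat piece and its degenerate intersection are excluded exactly as in Proposition 4.12, using that the surrounding boundary points of $\pa D$ are Levi non-degenerate and a comparison of real analytic germ dimensions. The conclusion is that the genuinely non-pseudoconvex picture at $p_{\infty}$ cannot occur, so $\pa D$ is weakly pseudoconvex near $p_{\infty}$ and, by \cite{Ber1}, $D \backsimeq \ti D = \{(z_1, z_2) \in \mbf C^2 : 2 \Re z_2 + P_{2m}(z_1, \ov z_1) < 0\}$ as in (4.2).

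The final contradiction splits on the value of $t$. For $t < \infty$ the covering $\pi : \ti D \ra D_{s,t}$ and the completeness of the Kobayashi metric of $\ti D$ force $D_{s,t}$ to be complete hyperbolic, hence taut and therefore pseudoconvex; this is incompatible with the strongly pseudoconcave points on its $t$-level, and rules out both the finite and infinite covers at once. For $t = \infty$ the pseudoconcave level is gone and I would instead argue as in the $s = -1$ case of Proposition 4.13: taking a local holomorphic peak function at a strongly pseudoconvex point of the $s$-level of $\pa D_{s,\infty}$ and applying the analytic disc and maximum principle argument used in the claim of Proposition 4.7, one produces a boundary point of $\ti D$ whose cluster set under $\pi$ meets that $s$-level, so that $\pi$ extends locally biholomorphically across the dense set of strongly pseudoconvex boundary points and is therefore algebraic by \cite{We}. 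A generic fibre of an algebraic $\pi$ is finite, which contradicts $\pi$ being infinite sheeted in the case $D^{(\infty)}_{s,\infty}$; for the finite covers $D^{(n)}_{s,\infty}$ the proper map $\pi : D \ra D_{s,\infty}$ extends, across the deleted curve $\cal O$, to a proper map onto $D_s = D_{s,\infty} \cup \cal O$, which Proposition 4.12 forbids.

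The step I expect to be the genuine obstacle is the degenerate range $t = \infty$ (and the borderline $s = 1$), where $D_{s,t}$ carries no strongly pseudoconcave boundary so that the clean completeness argument is unavailable. There the contradiction rests on two more delicate points: securing enough boundary regularity of the covering to invoke the peak function and algebraicity machinery at the strongly pseudoconvex $s$-level, and, for the finite covers, extending the proper map across the deleted complex curve $\cal O$ so as to realise a genuine proper map onto $D_s$ and bring Proposition 4.12 to bear. Controlling the behaviour of $\pi$ near $\cal O$ is the technical heart of this last case.
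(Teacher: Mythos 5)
Your reduction to a covering $\pi : D \ra D_{s,t}$, the containment of the cluster set of $\pa D$ in $\pa D_{s,t}$, and the Levi-form/Segre exclusion of the strongly pseudoconvex and pseudoconcave levels and of the Levi flat locus follow the paper's proof up to the conclusion that $\pa D$ is weakly pseudoconvex near $p_{\infty}$; your completeness argument for $t < \infty$ is also a correct variant (it is the mechanism of Proposition 4.13 rather than of the paper's own proof of this statement). But there are two genuine gaps, both of which you flag without filling. First, at the borderline $s = 1$ your exclusion step breaks down: $\eta_1$ is not strongly pseudoconvex (indeed $\Psi_{\eta}^{-1} : D_1 \ra D^{(2)}_1 \backsimeq \Delta^2$, so the level $\eta_1$ is Levi degenerate), and invariance of the Levi form does not rule out $\pi(a) \in \eta_1$. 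The paper disposes of this case by passing to the correspondence $\hat F = \Psi_{\eta}^{-1} \circ \pi : D \ra \Delta^2$, extending it across $a \in S \subset \hat D$, and using that its at most one-dimensional branch locus together with finite type produces strongly pseudoconvex/pseudoconcave points near $a$ mapped locally biholomorphically into $\pa \Delta^2$, contradicting Levi-form invariance. Second, your endgame at $t = \infty$ for the finite covers does not work as stated: $\pi$ is proper onto $D_{s,\infty}$, not onto $D_s = D_{s,\infty} \cup \cal O$, and since $D$ is a fixed bounded domain there is no completion of the source that converts $\pi$ into a proper map onto $D_s$ (a proper map onto $D_s$ would have to cover $\cal O$, which no point of $D$ does), so Proposition 4.12 cannot be invoked; moreover at $s = 1$, $t = \infty$ your peak-function/algebraicity route also fails for lack of strongly pseudoconvex points on $\pa D_{1,\infty}$.

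The paper's endgame avoids all of this and is worth adopting. Once $\pa D$ is weakly pseudoconvex near $p_{\infty}$, \cite{Ber1} gives $D \backsimeq \ti D$ as in (4.2), and since $\dim {\rm Aut}(D) = 3$ the Lie algebra analysis at the end of the proof of Proposition 4.1 forces $\ti D \backsimeq \cal D_4$, which has a Levi flat orbit; but \cite{I1} records that all orbits in $D^{(n)}_{s,t}$ and $D^{(\infty)}_{s,t}$ are strongly pseudoconvex hypersurfaces. This single contradiction is uniform in $s$, $t$ and $n$, and requires neither completeness, nor algebraicity, nor any control of $\pi$ near $\cal O$. If you repair the $s = 1$ exclusion with the bidisc correspondence and replace your $t$-dependent case split by this orbit-type argument, the proof closes.
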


\begin{proof}
It will suffice to show that $D$ cannot be equivalent to $D^{(\infty)}_{s, t}$ 
as the reasoning for $D^{(n)}_{s, t}$ is the same. So suppose that $f : D \ra 
D^{(\infty)}_{s, t}$ is biholomorphic. Then $\pi = \Psi_{\eta} \circ 
\Phi^{(2)}_{\eta} \circ \Lambda_{\eta} \circ f : D \ra D_{s, t}$ is a 
holomorphic covering between domains with the standard complex structure. 
Observe that the cluster set of $\pa D$ under $\pi$ is contained in $\pa D_{s, 
t}$. First suppose that $1 < s < t < \infty$. The smooth hypersurfaces $\eta_s = 
\{ 1 + \vert z_1 \vert^2 - \vert z_2 \vert^2 = s \vert 1 + z_1^2 - z_2^2 \vert, 
\; \Im(z_1(1 + \ov z_2)) > 0\}$ and $\eta_t = \{1 + \vert z_1 \vert^2 - \vert 
z_2 \vert^2 = t \vert 1 + z_1^2 - z_2^2 \vert, \; \Im(z_1(1 + \ov z_2)) > 0\}$ 
are strongly pseudoconvex and strongly pseudoconcave pieces respectively of $\pa 
D_{s, t}$. The other component is $\pa D_{s, t} \cap \{\Im(z_1(1 + \ov z_2)) = 
0\}$. As before choose $a \in S \subset T$ such that $\pi$ extends 
holomorphically across $a$ with $\pi(a) \in \pa D_{s, t}$. It is evident that 
$\pi(a) \notin \eta_s, \eta_t$. Likewise the arguments in proposition 4.13 show 
that $\pi(a) \notin \pa D_{s, t} \cap \{\Im(z_1(1 + \ov z_2)) = 0\}$ as well. 
Thus $\pa D$ must be weakly pseudoconvex near $p_{\infty}$. 

\medskip

Now suppose that $1 = s < t < \infty$. Again $\pi(a) \notin \eta_t$ and for 
similar reasons as above $\pi(a) \notin \pa D_{1, t} \cap \{\Im(z_1(1 + \ov 
z_2)) = 0\}$. Therefore $\pi(a) \in \eta_1$. To study this possibility note 
that $D_{1, t} \subset D_1$ and $\Psi_{\eta}^{-1} : D_1 \ra D^{(2)}_1 
\backsimeq \Delta^2$ is a proper holomorphic correspondence. The holomorphic 
correspondence (no longer proper) $\hat F = \Psi_{\eta}^{-1} \circ \pi : 
D \ra \Delta^2$ extends across $a$ (since $a \in \hat D$) and the 
branches $\hat F_1, \hat F_2$ of $\hat F$ (there are two since $\Psi_{\eta}$
is generically two sheeted) 
satisfy $\hat F_1(a), \hat F_2(a) \in \pa \Delta^2$. The branch locus of $\hat 
F$ is of dimension at most one and since $\pa D$ is of finite type near $a$, it 
follows that there are strongly pseudoconvex/pseudoconcave points near $a$ that 
are mapped locally biholomorphically by the branches of $\hat F$ into $\pa 
\Delta^2$. This cannot happen. Again $\pa D$ must be weakly pseudoconvex near 
$p_{\infty}$. 

\medskip

When $1 < s < t = \infty$, the boundary $\pa D_{1, \infty}$ contains 
$\eta_s$ as 
a strongly pseudoconvex piece, the complex curve $\cal O$ (as defined in 
subsection 4.5) and the remaining piece is $\pa D_{1, \infty} \cap 
\{\Im(z_1(1 + \ov z_2)) = 0\} \subset \{\Im(z_1(1 + \ov z_2)) = 0\}$. It 
follows from the 
reasoning used before that $\pi(a)$ cannot belong to any of these components. 
The same holds when $1 = s < t = \infty$. In any event the conclusion is that 
$\pa D$ must be weakly pseudoconvex near $p_{\infty}$. By \cite{Ber1} it 
follows 
that $D \backsimeq \ti D$ and by the last part of proposition 4.1 it is known 
that $\ti D \backsimeq \cal D_4 = \{(z_1, z_2) \in \mbf C^2 : 2 \Re z_2 + 
(\Re z_1)^{2m} < 0\}$. By assumption $D \backsimeq D^{(\infty)}_{s, t}$ 
which implies that $D^{(\infty)}_{s, t}$ must have a unique Levi flat orbit. 
However it is noted in \cite{I1} that all orbits in $D^{(\infty)}_{s, t}$ 
are strongly pseudoconvex hypersurfaces and this is a contradiction.
\end{proof}

\subsection{Miscellaneous examples:} The domain $D_s = D_{s, \infty} \cup 
\cal O$ as defined in section 4.5 can be regarded as a completion of 
$D_{s, \infty}$ by attaching the complex curve $\cal O$. Indeed, the 
action of ${\rm Aut}(D_{s, \infty})^c$ on $D_{s, \infty}$ extends to $D_s$ 
and $\cal O$ is an orbit for this extended action. The domain $D_{s, 
\infty}$ admits a finite covering by $D^{(n)}_{s, \infty} \subset 
M^{(n)}_{\eta}$ and it is natural to ask whether there is a completion of 
$D^{(n)}_{s, \infty}$ in the sense discussed above. The last set of 
examples (cf. 11(a) and 11(b)) in \cite{I1} do precisely this. Briefly 
put, 
for $1 \le s < \infty$ and $n \ge 1$ define $\cal O^{(2n)} = \{[0: z : w] 
\in \mbf P^2 : \vert w \vert < \vert z \vert\}$ and put $D^{(2n)}_s = 
D^{(2n)}_{s, \infty} \cup \cal O^{(2n)}$. It is shown that the maps 
$\Psi_{\eta}$ and $\Psi_{\eta} \circ \Phi^{(2n)}_{\eta}$ that are apriori 
defined only on $D^{(2)}_{s, \infty}$ and $D^{(2n)}_{s, \infty}$ 
respectively extend to proper holomorphic branched coverings $\Psi_{\eta} 
: D^{(2)}_s \ra D_s$ and $\Psi_{\eta} \circ \Phi^{(2n)}_{\eta} : D^{(2n)}_s 
\ra D_s$ respectively. Likewise for $n$ odd and $n \ge 3$, the 
map $\Pi^{(n)}$ extends to a proper holomorphic branched covering 
$\Pi^{(n)} : D^{(4n)}_s \ra D^{(n)}_s$ and hence $\Psi_{\eta} \circ 
\Phi^{(2n)}_{\eta} \circ (\Pi^{(n)})^{-1} : D^{(n)}_s \ra D_s$ is a proper 
holomorphic correspondence. 

\begin{lem}
$D$ cannot be equivalent to $D^{(n)}_s$ for $n \ge 1$ and $1 \le s < 
\infty$.
\end{lem}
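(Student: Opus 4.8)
The plan is to deduce the lemma from the earlier proposition asserting that there is no proper holomorphic map from $D$ onto $D_s$ for $1 \le s < \infty$. The point is that every $D^{(n)}_s$ was constructed, in the discussion preceding the lemma, as a finite branched covering of $D_s$, so a biholomorphism $D \backsimeq D^{(n)}_s$ ought to transport the covering projection into a proper holomorphic map from $D$ onto $D_s$, which is precisely what is forbidden.

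First I would dispose of the two straightforward regimes. For $n = 1$ one has $D^{(1)}_s = D_s$, so any biholomorphism $D \backsimeq D_s$ is already a proper holomorphic map onto $D_s$, contradicting the cited proposition. For even $n$, the construction above exhibits $D^{(n)}_s$ as a proper holomorphic branched covering of $D_s$; composing that covering projection with a biholomorphism $f : D \ra D^{(n)}_s$ again produces a proper holomorphic map $D \ra D_s$, and hence the same contradiction.

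The remaining and genuinely delicate regime is $n$ odd with $n \ge 3$, where the projection of $D^{(n)}_s$ onto $D_s$ is recorded only as the proper holomorphic correspondence $\Psi_\eta \circ \Phi^{(2n)}_\eta \circ (\Pi^{(n)})^{-1}$, the apparent multivaluedness entering through $(\Pi^{(n)})^{-1}$. My proposal is to show that this correspondence is in fact the single-valued $n$-sheeted covering projection $\pi_n : D^{(n)}_s \ra D_s$. Since $D^{(n)}_s = \Pi^{(n)}(D^{(4n)}_s)$ is the quotient of $D^{(4n)}_s$ by a cyclic group of order four, and that group acts as deck transformations of the $4n$-sheeted covering $q = \Psi_\eta \circ \Phi^{(2n)}_\eta : D^{(4n)}_s \ra D_s$, the map $q$ is invariant under the action and descends to a single-valued proper holomorphic map $\pi_n$ on the quotient satisfying $\pi_n \circ \Pi^{(n)} = q$. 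Composing $\pi_n$ with $f$ once more yields a proper holomorphic map $D \ra D_s$ and the required contradiction. I expect the crux to lie precisely in this descent, namely in verifying that the order-four group sits inside the deck group of $q$ so that $q$ genuinely factors through the quotient; should one prefer to sidestep it, the robust alternative is to observe that the proof of the cited proposition is entirely local near $p_\infty$ --- it only tracks the image of the two-dimensional Levi-degenerate stratum $S \subset \hat D$ and of the adjacent strongly pseudoconcave points, via invariance of the Levi form and local holomorphic peak functions --- so it applies verbatim to each local holomorphic branch of a proper correspondence, exactly the device already used for correspondences earlier in this section. Either route closes the odd case and establishes the lemma for all $n \ge 1$.
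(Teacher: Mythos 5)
Your proposal is correct, and for $n=1$ and $n$ even it coincides with the paper's proof (biholomorphism composed with the extended branched covering gives a proper holomorphic map $D \ra D_s$, contradicting proposition 4.12). Where you genuinely diverge is the odd case $n \ge 3$. The paper never attempts to make the projection single-valued: it works directly with the proper correspondence $\Psi_{\eta} \circ \Phi^{(2n)}_{\eta} \circ (\Pi^{(n)})^{-1}$, observes that its branch locus has dimension one, so that near points of the stratum $S \subset \hat D$ the intersection of the branch locus with $\pa D$ has real dimension at most one and the correspondence splits into well-defined holomorphic branches, and runs the local part of the proof of proposition 4.12 on these branches to conclude that $\pa D$ is weakly pseudoconvex near $p_{\infty}$ --- this is your route (b), and your instinct that the device was already used for correspondences in this section (propositions 4.9 and 4.14) is accurate. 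But ``applies verbatim'' slightly oversells it: the endgame of proposition 4.12 (the peak-function claim, continuous extension, and weak sphericity via \cite{CPS}) is formulated for proper \emph{maps}, and rather than redo it for correspondences the paper switches to a different contradiction: once $\pa D$ is weakly pseudoconvex near $p_{\infty}$, one gets $D \backsimeq \ti D \backsimeq \cal D_4$, which has a Levi flat orbit, whereas \cite{I1} notes that $D^{(n)}_s$ has none. Your primary route (a) --- descending $q = \Psi_{\eta} \circ \Phi^{(2n)}_{\eta}$ through the order-four quotient to a single-valued proper map $\pi_n : D^{(n)}_s \ra D_s$ and then quoting proposition 4.12 outright --- is sound and arguably cleaner: on the uncompleted domains the very fact that $D^{(n)}_{s,t}$ is an $n$-sheeted cover of $D_{s,t}$ forces the cyclic group into the deck group of $q$, so $q$ factors there, and continuity plus properness push the factorization across the completion (a correspondence that is single-valued on a dense open set, with irreducible graph, is single-valued everywhere). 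The cost is that this descent leans on details of the construction in \cite{I1} that the paper does not reproduce, which is presumably why both the paper and \cite{I1} record only a correspondence; what the paper's route buys is independence from those details, at the price of invoking the orbit classification of $D^{(n)}_s$.
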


\begin{proof}
For $n$ even the existence of a biholomorphism between $D$ and $D^{(n)}_s$ 
would imply the existence of a proper holomorphic map from $D$ onto $D_s$. 
By proposition 4.12 this is not possible. On the other hand, for $n$ odd 
there would be a proper correspondence from $D$ onto $D_s$. The proof is 
the same as in proposition 4.12 once it is realised that the branch 
locus of the proper correspondence is of dimension one and near points of 
$S \subset \hat D$ ($S$ is the two dimensional stratum of the Levi 
degenerate points that clusters at $p_{\infty}$), the intersection of the 
branch locus with $\pa D$ has real dimension at most one. So there are 
points on $S$ near which the correspondence splits into well defined 
holomorphic mappings. Working with these mappings it is possible to show 
that $\pa D$ must be weakly pseudoconvex near $p_{\infty}$ and hence $D 
\backsimeq \ti D \backsimeq \cal D_4 = \{(z_1, z_2) \in \mbf C^2 : 2 \Re 
z_2 + (\Re z_1)^2 < 0\}$ which has a Levi flat orbit. Thus $D^{(n)}_s$ would 
also have a Levi flat orbit but it has been noted in \cite{I1} that there 
are none in $D^{(n)}_s$.
\end{proof}

\no The other examples may be briefly recalled as follows. 
\begin{align*}
\mathfrak{D}^{(1)}_s &= \Psi^{-1}_{\nu}(\Om_{s, 1}) \cup D^{(2)}_1 \cup 
\cal O^{(1)}_0,  \;-1 < s < 1\\
\hat{\mathfrak{D}}^{(1)}_t &= \Psi^{-1}_{\nu}(\Om_1) \cup D^{(2)}_{1, t} 
\cup \cal O^{(1)}_0, \;1 < t < \infty\\
\mathfrak{D}^{(1)}_{s, t} &= \Psi^{-1}_{\nu}(\Om_{s, 1}) \cup D^{(2)}_{1, 
t} \cup \cal O^{(1)}_0, \; -1 \le s < 1 < t \le \infty, \; \text{where 
$s = -1$ and $t = \infty$ do not hold simultaneously}\\
\mathfrak{D}^{(n)}_s &= \Om^{(n)}_{s, 1} \cup D^{(2n)}_1 \cup \cal 
O^{(n)}_0, \; -1 < s < 1\\
\mathfrak{D}^{(n)}_{s, t} &= \Om^{(n)}_{s, 1} \cup D^{(2n)}_{1, t} \cup 
\cal O^{(n)}_0, \; -1\le s < 1 < t \le \infty, \; \text{where
$s = -1$ and $t = \infty$ do not hold simultaneously}\\
\mathfrak{D}^{(\infty)}_s &= \Om^{(\infty)}_{s, 1} \cup D^{(\infty)}_{1, 
\infty} \cup \cal O^{(\infty)}_0, \; -1 < s < 1\\
\mathfrak{D}^{(\infty)}_{s, t} &= \Om^{(\infty)}_{s, 1} \cup 
D^{(\infty)}_{1, t} \cup \cal O^{(\infty)}_0, \; -1 < s < 1 < t \le 
\infty, \; \text{where $s = -1$ and $t = \infty$ do not hold simultaneously}
\end{align*}
where $\cal O^{(1)}_0$ is a Levi flat orbit and $\cal O^{(n)}_0, \cal 
O^{(\infty)}_0$ are its $n$-sheeted and infinite covering respectively. 
The details of this construction are given in \cite{I1}, but the relevant 
point here is that all have Levi flat orbits.

\begin{prop}
$D$ cannot be equivalent to any of the domains listed above.
\end{prop}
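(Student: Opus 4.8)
The plan is to reduce the entire list to Proposition 4.1 together with the classification in \cite{I1}. First I would record the one structural feature common to every domain in the list: by construction each of $\mathfrak{D}^{(1)}_s$, $\hat{\mathfrak{D}}^{(1)}_t$, $\mathfrak{D}^{(1)}_{s,t}$, $\mathfrak{D}^{(n)}_s$, $\mathfrak{D}^{(n)}_{s,t}$, $\mathfrak{D}^{(\infty)}_s$ and $\mathfrak{D}^{(\infty)}_{s,t}$ contains the Levi flat orbit $\cal O^{(1)}_0$, or one of its covers $\cal O^{(n)}_0$, $\cal O^{(\infty)}_0$, as an ${\rm Aut}$-orbit. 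Hence every domain on the list admits a Levi flat orbit, and by the general description of codimension one orbits recalled at the start of this section (taken from \cite{I1}) the leaves of the associated Levi foliation are closed and biholomorphic to $\Delta$.

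Next I would argue by contradiction. Suppose $D$ is equivalent to one of the listed domains, call it $X$, via a biholomorphism $h : D \ra X$. Transporting the Levi flat orbit of $X$ by $h^{-1}$ produces a Levi flat orbit in $D$; since $D$ is simply connected (see the remark following Lemma 2.2), its leaves are closed copies of $\Delta$, so the hypotheses isolated in the closing remark of Proposition 4.1 are satisfied. Proposition 4.1 then forces $D \backsimeq \cal D_4 \backsimeq R_{1/2m,-1,1}$ for some integer $m \ge 2$, and therefore $X \backsimeq R_{1/2m,-1,1}$.

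It remains to exclude this last equivalence, and here the classification of \cite{I1} is used decisively. The model $R_{1/2m,-1,1}$ belongs to the family $R_{b,s,t}$ and, as noted above, carries a single Levi flat orbit $\cal O_1$, all of its remaining orbits being strongly pseudoconvex. Each domain $X$ on the present list is a separate entry of Isaev's classification of two dimensional complex manifolds with three dimensional automorphism group; since that classification is complete and its members are mutually non-biholomorphic, no $X$ on the list can be equivalent to the member $R_{1/2m,-1,1}$ of the family $R_{b,s,t}$. This contradicts $X \backsimeq R_{1/2m,-1,1}$ obtained above and proves the proposition.

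I expect the only delicate point to be the verification that Proposition 4.1 genuinely applies, that is, that the Levi flat orbit inherited by $D$ really has closed leaves each equivalent to the unit disc. This is precisely the hypothesis singled out at the end of Proposition 4.1, and it is supplied by the structure theory for codimension one orbits in \cite{I1} once the simple connectivity of $D$ is invoked. Beyond this, the argument is a formal combination of Proposition 4.1 with the fact that \cite{I1} furnishes a list of pairwise non-isomorphic models, so that the only remaining labour is bookkeeping to confirm that each entry displayed above indeed possesses a Levi flat orbit.
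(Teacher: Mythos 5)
Your proposal follows the paper's reduction exactly up to the last step: each listed domain carries a Levi flat orbit by construction, a biholomorphism $h : D \ra X$ conjugates the identity components of the automorphism groups and hence transports that orbit to a Levi flat ${\rm Aut}(D)^c$-orbit in $D$, and Proposition 4.1 then gives $D \backsimeq \cal D_4 \backsimeq R_{1/2m, -1, 1}$, so that $X \backsimeq R_{1/2m, -1, 1}$. Where you genuinely diverge is in refuting this final equivalence. The paper does it concretely: since a biholomorphism maps ${\rm Aut}^c$-orbits to ${\rm Aut}^c$-orbits, the strongly pseudoconvex orbits of $R_{1/2m, -1, 1}$, namely $O^{R_{1/2m}}_{\alpha} = \{\Re z_2 = \alpha (\Re z_1)^{1/2m}, \; \Re z_1 > 0\}$, would have to be CR-equivalent to the orbits of points in $\Psi^{-1}_{\nu}(\Om_{s, 1}) \subset \mathfrak{D}^{(1)}_s$, which are equivalent to the hypersurfaces $\nu_{\alpha}$, and by \cite{I3} (the CR-classification of homogeneous strongly pseudoconvex hypersurfaces) these are not CR-equivalent. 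You instead cite the meta-statement that the entries of the classification in \cite{I1} are pairwise non-biholomorphic, so $X$, being a different entry from $R_{1/2m, -1, 1}$, cannot be equivalent to it. This is logically sound provided \cite{I1} is used in its strong form (equivalence to \emph{exactly one} entry, with the parameter normalizations that make the list non-redundant); your version is shorter and uniform over the whole list, while the paper's version makes the distinguishing invariant explicit and does not lean on the uniqueness clause of \cite{I1} — indeed the pairwise distinctness of precisely these entries is itself established by comparing orbits via \cite{I3}, so your route does not avoid that input, it only cites it at one remove. One small correction: the closedness of the leaves and their equivalence to $\Delta$ come from the orbit analysis of \cite{I1} recalled at the beginning of Section 4, and not from the simple connectivity of $D$; simple connectivity is used elsewhere in that discussion, to exclude the case $D \backsimeq \Delta \times R$ with no codimension one orbits.
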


\begin{proof}
It will suffice to show that $D$ cannot be equivalent to 
$\mathfrak{D}^{(1)}_s$ for similar arguments can be applied in all the 
other cases. If $D \backsimeq \mathfrak{D}^{(1)}_s$ for $-1 < s < 1$ then 
$D$ must have a Levi flat orbit and by proposition 4.1 it follows that 
\[
D \backsimeq \cal D_4 \backsimeq R_{1/2m, -1, 1} = \Big\{(z_1, z_2) \in 
\mbf C^2 : -(\Re z_1)^{1/2m} < \Re z_2 < (\Re z_1)^{1/2m}, \; \Re z_1 > 0 
\Big\}.
\]
The orbits in $R_{1/2m, -1, 1}$, apart from a unique Levi flat orbit 
namely $\cal O_1$, are strongly pseudoconvex hypersurfaces of the form 
$O^{R_{1/2m}}_{\alpha} = \{\Re z_2 = \alpha (\Re z_1)^{1/2m} ; \; \Re z_1 
> 0\}$ for $-1 < \alpha < 1$. On the other hand the orbits of points in 
$\Psi^{-1}_{\nu}(\Om_{s, 1}) \subset \mathfrak{D}^{(1)}_s$ are equivalent 
to $\nu_{\alpha} = \{ \vert z_1 \vert^2 + \vert z_2 \vert^2 - 1 = \alpha 
\vert z_1^2 + z_2^2 - 1\vert \} \sm \{(x, u) \in \mbf R^2 : x^2 + u^2 = 
1\}$ for $s < \alpha < 1$. The hypersurfaces $O^{R_{1/2m}}_{\alpha}$ and 
${\nu}_{\alpha}$ are not CR-equivalent (cf. \cite{I3}) and this is a 
contradiction.
\end{proof}

\section{Model domains when ${\rm Aut}(D)$ is four dimensional}

\noindent It is shown in \cite{I2} (compare with the result in \cite{KV}) 
that there are exactly 7 isomorphism classes of Kobayashi hyperbolic 
manifolds of dimension two whose automorphism group has dimension four. 
These are listed below along with 
some properties that are relevant to this discussion and the idea once 
again will be to show that $D \backsimeq \cal D_5 = \{(z_1, z_2) \in \mbf 
C^2 : 2 \Re z_2 + \vert z_1 \vert^{2m} < 1\}$, where $m \ge 2$ is an integer, 
by eliminating all other possibilities from this list. 

\medskip

\noindent $\bullet$ Let $S_r = \{z \in \mbf C^2 : r < \vert z \vert < 1 
\}$ where $0 \le r < 1$ be a spherical shell. The automorphism group of this 
domain is the unitary group $U_2$. Evidently $D$ cannot be equivalent 
to $S_r$ since ${\rm Aut}(D)$ is non-compact by assumption. Quotients 
of $S_r$ can also be obtained by realising $\mbf Z_m, m \in \mbf N$ as a 
subgroup of scalar matrices in $U_n$ and considering $S_r/ \mbf Z_m$. This 
has fundamental group $\mbf Z_m$. Clearly $D$ being simply connected cannot 
be equivalent to $S_r/\mbf Z_m$.

\medskip

\noindent $\bullet$ Define $\cal E_{r, \theta} = \{(z_1, z_2) \in \mbf C^2 
: \vert z_1 \vert < 1, \; r(1 - \vert z_1 \vert^2)^{\theta} < \vert z_2 
\vert < (1 - \vert z_1 \vert^2)^{\theta} \}$ where $\theta \ge 0, 0 \le r 
< 1$ or $\theta < 0, r = 0$.
\begin{enumerate}
\item[(i)] When $\theta > 0$ and $0 < r < 1$ the boundary of $\cal 
E_{r, \theta}$ consists of the spherical hypersurfaces $\{\vert z_1 \vert 
< 1, \; \vert z_1 \vert^2 + (\vert z_2 \vert/r)^{1/\theta} = 1\}$ and 
$\{\vert z_1 \vert < 1, \; \vert z_1 \vert^2 + \vert z_2 \vert^{1/\theta} 
= 1\}$ and the circle $\{\vert z_1 \vert = 1, \; z_2 = 0\}$. Note that 
$z_2 \not= 0$ on either of the hypersurfaces as otherwise $\vert z_1 
\vert = 1$. 

\item[(ii)] When $\theta > 0$ and $r = 0$ the domain is $\cal E_{0, 
\theta} = \{(z_1, z_2) \in \mbf C^2 : \vert z_1 \vert < 1, \; 0 < \vert 
z_2 \vert < (1 - \vert z_1 \vert^2)^{\theta}\}$. The boundary of this 
domain consists of the spherical hypersurface $\{ \vert z_1 \vert < 1, \; 
\vert z_1 \vert^2 + \vert z_2 \vert^{1/\theta} = 1 \}$ and the closed unit 
disc $\{\vert z_1 \vert \le 1, \; z_2 = 0\}$.

\item[(iii)] When $\theta = 0$ and $0 \le r < 1$ the domain is $\cal 
E_{r, 0} = \{(z_1, z_2) \in \mbf C^2 : \vert z_1 \vert < 1, \; r < \vert 
z_2 \vert < 1\}$ which is not simply connected. The same holds when 
$\theta < 0$ and $r = 0$ in which case the domain is $\cal E_{0, \theta} 
= \{(z_1, z_2) \in \mbf C^2 : \vert z_1 \vert < 1, \; 0 < \vert z_2 \vert 
< (1 - \vert z_1 \vert^2)^{\theta} \}$. Hence $D$ cannot be
equivalent to either $\cal E_{r, 0}$ or $\cal E_{0, \theta}$.
\end{enumerate}

\medskip

\noindent $\bullet$ Define $\Om_{r, \theta} = \{(z_1, z_2) \in \mbf C^2 : 
\vert z_1 \vert < 1, \; r(1 - \vert z_1 \vert^2)^{\theta} < \exp(\Re z_2) 
< (1 - \vert z_1 \vert^2)^{\theta}\}$ where $\theta = 1, 0 \le r < 1$ or 
$\theta = -1, r = 0$.
\begin{enumerate}
\item[(i)] When $\theta = 1, 0 \le r < 1$ the domain is $\Om_{r, 1} = 
\{(z_1, z_2) \in \mbf C^2 : \vert z_1 \vert < 1, \; r(1 - \vert z_1 
\vert^2) < \exp(\Re z_2) < 1 - \vert z_1 \vert^2\}$ and its boundary has 
two components $\{\vert z_1 \vert < 1, \; r(1 - \vert z_1 \vert^2) = 
\exp(\Re z_2)\}$ and $\{ \vert z_1 \vert < 1, \; 1 - \vert z_1 \vert^2 = 
\exp(\Re z_2)\}$ both of which are spherical.

\item[(ii)] When $\theta = -1, r = 0$ the domain is $\Om_{0, -1} = 
\{(z_1, z_2) \in \mbf C^2 : \vert z_1 \vert < 1, \; 0 < \exp(\Re z_2) < 
1/(1 - \vert z_1 \vert^2)^{-1}\}$. The map $(z_1, z_2) \mapsto (z_1, 
-z_2)$ transforms $\Om_{0, -1}$ biholomorphically onto $\{(z_1, z_2) \in 
\mbf C^2 : \vert z_1 \vert < 1, \; 1 - \vert z_1 \vert^2 < \exp(\Re z_2) 
\}$ whose boundary $\{ \vert z_1 \vert < 1,\; 1 = \vert z_1 \vert^2 + 
\exp(\Re z_2) \}$ is spherical.
\end{enumerate}

\medskip

\noindent $\bullet$ Define $D_{r, \theta} = \{(z_1, z_2) \in \mbf C^2 : r 
\exp(\theta \vert z_1 \vert^2) < \vert z_2 \vert < \exp(\theta \vert z_1 
\vert^2)\}$ where $\theta = 1, 0 < r < 1$ or $\theta = -1, r = 0$.
\begin{enumerate}
\item[(i)] When $\theta = 1$ and $0 < r < 1$ the domain is $D_{r, 1} = 
\{(z_1, z_2) \in \mbf C^2 : r \exp(\vert z_1 \vert^2) < \vert z_2 \vert < 
\exp(\vert z_1 \vert^2) \}$ and its boundary has two components $\{\vert 
z_2 \vert = \exp(r \vert z_1 \vert^2)\}$ and $\{\vert z_2 \vert = \exp(r 
\vert z_1 \vert^2)\}$ both of which are spherical.

\item[(ii)] When $\theta = -1$ and $r = 0$ the domain is $D_{0, -1} = 
\{(z_1, z_2) \in \mbf C^2 : 0 < \vert z_2 \vert < \exp(-\vert z_1 
\vert^2)\}$ which is mapped biholomorphically by $(z_1, z_2) \mapsto (z_1, 
1/z_2)$ onto $\{(z_1, z_2) \in \mbf C^2 : \exp(\vert z_1 \vert^2) < \vert 
z_2 \vert\}$. Note that the boundary of this is again spherical.
\end{enumerate}

\medskip

\noindent $\bullet$ Define $\mathfrak{S} = \{(z_1, z_2) \in \mbf C^2 : -1 
+ \vert z_1 \vert^2 < \Re z_2 < \vert z_1 \vert^2\}$. It can be seen 
that both boundary components of this domain are spherical.

\medskip 

\noindent $\bullet$ Define $\cal E_{\theta} = \{(z_1, z_2) \in \mbf C^2 : 
\vert z_1 \vert < 1, \; \vert z_2 \vert < (1 - \vert z_1 
\vert^2)^{\theta}\}$ where $\theta < 0$. The boundary of $\cal E_{\theta}$ 
consists of $L = \{\vert z_1 \vert = 1\} \times \mbf C_{z_2}$ which is 
Levi flat and $S = \{\vert z_1 \vert < 1, \; \vert z_2 \vert = (1 - \vert z_1 
\vert^2)^{\theta}\}$ where $\theta < 0$. Choose $p = (p_1, p_2) \in S$. 
Note that $p_2 \not= 0$ and hence the mapping $(z_1, z_2) \mapsto (z_1, 
1/z_2)$ is well defined near $p$ and maps a germ of $S$ near $p$ 
biholomorphically to a germ of $\{\vert z_1 \vert < 1, \; (1 - \vert z_1 
\vert^2)^{-\theta}  = \vert z_2 \vert\}$ which is seen to be spherical. 
Moreover $S$ viewed from within $\cal E_{\theta}$ is a strongly 
pseudoconcave point as a straightforward computation shows. 

\medskip

\noindent $\bullet$ Define $E_{\theta} = \{(z_1, z_2) \in \mbf C^2 : \vert 
z_1 \vert^2 + \vert z_2 \vert^{\theta} < 1\}$ where $\theta > 0, \theta 
\not= 2$.

\begin{prop}
$D$ cannot be equivalent to any of $\cal E_{r, \theta}$ (where $\theta > 
0, 0 \le r < 1$), $\Om_{r, \theta}$ (where $\theta = 1, 0 \le r < 1$ or 
$\theta = -1, r = 0$), $D_{r, \theta}$ (where $\theta = 1, 0 < r < 
1$ or $\theta = -1, r = 0$), $\mathfrak{S}$ or to $\cal E_{\theta}$ (where 
$\theta < 0$).
\end{prop}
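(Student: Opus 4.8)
The plan is to transplant the local Levi geometry of $\pa D$ at the orbit accumulation point onto the boundary of the putative target and to check that no target can receive it. Suppose $f : D \ra M$ is a biholomorphism, where $M$ is any one of the listed domains. By Lemma 2.1 there is a two dimensional stratum $S$ of the Levi degenerate locus, clustering at $p_{\infty}$, along which $\cal L_{\rho}$ vanishes to odd order; thus $\cal L_{\rho}$ changes sign near every point of $S$ and, crucially, $S \subset \hat D$. I would fix a point $a \in S$ close to $p_{\infty}$ at which the Jacobian of the (holomorphic) extension of $f$ is nonzero. Since $a \in \hat D$, the components of $f$ continue holomorphically past $a$, so this extension is a local biholomorphism carrying a neighbourhood of $a$ in $\pa D$ diffeomorphically onto a neighbourhood of $f(a)$ in $\pa M$. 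Two invariants are thereby transplanted: $f(a)$ is a Levi degenerate point of $\pa M$ lying on a smooth hypersurface piece of $\pa M$, and along the image $f(S)$ the Levi form of this hypersurface changes sign, being strongly positive on one side and strongly negative on the other.

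The crux is that $\pa M$ never contains such a point, as a Levi form computation on each model confirms, and the obstruction takes one of three shapes. In some cases, for instance $\mathfrak{S}$, $D_{0,-1}$, and $\Om_{r,1}$ or $D_{r,1}$ with $0 < r < 1$, the boundary is a union of strongly definite, hence Levi nondegenerate, spherical sheets, so $\pa M$ carries no Levi degenerate point at all and no such $f(a)$ can exist. In others, such as $\cal E_{r,\theta}$ with $\theta > 0$ and $0 < r < 1$ and $\cal E_{0,\theta}$ with $\theta > 0$, the only Levi degenerate boundary set is lower dimensional, namely the circle $\{|z_1| = 1,\; z_2 = 0\}$ or the complex disc $\{z_2 = 0\}$; being two real dimensional it cannot be a smooth hypersurface carrying $f(a)$ in its relative interior, which a dimension count excludes. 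In the remaining cases, such as $\cal E_{\theta}$ with $\theta < 0$ and those possessing a flat wall $\{|z_1| = 1\}$, the Levi degenerate locus is a genuine Levi flat hypersurface, on which the Levi form vanishes identically and so does not change sign; were $f(a)$ to lie there, the strongly pseudoconvex and strongly pseudoconcave points near $a$ would be carried into a Levi flat hypersurface, contradicting the invariance of the Levi form. In every case the assumption $D \backsimeq M$ is contradicted.

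The genericity of $a$ is legitimate because the finite type of $\pa D$ at $p_{\infty}$ forces the Jacobian locus of the extension to meet $\pa D$ in a set of real dimension at most one, whereas $S$ is two dimensional, so almost every $a \in S$ is a regular point. Beyond this the whole content is the model by model determination of the Levi signatures and of the Levi degenerate boundary strata of the seven domains of \cite{I2}; each such computation is elementary, and this bookkeeping is where the real work lies. Should one instead prefer to phrase the conclusion as weak pseudoconvexity of $\pa D$ near $p_{\infty}$, in the style of Section 4, the same endpoint is reached: weak pseudoconvexity gives $D \backsimeq \ti D$ by \cite{Ber1}, and the four dimensionality of ${\rm Aut}(D)$ then forces $\ti D \backsimeq \cal D_5 \backsimeq E_m$, so that $M \backsimeq E_m$, contradicting the distinctness of the isomorphism classes in \cite{I2}. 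The main obstacle is therefore organisational rather than conceptual, being the verification, model by model, that a boundary admitting a four dimensional automorphism group can never host a smooth, sign changing, Levi degenerate hypersurface.
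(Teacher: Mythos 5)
Your proof is correct, but it takes a genuinely different route from the paper's. The paper never transplants the degenerate stratum: it picks a strongly pseudoconcave point $p$ near $p_{\infty}$ (for the targets whose boundary contains the complex disc $\{z_2 = 0\}$, it first uses the plurisubharmonic function $\log \vert z_2 \vert$ to show that the set of boundary points whose cluster set lies entirely in that disc is nowhere dense, so such a $p$ exists), extends $f$ across $p$, composes with a local chart $g$ carrying the spherical boundary piece of the target to $\pa \mbf B^2$, and then invokes Shafikov's analytic continuation of germs \cite{Sha} to propagate $g \circ f$ along $U \cap \pa D$ to a full neighbourhood of $p_{\infty}$, producing a holomorphic map sending $\pa D$ near $p_{\infty}$ into $\pa \mbf B^2$; this forces $\pa D$ to be weakly pseudoconvex and weakly spherical at $p_{\infty}$, whence $D \backsimeq E_{2m}$ by \cite{CPS} and \cite{Ber1}, contradicting the fact that none of the listed models is equivalent to $E_{2m}$ (for $\cal E_{\theta}$ the Levi flat wall is excluded by finite type, and the pseudoconvex alternative by the pseudoconcavity of the other boundary sheet). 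You instead push the odd-order stratum $S$ of Lemma 2.1 forward through the local extension at a generic $a \in S \subset \hat D$ — precisely the mechanism the paper itself deploys in Section 4 (Propositions 4.7, 4.12, 4.13) — and rule out each target by Levi-signature bookkeeping: every smooth boundary piece of every listed model is spherical (hence Levi nondegenerate) or Levi flat, and the residual pieces — the circle $\{\vert z_1 \vert = 1, \; z_2 = 0\}$, which incidentally is one-dimensional rather than two as you wrote, and the complex discs or lines $\{z_2 = 0\}$ — are too thin to absorb either the two-dimensional degenerate set $f(S)$ or the three-dimensional image of the hypersurface germ. Your route buys elementarity: no Shafikov continuation, no \cite{CPS}, no cluster-set lemma, at the cost of a model-by-model Levi computation and care at the corners, where it is your dimension count, not the opening assertion that $f(a)$ lies on a smooth hypersurface piece, that carries the argument (you should also make explicit that the extension maps $U \cap \pa D$ into $\pa M$, via cluster sets of boundary points under a biholomorphism, valid for the unbounded targets too since $f(a)$ is a finite point). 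The paper's route buys uniformity, keyed to the single feature that a large boundary piece is spherical, and — more importantly — it delivers the sphericity of $\pa D$ at $p_{\infty}$ itself, which is reused verbatim immediately after Proposition 5.1 to settle the remaining case $D \backsimeq E_{\theta}$; your closing remark recovers that same endgame via \cite{Ber1} and the distinctness of the classes in \cite{I2}.
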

\begin{proof}
The domains listed above have a common feature namely that a large piece 
of their boundary (if not all) is spherical. The argument is essentially 
the same for all the domains and it will therefore suffice to illustrate 
the reasoning in two cases, namely $\cal E_{0, \theta}$ and $\cal 
E_{\theta}$. Suppose that $f : D \ra 
\cal E_{0, \theta}$ is a biholomorphism. Note that $\psi(z) = \log \vert 
z_2 \vert$ is plurisubharmonic everywhere and its negative infinity locus 
contains $\ov \Delta = \{\vert z_1 \vert \le 1, \; z_2 = 0\} \subset \pa 
\cal E_{0, \theta}$. Fix an open neighbourhood $U$ of $p_{\infty} \in 
\pa D$. Let $\Gamma \subset U \cap \pa D$ be the set of those points 
whose cluster set is entirely contained in $\ov \Delta$. If $\Gamma$ 
contains a relatively open subset of $\pa D$ then the uniqueness theorem 
shows that $\psi \circ f$, which is plurisubharmonic on $D$, must satisfy 
$\psi \circ f \equiv -\infty$ on $D$. This is a contradiction. Therefore 
$\Gamma \subset U \cap \pa D$ is nowhere dense and hence it is possible to 
choose a strongly pseudoconcave point $p \in (U \cap \pa D) \sm \Gamma$. 
Then $f$ extends to a neighbourhood of $p$ and $f(p) \in \Sigma = \{\vert 
z_1 \vert < 1, \; \vert z_1 \vert^2 + \vert z_2 \vert^{1/\theta} = 1\}$. 
Let $g$ be a local biholomorphism defined in an open neighbourhood of 
$f(p)$ that maps the germ of $\Sigma$ near $f(p)$ to $\pa \mbf B^2$. Then 
$g \circ f$ is a biholomorphic germ at $p$ that maps the germ of $\pa D$ 
at $p$ into $\pa \mbf B^2$. By \cite{Sha} this germ of a mapping can be 
analytically continued along all paths in $U \cap \pa D$ that start at 
$p$. In particular there is an open neighbourhood $\ti U$ of $p_{\infty}$, 
a holomorphic mapping $\ti f : \ti U \ra \mbf C^2$ such that $\ti f(\ti U 
\cap \pa D) \subset \pa \mbf B^2$. This shows that $\pa D$ must be weakly 
pseudoconvex near $p_{\infty}$ and moreover $p_{\infty}$ must a weakly 
spherical point by \cite{CPS}. By \cite{Ber1} it follows that $D 
\backsimeq 
\cal D_5 = \{(z_1, z_2) \in \mbf C^2 : 2 \Re z_2 + \vert z_1 \vert^{2m} < 
0\} \backsimeq E_{2m}$ and therefore $E_{2m} \backsimeq \cal E_{0, 
\theta}$ which is a contradiction.

\medskip

On the other hand suppose that $f : D \ra \cal E_{\theta}$ is 
biholomorphic. Choose $p \in \pa D$ a strongly pseudoconcave point near 
$p_{\infty}$. Then $f$ extends to a neighbourhood of $p$ and $f(p) \in \pa 
\cal E_{\theta}$. By shifting $p$ if necessary it can be assumed that 
$f$ is locally biholomorphic near $p$. Note that $f(p) \notin L$ as $\pa 
D$ is assumed to be of finite type near $p_{\infty}$. If $f(p) \in S$ then 
we may compose 
$f$ with $g$ as above and get a germ of a holomorphic mapping from a 
neighbourhood of $p \in \pa D$ into $\pa \mbf B^2$. As above this can be 
analytically continued to get a holomorphic mapping defined in an open 
neighbourhood of $p_{\infty}$ that takes $\pa D$ into $\pa \mbf B^2$. 
This leads to the conclusion that $E_{2m} \backsimeq \cal E_{\theta}$ 
which is false. The only other possibility is that $\pa D$ is weakly 
pseudoconvex near $p_{\infty}$. By \cite{Ber1} it follows that 
$D \backsimeq \ti D$ where $\ti D$ is as in (4.2). Therefore 
$\cal E_{\theta}$ must be pseudoconvex as well. However all points on 
$S \subset \pa \cal E_{\theta}$ are pseudoconcave points.
\end{proof}

\noindent The only possibility is that $D \backsimeq E_{\theta}$ for some 
$\theta > 0, \theta \not= 2$. Clearly $E_{\theta}$ is pseudoconvex and so 
must $D$ be. Note that $\pa E_{\theta}$ is spherical except along the 
circle $\{(e^{i \alpha}, 0)\}$. Using the plurisubharmonic function 
$\psi(z)$ in exactly the same way as in proposition 5.1, it can be seen 
that there are points $p \in \pa D$ near $p_{\infty}$ such that the 
cluster set of $p$ under the biholomorphism $f : D \ra E_{\theta}$ 
contains points of $\pa E_{\theta} \sm \{(e^{i  \alpha}, 0)\}$. Then $f$ 
will extend holomorphically to an open neighbourhood of $p$ and $f(p) \in 
\pa E_{\theta} \sm \{(e^{i \alpha}, 0)\}$. As above we may compose $f$ 
with $g$ to get a germ of a holomorphic mapping from a neighbourhood of $p 
\in \pa D$ into $\pa \mbf B^2$. By continuation this will give rise to a 
map from a neighbourhood of $p_{\infty}$ on $\pa D$ into $\pa \mbf B^2$. 
It follows from \cite{CPS} that $p_{\infty}$ must be weakly spherical and 
hence \cite{Ber1} shows that $D \backsimeq E_{2m}$ where $m \ge 2$ is an 
integer.

\end{document}